\theoremstyle{plain}
\newtheorem{thm}{Theorem}[section]
\newtheorem{lem}[thm]{Lemma}
\newtheorem{prop}[thm]{Proposition}
\newtheorem{cor}[thm]{Corollary}
\newtheorem{claim}{Claim}
\theoremstyle{definition}
\newtheorem{defn}{Definition}[section]
\theoremstyle{remark}
\newtheorem{rem}{Remark}[section]
\newcommand\Tr{\operatorname{Tr}}
\newcommand\Cov{\operatorname{Cov}}
\newcommand\Capa{\operatorname{Cap}}
\newcommand\thr{\alpha_c}
\newcommand\thrcz{\alpha_{\#}}
\newcommand\thran{\alpha_{1}}
\begin{document}
\hypersetup{pageanchor=false}
\pgfsetxvec{\pgfpoint{8pt}{0}}
\pgfsetyvec{\pgfpoint{0}{8pt}}
\title{Phase transition in loop percolation}
\author{Yinshan Chang and Art{\"e}m Sapozhnikov\\
MPI MSI,\\
04103 Leipzig, Germany}
\maketitle

\begin{abstract}
We are interested in the clusters formed by a Poisson ensemble of Markovian loops on infinite graphs. 
This model was introduced and studied in \cite{LejanMR2971372} and \cite{LemaireLeJan}. 
It is a model with long range correlations with two parameters $\alpha$ and $\kappa$. 
The non-negative parameter $\alpha$ measures the amount of loops, 
and $\kappa$ plays the role of killing on vertices penalizing ($\kappa>0$) or favoring ($\kappa<0$) appearance of large loops.
It was shown in \cite{LemaireLeJan} that for any fixed $\kappa$ and large enough $\alpha$, 
there exists an infinite cluster in the loop percolation on $\mathbb{Z}^d$.
In the present article, we show a non-trivial phase transition on the integer lattice $\mathbb{Z}^d$ ($d\geq 3$) for $\kappa=0$. 
More precisely, we show that there is no loop percolation for $\kappa=0$ and $\alpha$ small enough. 
Interestingly, we observe a critical like behavior on the whole sub-critical domain of $\alpha$, 
namely, for $\kappa=0$ and any sub-critical value of $\alpha$, the probability of one-arm event decays at most polynomially. 

For $d\geq 5$, we prove that there exists a non-trivial threshold for the finiteness of the expected cluster size. 
For $\alpha$ below this threshold, we calculate, up to a constant factor, the decay of the probability of one-arm event, two point function, 
and the tail distribution of the cluster size. 
These rates are comparable with the ones obtained from a single large loop and only depend on the dimension. 

For $d=3$ or $4$, we give better lower bounds on the decay of the probability of one-arm event, 
which show importance of small loops for long connections. 
In addition, we show that the one-arm exponent in dimension $3$ depends on the intensity $\alpha$.

\end{abstract}
\section{Introduction}
Consider an unweighted undirected graph $G=(V,E)$ and a random walk $(X_m,m\geq 0)$ on it with transition matrix $Q$.
Unless specified, we will assume that $(X_m,m\geq 0)$ is a simple random walk (SRW) on $\mathbb Z^d$. 

\medskip

As in \cite{LemaireLeJan}, an element $\dot\ell = (x_1,\dots,x_n)$ of $V^n$, $n\geq 2$, satisfying $x_1\neq x_2,\dots,x_n\neq x_1$ is called a non-trivial discrete based loop. 
Two based loops of length $n$ are equivalent if they coincide after a circular permutation of their coefficients, i.e., 
$(x_1,\dots,x_n)$ is equivalent to $(x_i,\dots,x_n,x_1,\dots,x_{i-1})$ for all $i$. 
Equivalence classes of non-trivial discrete based loops for this equivalence relation are called (non-trivial) discrete loops. 

Given an additional parameter $\kappa>-1$, we associate to each based loop $\dot\ell=(x_1,\dots,x_n)$ the weight
\begin{equation}\label{def:dotmu}
\dot\mu_\kappa(\dot\ell)=\frac{1}{n}\left(\frac{1}{1+\kappa}\right)^{n}Q^{x_1}_{x_2}\cdots Q^{x_{n-1}}_{x_n}Q^{x_n}_{x_1}.
\end{equation}
The push-forward of $\dot\mu_\kappa$ on the space of discrete loops is denoted by $\mu_\kappa$. 
(Note that our parameter $\kappa$ in \eqref{def:dotmu} corresponds to $\frac{\kappa}{2d}$ in \cite{LemaireLeJan}.)

For $\alpha>0$ and $\kappa>-1$, let $\mathcal L_{\alpha,\kappa}$ be the Poisson loop ensemble of intensity $\alpha\mu_\kappa$, 
i.e, $\mathcal L_{\alpha,\kappa}$ is a random countable collection of discrete loops such that the point measure $\sum\limits_{\ell\in \mathcal{L}_{\alpha,\kappa}}\delta_{\ell}$ is 
a Poisson random measure of intensity $\alpha\mu_\kappa$. (Here, $\delta_\ell$ means the Dirac mass at the loop $\ell$.)
The collection $\mathcal L_{\alpha,\kappa}$ is induced by the Poisson ensemble of non-trivial continuous loops defined by Le Jan \cite{LejanMR2971372}. 

\medskip

The Poisson ensembles of Markovian loops were introduced informally by Symanzik \cite{Symanzik}.
They have been rigorously defined and studied by Lawler and Werner \cite{LawlerWernerMR2045953} in the context of two dimensional Brownian motion 
(the Brownian loop soup). 
The random walk loop soup on graphs was studied by Lawler and Limic \cite[Chapter 9]{LawlerMR2677157}, 
and its convergence to the Brownian loop soup by Ferreras and Lawler \cite{LawlerMR2255196}. 
Extensive investigation of the loop soup on finite and infinite graphs was done by Le Jan \cite{LeJanMR2675000,loop} 
for reversible Markov processes, and by Sznitman \cite{SznitmanMR2932978} in the context of reversible Markov chains on finite graphs 
from the point of view of occupation field and relation with random interlacement. 
A comprehensive study of Poisson ensembles of loops of one-dimensional diffusions was done by Lupu \cite{Lupuonedim}.
Let us also mention the works of Sheffield and Werner \cite{SheffieldWernerMR2979861} and Camia \cite{Camia}, 
who studied clusters in the two dimensional Brownian loop soup. 

\medskip

In this paper we are interested in percolative properties of clusters formed by $\mathcal L_{\alpha,\kappa}$ on $\mathbb Z^d$, motivated by the work of Le Jan and Lemaire \cite{LemaireLeJan}. 
An edge $e\in E$ is called open for $\mathcal L_{\alpha,\kappa}$ if it is traversed by at least one loop from $\mathcal L_{\alpha,\kappa}$. 
Maximal connected components of open edges for $\mathcal L_{\alpha,\kappa}$ form open clusters $\mathcal C_{\alpha,\kappa}$ of vertices. 
The percolation probability is defined as 
\[
\theta(\alpha,\kappa) \stackrel{\mathrm{def}}= \mathbb P[\#\mathcal C_{\alpha,\kappa}(0) = \infty].
\]
It is known that $\theta(\alpha,\kappa)$ is an increasing function of $\alpha$ and a decreasing function of $\kappa$, see \cite[Proposition~4.3]{LemaireLeJan}. 
In particular, if we define the critical thresholds
\[
\thr(\kappa)=\inf\{\alpha\geq 0:\theta(\alpha,\kappa)>0\}\text{ and }\kappa_c(\alpha)=\inf\{\kappa\geq -1:\theta(\alpha,\kappa)=0\},
\]
then $\theta(\alpha,\kappa) = 0$ for $\alpha<\thr(\kappa)$ or $\kappa>\kappa_c(\alpha)$, 
and $\theta(\alpha,\kappa) > 0$ for $\alpha>\thr(\kappa)$ or $\kappa<\kappa_c(\alpha)$, 
$\thr(\kappa)$ is non-decreasing in $\kappa$ and $\kappa_c(\alpha)$ is non-decreasing in $\alpha$. 
The first properties of these thresholds were proved by Le Jan and Lemaire \cite[Proposition~4.3, Remark~4.4]{LemaireLeJan}: 
\begin{itemize}\itemsep0pt
\item 
for fixed $\kappa$, $\thr(\kappa)<\infty$, i.e., for large enough $\alpha$ there is an infinite open cluster,
\item
for fixed $\alpha$, $\kappa_c(\alpha)<\infty$, i.e., there is no infinite open cluster for large enough $\kappa$,
\item
on $\mathbb Z^2$, for any $\alpha$, $\kappa_c(\alpha)\geq 0$. 
\end{itemize}
This picture can be complemented with the following result. 
\begin{thm}\label{thm:phase transition}
For simple random walk loop percolation on $\mathbb Z^d$, $d\geq 3$, 
\begin{itemize}\itemsep0pt
\item[(1)]
$\thr \stackrel{def}= \thr(0)>0$,  
\item[(2)]
for any $\alpha>0$, $\kappa_c(\alpha)\geq 0$, 
\end{itemize}
i.e., the percolation phase transition is non-trivial, see Figure~\ref{fig:critical curve}.

In contrast, for any connected recurrent\footnote{A graph $G$ is called recurrent is the simple random walk on $G$ is recurrent.} graph $G$, 
for $\kappa=0$ and $\alpha>0$, with probability $1$, all the vertices are in the same open cluster.
\end{thm}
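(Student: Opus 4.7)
Proof plan. The three parts employ rather different arguments.

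For Part (1), $\thr(0) > 0$: I would show $P(0 \leftrightarrow \partial B_n) \to 0$ for small $\alpha$ by a first-moment count of loop chains. If $0 \leftrightarrow \partial B_n$ in $\mathcal L_{\alpha,0}$, there exists a sequence of distinct loops $\ell_1, \ldots, \ell_k$ with $0 \in \ell_1$, $\ell_k \cap \partial B_n \neq \emptyset$, and $\ell_i \cap \ell_{i+1} \neq \emptyset$. Campbell's formula for the Poisson ensemble yields
\[
P(0 \leftrightarrow \partial B_n) \;\leq\; \sum_{k \geq 1} \alpha^k \sum_{\substack{x_0 = 0,\, x_k \in \partial B_n\\ x_1, \ldots, x_{k-1} \in \mathbb Z^d}} \prod_{i=0}^{k-1} \mu_0(\ell \ni x_i, x_{i+1}).
\]
Using the Le Jan covariance identity $\int n_u n_v \, d\mu_0 = G(u,v)^2$ together with $\mathbf{1}[\ell \ni u, v] \leq n_u(\ell) n_v(\ell)$, one gets $\mu_0(\ell \ni u, v) \leq G(u, v)^2 \leq C |u-v|^{-2(d-2)}$, reducing the inner sum to a convolution of $G^2$. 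In dimensions $d \geq 5$, $G^2$ is summable on $\mathbb Z^d$ and the $k$-series converges geometrically for $\alpha$ small, with the bound tending to $0$ as $n \to \infty$. In $d = 3, 4$, where $\sum_x G(0,x)^2 = \infty$, the naive bound diverges and a more refined approach is needed---possibly the sharper identity $\mu_0(\ell \ni u, v) = \log\bigl(G(u,u)G(v,v)/(G(u,u)G(v,v) - G(u,v)^2)\bigr)$ combined with truncation on the chain length or a capacity-type estimate. I expect this to be the main technical obstacle.

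For Part (2), $\kappa_c(\alpha) \geq 0$: I would show directly that for any $\kappa < 0$ every edge of $\mathbb Z^d$ is open a.s., so the whole lattice is one cluster. Fix an edge $e = (x, y)$ and sum $\dot\mu_\kappa$ over based loops starting at $x$ with first step $y$; by Campbell,
\[
\int \frac{n_e(\ell)}{|\ell|} \, d\mu_\kappa(\ell) \;=\; \frac{1}{2d} \sum_{n \geq 2} \frac{(1+\kappa)^{-n}}{n} \, p_{n-1}(y, x),
\]
where $n_e(\ell)$ is the number of traversals of $e$ by $\ell$. For $\kappa < 0$ the factor $(1+\kappa)^{-n}$ grows exponentially while $p_{n-1}(y, x) \sim c\, n^{-d/2}$, so the series diverges. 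Since $n_e(\ell)/|\ell| \leq \mathbf{1}[\ell \text{ traverses } e]$, this forces $\mu_\kappa(\ell \text{ traverses } e) = \infty$, making $e$ open almost surely.

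For Part (3), full connectedness on any connected recurrent graph with $\kappa = 0$ and $\alpha > 0$: it suffices to prove $P(x \leftrightarrow y) = 1$ for each pair $x, y$, and then take a countable intersection. I would derive this from $\mu_0(\ell \ni x, y) = \infty$, which yields almost surely a single loop containing both $x$ and $y$. Recurrence $G(x, x) = \infty$ combined with a Campbell-type computation gives $\mu_0(\ell \ni x, |\ell| \geq N) = \infty$ for every $N$, so a.s.\ there are loops through $x$ of arbitrarily large length. Conditionally on $|\ell| = n$, a loop through $x$ is distributed as a random walk bridge from $x$ to $x$ of length $n$, and by recurrence such bridges visit any fixed $y$ with probability tending to $1$ as $n \to \infty$. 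Hence $\mu_0(\ell \ni x, y, |\ell| = n) \sim \mu_0(\ell \ni x, |\ell| = n)$ for large $n$, and summing over $n$ gives $\mu_0(\ell \ni x, y) = \infty$. The key quantitative step is the bridge-visits-$y$ statement on an arbitrary recurrent graph, which I would handle via a first-passage decomposition and generating-function estimates for the hitting time $T_y$ of $y$ from $x$.
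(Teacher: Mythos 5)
Your Part (2) is correct and is essentially the paper's argument (Proposition~\ref{prop: existence of percolation for negative kappa on Zd}): for $\kappa<0$ the exponential factor $(1+\kappa)^{-n}$ beats the polynomial decay of the relevant return/hitting probabilities, forcing $\mu_\kappa$ to give infinite mass to loops through a fixed edge (the paper works with pairs of vertices rather than edges, but the mechanism is identical). Part (3) reaches the right conclusion, but your key step --- that a bridge of length $n$ on an \emph{arbitrary} connected recurrent graph visits $y$ with probability tending to $1$ --- is unproved and delicate: it amounts to $p_n^{V\setminus\{y\}}(x,x)=o(p_n(x,x))$, which you only get in a Ces\`aro sense from $\sum_n p_n^{V\setminus\{y\}}(x,x)<\infty$ and $\sum_n p_n(x,x)=\infty$ without further regularity. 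You do not need it: either subtract, using Lemma~\ref{lem:non-trivial loop visiting F}, $\mu(\ell\ni x,\ \ell\not\ni y)=\log G_{V\setminus\{y\}}(x,x)<\infty$ from $\mu(\ell\ni x)=\log G(x,x)=\infty$, or use the paper's excursion decomposition \eqref{eq:muS1S2}, which gives directly $\mu(x\stackrel{\ell}\leftrightarrow y)=\sum_{k\ge1}\frac1k\,\mathbb P^x[\tau_{2k}<\infty]=\sum_{k\ge1}\frac1k=\infty$ by recurrence.

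The genuine gap is Part (1) in dimensions $d=3,4$, and it is not merely the ``technical obstacle'' you anticipate --- the first-moment chain count is structurally doomed there. The kernel $K(u,v)=\mu_0(\ell\ni u,v)\asymp G(u,v)^2\asymp |u-v|^{-2(d-2)}$ is not summable for $d=3,4$, and the paper shows (Proposition~\ref{prop:posivity of alpha *}) that $\mathbb E[\#\mathcal C_\alpha(0)]=\infty$ for \emph{every} $\alpha>0$ when $d=3,4$; moreover the one-arm probability in $d=3$ decays strictly slower than $n^{2-d}$ (Theorem~\ref{thm: d-2 is not the right exponent for d=3}), i.e.\ long connections really are carried by long chains of small loops, so the expected number of chains you are counting is genuinely infinite, not just badly bounded. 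Concretely, already $K^{*2}(0,z)\gtrsim |z|^{-1}$ in $d=3$ (versus $K(0,z)\asymp|z|^{-2}$), and summing $K^{*k}$ over $\partial B(0,n)$ produces factors growing in $n$, so the $k$-series cannot converge for large $n$ no matter how small $\alpha$ is; replacing $G(u,v)^2$ by the exact logarithmic identity changes nothing since the two agree up to bounded factors, and truncating the chain length does not help because the divergence is already present at each fixed $k\ge2$. The missing idea is the paper's renormalization: one first proves the uniform bound $\mathbb P[B(0,n)\leftrightarrow\partial B(0,\lceil\beta n\rceil)]\le C(d,\beta)\,\alpha$ via the recursion $f^{(\alpha)}(4n)\le C\alpha+C(f^{(\alpha)}(n))^2$, which combines the capacity estimate $\mu(\ell:B(0,b)\stackrel{\ell}\leftrightarrow\partial B(0,c))\le C\alpha$ for annuli of bounded aspect ratio (Lemma~\ref{lem:one loop connection on Zd}) with the independence of the Poisson ensemble on disjoint loop classes; polynomial decay of the one-arm probability then follows by iterating across the random scales $\mathcal A_n,\mathcal B_n$. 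Your approach does, however, essentially work for $d\ge5$ and small $\alpha$, where $\sum_x G(0,x)^2<\infty$ makes the convolution powers of $K$ summable; this is in effect how the paper proves $\thrcz>0$ for $d\ge 5$.
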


\begin{figure}[!htbp]
\begin{picture}(0,0)%
\includegraphics{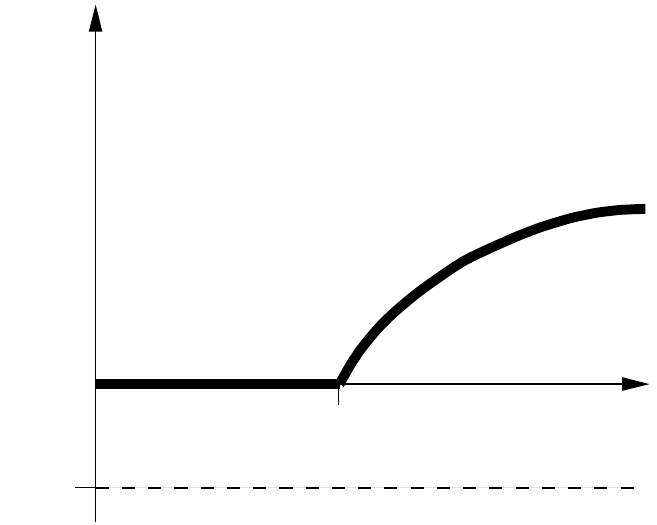}%
\end{picture}%
\setlength{\unitlength}{4144sp}%
\begingroup\makeatletter\ifx\SetFigFont\undefined%
\gdef\SetFigFont#1#2#3#4#5{%
  \reset@font\fontsize{#1}{#2pt}%
  \fontfamily{#3}\fontseries{#4}\fontshape{#5}%
  \selectfont}%
\fi\endgroup%
\begin{picture}(3000,2389)(2731,-4238)
\put(4051,-3796){\makebox(0,0)[lb]{\smash{{\SetFigFont{12}{14.4}{\familydefault}{\mddefault}{\updefault}{\color[rgb]{0,0,0}$\thr(0)$}%
}}}}
\put(2746,-4156){\makebox(0,0)[lb]{\smash{{\SetFigFont{12}{14.4}{\familydefault}{\mddefault}{\updefault}{\color[rgb]{0,0,0}$-1$}%
}}}}
\put(3286,-2581){\makebox(0,0)[lb]{\smash{{\SetFigFont{12}{14.4}{\familydefault}{\mddefault}{\updefault}{\color[rgb]{0,0,0}$\theta(\alpha,\kappa)=0$}%
}}}}
\put(2836,-2086){\makebox(0,0)[lb]{\smash{{\SetFigFont{12}{14.4}{\familydefault}{\mddefault}{\updefault}{\color[rgb]{0,0,0}$\kappa$}%
}}}}
\put(5356,-3796){\makebox(0,0)[lb]{\smash{{\SetFigFont{12}{14.4}{\familydefault}{\mddefault}{\updefault}{\color[rgb]{0,0,0}$\alpha$}%
}}}}
\put(4681,-3346){\makebox(0,0)[lb]{\smash{{\SetFigFont{12}{14.4}{\familydefault}{\mddefault}{\updefault}{\color[rgb]{0,0,0}$\theta(\alpha,\kappa)>0$}%
}}}}
\put(5716,-2716){\makebox(0,0)[lb]{\smash{{\SetFigFont{12}{14.4}{\familydefault}{\mddefault}{\updefault}{\color[rgb]{0,0,0}$\kappa=\kappa_c(\alpha)$}%
}}}}
\end{picture}%
\caption{Illustration of critical curve $\alpha\mapsto\kappa_{c}(\alpha)$ in $\mathbb{Z}^d$ ($d\geq 3$).}
\label{fig:critical curve}
\end{figure}

The first statement of Theorem~\ref{thm:phase transition} will directly follow from further stronger results of 
Theorem~\ref{thm: polynomial decay} that for some positive value of $\alpha$, the one arm probability tends to $0$ polynomially, 
the second statement will be proved in Proposition~\ref{prop: existence of percolation for negative kappa on Zd}, 
and the third in Proposition~\ref{prop: existence of percolation for recurrent graph and zero kappa}. 
We should mention that during the write up of this paper, Titus Lupu posted a paper \cite{Lupu} in which he proves that 
for the loop percolation on $\mathbb Z^d$, $d\geq 3$, $\thr\geq \frac 12$ 
using a new coupling between the loop percolation and the Gaussian free field. 
Later in Theorem~\ref{thm: asymptotic of critical value in high dimensions} we provide 
an asymptotic expression for $\thr$ as the dimension $d\to\infty$. 

\bigskip

The loop percolation on $\mathbb Z^d$, $d\geq 3$, has long range correlations, see Proposition~\ref{prop: long range correlation},
it is translation invariant and ergodic with respect to the lattice shifts, see Proposition~\ref{prop: translation invariance and ergodicity},
it satisfies the positive finite energy property, and thus there can be at most one infinite open cluster, see Proposition~\ref{prop: uniqueness of infinite cluster}.
It is worth to make a comparison with other percolation models with long range correlations, which have been recently actively studied, 
for instance, the vacant set of random interlacements \cite{SznitmanMR2680403} or the level sets of the Gaussian free field \cite{RodriguezSznitmanMR3053773}. 
As we will soon see from our main results, the loop percolation displays rather different behavior in the sub-critical regime than 
the above mentioned models. The decay of the one-arm probability in the loop percolation is at most polynomial, and in the other models 
it is exponential or stretched exponential, see \cite{PopovTeixeira,PopovRath}. 
As a consequence, the so-called decoupling inequalities \cite{SznitmanMR2891880,PopovTeixeira,PopovRath}, which are a powerful tool 
in studying those models, are not valid for the loop percolation.

\bigskip

For $x,y\in V$, we write $x\stackrel{\mathcal L_{\alpha,\kappa}}\longleftrightarrow y$ if $y\in \mathcal C_{\alpha,\kappa}(x)$ 
(equivalently, $x\in\mathcal C_{\alpha,\kappa}(y)$).
For two vertex sets $A$ and $B$, the notation $A\overset{\mathcal{L}_{\alpha,\kappa}}{\longleftrightarrow} B$ means that 
there exist $x\in A$ and $y\in B$ such that $x\overset{\mathcal{L}_{\alpha,\kappa}}{\longleftrightarrow} y$. 
For a vertex $x$ and a set of vertices $B$, we write $x\overset{\mathcal{L}_{\alpha,\kappa}}{\longleftrightarrow} B$ instead of 
$\{x\}\overset{\mathcal{L}_{\alpha,\kappa}}{\longleftrightarrow} B$.

Our main object of interest in this paper is the one arm probability for the loop percolation on $\mathbb{Z}^d$ with $\alpha<\thr$ and $\kappa=0$:
\[
\mathbb{P}\left[0\overset{\mathcal{L}_{\alpha,0}}{\longleftrightarrow}\partial B(0,n)\right],
\]
where $\partial B(0,n)$ is the boundary of the box of side length $2n$ centered at $0$. 
A general lower bound for $\alpha>0$ can be obtained by calculating the probability of connection by one big loop in $\mathcal L_{\alpha,0}$:
\begin{thm}\label{thm:at most polynomial decay of the connectivity}
For $d\geq 3$, $\alpha>0$ and $\kappa=0$, there exists $c = c(d)>0$ such that for all $n\geq 1$,
$$\mathbb{P}\left[0\overset{\mathcal{L}_{\alpha,0}}{\longleftrightarrow}\partial B(0,n)\right]\geq 
\mathbb P\left[\exists \ell\in\mathcal L_{\alpha,0}~:~ \text{$\ell$ intersects both $0$ and $\partial B(0,n)$}\right]\geq
\alpha\cdot c\cdot n^{2-d}.$$
\end{thm}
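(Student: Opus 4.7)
The plan is to prove the two stated inequalities separately. The first is immediate from the definitions: if some $\ell\in\mathcal L_{\alpha,0}$ contains both $0$ and a vertex $y\in\partial B(0,n)$, then every edge traversed by $\ell$ is open and hence $0$ and $y$ lie in the same open cluster, witnessing $0\overset{\mathcal L_{\alpha,0}}{\longleftrightarrow}\partial B(0,n)$.

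For the second inequality, since $\mathcal L_{\alpha,0}$ is a Poisson random measure of intensity $\alpha\mu_0$, the probability that it charges the event $\mathcal E=\{\ell:\ell\ni 0,\,\ell\cap\partial B(0,n)\neq\emptyset\}$ equals $1-e^{-\alpha\mu_0(\mathcal E)}$. Combined with the elementary inequality $1-e^{-x}\geq x/2$ for $x\in[0,1]$, the second inequality reduces to the key estimate
\[
\mu_0(\mathcal E)\geq c(d)\,n^{2-d}\quad\text{uniformly in }n\geq 1
\]
(the regime $\alpha\mu_0(\mathcal E)\geq 1$ is handled by adjusting the constant, since $\mu_0(\ell\ni 0)\leq G(0,0)<\infty$).

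To establish this, I restrict to the sub-family $\mathcal E_1\subset\mathcal E$ of loops that visit $0$ exactly once. An unrooted loop of length $N$ in $\mathcal E_1$ has trivial cyclic stabiliser (otherwise $0$ would be visited more than once), so its equivalence class has size exactly $N$, and it admits a unique based representative $\dot\ell=(0,x_2,\ldots,x_N)$ with $x_2,\ldots,x_N\neq 0$, which is precisely a first-return excursion of the simple random walk. Unwinding the push-forward of $\dot\mu_0$ given in \eqref{def:dotmu}, the factor $1/N$ cancels against the orbit size $N$, yielding the clean identity
\[
\mu_0(\mathcal E_1)\;=\;\sum_{N\geq 2}P_0\bigl[T_0^+=N,\;\tau_n<N\bigr]\;=\;P_0\bigl[\tau_n<T_0^+<\infty\bigr],
\]
where $T_0^+$ is the first return time of the simple random walk to $0$ and $\tau_n$ the first exit time from $B(0,n)$.

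The final step is to bound $P_0[\tau_n<T_0^+<\infty]$ from below. Applying the strong Markov property at $\tau_n$ together with the last-visit identity $P_y[T_0<\infty]=G(0,y)/G(0,0)$ for transient chains gives
\[
P_0[\tau_n<T_0^+<\infty]\;=\;G(0,0)^{-1}\,E_0\bigl[\mathbf{1}_{\{\tau_n<T_0^+\}}\,G(0,X_{\tau_n})\bigr].
\]
Standard Green's function asymptotics on $\mathbb Z^d$ with $d\geq 3$ give $G(0,y)\geq c_1(d)\,n^{2-d}$ uniformly for $y\in\partial B(0,n)$, while $P_0[\tau_n<T_0^+]\geq P_0[T_0^+=\infty]=1/G(0,0)$ is a strictly positive constant independent of $n$ (by transience); together these inputs yield the required bound $\mu_0(\mathcal E)\geq c(d)\,n^{2-d}$. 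The one delicate point is the book-keeping between the based-loop measure $\dot\mu_0$ and its push-forward $\mu_0$: restricting to loops with a single visit to $0$ is essential so that the factor $1/N$ cancels against the orbit size $N$. Without this restriction, summing over based-at-$0$ representatives leaves an uncancelled factor $1/N$ and only yields $\mu_0(\mathcal E)\gtrsim n^{-d}$, weaker by the crucial factor $n^2$.
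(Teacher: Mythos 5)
Your proof is correct, but it reaches the key estimate $\mu(\ell:0\stackrel{\ell}\longleftrightarrow\partial B(0,n))\geq c(d)\,n^{2-d}$ by a genuinely different route. The paper deduces the theorem in one line from Lemma~\ref{lem:one loop connection on Zd}(b) applied to $K=\{0\}$; that lemma treats an arbitrary $K\subset B(0,N)$, is proved via the crossing decomposition \eqref{eq:muS1S2} between $K$ and $\partial B(0,M)$ (keeping only the loops making a single round trip), and needs Harnack's inequality plus Lemmas~\ref{lem:hitting probability} and \ref{lem:Green function}; it is also reused repeatedly later in the paper. You instead exploit that $K$ is a single point: restricting to loops visiting $0$ exactly once makes the orbit size exactly the length, so the $1/N$ in \eqref{def:dotmu} cancels (this is \eqref{eq:mumultiplicity} with $m(\ell)=1$) and you get the exact identity $\mu_0(\mathcal E_1)=P_0[\tau_n<T_0^+<\infty]$; the last-exit identity $P_y[T_0<\infty]=G(0,y)/G(0,0)$ and Lemma~\ref{lem:Green function} then finish the job. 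Your version is more elementary and self-contained (no Harnack, no geometric series over repeated crossings), and your factor $P_0[\tau_n<T_0^+]\geq P_0[T_0^+=\infty]=1/G(0,0)$ is exactly the $\Capa(\{0\})$ appearing in the paper's bound; the trade-off is that it is special to $K=\{0\}$ and does not give the general capacity lemma the paper needs elsewhere. One cosmetic caveat, which your proof shares with the paper's own: the step from $1-e^{-\alpha\mu_0(\mathcal E)}$ to $\alpha\cdot c(d)\cdot n^{2-d}$ with $c(d)$ independent of $\alpha$ cannot literally hold once $\alpha c(d) n^{2-d}>1$, so ``adjusting the constant'' in the regime $\alpha\mu_0(\mathcal E)\geq 1$ only salvages the bound when $\alpha c(d)n^{2-d}\leq 1$; this is an imprecision of the theorem statement itself rather than of your argument.
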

It is interesting that we have in the same model a non-trivial phase transition together with an at most polynomial decay of one-arm connectivity for sub-critical domain. 
This critical-like behavior might be understood in the following way. 
By the second statement of Theorem~\ref{thm:phase transition}, for any $\alpha>0$, $\kappa_c(\alpha)\geq 0$. 
Thus, $]0,\thr[\times \{0\}$ is a part of the critical curve of $(\alpha,\kappa)$, see Figure~\ref{fig:critical curve}.
This polynomial decay phenomenon appears exactly at the critical value $0$ of the parameter $\kappa$.

\medskip

It is natural to consider whether $n^{2-d}$ is the right order of $\mathbb{P}[0\overset{\mathcal{L}_{\alpha,0}}{\longleftrightarrow}\partial B(0,n)]$. 
We cannot give an answer for the whole sub-critical domain. 
We introduce an auxiliary parameter as follows: for $d\geq 3$, 
\[
\thran = \thran(d) \overset{\text{def}}{=}
\sup\limits_{\beta>1}\inf\{\alpha>0:\limsup\limits_{n\rightarrow\infty}\mathbb{P}[B(0,n)\overset{\mathcal{L}_{\alpha,0}}{\longleftrightarrow} \partial B(0,\lceil\beta n\rceil)]=1\}.
\]
Our first step is the following polynomial upper bound:
\begin{thm}\label{thm: polynomial decay}\ 
\begin{itemize}\itemsep0pt
 \item[a)] For $d\geq 3$, $\thran>0$.
 \item[b)] For $d\geq 3$ and $\alpha<\thran$, there exist constants $C(d,\alpha)<\infty$ and $c(d,\alpha)>0$ such that 
$\lim\limits_{\alpha\rightarrow 0}c(d,\alpha)=d-2$ and for $N\geq 1$,
$$\mathbb{P}[0\overset{\mathcal{L}_{\alpha,0}}{\longleftrightarrow} \partial B(0,N)]\leq C(d,\alpha)\cdot N^{- c(d,\alpha)}.$$
In fact, one can take $c(d,\alpha) = d-2 - C(d)(\log\frac{1}{\alpha})^{-1}$ as $\alpha\to 0$, where $C(d)$ is a large enough constant.
\end{itemize}
\end{thm}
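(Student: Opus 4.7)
The plan is a two-step argument. First, part (a) is obtained by a first-moment computation on chains of loops combined with a diameter cutoff. Second, given sub-criticality, part (b) is proved by a multi-scale iteration that separates the contribution of single large loops from cascades of smaller ones.

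For part (a), fix $\beta > 1$ and split the loops in $\mathcal L_{\alpha,0}$ by diameter, using the independence of the Poisson sub-ensembles of loops of diameter $<\varepsilon n$ and $\geq \varepsilon n$. The expected number of large loops (diameter $\geq \varepsilon n$) meeting $B(0,\beta n)$ is $O(\alpha)$ by the classical Green-function estimate $\mu_0(\ell \ni x,\ \mathrm{diam}(\ell) \geq r) = O(r^{-(d-2)})$ together with summation over a basepoint. For small loops, any chain of loops connecting $B(0,n)$ to $\partial B(0,\beta n)$ must have length at least $k_0 := (\beta-1)/(2\varepsilon)$; Campbell's formula expresses the expected number of such $k$-chains as an iterated integral of loop-intersection masses, which are bounded using an estimate of the form $\mu_0(\ell \ni x,\ \ell \ni y) \leq C\, G(x,y)^2$. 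This yields a geometric-in-$\alpha$ series in $k \geq k_0$. Choosing $\varepsilon$ small and then $\alpha$ small makes the total expectation strictly less than $1$, giving $\limsup_n \mathbb P[B(0,n) \leftrightarrow \partial B(0,\lceil\beta n\rceil)] < 1$, and hence $\thran > 0$.

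For part (b), fix $\alpha < \thran$ together with $\beta > 1$ and $\delta < 1$ such that $\mathbb P[B(0,n) \leftrightarrow \partial B(0,\lceil\beta n\rceil)] \leq \delta$ for all large $n$. Consider the scales $r_k = \lceil \beta^k \rceil$ and the annular shells $A_k = B(0,r_{k+1}) \setminus B(0,r_k)$. For each $k$, partition the loop ensemble into loops ``small at scale $k$'' (diameter $< c(\beta-1) r_k$) and ``large at scale $k$''. A crossing of $A_k$ either uses only small loops, in which case the event is of the type bounded by $\delta$, or it uses a large loop touching $A_k$. Since small loops supported in well-separated shells are independent Poisson processes, the small-loop crossings of different shells are mutually independent; large-loop events across scales are controlled by a single union bound using the same tail estimate $\mu_0(\ell \ni x, \mathrm{diam}(\ell) \geq r) = O(r^{-(d-2)})$. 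Summing yields
\[
\mathbb P[0 \leftrightarrow \partial B(0,N)] \leq \delta^{K} + C \sum_{k \leq K} r_k^{-(d-2)} \leq C' \, N^{-c(d,\alpha)},
\]
with $c(d,\alpha) = \min\bigl(d-2,\ \log(1/\delta)/\log\beta\bigr) - o(1)$.

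The quantitative statement $c(d,\alpha) = d-2 - C(d)(\log(1/\alpha))^{-1}$ follows by letting $\alpha \to 0$ in the construction of part (a): there $\delta$ can be taken of order $\alpha^{k_0}$ with $k_0$ tunable, so $\log(1/\delta)/\log\beta$ exceeds $d-2$, and a careful balance between $\varepsilon$, $\beta$, and $k_0$ gives the stated logarithmic correction. The main obstacle is the multi-scale decoupling in part (b): a single large loop can traverse many shells simultaneously, so the union bound over large loops must be routed through the Green-function tail $r^{-(d-2)}$ rather than over scales naively, in order to avoid accumulating a polylogarithmic loss over $K \sim \log N$ scales.
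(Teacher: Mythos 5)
Your proposal has two genuine gaps, one in each part. For part (a), the first-moment count over chains of small loops cannot work in dimensions $3$ and $4$. The per-link factor in your geometric series is $\alpha\sum_{y}\mu_0(\ell\ni x,\ \ell\ni y,\ \mathrm{diam}(\ell)<\varepsilon n)$, and since $\mu_0(\ell\ni x,\ \ell\ni y)\asymp G(x,y)^2\asymp\|x-y\|^{2(2-d)}$, restricting to $\|y-x\|\leq\varepsilon n$ still gives a factor of order $\alpha\,\varepsilon n$ for $d=3$ (and $\alpha\log n$ for $d=4$). The minimal chain length $k_0=(\beta-1)/(2\varepsilon)$ is a constant independent of $n$, so the expected number of admissible chains grows like $(\alpha C\varepsilon n)^{k_0}$ and diverges as $n\to\infty$ for every fixed $\alpha>0$. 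This is not a technicality: the paper shows (Proposition~\ref{prop:posivity of alpha *}) that $\mathbb E[\#\mathcal C_\alpha(0,1)]=\infty$ for all $\alpha>0$ when $d=3,4$, so no counting argument of this type can certify sub-criticality there. The paper instead proves $\thran>0$ by a renormalization inequality $f^{(\alpha)}(4n)\leq C\alpha+C(f^{(\alpha)}(n))^2$ for the crossing probability $f^{(\alpha)}$, where the $C\alpha$ term is the single-loop crossing of a middle subannulus and the quadratic term comes from independence of two crossing events supported on disjoint loop families; iterating gives the uniform bound $f^{(\alpha)}(n)\leq C\alpha$ with no chain counting. Your method would only recover the ($d\geq 5$) Galton--Watson domination used elsewhere in the paper.

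For part (b), the displayed bound $\delta^{K}+C\sum_{k\leq K}r_k^{-(d-2)}$ cannot imply polynomial decay: the sum $\sum_k\beta^{-k(d-2)}$ converges and is dominated by its first term, so the right-hand side is $\delta^K+O(1)$. Moreover the probability that \emph{some} loop of diameter $\gtrsim r_k$ meets the shell $A_k$ is of order $\alpha\cdot\Capa(B(0,r_{k+1}))\cdot r_k^{2-d}=O(\alpha)$ per scale (Lemmas~\ref{lem:one loop connection on Zd} and \ref{lem: capacity}), not $O(r_k^{2-d})$, so a naive union bound over scales costs $O(\alpha\log N)$ and again does not decay. You correctly identify this obstacle in your last paragraph but do not resolve it. The paper's resolution is to make the scales random: $\mathcal B_n$ is enlarged until it swallows every loop touching the previous scale, so that the events at successive random scales are genuinely independent and $\mathbb P[0\leftrightarrow\partial B(0,\mathcal B_n)]\leq r^n$; the cost of the large loops is then paid through the moment bound $\mathbb E[\mathcal B_n^{\,d-2-\epsilon}]\leq(\beta^{d-2}C(1+\alpha/\epsilon))^n$, and optimizing $r^n+\mathbb E[\mathcal B_n^{\,d-2-\epsilon}]N^{-(d-2-\epsilon)}$ over $n$ produces both the exponent $d-2-\epsilon$ and, using $r\leq C'\alpha$ from part (a), the quantitative correction $C(d)(\log\frac1\alpha)^{-1}$. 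Your claim that $\delta$ can be taken of order $\alpha^{k_0}$ also has no support once part (a) is repaired; the available bound is linear in $\alpha$.
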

By Theorem~\ref{thm:at most polynomial decay of the connectivity}, $c(d,\alpha)\leq d-2$, and 
Theorem~\ref{thm: polynomial decay} suggests that $d-2$ could probably be the right exponent for the one-arm decay. 
This is indeed the case when the expected cluster size is finite, see Theorem~\ref{thm:exponent of one-arm connectivity for d larger than 5}. 
To state the result, we introduce another auxiliary parameter corresponding to the finiteness of expected cluster size: 
\begin{equation}
\thrcz = \thrcz(d)\overset{\text{def}}{=}\sup\{\alpha>0:\mathbb{E}[\#\mathcal{C}_{\alpha,0}(0)]<\infty\}.
\end{equation}
Our next result provides the strict positivity of $\thrcz$ and the order of one-arm decay for $d\geq 5$ together with the order of two point connectivity, 
the tail of cluster size and comparison between $\thrcz$ and $\thran$:
\begin{thm}\label{thm:exponent of one-arm connectivity for d larger than 5}\ 
\begin{itemize}\itemsep0pt
 \item[a)] For $d\geq 5$, $\thrcz>0$, and for $d=3$ or $4$, $\thrcz=0$.
 \item[b)] For $d\geq 5$ and $\alpha<\thrcz$, there exist constants $0<c(d,\alpha)<C(d,\alpha)<\infty$ such that for all $n$,
$$c(d,\alpha)n^{2-d}\leq\mathbb{P}[0\overset{\mathcal{L}_{\alpha,0}}{\longleftrightarrow}\partial B(0,n)]\leq C(d,\alpha)n^{2-d}.$$
 \item[c)] For $d\geq 5$ and $\alpha<\thrcz$, there exist $0<c(d,\alpha)<C(d,\alpha)<\infty$ such that for all $x\in\mathbb{Z}^d$,
 $$c(d,\alpha) (||x||_{\infty}+1)^{2(2-d)}\leq\mathbb{P}[x\in\mathcal{C}_{\alpha,0}(0)]\leq C(d,\alpha) (||x||_{\infty}+1)^{2(2-d)}.$$
 \item[d)] For $d\geq 5$ and $\alpha<\thrcz$, there exist $0<c(d,\alpha)<C(d,\alpha)<\infty$ such that for all $n$,
 \begin{equation*}
 c(d,\alpha)n^{1-d/2}\leq\mathbb{P}[\#\mathcal{C}_{\alpha,0}(0)>n]\leq C(d,\alpha)n^{1-d/2},
 \end{equation*}
 \item[e)] For $d\geq 5$, $\thrcz\leq\thran$.
\end{itemize}
\end{thm}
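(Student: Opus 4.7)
My plan is as follows. The inequality $\thrcz \leq \thran$ is equivalent to showing that for every $\alpha < \thrcz$ there exists some fixed $\beta > 1$ with $\limsup_{n\to\infty}\mathbb{P}[B(0,n)\leftrightarrow\partial B(0,\lceil\beta n\rceil)] < 1$; this then gives $\alpha \leq \thran$ by definition. I would prove it in this direct form.

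The natural starting point is the cluster-size tail from part (d): for $\alpha < \thrcz$ and $d \geq 5$, $\mathbb{P}[\#\mathcal{C}_{\alpha,0}(0)>m]\leq C m^{1-d/2}$, which by integration gives $\mathbb{E}[\#\mathcal{C}_{\alpha,0}(0)\mathbf{1}_{\#\mathcal{C}_{\alpha,0}(0)\geq m}]\leq C m^{2-d/2}$. Combined with the two-point estimate (c) $\mathbb{P}[x\leftrightarrow y]\leq C(\|x-y\|_\infty+1)^{-2(d-2)}$, these are the main quantitative inputs available. The key counting observation is that on $A_n^{(\beta)}:=\{B(0,n)\leftrightarrow\partial B(0,\lceil\beta n\rceil)\}$, the crossing cluster $\mathcal{C}$ has $|\mathcal{C}|\geq (\beta-1)n$ and contains at least $(\beta-1)n$ vertices inside $B(0,\lceil\beta n\rceil)$ (along a crossing path), each satisfying $\#\mathcal{C}(x)=|\mathcal{C}|\geq (\beta-1)n$. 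Double-counting and translation invariance therefore yield
\[
\mathbb{P}[A_n^{(\beta)}]\cdot((\beta-1)n)^2 \leq |B(0,\lceil\beta n\rceil)|\cdot\mathbb{E}[\#\mathcal{C}(0)\mathbf{1}_{\#\mathcal{C}(0)\geq(\beta-1)n}]\leq C\beta^d n^d\,((\beta-1)n)^{2-d/2},
\]
i.e.\ $\mathbb{P}[A_n^{(\beta)}]\leq C\beta^d(\beta-1)^{-d/2} n^{d/2}$.

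Unfortunately this direct estimate grows polynomially in $n$ (like $n^{d/2}$) for any fixed $\beta$, and so it does not, by itself, yield $\mathbb{P}[A_n^{(\beta)}] < 1$. The main obstacle --- which I expect to be the bulk of the proof --- is eliminating this polynomial blow-up. My plan is to run a multi-scale argument: at a sufficiently large base scale $N = N(\alpha,d)$, show $\mathbb{P}[A_N^{(\beta)}]<1-\delta$ for some $\delta>0$ and $\beta$ large. For this I would decompose the loop ensemble $\mathcal{L}_{\alpha,0}$ into short loops (length $\leq L$) and long loops (length $> L$), treating the former as a finite-range Bernoulli-like percolation (for which subcritical behavior is classical once $L$ is fixed and $\alpha$ small enough relative to the range), and controlling the contribution of the long loops directly via the known polynomial decay of the intensity measure $\mu_0$ on loops of length $\geq L$. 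One then propagates the bound from scale $N$ to all larger scales by iteration.

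The principal difficulty of the propagation step is that loop percolation admits neither a BK-type inequality nor decoupling inequalities (as the introduction explicitly emphasizes), so one cannot rely on standard Aizenman--Newman-style sub-multiplicativity. The plan is therefore to exploit the Poissonian structure of $\mathcal{L}_{\alpha,0}$: the restrictions of the loop soup to distant regions are genuinely independent once one conditions on the (few) loops that cross between them, and the expected number of such connecting loops is controlled by $\mu_0$ and hence by capacity-type quantities. Combining this quasi-independence with the short-loop subcritical bound should give the required uniform-in-$n$ estimate $\limsup_n \mathbb{P}[A_n^{(\beta)}]<1$ for some fixed $\beta = \beta(\alpha,d)>1$, completing the proof.
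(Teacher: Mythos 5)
Your proposal only engages part (e), taking (c) and (d) as inputs (they are indeed proved independently of (e) in the paper, so that is legitimate as a reduction, but parts (a)--(d) are left entirely unaddressed). For (e) itself, the reduction to showing $\limsup_n \mathbb P[B(0,n)\overset{\mathcal L_\alpha}{\longleftrightarrow}\partial B(0,\lceil\beta n\rceil)]<1$ for one fixed $\beta>1$ is correct, and you rightly discard the double-counting bound, which blows up like $n^{d/2}$. The genuine gap is in the fallback plan. First, the base-scale estimate $\mathbb P[A_N^{(\beta)}]<1-\delta$ is trivially true for any fixed $N$ (with positive probability no loop meets $B(0,\lceil\beta N\rceil)$), so all of the content lies in the propagation; but the only renormalization inequality available in this model (the one used in Section 4, of the shape $f(4n)\le C\alpha+Cf(n)^2$) closes only when $f$ is \emph{small}, not merely bounded away from $1$, and your short-loop/long-loop decomposition does not produce smallness at a large base scale for an arbitrary $\alpha<\thrcz$: the phrase ``$\alpha$ small enough relative to the range'' contradicts the setting, and subcriticality of the truncated (short-loop) percolation at the given $\alpha$ is exactly what would have to be proved. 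Second, your structural premise is off: the paper's introduction rules out \emph{decoupling} inequalities, but a BK-type inequality for increasing events of the loop configuration \emph{is} available (the loop indicators form a product measure, and one applies BK via finite-volume approximation), and it is a key tool in the actual proof. As written, the proposal has no mechanism that yields a uniform-in-$n$ bound.

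For comparison, the paper avoids renormalization altogether. Fixing $m=\lfloor n^{5/6}\rfloor$ and cutting the annulus $B(0,\beta^5 n)\setminus B(0,n)$ into five concentric subannuli, it classifies any crossing chain of loops by the number of loops of diameter at least $m$ it contains: (i) the probability that a \emph{single} loop crosses one of the five subannuli is at most $5\alpha C(d)\beta^{2-d}$, uniformly in $n$ (capacity estimate, Lemma 5.14), and $\beta$ is chosen to make this $\le 1/2$; (ii) the probability that some chain from $\partial B(0,n)$ contains at least three loops of diameter $\ge m$ is $O(n^{d-1}m^{10-3d})\to 0$ (Lemma 5.15, proved by BK plus $\mathbb E[\#\mathcal C_\alpha(0)]<\infty$ and the tail of $\#\mathcal C_\alpha(0,1)$); (iii) otherwise some subannulus must be crossed using only loops of diameter $<m$, forcing a chain of at least $(\beta-1)n/m\sim n^{1/6}$ loops, which has probability $O(n^{d-1}e^{-cn^{1/6}})$ by the exponential decay in $k$ of $\mathbb P[\mathcal C_\alpha(0,k)\neq\emptyset]$ coming from Galton--Watson domination (Lemma 5.16). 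These three uniform bounds give $\limsup_n\le 1/2$ directly. The ingredients (ii) and (iii) — the big-loop/small-loop chain classification, the BK step, and the exponential decay of the loop distance under $\mathbb E[\#\mathcal C_\alpha(0)]<\infty$ — are precisely what is missing from your sketch, and without them (or a genuinely new propagation scheme with a proof) the argument does not go through.
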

Theorems~\ref{thm:at most polynomial decay of the connectivity} and \ref{thm:exponent of one-arm connectivity for d larger than 5} 
suggest the following picture for sub-critical loop percolation in dimension $d\geq 5$: the large cluster typically contains a macroscopic loop of 
diameter comparable with the diameter of the cluster. 

This scenario however cannot be true for sub-critical loop percolation in dimensions $d = 3,4$, 
as we can get better lower bounds on the one-arm probability. 
In dimension $d=3$, we prove that $d-2$ is not the right exponent for the one-arm probability:
\begin{thm}\label{thm: d-2 is not the right exponent for d=3}
 For $d=3$, for $\alpha>0$, there exist $\epsilon(\alpha), c(\alpha)>0$ such that for all $n$,
 $$\mathbb{P}[0\overset{\mathcal{L}_{\alpha,0}}{\longleftrightarrow}\partial B(0,n)]\geq c(\alpha)n^{-1+\epsilon(\alpha)}.$$
\end{thm}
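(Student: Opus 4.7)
The plan is to exploit the fact that in dimension~$3$, chains of two loops improve the two-point function $\tau(x) := \mathbb{P}[0 \stackrel{\mathcal L_{\alpha,0}}{\longleftrightarrow} x]$ from the single-loop bound $\alpha c\,\|x\|^{-2}$ to an estimate $\tau(x) \geq c(\alpha)\,\|x\|^{-1}$; this improved two-point bound is then converted into a gain in the one-arm exponent via a second-moment argument on the cluster's intersection with the boundary. By the superposition property of Poisson loop ensembles, $\mathbb{P}[0 \leftrightarrow \partial B(0, n)]$ is nondecreasing in $\alpha$, so it suffices to treat $\alpha$ small, with the exponent $\epsilon(\alpha) > 0$ allowed to depend on $\alpha$.

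For the improved two-point bound, I will count witnesses $(y, \ell_1, \ell_2)$ where $y \in \mathbb{Z}^3$ and $\ell_1, \ell_2 \in \mathcal L_{\alpha, 0}$ are distinct loops with $\{0, y\} \subseteq \ell_1$ and $\{y, x\} \subseteq \ell_2$. Using the estimate $\mu_0(\{u, v \in \ell\}) \asymp G(u, v)^2 \asymp \|u-v\|^{-2}$ on $\mathbb{Z}^3$, the expected number of witnesses is
\[ \mathbb E[N_2] \asymp \alpha^2 \sum_y \|y\|^{-2}\,\|y-x\|^{-2} \asymp \alpha^2\,\|x\|^{-1}, \]
by Riesz composition on $\mathbb{R}^3$. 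A Paley--Zygmund estimate, showing $\mathbb{E}[N_2^2] = (\mathbb{E} N_2)^2 + O(\mathbb{E} N_2)$ thanks to the Poisson structure (for small $\alpha$, most pairs of witnesses use disjoint loops), then delivers $\tau(x) \geq c(\alpha)\,\|x\|^{-1}$, strictly better than the $\|x\|^{-2}$ single-loop bound.

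To upgrade this into a one-arm bound, set $S_n := |\{x \in \partial B(0, n): 0 \leftrightarrow x\}|$; since $|\partial B(0, n)| \asymp n^2$, the preceding step yields $\mathbb{E}[S_n] \geq c(\alpha)\,n$. The Paley--Zygmund bound $p(n) = \mathbb{P}[S_n \geq 1] \geq (\mathbb{E} S_n)^2/\mathbb{E}[S_n^2]$ reduces the task to an upper estimate $\mathbb{E}[S_n^2] \leq C n^{3-\epsilon}$ for some $\epsilon > 0$. This is the main obstacle: FKG makes the events $\{0 \leftrightarrow x\}$ and $\{0 \leftrightarrow y\}$ positively correlated, so naive factorization fails, and no BK-type inequality is available for the loop soup. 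The approach I would take is a branch-point decomposition exploiting the Poisson structure: either a single loop contains both $x$ and $y$ (a negligible contribution controlled by the single-loop mass), or the cluster splits at some vertex $z$, in which case the three arm-type events $\{0 \leftrightarrow z\}$, $\{z \leftrightarrow x\}$, $\{z \leftrightarrow y\}$ can be realized by loop-disjoint sub-configurations of $\mathcal L_{\alpha, 0}$. This should yield a tree-graph estimate $\mathbb{P}[0 \leftrightarrow x, 0 \leftrightarrow y] \leq C \sum_z \tau(0, z)\,\tau(z, x)\,\tau(z, y)$ plus smaller loop-sharing corrections. Summing this over $x, y \in \partial B(0, n)$ and $z \in \mathbb{Z}^3$, using a matching upper bound $\tau(u, v) \leq C\|u-v\|^{-1+\delta}$ for a small $\delta = \delta(\alpha) > 0$, is expected to give $\mathbb{E}[S_n^2] \leq Cn^{3-\epsilon}$ and close the argument.
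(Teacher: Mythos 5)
Your plan breaks down at the step you yourself identify as the main obstacle, and it breaks down for a quantitative reason, not just because the tree-graph inequality is unproven for loop percolation. In $d=3$ with $\tau(u,v)\asymp \|u-v\|^{-1}$ (or $\|u-v\|^{-1+\delta}$), the triangle diagram $\sum_{z}\tau(0,z)\tau(z,x)\tau(z,y)$ does not converge over $z\in\mathbb{Z}^3$ (the dyadic shell at radius $R\gg n$ contributes $\asymp R^{3}\cdot R^{-3+3\delta}$), and even its restriction to $\|z\|\leq Cn$ contributes at least a constant per pair $(x,y)$: the region near each of the three corners already gives $n^{-2}\sum_{\|z\|\leq n}\|z\|^{-1}\asymp 1$. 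Summing over the $\asymp n^{4}$ pairs $x,y\in\partial B(0,n)$, the tree-graph route can at best deliver $\mathbb{E}[S_n^2]\lesssim n^{4}$, which is \emph{worse} than the trivial bound $\mathbb{E}[S_n^2]\leq Cn^2\,\mathbb{E}[S_n]\leq Cn^{3+\delta}$, and Paley--Zygmund then yields only $\mathbb{P}[S_n\geq 1]\gtrsim n^{-2}$ --- strictly weaker than the one-loop bound $n^{-1}$ of Theorem~\ref{thm:at most polynomial decay of the connectivity} that you are trying to beat. This is the usual failure of tree-graph/triangle-condition arguments below the upper critical dimension; note also that $\mathbb{E}[\#\mathcal C_\alpha(0)]=\infty$ for every $\alpha>0$ in $d=3$ (Proposition~\ref{prop:posivity of alpha *}), so all moment-counting of multi-loop chains is badly divergent. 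The bound $\mathbb{E}[S_n^2]\leq Cn^{3-\epsilon}$ you need amounts to showing the cluster, conditioned to reach $\partial B(0,n)$, hits at most $n^{2-\epsilon}$ boundary points; nothing in your outline supplies this, and the ``loop-sharing corrections'' to the branch-point decomposition are not small either (a single macroscopic loop can serve as the branch point). Your first step (the two-loop improvement $\tau(x)\gtrsim\alpha^2\|x\|^{-1}$) is plausible, though the cross terms in $\mathbb{E}[N_2^2]$ where the two witnesses share a loop already cost at least a logarithm; in any case it only reproduces the exponent $-1$ for the one-arm event and carries no gain by itself.

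The paper's proof avoids second moments entirely. It tracks the \emph{capacity} of the cluster $\mathcal C_\alpha^k(0)$ built from loops contained in $B(0,k)$: Lemma~\ref{lem:one loop connection on Zd}(b) shows that a single loop joining a set $K$ to $\partial B(0,m)$ has capacity $\gtrsim m$ in $d=3$ while costing only $\mu\gtrsim\Capa(K)\,m^{-1}$, so integrating over the scale $m\in[m_0,M/\lambda_1]$ multiplies the expected capacity by $\log(M/m_0)$; choosing a fixed scale ratio makes the expected capacity at least double at each step, and iterating over $\asymp\log k$ scales gives $\mathbb{E}[\Capa(\mathcal C_\alpha^k(0))]\geq c\,k^{\epsilon(\alpha)}$ (Lemma~\ref{lem:lower bound box expected capacity}). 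One then uses the exact Poisson independence between loops inside $B(0,\lfloor n/2\rfloor)$ and loops exiting it, together with $\mathbb{P}[\exists\ell:K\stackrel{\ell}\longleftrightarrow\partial B(0,n)]=1-\exp\{-\alpha\mu(\cdots)\}\geq c\,n^{-1}\Capa(K)$, to convert the capacity lower bound directly into the one-arm bound. This is a pure first-moment, multiscale argument; the observable $\Capa$ is exactly adapted to the one-loop connection probability, which is what lets the paper sidestep the correlation upper bounds your approach requires.
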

Note that $\lim\limits_{\alpha\rightarrow 0}\epsilon(\alpha)=0$ by Theorem~\ref{thm: polynomial decay}. 

In dimension $d=4$, we get an improved lower bound for the one-arm probability, still with exponent $d-2$, but with an extra logarithmic correction:
\begin{thm}\label{thm: refined lower bound for d=4}
 For $d=4$, there exist $\epsilon(\alpha),c(\alpha)>0$ such that
 $$\mathbb{P}[0\overset{\mathcal{L}_{\alpha,0}}{\longleftrightarrow}\partial B(0,n)]\geq c(\alpha)n^{-2}(\log n)^{\epsilon(\alpha)}.$$
\end{thm}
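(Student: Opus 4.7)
\emph{Proof plan.} The goal is to improve the single-big-loop lower bound $n^{2-d}=n^{-2}$ of Theorem~\ref{thm:at most polynomial decay of the connectivity} by a multiplicative factor $(\log n)^{\epsilon(\alpha)}$. The plan is to use medium-scale loops to first build a logarithmically large cluster of $0$ inside $B(0,n/8)$, and then to attach to this cluster a single big loop reaching $\partial B(0,n)$. The logarithmic factor is characteristic of $d=4$ and traces back to the divergence $\sum_{0<|x|\le n}|x|^{2(2-d)}\asymp\log n$.

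Concretely, I would decompose $\mathcal L_{\alpha,0}$ into two independent Poisson ensembles $\mathcal L^{\mathrm{sm}}$ and $\mathcal L^{\mathrm{bg}}$ consisting of loops of diameter at most $n/4$ and greater than $n/4$ respectively, and let $K$ denote the cluster of $0$ in $\mathcal L^{\mathrm{sm}}$ intersected with $B(0,n/8)$. For any $x\in B(0,n/8)\setminus\{0\}$, the one-big-loop construction underlying Theorem~\ref{thm:at most polynomial decay of the connectivity}, localised in the ball $B(x/2,|x|)\subset B(0,n/4)$, yields $\mathbb P[0\leftrightarrow x\text{ in }\mathcal L^{\mathrm{sm}}]\ge c|x|^{2(2-d)}$; summing over $B(0,n/8)$ gives $\mathbb E[|K|]\ge c'\log n$ in $d=4$. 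A Paley--Zygmund / second-moment argument, controlling the three-point function $\mathbb P[0\leftrightarrow x,\,0\leftrightarrow y\text{ in }\mathcal L^{\mathrm{sm}}]$ via a tree-graph-type bound, then upgrades this first-moment estimate to a tail bound $\mathbb P[|K|\ge(\log n)^{\epsilon(\alpha)}]\ge c''(\alpha)>0$. Finally, since $\mathcal L^{\mathrm{bg}}$ is independent of $K$ and the loop measure of big loops hitting a fixed point of $B(0,n/8)$ and reaching $\partial B(0,n)$ is of order $n^{-2}$, a Poisson estimate gives $\mathbb P[\exists \ell\in\mathcal L^{\mathrm{bg}}:\ell\cap K\ne\emptyset,\,\ell\cap\partial B(0,n)\ne\emptyset\mid K]\ge c\min(1,|K|\cdot n^{-2})$; chaining the two conditional bounds yields $\mathbb P[0\leftrightarrow\partial B(0,n)]\ge c(\alpha)\,n^{-2}(\log n)^{\epsilon(\alpha)}$.

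The main obstacle is the second-moment / three-point step. The loop soup does not admit a BK inequality, so the factorisation $\mathbb P[0\leftrightarrow x,\,0\leftrightarrow y]\le C\sum_z\mathbb P[0\leftrightarrow z]\mathbb P[z\leftrightarrow x]\mathbb P[z\leftrightarrow y]$ is not automatic; a loop-by-loop expansion or a direct argument using the Poisson structure should give such an inequality up to an $\alpha$-dependent constant. This $\alpha$-dependence is exactly what forces the exponent $\epsilon(\alpha)$ to degrade as $\alpha\to 0$, consistent with the $\epsilon(\alpha)\to 0$ limit of Theorem~\ref{thm: polynomial decay}. A further subtlety is to ensure that the Poisson lower bound on the number of big loops intersecting $K$ does not lose a logarithmic factor through inclusion--exclusion: since a single big loop may hit many points of $K$, the bound $c|K|n^{-2}$ is accurate only when the relevant capacity of $K$ is comparable to its size, which in $d=4$ may itself cost a $\log$. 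Handling this overlap carefully, possibly by restricting to a well-separated subset of $K$, is the technical heart of the argument.
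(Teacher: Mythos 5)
Your plan has a fatal gap at the Paley--Zygmund step. The theorem only has content for subcritical $\alpha$ (otherwise the one-arm probability is bounded below by $\theta>0$), and for subcritical $\alpha$ the cluster $\mathcal C_{\alpha}(0)$ is almost surely finite with a law that does not depend on $n$. Your set $K$ is contained in $\mathcal C_{\alpha}(0)$, so $\mathbb P[\#K\geq(\log n)^{\epsilon}]\leq\mathbb P[\#\mathcal C_{\alpha}(0)\geq(\log n)^{\epsilon}]\to 0$ as $n\to\infty$, no matter how small $\epsilon>0$ is. The claim $\mathbb P[\#K\geq(\log n)^{\epsilon(\alpha)}]\geq c''(\alpha)>0$ is therefore false. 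The first moment $\mathbb E[\#K]\asymp\log n$ is not a sign that the cluster is typically logarithmically large; it is entirely driven by the heavy tail of the cluster size (in $d=4$ one has $\mathbb P[\#\mathcal C_{\alpha}(0,1)>t]\asymp t^{-1}$, cf.\ Lemma~\ref{lem:tail of the range of the loop intersecting 0}, so $\mathbb E[\#\mathcal C_{\alpha}(0)\wedge n^{4}]\asymp\log n$ while the cluster is $O(1)$ with high probability). Consistently, the second moment is of order $(\log n)^3$, not $(\log n)^2$, so Paley--Zygmund can only give a $1/\log n$ lower bound --- which exactly cancels the logarithmic gain you are after. The secondary issue you already flag is also real: the $\mu$-measure of big loops joining $K$ to $\partial B(0,n)$ is of order $\Capa(K)\cdot n^{-2}$ (Lemma~\ref{lem:one loop connection on Zd}), not $\#K\cdot n^{-2}$, and in $d=4$ the capacity of an $N$-point set can be as small as $\sqrt N$.

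The paper's proof avoids both problems by never asking for a constant-probability event and by working with capacity throughout. The final step is the linearization $\mathbb P[\exists\,\ell:\mathcal C_{\alpha}^{\lfloor n/2\rfloor}(0)\stackrel{\ell}\longleftrightarrow\partial B(0,n)]\geq\mathbb E\bigl[1-\exp\{-\alpha c\,n^{-2}\Capa(\mathcal C_{\alpha}^{\lfloor n/2\rfloor}(0))\}\bigr]\geq c'n^{-2}\,\mathbb E[\Capa(\mathcal C_{\alpha}^{\lfloor n/2\rfloor}(0))]$, valid because $n^{-2}\Capa$ is uniformly bounded; so only the \emph{expected} capacity matters, and rare large clusters are allowed to carry the whole expectation. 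That expectation is bounded below by $c(\log n)^{\epsilon}$ via a multiscale iteration (Lemma~\ref{lem:big box expected capacity d=4}): using Lemma~\ref{lem:one loop connection on Zd}(b), each annulus-crossing loop attached to the current cluster has capacity $\gtrsim m^2/\log m$, and integrating over the crossing radius one gets $\mathbb E[\Capa(\mathcal C_{\alpha}(K,m,M))]\geq c\,\Capa(K)\log(\log M/\log m)$, which doubles the expected capacity whenever $M\geq m^{\lambda_2}$. Iterating over scales $M_{i+1}=\lceil(2M_i)^{\lambda_2}\rceil$ yields $\log\log n$ doublings, hence the factor $(\log n)^{\epsilon}$. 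If you want to salvage your scheme, you should replace the cardinality of $K$ by its capacity and replace the constant-probability claim by a lower bound on the expected capacity; at that point you are essentially forced into the paper's iterative construction.
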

We conjecture that for the sub-critical loop percolation in dimension $d=4$, 
an upper bound on the one-arm probability is in similar form with logarithmic correction.

The results of Theorems~\ref{thm: d-2 is not the right exponent for d=3} and \ref{thm: refined lower bound for d=4} 
imply that the structure of connectivities in sub-critical loop percolation in dimensions $d=3,4$ 
is different from that in dimensions $d\geq 5$: macroscopic loops are not essential for 
formation of large connected components. 

\bigskip

All the upper bounds that we obtain hold either for $\alpha<\thrcz$ or for $\alpha<\thran$, 
i.e., for subregimes of the sub-critical phase. 
We expect that for $d\geq 3$, $\thrcz = \thr$, and for $d\geq 5$, $\thran = \thr$, 
but we do not have a proof yet. However, we can show that asymptotically, as $d\to\infty$, all these thresholds coincide. 
\begin{thm}\label{thm: asymptotic of critical value in high dimensions}
Asymptotically, as $d\to\infty$,
\[
2d - 6 + O(d^{-1}) \leq \thrcz\leq \thr \leq 2d + \frac 32 + O(d^{-1}).
\]
\end{thm}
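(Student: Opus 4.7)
The inequality $\thrcz \leq \thr$ is automatic since $\alpha > \thr$ forces $\theta(\alpha,0) > 0$ and hence $\mathbb{E}[\#\mathcal{C}_{\alpha,0}(0)] = \infty$. It therefore suffices to establish the two outer bounds. The plan is to obtain the upper bound $\thr \leq 2d+3/2+O(d^{-1})$ by comparing the length-$2$ loops alone to Bernoulli bond percolation and applying the high-dimensional expansion of $p_c(\mathbb Z^d)$, and to obtain the lower bound $\thrcz \geq 2d-6+O(d^{-1})$ from a Simon--Lieb type recursion on the two-point function $\tau(0,x):=\mathbb P[0\stackrel{\mathcal L_{\alpha,0}}{\longleftrightarrow} x]$ combined with a direct enumeration of short loops.

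For the upper bound, observe that the length-$2$ loops in $\mathcal L_{\alpha,0}$ form, independently across edges, a Poisson process of intensity $\alpha/(4d^2)$ on each edge $e$, so declaring an edge open as soon as it carries at least one length-$2$ loop gives Bernoulli bond percolation with parameter $p(\alpha) = 1-\exp(-\alpha/(4d^2))$, which is stochastically dominated by the full loop percolation. Invoking the Hara--Slade expansion $p_c(\mathbb Z^d) = (2d)^{-1} + (2d)^{-2} + O(d^{-3})$, the computation $-\log(1-p_c) = p_c + p_c^2/2 + O(p_c^3) = 1/(2d) + 3/(8 d^2) + O(d^{-3})$ shows that $p(\alpha) > p_c$ as soon as $\alpha > 2d + 3/2 + O(d^{-1})$, whence $\thr \leq 2d + 3/2 + O(d^{-1})$.

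For the lower bound, split $\mathcal L_{\alpha,0}$ into the Poisson subensemble $L_0$ of loops touching $0$ and the independent Poisson of loops avoiding $0$; a union bound over the first vertex $y$ at which the cluster of $0$ intersects $V(L_0)$ combined with Poisson independence yields, for $x\neq 0$,
\[
\tau(0,x) \leq p(0,x) + \sum_{y\neq 0,\, y\neq x} p(0,y)\,\tau(y,x), \qquad p(0,y):=1-\exp(-\alpha\,\psi(0,y)),
\]
with $\psi(0,y):=\mu_0(\{\ell: 0,y\in\ell\})$. Summing over $x$ and using translation invariance produces $\mathbb E[\#\mathcal C_{\alpha,0}(0)] \leq (1-S)^{-1}$ whenever $S:=\sum_{y\neq 0} p(0,y) < 1$, and since $S \leq \alpha\widetilde\chi$ with $\widetilde\chi := \sum_{y\neq 0}\psi(0,y)$, it suffices to show $\widetilde\chi = (2d)^{-1}(1 + 3/d + O(d^{-2}))$. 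The length-$2$ contribution is exactly $2d\cdot 1/(4d^2) = 1/(2d)$. The length-$4$ contribution is obtained by partitioning length-$4$ loops through $0$ into four structural types $(0,y,0,y)$, $(0,y,0,w)_{w\neq y}$, $(0,y,w,y)_{w\neq 0,y}$ and $(0,a,b,c)_{\text{all distinct}}$; their discrete-loop counts are $2d$, $d(2d-1)$, $2d(2d-1)$, $4d(d-1)$, their measures $\mu_0(\ell)$ are $1/(32d^4)$, $1/(16d^4)$, $1/(16d^4)$, $1/(16d^4)$ (the first halved by the cyclic period $k=2$), and their values of $|V(\ell)|-1$ are $1,2,2,3$; summing gives $3/(2d^2) + O(d^{-3})$. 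Loops of length $2k\geq 6$ contribute at most $(2k-1)Q^{2k}(0,0) = O(d^{-k})$ each, hence $O(d^{-3})$ in total. Inverting finally yields $\thrcz \geq 1/\widetilde\chi = 2d-6+O(d^{-1})$.

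The principal obstacle is the sharp $1/d^2$-level computation of $\widetilde\chi$. The elementary bound $\psi(0,y) \leq \int \mu_0(d\ell)\,\ell_0\ell_y = G(0,y)^2$ together with $\sum_{y\neq 0}G(0,y)^2 = 1/(2d) + 2/d^2 + O(d^{-3})$ (using the high-$d$ expansions $G(0,0) = 1 + 1/(2d) + 3/(4d^2) + O(d^{-3})$ and $Q^4(0,0) = 3/(4d^2) + O(d^{-3})$) is too loose and yields only $\thrcz \geq 2d-8+O(d^{-1})$, because it overcounts loops visiting $0$ or $y$ more than once. Extracting the sharp constant $-6$ therefore requires replacing $\ell_0\ell_y$ by the indicator $1_{0,y\in\ell}$ and carrying out the case-by-case cyclic enumeration above, with careful bookkeeping of loops of nontrivial cyclic period. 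The Simon--Lieb / BK step itself, although technical, is by now a standard adaptation of the van den Berg--Kesten argument to Poisson ensembles.
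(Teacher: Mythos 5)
Your proposal is correct, and its overall architecture matches the paper's: the middle inequality is immediate, the upper bound comes from comparing the length-$2$ loops with Bernoulli bond percolation and invoking the lace-expansion asymptotics $p_c = (2d)^{-1}+(2d)^{-2}+O(d^{-3})$ (the paper cites the comparison $(1-(2d)^{-2})^{\thr}\leq 1-p_c$ from Le Jan--Lemaire rather than rederiving it, but the computation is the same), and the lower bound comes from showing that the expected number of vertices joined to $0$ by a \emph{single} loop is below $1$ once $\alpha < 2d-6+O(d^{-1})$, which yields $\mathbb E[\#\mathcal C_\alpha(0)]<\infty$ -- the paper does this via Galton--Watson domination, which is equivalent to your Simon--Lieb recursion. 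The genuine difference is in how the second-order constant is extracted. The paper uses the exact determinantal identity $\mu(\ell:0,y\in\ell) = -\log\bigl(1-(G(0,y)/G(0,0))^2\bigr)$, so that the quantity to expand is $\sum_{y\neq 0}(G(0,y)/G(0,0))^2$; the normalization by $G(0,0)^2 = 1+1/d+O(d^{-2})$ is precisely what you identify as the gap between $-8$ and $-6$, and the paper then computes $\sum_x G(0,x)^2$ and $G(0,0)$ to order $d^{-2}$ by Parseval and a careful probabilistic control of $\mathbb E[(1-Z_d)^{-2}]$ with $Z_d=\frac1d\sum\cos U_i$. You instead compute $\sum_{y\neq 0}\mu(\ell:0,y\in\ell)=\sum_{\ell\ni 0}\mu(\ell)(\#V(\ell)-1)$ by direct enumeration of loops of length $2$ and $4$; I checked your four structural types, their counts, their $\mu$-masses (including the multiplicity-$2$ correction for $(0,y,0,y)$) and the resulting $1/(2d)+3/(2d^2)+O(d^{-3})$, and they are right and consistent with the paper's Fourier answer. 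The one step you state too quickly is the remainder: the bound $(2k-1)Q^{2k}(0,0)$ is valid (via $\mu(\{\ell\ni 0,\,|\ell|=2k\})\le\int \xi(0,\ell)1_{|\ell|=2k}\,\mu(d\ell)=Q^{2k}(0,0)$), and each term is $O(d^{-k})$, but concluding $O(d^{-3})$ for the full sum over $k\ge 6$ requires uniform-in-$k$ control such as $\sum_{k\ge 3}(2k-1)\mathbb E[Z_d^{2k}]=O(d^{-3})$; this is exactly the kind of tail estimate the paper's argument supplies (splitting on $|Z_d|\ge 1/2$ and using a uniform bound on negative moments of $1-Z_d$), so it should be added rather than asserted. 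On balance, the paper's route avoids your case-by-case cyclic bookkeeping at the cost of the Fourier machinery, while your enumeration is more elementary and makes the origin of the constant $3$ combinatorially transparent.
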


\bigskip

{\it Outline of the paper.}
In the next section, we introduce the commonly used notation and collect some preliminary results about simple random walk on $\mathbb Z^d$ and some properties of the loop measure $\mu$. 
In Section~\ref{sec: basic properties of loop percolation}, 
we prove some elementary properties of the loop percolation on $\mathbb Z^d$, 
such as long-range correlations, translation invariance and ergodicity, the uniqueness of the infinite cluster, and the connectedness for $\kappa<0$. 
Except for the translation invariance, these properties will not be used in the proofs of the main results. 
In Section~\ref{sec: first results of one-arm connectivity}, 
we prove Theorems~\ref{thm:at most polynomial decay of the connectivity} and \ref{thm: polynomial decay}.
Finer results for the loop percolation in dimensions $d\geq 5$ are presented in Section~\ref{sec: dimension larger than 5}. 
In particular, the first $5$ subsections are devoted to the proof of Theorem~\ref{thm:exponent of one-arm connectivity for d larger than 5}, 
which is split into $5$ Propositions~\ref{prop:posivity of alpha *}, \ref{prop:exponent for one-arm connectivity for d larger than 5}, 
\ref{prop:two point connectivity}, \ref{prop:exponent of size of cluster for d larger than 5}, and \ref{prop: alpha * is smaller than alpha **}, 
and the last subsection contains the proof of Theorem~\ref{thm: asymptotic of critical value in high dimensions}. 
The proofs of Theorems~\ref{thm: d-2 is not the right exponent for d=3} and \ref{thm: refined lower bound for d=4} 
(refined lower bounds in dimension $d=3$ or $4$) are given in Section~\ref{sec: dimension three or four}. 
In Section~\ref{sec: general graphs}, we collect some results for the loop percolation on general graphs, 
such as triviality of the tail sigma-algebra, connectedness in recurrent graphs, and the continuity of $\kappa_c(\alpha)$. 
We finish the paper with an overview of some open questions. 

\section{Notation and preliminary results}

\subsection{Notation}
Let $G = (V,E)$ be an unweighted undirected graph. 
For $F\subseteq V$, let $\partial F = \{x\in F:\exists y\in V\setminus F\text{ such that }\{x,y\}\text{ is an edge}\}$ be the boundary of $F$. 

Let $(X_n,n\geq 0)$ be a simple random walk (SRW) on $G$. Let $\mathbb P^x$ be the law of SRW started from $x\in V$. 
Let $(G(x,y))_{x,y\in V}$ be the Green function for $(X_n,n\geq 0)$. 

For $F\subseteq V$, let $\tau(F)$ be the entrance time of $F$ and $\tau^{+}(F)$ be the hitting time of $F$ by $(X_n,n\geq 0)$:
\[
\tau(F)=\inf\{n\geq 0:X_n\in F\}\text{ and }\tau^{+}(F)=\inf\{n\geq 1:X_n\in F\}.
\]
For $x\in V$, we use the notation $\tau(x)$ and $\tau^{+}(x)$ instead of $\tau(\{x\})$ and $\tau^{+}(\{x\})$. 
\begin{defn}\label{defn:capacity}
The capacity of a set $F$ is defined by
$$\Capa(F)=\sum\limits_{x\in\partial F}\mathbb{P}^x[\tau^{+}(F)=\infty].$$
\end{defn}
For finite subsets of vertices of a transient graph, the capacity is positive and monotone, i.e., for any finite $F\subset F'\subset V$, 
$0<\Capa(F)\leq \Capa(F')$. 

\medskip

For $x\in V$ and a loop $\ell$, we write $x\in \ell$ if $\ell$ visits $x$, i.e., 
for some based loop $\dot\ell$ in the equivalence class $\ell$, $\dot\ell = (x_1,\dots,x_n)$ with $x_1 = x$. 
For $F\subseteq V$, we write $\ell\cap F\neq \phi$ if $\ell$ visits at least one vertex in $F$, 
and $\ell\subset F$ if all the vertices visited by $\ell$ are contained in $F$. 
For two sets of vertices $F_1$ and $F_2$, we write $F_1\stackrel{\ell}\longleftrightarrow F_2$ if 
the loop $\ell$ intersects both $F_1$ and $F_2$. If some of the two sets is a singleton, say $\{x\}$, then 
we omit the brackets from the notation. For instance, $x\stackrel{\ell}\longleftrightarrow y$ 
means that the loop $\ell$ intersects both $x$ and $y$.

For $F\subseteq V$, $\alpha>0$ and $\kappa>-1$, we write 
\[
(\mathcal{L}_{\alpha,\kappa})_F\overset{\text{def}}{=}\{\ell\in\mathcal{L}_{\alpha,\kappa}~:~\ell\cap F\neq\phi\},
\]
\[
(\mathcal{L}_{\alpha,\kappa})^F\overset{\text{def}}{=}\{\ell\in\mathcal{L}_{\alpha,\kappa}~:~\ell\subset F\}.
\]

\medskip

Since most of the time we will deal with the case $\kappa = 0$, we accept the following convention:
\begin{center}
\emph{In case $\kappa=0$, we omit the subindex ``$\kappa$'' from all the notation.}
\end{center}
For instance, we will write $\mu = \mu_0$, $\mathcal L_{\alpha} = \mathcal L_{\alpha,0}$, $\mathcal C_{\alpha} = \mathcal C_{\alpha,0}$. 

\medskip

Throughout the following context, we denote by $M_p^{+}$ the set of $\sigma$-finite point measures on the space of discrete loops on $G$, 
and by $\mathcal F$ the canonical $\sigma$-algebra on $M_p^+$. 
For $K\subseteq V$ and a point measure $m=\sum\limits_{i\in\mathbb{N}}c_i\delta_{\ell_i}$ of loops 
where $\delta_{\ell_i}$ is the Dirac mass at the loop $\ell_i$, 
define $m_K=\sum\limits_{i\in\mathbb{N}}c_i\delta_{\ell_i}1_{\{\ell_i\cap K\neq \phi\}}$ and 
$m^K=\sum\limits_{i\in\mathbb{N}}c_i\delta_{\ell_i}1_{\{\ell_i\subset K\}}$. 
We denote by $\mathcal{F}_K$ the $\sigma$-field generated by $\{m_K:m\in M_{p}^{+}\}$ and 
by $\mathcal{F}^K$ the $\sigma$-field generated by $\{m^K:m\in M_{p}^{+}\}$. 
A random set $\mathcal{K}$ is called $(\mathcal{F}_{K})_{\{K\text{ finite}\}}$-optional iff for any deterministic $K\subseteq V$, $\{\mathcal{K}\subset K\}$ is $\mathcal{F}_{K}$-measurable. 
Then, define $$\mathcal{F}_{\mathcal{K}}=\{A\in\mathcal{F}:A\cap\{\mathcal{K}\subset K\}\in\mathcal{F}_K\}.$$
Similar definitions hold for the filtration $(\mathcal{F}^{K})_{\{K\text{ finite}\}}$.

\medskip

For $x\in\mathbb{Z}^d$ and natural number $n\in\mathbb{N}$, denote by $B(x,n)$ the box of side length $2n$ centered at $x$.

\subsection{Facts about random walk}
\begin{lem}[{\cite[Proposition 4.6.4]{LawlerMR2677157}}]\label{lem:hitting probability}
 For a transient graph and a subset $F$ of vertices, by last passage time decomposition,
$$\mathbb{P}^x[\tau(F)<\infty]=\sum\limits_{z\in\partial F}G(x,z)\mathbb{P}^z[\tau^{+}(F)=\infty].$$
\end{lem}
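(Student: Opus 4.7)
The plan is to apply a last passage time decomposition to the event $\{\tau(F)<\infty\}$. Since the graph is transient, each vertex is visited only finitely many times almost surely, so for the finite sets $F$ to which the lemma is applied, on $\{\tau(F)<\infty\}$ the last visit time $L = \sup\{n\geq 0 : X_n \in F\}$ is well-defined and attained at some random vertex $X_L \in F$. Partitioning the event according to the pair $(L, X_L)$ yields
\[
\mathbb{P}^x[\tau(F)<\infty] = \sum_{z\in F}\sum_{n=0}^{\infty}\mathbb{P}^x\bigl[X_n = z,\ X_m \notin F \text{ for all } m > n\bigr].
\]

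Next I would apply the simple Markov property at time $n$ to the inner event: conditional on $X_n = z$, the shifted trajectory $(X_{n+1}, X_{n+2},\ldots)$ has the law of an SRW started from $z$, and $\{X_m \notin F \text{ for all } m > n\}$ corresponds to $\{\tau^{+}(F)=\infty\}$ under $\mathbb{P}^z$. The summand therefore factors as $\mathbb{P}^x[X_n = z]\cdot\mathbb{P}^z[\tau^{+}(F)=\infty]$. I would then observe that for $z \in F \setminus \partial F$ every neighbor of $z$ lies in $F$, whence $\tau^{+}(F) = 1$ almost surely from $z$ and the corresponding factor vanishes; so only boundary vertices contribute. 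Recognizing $\sum_{n\geq 0}\mathbb{P}^x[X_n = z] = G(x,z)$ by the definition of the Green function produces the claimed identity.

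The only genuinely delicate step is the justification of the existence of the last visit time $L$, i.e., that on $\{\tau(F)<\infty\}$ the walk visits $F$ only finitely often almost surely. For the finite sets $F$ to which the lemma is applied this is immediate from transience of $G$ (which makes each vertex, hence any finite $F$, visited finitely often a.s.); for infinite $F$ one would additionally need $\mathbb{P}^z[\tau^{+}(F)<\infty]<1$ for the relevant boundary vertices, but this caveat does not arise in the applications of this lemma in the paper.
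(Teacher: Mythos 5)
Your argument is correct and is exactly the standard last-passage decomposition that the lemma's name and its citation to Lawler--Limic refer to (the paper itself gives no proof, only the reference), so there is nothing to add. Your closing remark rightly notes that the identity as stated needs the walk to leave $F$ almost surely (automatic for finite $F$ by transience, which covers all uses in the paper), since for a set like $F=V$ the right-hand side would vanish while the left-hand side equals $1$.
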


\begin{lem}[{\cite[Theorem 4.3.1]{LawlerMR2677157}}]\label{lem:Green function}
 For simple random walk on $\mathbb{Z}^d$, $d\geq 3$, there exist $0<c(d)\leq C(d)<\infty$ such that
 $$c(d)(1+||x-y||_{\infty})^{2-d}\leq G(x,y)\leq C(d)(1+||x-y||_{\infty})^{2-d}.$$
 More precisely, $G(x,y)=\frac{d\Gamma(d/2)}{(d-2)\pi^{d/2}}(||x-y||_{2}+1)^{2-d}+O((||x-y||_2+1)^{-d})$.
\end{lem}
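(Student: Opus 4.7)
The plan is to combine a Fourier representation of the Green function with the local central limit theorem. Writing $G(0,x)=\sum_{n\geq 0}p_n(0,x)$ and using the characteristic function $\hat p(\theta)=\frac{1}{d}\sum_{j=1}^d\cos\theta_j$ of one step, Fourier inversion gives
\[
G(0,x)=\frac{1}{(2\pi)^d}\int_{[-\pi,\pi]^d}\frac{e^{-i\theta\cdot x}}{1-\hat p(\theta)}\,d\theta.
\]
The denominator vanishes only at $\theta=0$, where $1-\hat p(\theta)=\frac{|\theta|^2}{2d}+O(|\theta|^4)$, and at $\theta=(\pi,\ldots,\pi)$ because of the bipartite parity of $\mathbb{Z}^d$; the second singularity is neutralized by restricting to $p_{2n}$ on the even sublattice, or equivalently by a parity decomposition.

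For the crude two-sided bound $G(0,x)\asymp(1+\|x\|_\infty)^{2-d}$, I would bypass Fourier analysis and apply the local CLT directly: for $n\geq c\|x\|^2$ on the correct parity, $p_n(0,x)\asymp n^{-d/2}e^{-c\|x\|^2/n}$, so summing over $n$ gives
\[
\sum_{n\geq 1}p_n(0,x)\asymp\int_1^\infty t^{-d/2}e^{-c\|x\|^2/t}\,dt\asymp\|x\|^{2-d},
\]
while the contribution from $n<c\|x\|^2$ is negligible by standard Gaussian tail estimates. For $\|x\|_\infty=O(1)$ transience gives a strictly positive lower bound, absorbed into $c(d)$, matched by the trivial upper bound $G(0,x)\leq G(0,0)<\infty$.

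To pin down the explicit constant $\frac{d\,\Gamma(d/2)}{(d-2)\pi^{d/2}}$ and the error $O(\|x\|^{-d})$, I rescale the Fourier integral near the origin. Substituting $\theta=\phi/\|x\|_2$ in a small neighborhood and using $\frac{1}{1-\hat p(\theta)}=\frac{2d}{|\theta|^2}+O(1)$, the leading piece takes the form
\[
\frac{2d}{(2\pi)^d}\,\|x\|_2^{2-d}\int_{\mathbb{R}^d}\frac{e^{-i\phi\cdot\hat{x}}}{|\phi|^2}\,d\phi,
\]
and the Riesz-kernel integral is a constant independent of the direction $\hat{x}$, combining with the prefactor to produce exactly the advertised constant. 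The $O(\|x\|^{-d})$ remainder comes from the $O(|\theta|^4)$ correction in $1-\hat p(\theta)$ and from the tail $|\theta|\geq\delta$, which decays faster than any polynomial by repeated integration by parts exploiting smoothness of the integrand away from the singularities.

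The main obstacle is making this expansion \emph{uniform} in the direction $\hat{x}$ and handling the second singularity at $\theta=(\pi,\ldots,\pi)$ cleanly; this forces a parity decomposition of the walk together with quantitative control of the remainder over the entire torus. Once that bookkeeping is in place, everything reduces to the standard LCLT with error plus a stationary-phase type estimate, as carried out in \cite[Theorem 4.3.1]{LawlerMR2677157}.
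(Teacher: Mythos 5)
The paper does not prove this lemma at all: it is quoted verbatim from \cite[Theorem 4.3.1]{LawlerMR2677157}, and your sketch follows exactly the standard route of that reference (Fourier inversion of $1/(1-\hat p)$, local CLT for the crude two-sided bound, rescaling near $\theta=0$ against the Riesz kernel for the constant, and careful control of the remainder for the $O(\|x\|^{-d})$ error), so there is nothing to compare against in the paper itself. One small correction: the denominator $1-\hat p(\theta)$ does \emph{not} vanish at $\theta=(\pi,\dots,\pi)$ (there $\hat p=-1$, so $1-\hat p=2$); the only delicacy at that point is the convergence of the geometric series $\sum_n\hat p(\theta)^n$, not a singularity of the limiting integrand, and your parity decomposition handles that anyway.
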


\begin{lem}[{\cite[Proposition 6.5.1]{LawlerMR2677157}}]\label{lem: capacity}
 There exist $0<c(d)<C(d)<\infty$ such that for $n\geq 1$,
 $$c(d)n^{d-2}\leq\Capa(B(0,n))\leq C(d)n^{d-2}.$$
\end{lem}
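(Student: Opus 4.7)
The plan is to deduce both bounds directly from Lemmas~\ref{lem:hitting probability} and~\ref{lem:Green function}: the upper bound via the last-exit decomposition formula applied at a vertex sufficiently far from $B(0,n)$, and the lower bound via the Dirichlet/variational characterization of capacity applied to a natural test measure.

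For the upper bound, fix any vertex $x$ with $||x||_\infty = 2n$. For every $z \in \partial B(0,n)$ one has $||x-z||_\infty \leq 3n$, so Lemma~\ref{lem:Green function} gives $G(x,z) \geq c(d)\,n^{2-d}$. Combining with Lemma~\ref{lem:hitting probability},
$$1 \;\geq\; \mathbb{P}^x[\tau(B(0,n))<\infty] \;=\; \sum_{z \in \partial B(0,n)} G(x,z)\,\mathbb{P}^z[\tau^+(B(0,n))=\infty] \;\geq\; c(d)\,n^{2-d}\,\Capa(B(0,n)),$$
yielding $\Capa(B(0,n)) \leq C(d)\,n^{d-2}$.

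For the lower bound, I would use the standard variational identity
$$\Capa(F)^{-1} \;=\; \inf_{\nu}\sum_{x,y \in \partial F}\nu(x)\,\nu(y)\,G(x,y),$$
the infimum being over probability measures $\nu$ supported on $\partial F$ (the Dirichlet principle for the Green kernel). Take $\nu$ to be the uniform probability on $\partial B(0,n)$, a set of cardinality $\asymp n^{d-1}$. It then suffices to bound $\sum_{y\in\partial B(0,n)} G(x,y)$ uniformly in $x \in \partial B(0,n)$. By Lemma~\ref{lem:Green function} this sum is comparable to $\sum_{y\in \partial B(0,n)}(1+||x-y||_\infty)^{2-d}$. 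Since $\partial B(0,n)$ is the union of $2d$ codimension-one lattice faces of a box, one has $\#\{y\in\partial B(0,n):||x-y||_\infty\leq r\}\leq C\,r^{d-1}$, and a discrete-difference argument gives the integer-radius shell bound $\#\{y\in\partial B(0,n):||x-y||_\infty = r\}\leq C\,r^{d-2}$. Therefore
$$\sum_{y\in\partial B(0,n)}(1+||x-y||_\infty)^{2-d} \;\leq\; C\sum_{r=1}^{2n} r^{d-2}\cdot r^{2-d} \;\leq\; C\,n,$$
so the energy of $\nu$ is at most $C\,n\,/\,n^{d-1} = C\,n^{2-d}$, and the variational formula gives $\Capa(B(0,n))\geq c\,n^{d-2}$.

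The only mildly delicate point is the combinatorial shell estimate on $\partial B(0,n)$, but this reduces to the fact that the boundary of a $d$-dimensional box is $(d-1)$-dimensional and hence its intersection with an $\ell^\infty$-ball of radius $r$ contains at most $O(r^{d-1})$ lattice points. Everything else is a direct combination of the Green function estimate of Lemma~\ref{lem:Green function} with the last-exit decomposition of Lemma~\ref{lem:hitting probability} and the Dirichlet principle.
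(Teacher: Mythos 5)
The paper offers no proof of this lemma at all --- it is quoted verbatim from Lawler--Limic \cite[Proposition 6.5.1]{LawlerMR2677157} --- so your self-contained derivation from Lemmas~\ref{lem:hitting probability} and~\ref{lem:Green function} is by construction a different route, and a reasonable one. The upper bound is correct and is the standard argument. The lower bound via the energy/Dirichlet characterization is also a legitimate standard strategy; one could even avoid quoting the variational principle by averaging the identity $1=\mathbb P^x[\tau(B(0,n))<\infty]=\sum_{z\in\partial B(0,n)}G(x,z)\,\mathbb P^z[\tau^+(B(0,n))=\infty]$ over $x\in\partial B(0,n)$ against the uniform measure $\nu$, which yields $1\le \Capa(B(0,n))\cdot\sup_{z}\sum_{x}\nu(x)G(x,z)$ and requires exactly the same Green-function sum you estimate.

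That said, the combinatorial step you call ``mildly delicate'' is false as stated. The shell bound $\#\{y\in\partial B(0,n):||x-y||_\infty=r\}\le C r^{d-2}$ fails: take $x=(n,0,\dots,0)$ and $r=2n$; every point $y$ of the opposite face $\{y_1=-n\}$ has $||x-y||_\infty=2n$, so that single shell contains of order $n^{d-1}$ points. More generally, an entire face of the box can sit inside one $\ell^\infty$-sphere centred at a boundary point, and one cannot ``difference'' the cumulative bound $N(r)=\#\{y\in\partial B(0,n):||x-y||_\infty\le r\}\le Cr^{d-1}$ to control individual shells, since the increments may concentrate. The conclusion $\sum_{y\in\partial B(0,n)}(1+||x-y||_\infty)^{2-d}\le Cn$ is nevertheless true, and the repair is routine: decompose dyadically, so that the sum is at most $\sum_{k\,:\,2^k\le Cn}N(2^{k+1})\,2^{k(2-d)}\le C\sum_{k\,:\,2^k\le Cn}2^{k(d-1)}2^{k(2-d)}\le Cn$, using only the cumulative bound (equivalently, use summation by parts). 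With that substitution your argument is complete and correct.
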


The following lemma provides an estimate on the capacity of the random walk range.
\begin{lem}\label{l:lower bound SRW capacity in box}
For a SRW on $\mathbb Z^d$, $d\geq 3$, there exists $c(d)>0$ such that
\[
\inf\limits_{n\geq 1,z\in\partial B(0,n)}\mathbb{P}^{0}\left[\Capa(\{X_0,\ldots,X_{\tau(\partial B(0,n))}\})>c(d)\cdot F(d,n)|X_{\tau(\partial B(0,n))}=z\right]>0,
\]
where $F(d,n)=1_{d=3}\cdot n+1_{d=4}\cdot \frac{n^2}{\log n}+1_{d\geq 5}\cdot n^2$.
\end{lem}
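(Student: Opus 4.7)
The plan is to decouple the conditioning on the exit point using the Harnack inequality at an intermediate scale, reduce to a variational lower bound on capacity, and then perform a dimension-dependent second moment on the range together with a Green-sum estimate.

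First I would set $\sigma=\tau(\partial B(0,\lfloor n/2\rfloor))$ and $\tau_n=\tau(\partial B(0,n))$. By the strong Markov property at $\sigma$, the conditioning on $\{X_{\tau_n}=z\}$ reweights the law of the first piece $(X_0,\ldots,X_\sigma)$ by the factor $h_z(X_\sigma)$, where $h_z(y)=\mathbb{P}^y[X_{\tau_n}=z]$ is positive and discrete-harmonic on $B(0,n)$. The discrete Harnack inequality makes $h_z$ comparable on $\partial B(0,\lfloor n/2\rfloor)$, with a constant depending only on $d$. Hence for every event $\mathcal{A}$ measurable with respect to $(X_0,\ldots,X_\sigma)$, $\mathbb{P}^0[\mathcal{A}\mid X_{\tau_n}=z]\geq c_H\,\mathbb{P}^0[\mathcal{A}]$, uniformly in $n$ and $z$. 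Since $\Capa$ is monotone and $\{X_0,\ldots,X_\sigma\}\subset\{X_0,\ldots,X_{\tau_n}\}$, it suffices to prove that $\mathbb{P}^0[\Capa(A)\geq c'\,F(d,n)]$ is bounded below by a positive constant uniformly in $n$, where $A=\{X_0,\ldots,X_\sigma\}$.

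I would then apply the variational lower bound $\Capa(A)\geq |A|^2/V(A)$, with $V(A)=\sum_{x,y\in A}G(x,y)$, which follows by testing the quadratic form $\mu\mapsto\sum G(x,y)\mu(x)\mu(y)$ against the equilibrium measure via Cauchy--Schwarz and specialising to the uniform probability on $A$. For the range size, $\mathbb{E}^0[|A|]\geq \mathbb{E}^0[\sigma]/G(0,0)\asymp n^2$ (using $\mathbb{P}^x[\tau^+(x)=\infty]=1/G(0,0)$ applied in the time-reversed walk) and $\mathbb{E}^0[|A|^2]\leq\mathbb{E}^0[\sigma^2]\leq Cn^4$, so Paley--Zygmund gives $|A|\geq c_1n^2$ with uniformly positive probability. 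For $V(A)$, bound $1[x,y\in A]\leq L_xL_y$ with local times $L_x=\#\{i\leq\sigma:X_i=x\}$ and split by which of $x,y$ is visited first, obtaining $\mathbb{E}^0[L_xL_y]\leq G_{B(0,n/2)}(0,x)G_{B(0,n/2)}(x,y)+G_{B(0,n/2)}(0,y)G_{B(0,n/2)}(x,y)$. Using $G_B\leq G$, this yields
$$\mathbb{E}^0[V(A)]\leq 2\sum_{x\in B(0,n/2)}G_{B(0,n/2)}(0,x)\sum_{y\in B(0,n/2)}G(x,y)^2.$$
The outer sum equals $\mathbb{E}^0[\sigma]\asymp n^2$; by Lemma~\ref{lem:Green function} the inner sum is of order $\int_1^n r^{3-d}\,dr$, which is $O(n)$, $O(\log n)$ and $O(1)$ for $d=3$, $d=4$, $d\geq 5$ respectively. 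In all three regimes this matches $O(n^2/F(d,n))$, so $\mathbb{E}^0[V(A)]\leq Cn^4/F(d,n)$. Markov's inequality produces $V(A)\leq K\cdot n^4/F(d,n)$ with probability $1-O(1/K)$; choosing $K$ large enough, the intersection with $\{|A|\geq c_1n^2\}$ still has positive probability, on which $\Capa(A)\geq c\,F(d,n)$. Combining with the Harnack reduction finishes the argument.

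The main technical obstacle I anticipate is the sharp dimension split in the Green-sum estimate, and in particular the borderline case $d=4$ where the integral $\int_1^n r^{-1}\,dr=\log n$ produces the logarithmic correction in $F(4,n)=n^2/\log n$; the constants in the Paley--Zygmund and Markov steps must be balanced so that their intersection remains uniformly positive. A minor but essential point is the uniformity in $z$ of the discrete Harnack constant for harmonic functions on $B(0,n)$ restricted to $B(0,\lfloor n/2\rfloor)$, which follows from standard estimates but should be checked explicitly for boundary points $z$.
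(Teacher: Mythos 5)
Your proposal is correct, and it reaches the key estimate by a genuinely different route from the paper's. The reduction is the same in both: condition off the exit point via the strong Markov property at $\partial B(0,\lfloor n/2\rfloor)$ and Harnack's inequality, so that it suffices to bound $\mathbb{P}^0[\Capa(A)\geq c\,F(d,n)]$ from below unconditionally for the first piece $A$ of the range. For that core bound, however, the paper applies Paley--Zygmund to the capacity itself: it imports $\mathbb{E}[\Capa]\gtrsim F(d,\sqrt{T})$ and $\mathbb{E}[\Capa^2]\lesssim(\mathbb{E}[\Capa])^2$ from R\'ath--Sapozhnikov, and because the cited second-moment bound in $d=4$ carries a spurious logarithm, it must redo that case separately via Lawler's estimate on the intersection probability of two independent SRWs in $\mathbb{Z}^4$. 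You instead pass through the variational inequality $\Capa(A)\geq|A|^2/V(A)$ and control the two factors separately: $|A|$ from below by Paley--Zygmund using only elementary range and exit-time moments, and $V(A)$ from above by a first-moment Green-sum computation followed by Markov's inequality. This avoids any second moment of the capacity, treats $d=3$, $d=4$ and $d\geq5$ uniformly (the $d=4$ logarithm emerges automatically from $\sum_y G(x,y)^2\asymp\log n$), and is more self-contained than the paper's argument. The points you should still write out carefully are routine: positive-definiteness of $G$ so that Cauchy--Schwarz yields $\Capa(A)\geq 1/\mathcal{E}(\nu)$ for the uniform measure $\nu$ on $A$; the last-visit decomposition giving $\mathbb{E}^0[|A|]\geq\mathbb{E}^0[\sigma+1]/G(0,0)$ with the random time $\sigma$ (using that $\{\sigma\geq k\}\in\mathcal{F}_{k-1}$ and that never returning implies not returning before $\sigma$); and the degenerate small-$n$ cases, where the claim is trivial since $\Capa(A)\geq\Capa(\{0\})>0$ and $F(d,n)$ is bounded.
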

\begin{proof}
It suffices to show that there exists $c' =c'(d)>0$ such that for all $T\geq 0$, 
\begin{equation}\label{eq:clbScib1}
\mathbb{P}^0\left[\Capa(\{X_0,\ldots,X_{T}\})\geq c'F(d,\sqrt{T})\right]>c'.
\end{equation}
Indeed, let $\tau(n) = \tau(\partial B(0,n))$. 
By the strong Markov property and Harnack's inequality,
\begin{multline*}
\mathbb{P}^{0}[\Capa(\{X_0,\ldots,X_{\tau(n)}\})>cF(d,n)|X_{\tau(n)}=z]\\
\geq \frac{\inf\limits_{w\in\partial B(0,\lceil n/2\rceil)}\mathbb{P}^w[X_{\tau(n)}=z]}{\mathbb{P}^{0}[X_{\tau(n)}=z]}\cdot \mathbb{P}^{0}[\Capa(\{X_0,\ldots,X_{\tau(\lceil n/2\rceil )}\})>cF(d,n)]\\
 \geq c''(d)\cdot \mathbb{P}^{0}[\Capa(\{X_0,\ldots,X_{\tau(\lceil n/2\rceil )}\})>cF(d,n)].
\end{multline*}
By Kolmogorov's maximal inequality for the coordinates,
\[
\mathbb{P}^0[\tau(\lceil n/2\rceil)<\delta n^2]\leq 4\delta.
\]
We choose $\delta = \frac{c'}{8}$ and apply \eqref{eq:clbScib1} with $T = \lfloor \frac {c'}{8} n^2\rfloor$ to 
get $\mathbb{P}^{0}[\Capa(\{X_0,\ldots,X_{\tau(\lceil n/2\rceil )}\})>cF(d,n)]\geq \frac {c'}{2}$ 
for a suitable choice of $c=c(d)$. 

\medskip

It remains to verify \eqref{eq:clbScib1}. 
By the Paley-Zygmund inequality it suffices to check that for some $0<c(d)\leq C(d)<\infty$, 
\[
\mathbb E^0[\Capa(\{X_0,\ldots,X_{T}\})]\geq c F(d,\sqrt{T})
\]
and
\[
\mathbb E^0[\Capa(\{X_0,\ldots,X_{T}\})^2]\leq C \left(\mathbb E^0[\Capa(\{X_0,\ldots,X_{T}\})]\right)^2.
\]
The first inequality was proved in \cite[Lemma 4]{ArtemMR2819660} for all $d\geq 3$, and 
the second was proved in \cite{ArtemMR2819660} for $d=3$ and $d\geq 5$, see the proof of Lemma~5 there. 
For $d=4$, the second inequality is obtained in \cite{ArtemMR2819660} with a logarithmic correction, 
which is not enough to imply \eqref{eq:clbScib1}. 
Below we provide a proof of the correct bound using a result about intersection of SRWs from \cite[Theorem 2.2]{LawlerMR679202}. 
We prove that there exists $C$ such that for the SRW on $\mathbb Z^4$, 
\begin{equation}\label{eq:clbScib2}
\mathbb{E}^0\left[\Capa(\{X_0,\ldots,X_{T}\})^2\right]\leq C\cdot \frac{T^2}{(\log T)^2}.
\end{equation}
Let $(X^0_n)_{n\geq 0},(X^1_n)_{n\geq 0},(X^2_n)_{n\geq 0}$ be three independent SRWs.
Denote by $\mathbb{E}_{(i)}^{x}$ the expectation corresponding to the random walk $X^i$ with initial point $x$.
Similarly, we define $(\mathbb{E}_{(i),(j)}^{x,y})_{i\neq j}$.
For simplicity of notation, we denote by $\mathbb{E}^{x,y,z}$ (or $\mathbb{P}^{x,y,z}$) the expectation (or probability) corresponding to $X^0,X^1$ and $X^2$ with initial points $x,y$ and $z$, respectively.
Denote by $X^0[0,T]$ the range of $X^0$ up to time $T$. Similarly, we define $X^1[0,\infty[$ and $X^2[0,\infty[$.

Let $x_0 = (2T,0,\dots,0)$. By Lemmas \ref{lem:hitting probability} and \ref{lem:Green function},
$\mathbb{E}^0[\Capa(\{X_0,\ldots,X_{T}\})^2]$ is comparable to
\[
T^{4}\cdot \mathbb{P}^{0,x_0,x_0}\left[X^0[0,T]\cap X^1[0,\infty[\neq\phi,~X^0[0,T]\cap X^2[0,\infty[\neq\phi\right].
\]
For $i=1$ and $2$, define $\tau_i=\inf\{j\geq 0:X^0_j\in X^{i}[0,\infty[\}$. By symmetry,
\[
\mathbb{P}^{0,x_0,x_0}\left[X^0[0,T]\cap X^1[0,\infty[\neq\phi,~X^0[0,T]\cap X^2[0,\infty[\neq\phi\right]\leq 2\mathbb{P}^{0,x_0,x_0}[\tau_1\leq\tau_2\leq T].
\]
By conditioning on $X^1$ and $X^2$ and then applying the strong Markov property for $X^0$ at time $\tau_1$,
\begin{align*}
 \mathbb{P}^{0,x_0,x_0}[\tau_1\leq\tau_2\leq T]=&\mathbb{E}^{0,x_0,x_0}\left[\tau_1\leq\tau_2,\tau_1<T,\mathbb{E}_{(0)}^{X^0_{\tau_1}}\left[1_{\{X^0[0,T-\tau_1]\cap X^2[0,\infty[\neq\phi]\}}\right]\right]\\
 \leq&\mathbb{E}^{0,x_0,x_0}\left[\tau_1<T,\mathbb{E}_{(0)}^{X^0_{\tau_1}}\left[1_{\{X^0[0,T]\cap X^2[0,\infty[\neq\phi\}}\right]\right].
\end{align*}
Then, we take the expectation with respect to $X^2$ and get that
\begin{align*}
 \mathbb{P}^{0,x_0,x_0}[\tau_1\leq\tau_2\leq T]\leq &\mathbb{P}_{(0),(1)}^{0,x_0}[X^0[0,T]\cap X^1[0,\infty[\neq\phi]\\
 &\times\sup\limits_{y\in B(0,T)}\mathbb{P}_{(0),(2)}^{y,x_0}[X^0[0,T]\cap X^2[0,\infty[\neq\phi].
\end{align*}
Note that $||y-x_0||_{2}\geq T$ for all $y\in B(0,T)$. 
By \cite[Theorem 2.2]{LawlerMR679202}, there exists $C<\infty$ such that
\[
\sup\limits_{y\in B(0,T)}\mathbb{P}_{(0),(i)}^{y,x_0}[X^0[0,T]\cap X^i[0,\infty[\neq\phi]\leq C\frac{1}{T\log T}.
\]
Thus, \eqref{eq:clbScib2} is proved and the proof of Lemma~\ref{l:lower bound SRW capacity in box} is complete.
\end{proof}

\subsection{Properties of the loop measure \texorpdfstring{$\mu$}{µ}}
In this subsection, we present several properties of loop measure $\mu$. Most of them are taken from \cite{loop}, \cite{LejanMR2971372} and \cite{LemaireLeJan}.
\begin{lem}[Proposition 18 in Chapter 4 of \cite{loop}]\label{lem:non-trivial loop visiting F}
For a finite subset of vertices $F$ of a transient graph, 
\[
\mu(\ell~:~\ell\cap F \neq \phi)=\log\det(G|_{F\times F}),
\]
where $G$ is the Green function viewed as a matrix $\left(G(x,y)\right)_{x,y}$, and 
$G|_{F\times F}$ is its sub-matrix with indexes on $F\times F$. 

As a corollary, for $n$ different vertices $x_1,\dots,x_n$,
\[
\mu(\ell~:~\text{$x_i\in \ell$ for $i\in\{1,\dots,n\}$})=\sum\limits_{A\subset\{x_1,\ldots,x_n\},A\neq\phi}(-1)^{\# A+1}\log\det(G|_{A\times A}).
\]
\end{lem}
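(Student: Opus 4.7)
The plan is to reduce the formula $\mu(\ell:\ell\cap F\neq\phi)=\log\det(G|_{F\times F})$ to a standard loop-measure computation on a finite substochastic Markov chain, by passing to the induced (``trace'') chain on $F$. Let $K$ be the substochastic kernel on $F\times F$ defined by
\[
K(x,y)=\mathbb P^x[X_{\tau^+(F)}=y,\ \tau^+(F)<\infty],\qquad x,y\in F,
\]
the transition kernel of the walk observed at its successive visits to $F$. Since the walk is transient and $F$ is finite, $\mathbb P^x[\tau^+(F)<\infty]<1$ for every $x\in F$, so the spectral radius of $K$ is strictly less than $1$, $I-K$ is invertible, and $-\log\det(I-K)=\sum_{m\geq 1}\tfrac1m\Tr(K^m)$ converges.

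The first key step is the matrix identity $G|_{F\times F}=(I-K)^{-1}$. It follows from a last-passage decomposition: for $x,y\in F$, conditioning on which $F$-visit lands at $y$ gives $G(x,y)=\sum_{m\geq 0}\mathbb P^x[X_{\tau_m}=y]=\sum_{m\geq 0}K^m(x,y)$, where $\tau_0=0<\tau_1<\cdots$ are the successive visit times to $F$. Taking determinants and logs, $\log\det(G|_{F\times F})=-\log\det(I-K)$.

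The second key step is to identify $\mu(\ell:\ell\cap F\neq\phi)$ with the total mass $\tilde\mu(\text{all loops})$ of the loop measure of the trace chain on $F$. Any discrete original loop $\ell$ visiting $F$ decomposes canonically into its $F$-trace $\tilde\ell=(y_1,\ldots,y_m)$ together with $m$ excursions $e_i:y_i\to y_{i+1}$ through $V\setminus F$, and by the very definition of $K$, summing $\prod Q$ over such excursions produces $K(y_i,y_{i+1})$. Grouping based original loops visiting $F$ by their $F$-trace then reduces the sum $\sum_{\dot\ell\cap F\neq\phi}\dot\mu(\dot\ell)$ to $\sum_{m\geq 1}\tfrac1m\Tr(K^m)$. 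The main obstacle is the weight-matching bookkeeping: an unbased original loop with $n$ steps and $m$ visits to $F$ has $n/|\mathrm{sym}(\ell)|$ based representatives of $\dot\mu$-weight $1/n$ each, whereas its trace loop has $m/|\mathrm{sym}(\tilde\ell)|$ based representatives of $\tilde\mu$-weight $1/m$ each, and the counts align because the $m$ based representatives of $\dot\ell$ starting inside $F$ are in bijection with the $m$ cyclic shifts of the based trace loop. Combining this identification with the first step yields $\mu(\ell:\ell\cap F\neq\phi)=-\log\det(I-K)=\log\det(G|_{F\times F})$.

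For the corollary, apply inclusion--exclusion to the events $A_i=\{x_i\in\ell\}$. Expanding $\prod_{i=1}^n\mathbf 1_{A_i}=\prod_{i=1}^n(1-\mathbf 1_{A_i^c})$ and using $\bigcap_{i\in A}A_i^c=\{\ell\cap\{x_i:i\in A\}=\phi\}$, together with $\sum_{A\subseteq\{1,\ldots,n\}}(-1)^{|A|}=0$ for $n\geq 1$, yields
\[
\mathbf 1_{\bigcap_{i=1}^n A_i}=\sum_{\phi\neq A\subseteq\{x_1,\ldots,x_n\}}(-1)^{|A|+1}\,\mathbf 1_{\{\ell\cap A\neq\phi\}}.
\]
Integrating against $\mu$ and substituting the first formula produces the stated alternating sum of $\log\det(G|_{A\times A})$.
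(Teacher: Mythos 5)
Your argument is correct, but it takes a genuinely different route from the paper's. The paper proceeds by finite-volume exhaustion: it writes $\mu(\ell\cap F\neq\phi)=\lim_n\left[\mu(\ell\subset B_n)-\mu(\ell\subset B_n\setminus F)\right]$, evaluates each term as $-\log\det(I-Q|_{B\times B})$ via the trace expansion $\sum_k\frac1k\Tr(Q|_{B\times B})^k$, applies Jacobi's identity to recognize the ratio of determinants as $\det\bigl((I-Q|_{B_n\times B_n})^{-1}|_{F\times F}\bigr)$, and passes to the limit. You instead work directly with the induced chain on $F$, combining the identity $G|_{F\times F}=(I-K)^{-1}$ with the restriction property of the loop measure (the $F$-trace of the loops hitting $F$ carries the loop measure of the $K$-chain). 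Your route avoids both the exhaustion/limit step and Jacobi's identity, at the price of the based-loop bookkeeping, which you handle correctly: for a fixed unbased loop with $n$ steps, $m$ visits to $F$ and multiplicity $m(\ell)$, summing the weight $\frac1n\prod Q$ over all $n/m(\ell)$ based representatives, and summing $\frac1m\prod Q$ over the $m/m(\ell)$ representatives based in $F$ (which is what expanding $\frac1m\Tr(K^m)$ into excursions produces), both give $\frac{1}{m(\ell)}\prod Q=\mu(\ell)$. One small inaccuracy: on a general transient graph one can have $\mathbb P^x[\tau^+(F)<\infty]=1$ for some $x\in F$ (e.g.\ if every neighbour of $x$ lies in $F$); the correct justification that $\rho(K)<1$ is that $\sum_{y}K^m(x,y)=\mathbb P^x[F\text{ is visited at least $m$ more times}]\to0$ by transience and finiteness of $F$, so $\|K^m\|_\infty\to0$. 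The corollary is treated by the same inclusion--exclusion in both proofs.
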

Our definition of $G$ is slightly different from that in \cite{loop}. Thus, we provide a proof.
\begin{proof}
We only prove the first part as the second part follows from the inclusion-exclusion principle.

Take an increasing sequence of finite sets $(B_n)_n$ which exhausts our graph. Moreover, we can take $B_1=F$. Then,
\begin{align*}
\mu(\ell\cap F\neq\phi)&=\lim\limits_{n\rightarrow\infty}\mu(\ell\cap F\neq\phi,~\ell\subset B_n)\\
&=\lim\limits_{n\rightarrow\infty}\left\{\mu(\ell\subset B_n)-\mu(\ell\subset B_n\setminus F)\right\}\\
&=\lim\limits_{n\rightarrow\infty}\left\{\sum\limits_{k\geq 2}\frac{\Tr (Q|_{B_n\times B_n})^k}{k}-\sum\limits_{k\geq 2}\frac{\Tr (Q|_{(B_n\setminus F)^2})^k}{k}\right\}\\
&=\lim\limits_{n\rightarrow\infty}\left\{\log\det(I-Q|_{(B_n\setminus F)^2})-\log\det(I-Q|_{B_n\times B_n})\right\}\\
&=\lim\limits_{n\rightarrow\infty}\log\frac{\det(I-Q|_{(B_n\setminus F)^2})}{\det(I-Q|_{B_n\times B_n})}.
\end{align*}
By Jacobi's equality,
\[
\frac{\det(I-Q|_{(B_n\setminus F)^2})}{\det(I-Q|_{B_n\times B_n})}=
\det((I-Q|_{B_n\times B_n})^{-1}|_{F\times F}).
\]
Since $\lim\limits_{n\rightarrow\infty}(I-Q|_{B_n\times B_n})^{-1}|_{F\times F}=G|_{F\times F}$, the result follows.
\end{proof}

The following lemma is a special case of the result in \cite[(4.3)]{loop}. In fact, Le Jan proves that the joint distribution of visiting times for a set of points is multi-variate binomial distribution. The result about the excursions can be derived from explicit calculation.
\begin{lem}[\cite{loop}]\label{lem:cut the loop passing through a vertex into excursions}
Fix a vertex $x_0$ in a transient graph with the Green function $G$. 
Let $\xi(x_0,\ell)$ count the number of visits of vertex $x_0$ in the loop $\ell$. 
Set $\xi(x_0,\mathcal{L}_{\alpha})=\sum\limits_{\ell\in\mathcal{L}_{\alpha}}\xi(x_0,\ell)$ be the total number of visits of $x_0$ for the loop ensemble $\mathcal{L}_{\alpha}$. 
Then, $\xi(x_0,\mathcal{L}_{\alpha})$ follows a negative binomial (or P\'{o}lya) distribution, i.e., 
\[
\mathbb{P}[\xi(x_0,\mathcal{L}_{\alpha})=k]=G(x_0,x_0)^{-\alpha}\cdot \left(1-\frac{1}{G(x_0,x_0)}\right)^k\cdot \frac{\alpha(\alpha+1)\cdots (\alpha+k-1)}{k!}.
\]
By cutting down all the loops from $\mathcal L_{\alpha}$ into excursions from $x_0$, we get $\xi(x_0,\mathcal{L}_{\alpha})$-many excursions.
Conditionally on $\xi(x_0,\mathcal{L}_{\alpha})=k$, those excursions are i.i.d. sample of the SRW excursions with finite length.
\end{lem}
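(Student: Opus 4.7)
The plan is to compute in one shot the joint Laplace functional of the point process of excursions from $x_0$ extracted from $\mathcal L_\alpha$; both assertions then drop out simultaneously. Let $\mathcal E$ be the space of SRW excursions from $x_0$ (finite nearest-neighbor paths $x_0\to\cdots\to x_0$ avoiding $x_0$ in between), equipped with the sub-probability measure $\nu(e):=\prod Q(e)$ of total mass $p:=\nu(\mathcal E)=\mathbb P^{x_0}[\tau^+(x_0)<\infty]=1-1/G(x_0,x_0)$. For a bounded measurable $\phi:\mathcal E\to[0,\infty)$, set $\Phi(\ell)=\sum_{e\in\mathrm{ex}(\ell)}\phi(e)$, where $\mathrm{ex}(\ell)$ is the multi-set of excursions obtained by cutting $\ell$ at $x_0$ ($\Phi(\ell)=0$ if $x_0\notin\ell$). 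By Campbell's formula for the Poisson process $\mathcal L_\alpha$,
\[
\mathbb E\Bigl[\exp\bigl(-\textstyle\sum_{\ell\in\mathcal L_\alpha}\Phi(\ell)\bigr)\Bigr]=\exp\Bigl(-\alpha\!\int\!(1-e^{-\Phi(\ell)})\,d\mu(\ell)\Bigr).
\]

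I would then unfold $\mu$ via the based loop measure $\dot\mu$. A loop $\ell$ with $\xi(x_0,\ell)=k$ is parametrized by an equivalence class of $k$-tuples $(e_1,\ldots,e_k)\in\mathcal E^k$ modulo the cyclic group $C_k$, via concatenation of excursions at $x_0$; both $e^{-\Phi}$ and the weight $\prod\nu(e_i)$ are cyclically invariant. Combining the $\frac{1}{n}$ factor in $\dot\mu$ with the stabilizer bookkeeping between $\mathcal E^k/C_k$ and equivalence classes of based loops gives
\[
\int_{\{\xi(x_0,\cdot)=k\}}e^{-\Phi(\ell)}\,d\mu(\ell)=\frac{1}{k}\int_{\mathcal E^k}\prod_{i=1}^k e^{-\phi(e_i)}\,d\nu(e_i)=\frac{I(\phi)^k}{k},\qquad I(\phi):=\int_\mathcal E e^{-\phi}\,d\nu.
\]
Taking $\phi\equiv 0$ already yields $\mu(\xi(x_0,\cdot)=k)=p^k/k$ for $k\geq 1$; summing over $k\geq 1$ then gives $\int(1-e^{-\Phi})d\mu=\log G(x_0,x_0)+\log(1-I(\phi))=\log\tfrac{1-I(\phi)}{1-p}$. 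Plugging back into Campbell's formula,
\[
\mathbb E\Bigl[\exp\bigl(-\textstyle\sum_{\ell\in\mathcal L_\alpha}\Phi(\ell)\bigr)\Bigr]=\Bigl(\tfrac{1-p}{1-I(\phi)}\Bigr)^{\alpha}=\sum_{k\geq 0}\mathbb P[N=k]\,\bigl(I(\phi)/p\bigr)^{k},
\]
where $N\sim\mathrm{NegBin}(\alpha,p)$ has exactly the mass function stated in the lemma. The right-hand side is the Laplace functional of the recipe ``draw $N\sim\mathrm{NegBin}(\alpha,p)$, then sample $N$ i.i.d.\ excursions from $\nu/p$,'' and $\nu/p$ is precisely the law of the SRW excursion conditioned on finite return time; both conclusions of the lemma follow at once, the first by specializing $\phi$ to a constant.

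The main technical point I expect is verifying the identity $\mu|_{\{\xi=k\}}=\frac{1}{k}(\mathrm{concat})_*\nu^{\otimes k}$ used above. The subtle case is periodic excursion sequences: for $(e_1,\ldots,e_k)$ with nontrivial $C_k$-stabilizer of order $|S|$, the corresponding loop $\ell$ has equivalence class of based representatives of size $n/|S|$ (not $n$), so its push-forward mass is $\mu(\{\ell\})=\tfrac{1}{|S|}\prod\nu(e_i)$; but the $C_k$-orbit of $(e_1,\ldots,e_k)$ likewise has size $k/|S|$, so the factors of $|S|$ cancel and the $\tfrac{1}{k}$ averaging remains valid across all strata. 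A short Burnside-style count makes this rigorous. As a consistency check and as a cleaner route for the first assertion alone, one can compute directly $\int(1-e^{-s\xi(x_0,\cdot)})\,d\mu=\sum_n \frac{1}{n}\bigl(\Tr Q^n-\Tr(DQ)^n\bigr)=\log\frac{\det(I-DQ)}{\det(I-Q)}$ with $D$ the diagonal matrix equal to $e^{-s}$ at $x_0$ and $1$ elsewhere, and then invoke the rank-one matrix-determinant lemma together with the identity $v_{x_0}^\top Q(I-Q)^{-1}v_{x_0}=G(x_0,x_0)-1$ to simplify the ratio to $G(x_0,x_0)(1-pe^{-s})$, recovering the Laplace transform $\bigl((1-p)/(1-pe^{-s})\bigr)^\alpha$ of $\mathrm{NegBin}(\alpha,p)$.
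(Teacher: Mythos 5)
Your proposal is correct, and it is necessarily a different route from the paper's, because the paper gives no proof at all: it states the lemma as a special case of Le Jan's occupation-field result \cite[(4.3)]{loop} and remarks that the excursion statement ``can be derived from explicit calculation.'' What you supply is that explicit calculation, done in a self-contained way: the Laplace functional of the excursion point process via Campbell's theorem, reduced to the key identity $\int_{\{\xi=k\}}e^{-\Phi}\,d\mu=\tfrac1k I(\phi)^k$, whose stabilizer bookkeeping (the factor $1/|S|$ in $\mu(\{\ell\})$ for loops of multiplicity $|S|$ cancelling against the $C_k$-orbit size $k/|S|$) you correctly identify and resolve --- this is exactly where a naive count would go wrong, and your orbit--stabilizer argument matches the paper's own relation \eqref{eq:mumultiplicity}. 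The payoff of your approach is that the negative binomial marginal and the conditional i.i.d.\ structure of the excursions come out of a single computation, rather than being quoted from \cite{loop}; specializing $\phi$ to a constant recovers the stated mass function with $p=1-1/G(x_0,x_0)$, and the Laplace functional determines the law of the (unordered) excursion collection, which is the right formulation of the i.i.d.\ claim. One small caveat: in your closing ``consistency check'' the quantities $\Tr Q^n$ and $\det(I-Q)$ do not make sense directly on the infinite lattice, so that computation should be phrased through a finite-volume exhaustion and Jacobi's identity, exactly as in the paper's proof of Lemma~\ref{lem:non-trivial loop visiting F}; since it is only an independent cross-check of the first assertion, this does not affect the main argument.
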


\bigskip

We proceed by describing a useful representation of the measure of a given loop visiting two disjoint sets 
as a linear combination of the measures of based loops starting on the boundary of one of the sets,
see \eqref{eq:muS1S2}. 
We first introduce some notation. 
For a based loop $\dot\ell$, the multiplicity of $\dot\ell$ is defined as 
\[
m(\dot\ell) = \max\{k\geq 1~:~\dot\ell = (\dot\ell_1,\dots,\dot\ell_k)~\text{for $\dot\ell_1 = \dots =\dot\ell_k$}\}.
\]
The multiplicity of a loop $\ell$, denoted also by $m(\ell)$, is the multiplicity of any of the based loops in the equivalence class. 
By \eqref{def:dotmu} and the definition of $\mu$ as the push-forward of $\dot\mu$, 
for any loop $\ell$ of length $n$, 
\begin{equation}\label{eq:mumultiplicity}
\mu_\kappa(\ell) = \frac{n}{m(\ell)}\cdot \dot\mu_\kappa(\dot\ell).
\end{equation}

For two disjoint subsets $S_1$ and $S_2$, consider the map $L(S_1,S_2)$ from the space of loops visiting $S_1$ and $S_2$ to subsets of based loops such that 
\begin{itemize}\itemsep0pt
\item[(a)]
any $\dot\ell = (x_1,\dots,x_n)\in L(S_1,S_2)(\ell)$ is in the equivalence class $\ell$, 
\item[(b)]
$x_1\in S_1$, 
\item[(c)]
there exists $i$ such that $x_i\in S_2$ and $x_j\notin (S_1\cup S_2)$ for all $j>i$. 
\end{itemize}
Note that $L(S_1,S_2)(\ell)\neq\phi$ if and only if $\ell$ visits $S_1$ and $S_2$. 

For any $\dot\ell = (x_1,\dots,x_n)\in L(S_1,S_2)(\ell)$, we define recursively the sequence $(\tau_i)_{i\geq 0}$ as follows: 
for $k\geq 0$,
\[
\tau_0 = 1,\quad \tau_{2k+1} = \inf\{j>\tau_{2k}~:~ x_j\in S_2\},\quad \tau_{2k+2} = \inf\{j>\tau_{2k+1}~:~x_j\in S_1\}.
\]
We write $\inf\{\phi\} = n+1$. 
By the definition of $L(S_1,S_2)$, there exists $k(\dot\ell)\geq 1$ such that $\tau_{2k(\dot\ell)-1} \leq n$ and $\tau_{2k(\dot\ell)} = n+1$. 
The value of $k(\dot\ell)$ is the same for all $\dot\ell\in L(S_1,S_2)(\ell)$, and we denote it by $k(\ell)$. 
\begin{claim}
For any loop $\ell$ of length $n$ visiting $S_1$ and $S_2$,  
\begin{equation}\label{eq:muS1S2}
\mu_\kappa(\ell) = \frac{n}{k(\ell)}\cdot \sum_{\dot\ell\in L(S_1,S_2)(\ell)}\dot\mu_\kappa(\dot\ell) .\
\end{equation}
\end{claim}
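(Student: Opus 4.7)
The plan is to reduce \eqref{eq:muS1S2} to the already-established identity \eqref{eq:mumultiplicity} by a counting argument. The starting observation is that the expression
$$
\dot\mu_\kappa(\dot\ell) \;=\; \frac{1}{n}\Bigl(\frac{1}{1+\kappa}\Bigr)^{n} Q^{x_1}_{x_2}\cdots Q^{x_n}_{x_1}
$$
is invariant under cyclic permutation of the coordinates (the product of transition weights around a closed cycle is cyclic), so $\dot\mu_\kappa$ takes a single value $c(\ell)$ on the entire equivalence class of $\ell$. The right-hand side of \eqref{eq:muS1S2} thus equals $(n/k(\ell))\cdot|L(S_1,S_2)(\ell)|\cdot c(\ell)$, while the left-hand side equals $(n/m(\ell))\cdot c(\ell)$ by \eqref{eq:mumultiplicity}. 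The claim therefore reduces to the purely combinatorial identity
$$
|L(S_1,S_2)(\ell)| \;=\; \frac{k(\ell)}{m(\ell)}.
$$

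To establish this I would fix one based representative $\dot\ell_0=(x_1,\ldots,x_n)$ of $\ell$ and count the cyclic shifts that produce an element of $L(S_1,S_2)(\ell)$. A shift to starting index $i\in\{1,\ldots,n\}$ qualifies iff $x_i\in S_1$ (condition (b)) and, tracing the cyclic sequence of $S_1\cup S_2$-visits backward from $i$, the immediately preceding visit lies in $S_2$ (the cyclic rephrasing of (c)). Grouping the cyclic sequence of $S_1\cup S_2$-visits into maximal monochromatic blocks — well defined because $S_1\cap S_2=\phi$ — the legal shifts are exactly the first vertex of each $S_1$-block that follows an $S_2$-block. By alternation of blocks on a cycle, the number of such $S_2\to S_1$ block transitions equals the number of $S_1\to S_2$ transitions, and the latter is $k(\ell)$ by the definition of the sequence $(\tau_i)$.

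Finally, two legal shifts $i,j$ give rise to the same based loop iff $i\equiv j \pmod{n/m(\ell)}$, which is the very definition of the multiplicity $m(\ell)$; the periodicity of $\dot\ell_0$ also ensures that its block pattern is periodic with period $n/m(\ell)$, so $m(\ell)$ divides $k(\ell)$ and the legal indices group into equivalence classes of size $m(\ell)$, producing exactly $k(\ell)/m(\ell)$ distinct elements of $L(S_1,S_2)(\ell)$. The only delicate point is translating condition (c) — which only constrains the last $S_1\cup S_2$-visit before the loop wraps around to index $1$ — into the cyclic block-transition picture; once this identification is made, the count is immediate and \eqref{eq:muS1S2} follows.
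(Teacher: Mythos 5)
Your proposal is correct and follows the same route as the paper: the paper's proof consists precisely of invoking \eqref{eq:mumultiplicity} together with the identity $k(\ell)=m(\ell)\cdot|L(S_1,S_2)(\ell)|$, which it declares immediate. Your cyclic-shift and block-counting argument is a valid (and more detailed) justification of that combinatorial identity, so nothing is missing.
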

\begin{proof}
The claim is immediate from the fact that $k(\ell) = m(\ell)\cdot |L(S_1,S_2)(\ell)|$ and \eqref{eq:mumultiplicity}.
\end{proof}

\bigskip

We end this section with crucial estimates which will be frequently used in the proofs.
\begin{lem}\label{lem:one loop connection on Zd}\ 
\begin{itemize}\itemsep0pt
\item[a)] 
For $d\geq 3$ and $\lambda>1$, there exists $C = C(d,\lambda)<\infty$ such that for $N\geq 1$, $M\geq\lambda N$, and $K\subset B(0,N)$,
\[
\mu(\ell~:~K\stackrel{\ell}\longleftrightarrow \partial B(0,M))\leq C\cdot \Capa(K)\cdot M^{2-d}.
\]
\item[b)] 
For $d\geq 3$, let $F(d,n)=1_{d=3}\cdot n+1_{d=4}\cdot\frac{n^2}{\log n}+1_{d\geq5}\cdot n^2$. 
For any $\lambda>1$, there exists $c = c(d,\lambda)>0$ such that for $N\geq 1$, $M\geq \lambda N$, and $K\subset B(0,N)$, 
\[
\mu\left(\ell~:~K\stackrel{\ell}\longleftrightarrow \partial B(0,M),~ \Capa(\ell)>cF(d,M)\right)\geq c\cdot\Capa(K)\cdot M^{2-d}.
\]
\end{itemize}
\end{lem}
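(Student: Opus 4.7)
The plan for part (a) is to apply the identity \eqref{eq:muS1S2} with $S_1=K$ and $S_2=A:=\partial B(0,M)$: since $k(\ell)\ge 1$, one gets $\mu(K\leftrightarrow_\ell A)\le \sum_{\dot\ell\in L(K,A)} n(\dot\ell)\,\dot\mu(\dot\ell)$. For a based loop $(x_1,\dots,x_n)\in L(K,A)$, condition (c) lets me decompose at the \emph{last} visit to $K\cup A$, which is some $y\in A$: everything before $y$ is an arbitrary SRW path from $x_1\in K$ to $y$, and everything after (up to and including the closure $x_n\to x_1$) is a SRW path from $y$ to $x_1$ avoiding $K\cup A$ in its interior. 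Since this decomposition is unique (the last visit is unique), summing over lengths via the Markov property yields
\[
\sum_{\dot\ell\in L(K,A)} n\,\dot\mu(\dot\ell) \;=\; \sum_{x\in K}\sum_{y\in A} G(x,y)\,\mathbb P^{y}[X_{\tau^+_{K\cup A}}=x].
\]

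Next I would use reversibility of SRW on $\mathbb Z^d$ to rewrite $\mathbb P^{y}[X_{\tau^+_{K\cup A}}=x]=\mathbb P^{x}[X_{\tau^+_{K\cup A}}=y]$, and bound $G(x,y)\le C(d)M^{2-d}$ uniformly over $x\in K\subset B(0,N)$ and $y\in\partial B(0,M)$ using Lemma~\ref{lem:Green function} (here $|x-y|\asymp M$ because $M\ge\lambda N$). This reduces the bound to $C(d,\lambda) M^{2-d}\sum_{x\in K}\mathbb P^{x}[\tau_A<\tau_K^+]$. To get $\Capa(K)$, I would decompose $\mathbb P^{x}[\tau_A<\tau_K^+]=\mathbb P^{x}[\tau_K^+=\infty]+\mathbb P^{x}[\tau_A<\tau_K^+<\infty]$, note $\mathbb P^{x}[\tau_K^+=\infty]=e_K(x)$, and bound the second term by strong Markov at $\tau_A$ together with $\max_{a\in A}\mathbb P^{a}[\tau_K<\infty]\le C\,\Capa(K)\,M^{2-d}$ (last-exit formula plus Lemma~\ref{lem:Green function}). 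Since $\Capa(K)M^{2-d}\le C\lambda^{2-d}$, for $\lambda$ sufficiently large this error is self-absorbing and yields $\sum_{x}\mathbb P^{x}[\tau_A<\tau_K^+]\le 2\Capa(K)$. For small $\lambda>1$, monotonicity of $M\mapsto \mu(K\leftrightarrow\partial B(0,M))$ reduces to the large-$\lambda$ regime, with an additional factor $\lambda^{d-2}$ absorbed into $C(d,\lambda)$.

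For part (b), I would produce a matching lower bound by exhibiting enough loops with the capacity constraint. The idea is to keep only those based loops $\dot\ell$ starting at $x\in \partial K$ such that (i) the SRW trajectory from $x$ up to $\tau_A$ has range of capacity at least $cF(d,M)$, which by Lemma~\ref{l:lower bound SRW capacity in box} happens with positive probability conditional on the exit point in $A$, and (ii) after $\tau_A$ the walk returns to $x$ without revisiting $K\cup A$. Using the Green function lower bound $G(x,y)\ge c(d,\lambda) M^{2-d}$ for the second half, one obtains a contribution of order $M^{2-d}$ per starting vertex, weighted by (a quantity comparable to) $\mathbb P^{x}[\tau_A<\tau_K^+]\ge e_K(x)$. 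Summing over $x\in\partial K$ gives the lower bound $c\,\Capa(K)\,M^{2-d}$, with $\Capa(\ell)>cF(d,M)$ enforced along the way by restricting to trajectories of capacity at least $cF(d,M)$.

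The main obstacles I anticipate are two. First, in the based-loop bookkeeping: loops in $L(K,A)$ may revisit $K\cup A$ arbitrarily many times before the distinguished last visit, so one must be careful that the decomposition at the \emph{last} visit (not any visit) is the one producing a clean sum; without this, one would overcount and pick up an extra combinatorial factor killing the bound. Second, in part (b), ensuring that restricting to loops whose \emph{first} SRW leg has large capacity does not inflate the bookkeeping: this forces the lower-bound construction to use a fixed "canonical" parametrization per loop (by starting vertex in $\partial K$ and its first visit to $A$), and to verify that the measure lost by this restriction is only a constant factor, uniformly in $K$ and $M$.
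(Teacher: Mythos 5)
Your part (a) takes a genuinely different route from the paper's and it is essentially sound. Splitting each based loop in $L(K,\partial B(0,M))$ at its (unique) last visit to $\partial B(0,M)$ is indeed a bijection onto pairs (arbitrary path from $x\in K$ to $y\in\partial B(0,M)$, path from $y$ back to $x$ with interior avoiding $K\cup\partial B(0,M)$), so your identity $\sum_{\dot\ell\in L(K,A)} n\,\dot\mu(\dot\ell)=\sum_{x\in K}\sum_{y\in A}G(x,y)\,\mathbb P^{y}[X_{\tau^{+}(K\cup A)}=x]$ is correct, and with reversibility it extracts $\Capa(K)$ more transparently than the paper's decomposition into successive $K\to\partial B(0,M)\to K$ round trips summed as a geometric series. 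Both arguments, however, ultimately need $\sup_{z\in\partial B(0,M)}\mathbb P^{z}[\tau(K)<\infty]\leq\rho(d,\lambda)<1$ for \emph{every} $\lambda>1$. Your derivation of this from $C\,\Capa(K)M^{2-d}\leq C'\lambda^{2-d}$ only works for large $\lambda$, and the proposed fallback is broken: $\mu(K\stackrel{\ell}\longleftrightarrow\partial B(0,M))$ is decreasing in $M$, so replacing $M$ by a smaller $M'$ only \emph{decreases} the aspect ratio $M'/N$ and cannot reduce $1<\lambda\leq\lambda_0$ to the large-$\lambda$ case. You need the standard uniform escape estimate for all $\lambda>1$ (which the paper also invokes without proof); with that, part (a) is complete.

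Part (b) contains a genuine quantitative gap. The canonical class of based loops you keep --- first leg run until the \emph{first} visit $\tau_A$ to $A=\partial B(0,M)$, second leg returning to $x$ without revisiting $K\cup A$ --- consists of loops touching $\partial B(0,M)$ exactly once, and its total weight is $\sum_{x,y}\mathbb P^{x}[X_{\tau(A)}=y,\,\Capa>cF]\cdot\mathbb P^{y}[X_{\tau^{+}(K\cup A)}=x]$. The second factor is a first-passage probability, not $G(x,y)$, so the appeal to ``$G(x,y)\geq c M^{2-d}$ for the second half'' does not apply. Quantitatively, $\max_{y\in A}\mathbb P^{x}[X_{\tau(A)}=y]\leq C(d,\lambda)M^{1-d}$ and, by reversibility, $\sum_{y\in A}\mathbb P^{y}[X_{\tau^{+}(K\cup A)}=x]=\mathbb P^{x}[\tau(A)<\tau^{+}(K)]$, whose sum over $x\in K$ is of order $\Capa(K)$; hence your construction yields at most $C\,\Capa(K)\,M^{1-d}$, a full factor $M$ below the target (forcing the return leg to avoid the $(d-1)$-dimensional sphere it starts on costs exactly this factor). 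The paper's construction sidesteps this by letting the return leg merely first-hit $K$ at $x$ --- it is free to recross $\partial B(0,M)$ --- so that $\sum_{x}\mathbb P^{y}[X_{\tau(K)}=x]=\mathbb P^{y}[\tau(K)<\infty]\asymp\Capa(K)M^{2-d}$, and it places the capacity constraint on that leg via Lemma~\ref{l:lower bound SRW capacity in box} conditioned on the hitting point. Your conditional-capacity argument for the first leg is fine in itself, but it is attached to a parametrization that is too restrictive to reach $c\,\Capa(K)\,M^{2-d}$.
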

\begin{proof}\
\begin{itemize}
\item[a)] 
Let $(\tau_n)_{n\geq 0}$ be the sequence of stopping times defined recursively by
\begin{align*}
\tau_0\overset{\text{def}}{=}&\tau(K),\\
\tau_{2k+1}\overset{\text{def}}{=}&\inf\{n>\tau_{2k}:X_n\in \partial B(0,M)\},\\
\tau_{2k+2}\overset{\text{def}}{=}&\inf\{n>\tau_{2k+1}:X_n\in K\}.
\end{align*}
By \eqref{def:dotmu} and \eqref{eq:muS1S2}, 
\[
\mu(\ell~:~K\stackrel{\ell}\longleftrightarrow \partial B(0,M))=\sum\limits_{n\geq 1}\frac 1n \sum\limits_{x\in \partial K}\mathbb{P}^x[X_{\tau_{2n}}=x].
\]
By the strong Markov property, for $n\geq 0$,
\begin{equation*}
\mathbb{P}^x[X_{\tau_{2n+2}}=x]=\sum\limits_{\substack{y\in \partial K\\z\in\partial B(0,M)}}\mathbb{P}^x[X_{\tau_{2n}}=y]\mathbb{P}^y[X_{\tau(\partial B(0,M))}=z]\mathbb{P}^z[X_{\tau(K)}=x].
\end{equation*}
By Harnack's inequality, there exists a constant $C = C(d,\lambda)$ such that
\[
\max\limits_{y\in \partial K}\mathbb{P}^y[X_{\tau(\partial B(0,M))}=z]\leq C\cdot\mathbb{P}^0[X_{\tau(\partial B(0,M))}=z].
\]
Therefore,
\begin{align*}
\mathbb{P}^x[X_{\tau_{2n+2}}=x]\leq &C\cdot\mathbb{P}^x[\tau_{2n}<\infty]\sum\limits_{z\in\partial B(0,M)}\mathbb{P}^0[X_{\tau(\partial B(0,M))}=z]\mathbb{P}^z[X_{\tau(K)}=x]\\
\leq &C\cdot(\max\limits_{x\in\partial K}\mathbb{P}^x[\tau_{2n}<\infty])\cdot \mathbb{P}^0[X_{\tau_2}=x]\\
\leq &C\cdot (\max\limits_{x\in\partial K}\mathbb{P}^x[\tau_{2}<\infty])^n\cdot\mathbb{P}^0[X_{\tau_2}=x].
\end{align*}
Under the assumption $\lambda>1$, there exists $\rho = \rho(d,\lambda)<1$ such that 
\[
\max\limits_{x\in \partial K}\mathbb{P}^x[\tau_2<\infty]<\rho<1.
\]
Thus, there exists $C' = C'(d,\lambda)$ such that 
\begin{multline*}
\mu(\ell~:~K\stackrel{\ell}\longleftrightarrow \partial B(0,M))
\leq 
C\cdot \mathbb{P}^0[\tau_2<\infty]\cdot\left(\sum\limits_{n\geq 1}\frac{1}{n}\left(\max\limits_{x\in\partial K}\mathbb{P}^x[\tau_{2}<\infty]\right)^{n-1}\right)\\
\leq C'\cdot \mathbb{P}^0[\tau_2<\infty]
\leq C'\cdot \max\limits_{z\in\partial B(0,M)}\mathbb{P}^{z}[\tau(K)<\infty].
\end{multline*}
By Lemmas~\ref{lem:hitting probability} and \ref{lem:Green function} and using the assumption $\lambda>1$, there exists $C'' = C''(d,\lambda)$ such that for any $z\in\partial B(0,M)$,
\[
\mathbb{P}^z[\tau(K)<\infty]\leq C''\cdot M^{2-d}\cdot\sum\limits_{w\in\partial K}\mathbb{P}^{w}[\tau^{+}(K)=\infty]
= C''\cdot M^{2-d}\cdot\Capa(K).
\]
The result follows.
\item[b)] By Lemma \ref{l:lower bound SRW capacity in box} and the monotonicity of the capacity of finite sets, there exists $c =c(d,\lambda)>0$ such that
\begin{multline*}
\min\limits_{y\in\partial B(0,M)}\mathbb{P}^y[\Capa(\{X_0,\ldots,X_{\tau(K)}\})>cF(d,M)|X_{\tau(K)}=x]\notag\\
\geq \min\limits_{y\in\partial B(0,M)}\mathbb{P}^y[\Capa(\{X_0,\ldots,X_{\tau(\partial B(y,M-N))}\})>cF(d,M)|X_{\tau(K)}=x]\notag\\
\geq \min\limits_{z\in\partial B(0,M-N)}\mathbb{P}^0[\Capa(\{X_0,\ldots,X_{\tau(\partial B(0,M-N))}\})>cF(d,M)|X_{\tau(\partial B(0,M-N))}=z]\notag\\
\geq c.
\end{multline*}
By \eqref{def:dotmu} and \eqref{eq:muS1S2} (and ignoring the loops with $k(\ell) \geq 2$), 
\begin{multline*}
\mu\left(\ell~:~K\stackrel{\ell}\longleftrightarrow \partial B(0,M),~ \Capa(\ell)>cF(d,M)\right)\geq\\
\sum\limits_{x\in \partial K,y\in \partial B(0,M)}\mathbb{P}^x[X_{\tau(\partial B(0,M))}=y]\cdot \mathbb{P}^y[X_{\tau(K)}=x,\Capa(\{X_0,\ldots,X_{\tau(K)}\})>cF(d,M)]\\
\geq c\cdot\sum\limits_{x\in \partial K,y\in \partial B(0,M)}\mathbb{P}^x[X_{\tau(\partial B(0,M))}=y]\cdot\mathbb{P}^y[X_{\tau(K)}=x].
\end{multline*}
The rest of the proof is very similar to that of Part a). 
It is based on an application of Harnack's inequality, Lemmas~\ref{lem:hitting probability} and \ref{lem:Green function}.
We omit the details. \qedhere
\end{itemize}
\end{proof}

\section{Some basic properties of loop percolation}\label{sec: basic properties of loop percolation}

In this section we collect some elementary properties of the loop percolation on $\mathbb Z^d$:
\begin{itemize}\itemsep0pt
 \item long range correlations, see Proposition \ref{prop: long range correlation},
 \item translation invariance and ergodicity, see Proposition \ref{prop: translation invariance and ergodicity},
 \item the uniqueness of infinite cluster, see Proposition \ref{prop: uniqueness of infinite cluster},
 \item the existence of percolation for $\alpha>0$ and $\kappa<0$, see Proposition \ref{prop: existence of percolation for negative kappa on Zd}.
\end{itemize}
We remark that except for the translation invariance, these properties will not be used in the proofs of the main results.  

\bigskip

The following proposition shows the long range correlations in SRW loop percolation on $\mathbb{Z}^d$ for $d\geq 3$ and $\kappa=0$.
\begin{prop}\label{prop: long range correlation}
Let $d\geq 3$, $\alpha>0$, and $\kappa = 0$. For $x\in\mathbb Z^d$, we write $x\in \mathcal L_\alpha$ if there exists $\ell\in\mathcal L_\alpha$ such that $x\in\ell$. 
For any $x,y\in\mathbb Z^d$, 
\begin{multline*}
\Cov(1_{\{x\in\mathcal{L}_{\alpha}\}},1_{\{y\in\mathcal{L}_{\alpha}\}})
  =(G(x,x)G(y,y))^{-\alpha}\left(\left(1-\frac{G(x,y)G(y,x)}{G(x,x)G(y,y)}\right)^{-\alpha}-1\right)\\
\sim\alpha(G(0,0))^{-2-2\alpha}\frac{d^2(\Gamma(d/2))^2}{\pi^d(d-2)^2}||x-y||_2^{4-2d},\quad\text{ as }||x-y||_2\rightarrow\infty.
 \end{multline*}
\end{prop}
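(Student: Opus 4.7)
The plan is to compute the marginals $\mathbb{P}[x\in\mathcal{L}_\alpha]$, $\mathbb{P}[y\in\mathcal{L}_\alpha]$ and the joint probability $\mathbb{P}[x\in\mathcal{L}_\alpha,\,y\in\mathcal{L}_\alpha]$ in closed form using the Poisson nature of $\mathcal{L}_\alpha$ together with Lemma~\ref{lem:non-trivial loop visiting F}, and then to expand in the small quantity $G(x,y)G(y,x)/(G(x,x)G(y,y))$ using Lemma~\ref{lem:Green function} to extract the asymptotic.

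First I would use that for any finite $F\subseteq\mathbb{Z}^d$, the number of loops of $\mathcal{L}_\alpha$ meeting $F$ is Poisson with parameter $\alpha\mu(\ell:\ell\cap F\neq\phi)$, so
\[
\mathbb{P}[\mathcal{L}_\alpha\cap\{\ell:\ell\cap F\neq\phi\}=\emptyset]=\exp(-\alpha\mu(\ell:\ell\cap F\neq\phi)).
\]
Applying Lemma~\ref{lem:non-trivial loop visiting F} with $F=\{x\}$, $\{y\}$, and $\{x,y\}$ yields
\[
\mathbb{P}[x\notin\mathcal{L}_\alpha]=G(x,x)^{-\alpha},\qquad \mathbb{P}[x\notin\mathcal{L}_\alpha,\,y\notin\mathcal{L}_\alpha]=\bigl(G(x,x)G(y,y)-G(x,y)G(y,x)\bigr)^{-\alpha},
\]
where I have evaluated $\det(G|_{\{x,y\}\times\{x,y\}})$ explicitly. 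Then by inclusion--exclusion
\[
\mathbb{P}[x\in\mathcal{L}_\alpha,\,y\in\mathcal{L}_\alpha]=1-G(x,x)^{-\alpha}-G(y,y)^{-\alpha}+\bigl(G(x,x)G(y,y)\bigr)^{-\alpha}\Bigl(1-\tfrac{G(x,y)G(y,x)}{G(x,x)G(y,y)}\Bigr)^{-\alpha}.
\]
Subtracting $\mathbb{P}[x\in\mathcal{L}_\alpha]\,\mathbb{P}[y\in\mathcal{L}_\alpha]=(1-G(x,x)^{-\alpha})(1-G(y,y)^{-\alpha})$, the constant terms and the two first-order terms cancel, leaving precisely the claimed expression for the covariance.

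For the asymptotic, translation invariance gives $G(x,x)=G(y,y)=G(0,0)$. By Lemma~\ref{lem:Green function}, $G(x,y)\sim\frac{d\Gamma(d/2)}{(d-2)\pi^{d/2}}\|x-y\|_2^{2-d}$, so $t:=G(x,y)G(y,x)/(G(x,x)G(y,y))\to 0$. Using $(1-t)^{-\alpha}-1=\alpha t+O(t^2)$ and substituting the Green function asymptotic yields the stated leading order with prefactor $\alpha\,G(0,0)^{-2-2\alpha}\cdot d^2(\Gamma(d/2))^2/(\pi^d(d-2)^2)$. There is no real obstacle here: the argument is essentially a direct computation, and the only care required is in the sign bookkeeping for inclusion--exclusion and in checking that the error terms from Lemma~\ref{lem:Green function} (which replace $\|x-y\|_2+1$ by $\|x-y\|_2$) are of strictly smaller order than $\|x-y\|_2^{4-2d}$, which is automatic since the next-order correction in the Green function is $O(\|x-y\|_2^{-d})$.
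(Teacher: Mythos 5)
Your proposal is correct and follows essentially the same route as the paper: both rest on $\mathbb P[\mathcal L_\alpha$ avoids $F]=(\det(G|_{F\times F}))^{-\alpha}$ from Lemma~\ref{lem:non-trivial loop visiting F} and then expand $(1-t)^{-\alpha}-1\sim\alpha t$ via Lemma~\ref{lem:Green function}. The paper merely shortcuts your inclusion--exclusion step by noting directly that $\Cov(1_{\{x\in\mathcal L_\alpha\}},1_{\{y\in\mathcal L_\alpha\}})=\mathbb P[x,y\notin\mathcal L_\alpha]-\mathbb P[x\notin\mathcal L_\alpha]\mathbb P[y\notin\mathcal L_\alpha]$, which is the same cancellation you carry out by hand.
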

\begin{proof}
By Lemma \ref{lem:non-trivial loop visiting F}, for $F\subset\mathbb Z^d$, 
\[
\mathbb{P}[\forall \ell\in\mathcal{L}_{\alpha}~:~\ell\cap F=\phi]=\exp\{-\alpha\mu(\ell~:~\ell\cap F\neq\phi)\}
=(\det(G|_{F\times F}))^{-\alpha}.
\]
Thus,
\begin{multline*}
\Cov(1_{\{x\in\mathcal{L}_{\alpha}\}},1_{\{y\in\mathcal{L}_{\alpha}\}})=
\mathbb P[x,y\notin\mathcal L_\alpha] - \mathbb P[x\notin\mathcal L_\alpha]\cdot\mathbb P[y\notin\mathcal L_\alpha]\\
=\begin{vmatrix}
G(x,x) & G(x,y)\\
G(y,x) & G(y,y)
\end{vmatrix}^{-\alpha}
- (G(x,x)G(y,y))^{-\alpha}.
\end{multline*}
This gives the first statement. The second follows from Lemma~\ref{lem:Green function}. 
\end{proof}

\medskip

Next, we prove the ergodicity of SRW loop soup under lattice shifts $(t_x)_{x\in\mathbb{Z}^d}$, $t_x:\ell\mapsto \ell+x$.
\begin{prop}\label{prop: translation invariance and ergodicity}
The Poisson loop ensemble associated with a simple random walk on $\mathbb{Z}^d$ is invariant under lattice shifts. Moreover, it is ergodic under these translations.
\end{prop}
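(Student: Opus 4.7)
I would handle the two statements separately and in order.

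\emph{Translation invariance.} On $\mathbb Z^d$ the SRW transition matrix is translation invariant, $Q^x_y = Q^{x+z}_{y+z}$, and $\kappa=0$; reading off \eqref{def:dotmu} one sees $\dot\mu(\dot\ell+z) = \dot\mu(\dot\ell)$ for every based loop $\dot\ell$ and every $z\in\mathbb Z^d$. Pushing forward to discrete loops, $\mu$ is $(t_z)$-invariant. Since the law of a Poisson point process is determined by its intensity measure, $\mathcal L_\alpha$ is translation invariant in distribution.

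\emph{Ergodicity.} My plan is to derive ergodicity from mixing via the classical criterion that a Poisson point process on $(E,\mathcal E)$ with $\sigma$-finite intensity $\nu$, equipped with a countable group $G$ acting on $E$ measurably and preserving $\nu$, induces a mixing (hence ergodic) action on configurations as soon as
\[
\nu(A \cap g^{-1}B) \xrightarrow[g\to\infty]{} 0 \qquad \text{for all } A,B\in\mathcal E \text{ with } \nu(A), \nu(B) < \infty.
\]
I would either quote this or include a short proof based on factorization of Laplace functionals, which reduces to the independence of $\Pi$ restricted to disjoint sets.

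It remains to verify the displayed condition for $G = \mathbb Z^d$ acting on discrete loops by $t_z$ and $\nu = \alpha\mu$. A monotone class argument reduces the verification to the generating $\pi$-system $\{A_K : K\subset\mathbb Z^d\text{ finite}\}$, where $A_K = \{\ell : \ell \cap K \neq \phi\}$; note that $\mu(A_K) < \infty$ by Lemma~\ref{lem:non-trivial loop visiting F}. For finite $K_1, K_2 \subset B(0,N)$ and $y\in\mathbb Z^d$ with $||y||_\infty$ large, set $M = \lceil ||y||_\infty/2\rceil$; any loop visiting both $K_1$ and $K_2 + y$ must cross $\partial B(0,M)$, so
\[
\mu\bigl(\ell : \ell \cap K_1 \neq \phi,\; \ell \cap (K_2+y) \neq \phi\bigr) \;\leq\; \mu\bigl(\ell : K_1 \stackrel{\ell}{\longleftrightarrow} \partial B(0,M)\bigr) \;\leq\; C\cdot \Capa(K_1)\cdot M^{2-d}
\]
by Lemma~\ref{lem:one loop connection on Zd}(a), and the right-hand side vanishes as $||y||_\infty\to\infty$.

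The only (mild) obstacle is stating or reproving the Poisson mixing criterion cleanly; everything else is straightforward bookkeeping on top of Lemma~\ref{lem:one loop connection on Zd}(a). Assembled together, these steps give ergodicity of $\mathcal L_\alpha$ under the lattice shifts.
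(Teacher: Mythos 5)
Your proof is correct in substance, but it takes a genuinely different route from the paper's. The paper's argument is entirely soft: by a monotone class argument it reduces to events $A,B\in\bigcup_{K\text{ finite}}\mathcal F^K$ depending only on loops \emph{contained in} a finite set $K$, and then observes that for $n$ large the sets $K$ and $t_x^{-n}(K)$ are disjoint, so the two corresponding collections of loops are disjoint and the events are exactly independent --- no estimate on $\mu$ is needed, and the argument works for every $d\geq 1$ and every $\kappa>-1$. You instead invoke the mixing criterion for Poisson suspensions, $\nu(A\cap t_y^{-1}B)\to 0$ for sets of finite intensity, and verify it with the one-loop connection bound of Lemma~\ref{lem:one loop connection on Zd}(a). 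This buys you genuine mixing with a quantitative rate $O(\|y\|_\infty^{2-d})$ on the cross term, at the price of (i) having to prove or cite the Poisson mixing criterion (the Laplace-functional factorization you mention does work), and (ii) a loss of generality: your verification needs $\mu(A_K)<\infty$ and Lemma~\ref{lem:one loop connection on Zd}, hence transience ($d\geq 3$) and $\kappa=0$, whereas for recurrent $\mathbb Z^d$ or for $\kappa<0$ one has $\mu(\ell:\ell\cap K\neq\phi)=\infty$ and your criterion does not apply as stated, while the paper's independence argument still goes through. Two small corrections: the family $\{A_K\}$ is not a $\pi$-system (since $A_{K_1}\cap A_{K_2}\neq A_{K_1\cup K_2}$), so the reduction should be phrased as an inner approximation --- $\nu(A\setminus A_{B(0,N)})\to 0$ as $N\to\infty$, combined with the translation invariance of $\nu$ to handle $B$; and note that the paper's proof also yields $\lim_n\Cov(1_B,1_A\circ t_x^n)=0$ for all $A,B$, so both arguments in fact establish mixing, not just ergodicity.
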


\begin{proof}[Proof of Proposition \ref{prop: translation invariance and ergodicity}]
The translation invariance of Poisson loop ensemble comes from the translation invariance of its intensity measure and we omit its proof here. For the ergodicity, let us fix $x\in\mathbb{Z}^d$, a measurable event $A$ and show that
$$\lim\limits_{n\rightarrow\infty}\frac{1}{n}\sum\limits_{i=1}^{n}1_{A}\circ t_x^i=\text{constant,}\quad\mathbb{P}\text{-almost surely}.$$
By the law of large numbers, it is enough to show that for measurable events $A$ and $B$,
$$\lim\limits_{n\rightarrow\infty}\Cov(1_B,1_A\circ t_x^n)=0.$$
Or equivalently, by translation invariance, it is enough to show
$$\lim\limits_{n\rightarrow\infty}\mathbb{E}[1_{B}\cdot 1_{A}\circ t_x^n]=\mathbb{P}[A]\mathbb{P}[B].$$
By a classical monotone class argument (Dynkin's $\pi-\lambda$ theorem), it is enough to show it for $A,B\in \bigcup\limits_{K\text{ finite}}\mathcal{F}^{K}$. (Recall that $\mathcal{F}^K$ is the sigma-filed generated by loops inside $K$.) We choose $K$ large enough such that $A$ and $B$ are $\mathcal{F}^{K}$-measurable. Then, $1_A\circ t_x^n$ is $\mathcal{F}^{t_x^{-n}(K)}$-measurable. For $n$ large enough, $t_x^{-n}(K)\cap K$ is empty. Recall that Poisson random measure of disjoint sets are independent. Then, for the same $n$, we have independence between $1_B$ and $1_A\circ t_x^n$ since we have independence between the loops inside $t_x^{-n}(K)$ and those loops inside $K$. Thus,
\begin{equation*}
 \lim\limits_{n\rightarrow\infty}\mathbb{E}[1_B\cdot 1_A\circ t_x^n]=\mathbb{P}[A]\mathbb{P}[B].\qedhere
\end{equation*}
\end{proof}

Ergodicity implies that any translation invariant event has probability either $0$ or $1$. 
Consequently, 
\begin{equation}\label{equivalence:equivalence between theta and p}
\theta(\alpha,\kappa)>0\Longleftrightarrow \mathbb P[\exists x\in\mathbb Z^d~:~\#\mathcal C_{\alpha,\kappa}(x)=\infty]=1.
\end{equation}

The next proposition states the uniqueness of infinite cluster.

\begin{prop}[Uniqueness of infinite cluster]\label{prop: uniqueness of infinite cluster}
 For the Poisson loop ensemble associated with a SRW on $\mathbb{Z}^d$, there is at most one infinite cluster in the corresponding loop percolation.
\end{prop}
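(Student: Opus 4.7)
The plan is to adapt the classical Burton--Keane uniqueness argument to the loop percolation setting. The three ingredients are translation invariance and ergodicity, both furnished by Proposition~\ref{prop: translation invariance and ergodicity}, together with a positive finite energy property tailored to the Poisson loop ensemble. The latter comes from the fact that for any finite $B\subset\mathbb Z^d$, the sub-ensembles $(\mathcal L_\alpha)_B$ and $(\mathcal L_\alpha)^{V\setminus B}$ are independent Poisson point processes, being restrictions of $\mathcal L_\alpha$ to disjoint measurable subsets of loop space. Since any specific based loop traversing every vertex of $B$ is charged positively by $\dot\mu$, the event $F_B=\{\exists\,\ell\in\mathcal L_\alpha:B\subset\ell\}$ has positive probability and is independent of $\mathcal F^{V\setminus B}$.

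By ergodicity, $N:=\#\{\text{infinite clusters of }\mathcal L_\alpha\}$ is a.s.\ a constant in $\{0,1,2,\dots,\infty\}$. To rule out $2\le N=k<\infty$, I first observe that for $R$ large, $\mathbb P[\text{all }k\text{ infinite clusters meet }B(0,R)]$ is close to $1$, and that every infinite cluster of $\mathcal L_\alpha$ touching $B(0,R)$ contains at least one infinite cluster of $(\mathcal L_\alpha)^{V\setminus B(0,R)}$ touching $B(0,R)$. Hence the event $A_R=\{\text{at least }2\text{ infinite clusters of }(\mathcal L_\alpha)^{V\setminus B(0,R)}\text{ touch }B(0,R)\}$ has positive probability. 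Since $A_R\in\mathcal F^{V\setminus B(0,R)}$ is independent of $F_{B(0,R)}$, we get $\mathbb P[A_R\cap F_{B(0,R)}]>0$; on this intersection the big loop merges every infinite cluster touching $B(0,R)$ into a single one, so the actual number of infinite clusters is at most $k-1$, contradicting $N=k$ a.s.

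To rule out $N=\infty$, I will run the Burton--Keane trifurcation argument. Call $x$ a trifurcation if $x$ lies in an infinite cluster of $\mathcal L_\alpha$ and removing every loop through $x$ leaves $\mathcal C_\alpha(x)\setminus\{x\}$ in at least three distinct infinite components of $\{\ell\in\mathcal L_\alpha:x\notin\ell\}$. If $N=\infty$ a.s., the same splitting idea applied at $B=B(0,R)$ for large $R$ produces, with positive probability, three distinct infinite clusters of $\{\ell\in\mathcal L_\alpha:0\notin\ell\}$ hitting $B(0,R)\setminus\{0\}$; inserting via the independent Poisson component $(\mathcal L_\alpha)_{\{0\}}$ a single loop through $0$ that meets representatives of all three clusters then forces $\mathbb P[0\text{ is a trifurcation}]>0$. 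Translation invariance now gives $\mathbb E[\#\{\text{trifurcations in }B(0,n)\}]\ge c\,n^d$, while the classical geometric lemma of Burton--Keane bounds the same count by $|\partial B(0,n)|\le C n^{d-1}$. The contradiction forces $N<\infty$, and combined with the previous paragraph $N\le 1$.

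The main obstacle is the last, geometric step, because in loop percolation connections are mediated by finite loops rather than single edges. To implement the tree-packing part of Burton--Keane I will need, from each trifurcation $x\in B(0,n)$, to select three infinite sub-clusters of $\{\ell\in\mathcal L_\alpha:x\notin\ell\}$ and then three vertex-disjoint chains of loops joining them to $\partial B(0,n)$, so that the chains associated to distinct trifurcations remain pairwise disjoint. Once this combinatorial set-up is in place, the classical argument that the number of leaves of a forest embedded in a box of side $n$ is at most $|\partial B(0,n)|$ applies without further change.
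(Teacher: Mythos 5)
Your plan reproduces Burton--Keane from scratch, but the step you yourself flag as ``the main obstacle'' is a genuine, unresolved gap, and it is not merely combinatorial bookkeeping. The trifurcations you construct are \emph{loop-level} trifurcations: $x$ splits its cluster into three infinite pieces only after deleting \emph{every loop through} $x$. Such a point need not be a trifurcation of the induced bond configuration: if the three infinite clusters of $\{\ell:x\notin\ell\}$ are stitched together by a single large loop $\ell_0$ through $x$ (which is exactly how your insertion step produces them), then removing the vertex $x$ alone leaves $\ell_0\setminus\{x\}$ as a connected path still joining all three pieces. Consequently the classical geometric lemma --- which counts vertex-trifurcations against $|\partial B(0,n)|$ via a forest whose branches are edge-paths --- does not ``apply without further change''; you would have to prove a new packing lemma producing, for each loop-trifurcation, three branches that are disjoint across \emph{all} trifurcations in $B(0,n)$, and nothing in the proposal does this. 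There are also smaller unpatched holes earlier: an infinite cluster of $\mathcal L_\alpha$ meeting $B(0,R)$ contains an infinite cluster of $(\mathcal L_\alpha)^{V\setminus B(0,R)}$, but that sub-cluster need not come anywhere near $B(0,R)$, so positivity of $\mathbb P[A_R]$ does not follow as stated; and even when two such sub-clusters do approach $B(0,R)$, they may lie in the \emph{same} infinite cluster of the full process, in which case merging them does not decrease $N$.

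The paper avoids all of this with a two-line reduction: loop percolation induces a translation-invariant bond percolation on $\mathbb Z^d$, and inserting the \emph{minimal} loop $(x,y)$ --- which opens the single edge $e=\{x,y\}$ and nothing else, and whose presence is independent of all other loops --- gives the positive finite energy (insertion tolerance) property at the level of edges. Uniqueness is then exactly Theorem~1 of Gandolfi--Keane--Newman, which packages the Burton--Keane argument for the induced edge configuration, where the geometric counting lemma is the classical one. If you want a self-contained argument, the right move is to work with single-edge insertions throughout (so that trifurcations are ordinary vertex-trifurcations of the bond configuration), rather than inserting box-covering loops, which is what forces you into the loop-level geometry you cannot control.
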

\begin{proof}
By Theorem 1 in \cite{GandolfiMR1169017}, ``translation invariance'' and ``positive finite energy property'' imply the uniqueness. 
Thus, we only need to show the positive finite energy property: for all $e=\{x,y\}$, 
\[
\mathbb{P}\left[\omega_{e}=1~|~\sigma(\{\omega_f:f\text{ is an edge in }\mathbb{Z}^d\text{ and }f\neq e\})\right]>0\text{ almost surely},
\]
where $\omega_e\in \{0,1\}$ and $\omega_e =1$ if and only if the edge $e$ is traversed by a loop from $\mathcal L_\alpha$. 

From the independence structure of Poisson loop ensemble between two disjoint sets, 
the event $\{\text{the loop $(x,y)$ is in $\mathcal L_\alpha$}\}$ is independent from 
$\sigma(\{\omega_f:f\text{ is an edge in }\mathbb{Z}^d\text{ and }f\neq e\})$.
Thus,
\begin{multline*}
\mathbb{P}\left[\omega_{e}=1~|~\sigma(\{\omega_f:f\text{ is an edge in }\mathbb{Z}^d\text{ and }f\neq e\})\right]\\
\geq \mathbb{P}\left[\{\text{the loop $(x,y)$ is in $\mathcal L_\alpha$}\}~|~\sigma(\{\omega_f:f\text{ is an edge in }\mathbb{Z}^d\text{ and }f\neq e\})\right]\\
=\mathbb{P}\left[\text{the loop $(x,y)$ is in $\mathcal L_\alpha$}\right]>0. \text{\qedhere}
\end{multline*}
\end{proof}

\begin{rem}\label{rem:finite energy}
In fact, for $d\geq 3$, for an edge $e=\{x,y\}$ in the integer lattice $\mathbb{Z}^d$, one can also prove that
\[
\mathbb{P}\left[\omega_{e}=0~|~\sigma(\{\omega_f:f\text{ is an edge in }\mathbb{Z}^d\text{ and }f\neq e\})\right]>0\text{ almost surely}.
\]
As a consequence, there exists at most one infinite cluster of closed edges. 
\end{rem}

\medskip

We complete this section with a statement about triviality of loop percolation on $\mathbb Z^d$ for $\alpha>0$ and $\kappa<0$. 
In particular, it implies the second statement of Theorem~\ref{thm:phase transition}.

\begin{prop}\label{prop: existence of percolation for negative kappa on Zd}
 For $d\geq 3$, $\alpha>0$, $\kappa<0$, and $x\in\mathbb Z^d$, the graph $\mathbb{Z}^d$ is covered by the loops $\{\ell\in\mathcal{L}_{\alpha}:x\in \ell\}$ passing through $x$.
\end{prop}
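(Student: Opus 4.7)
The plan is to show that almost surely every $y \in \mathbb Z^d$ belongs to some loop $\ell \in \mathcal L_{\alpha,\kappa}$ that also passes through $x$. Since $\mathbb Z^d$ is countable, a union bound reduces this to fixing $y \in \mathbb Z^d$ and proving $\mathbb P[\exists \ell \in \mathcal L_{\alpha,\kappa}~:~x,y \in \ell] = 1$. Because $\mathcal L_{\alpha,\kappa}$ is a Poisson point process of intensity $\alpha\mu_\kappa$, this event has probability $1$ if and only if
\[
\mu_\kappa\bigl(\ell~:~x \in \ell \text{ and } y \in \ell\bigr) = +\infty,
\]
so everything reduces to this single divergence statement.

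To prove the divergence, I would switch to based loops. Set $z = (1+\kappa)^{-1}$, which is $>1$ since $\kappa < 0$. By the definition \eqref{def:dotmu}, the $\dot\mu_\kappa$-mass of based loops of length $n$ of the form $(x,x_2,\ldots,x_n)$ with some $x_i = y$ equals
\[
\frac{z^n}{n}\,\mathbb P^x\bigl[X_n = x,~\tau^+(y) \leq n\bigr].
\]
On the other hand, by \eqref{eq:mumultiplicity}, each equivalence class $\ell$ visiting $x$ contains exactly $\xi(x,\ell)/m(\ell)$ based representatives that start at $x$ (where $\xi(x,\ell)$ is the number of visits of $x$ in any representative, as in Lemma \ref{lem:cut the loop passing through a vertex into excursions}), and each such representative has $\dot\mu_\kappa$-mass $(m(\ell)/n_\ell)\mu_\kappa(\{\ell\})$. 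Summing over all $\ell$ visiting both $x$ and $y$ and using the trivial bound $\xi(x,\ell) \leq n_\ell$, we obtain
\[
\sum_{n \geq 1}\frac{z^n}{n}\,\mathbb P^x\bigl[X_n = x,~\tau^+(y) \leq n\bigr] = \sum_{\ell:\,x,y\in\ell}\frac{\xi(x,\ell)}{n_\ell}\mu_\kappa(\{\ell\}) \leq \mu_\kappa\bigl(\ell~:~x,y \in \ell\bigr),
\]
so it suffices to show that the left-hand series diverges.

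For the divergence I would use the strong Markov property at the first hitting time of $y$ together with a local CLT lower bound. Writing $f^{(i)}_{xy} = \mathbb P^x[\tau^+(y) = i]$, the decomposition at $\tau^+(y)$ yields
\[
\mathbb P^x[X_n = x,~\tau^+(y) \leq n] = \sum_{i=1}^{n} f^{(i)}_{xy}\,\mathbb P^y[X_{n-i} = x].
\]
Connectivity of $\mathbb Z^d$ provides some $i_0$ with $f^{(i_0)}_{xy} > 0$, and the local central limit theorem gives $\mathbb P^y[X_j = x] \geq c\,j^{-d/2}$ for all sufficiently large $j$ with the correct parity. Combining these,
\[
\sum_{n \geq 1}\frac{z^n}{n}\,\mathbb P^x[X_n = x,~\tau^+(y) \leq n]\ \geq\ c\,f^{(i_0)}_{xy}\sum_{\substack{n > i_0\\ \text{right parity}}}\frac{z^n}{n\,(n-i_0)^{d/2}}\ =\ +\infty,
\]
since $z > 1$ makes the summand grow exponentially. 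The only real obstacle is the bookkeeping via \eqref{eq:mumultiplicity}: we need the inequality $\xi(x,\ell)/n_\ell \leq 1$ to point in the direction we used, which it does. All other ingredients (the strong Markov decomposition and the Gaussian lower bound on return probabilities for SRW on $\mathbb Z^d$) are standard.
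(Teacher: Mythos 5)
Your proposal is correct. The reduction (countability plus the Poisson formula $\mathbb P[\exists\ell:x,y\in\ell]=1-e^{-\alpha\mu_\kappa(\ell:x,y\in\ell)}$) is exactly the paper's, and your bookkeeping is sound: by \eqref{eq:mumultiplicity} a class $\ell$ has $\xi(x,\ell)/m(\ell)$ rooted representatives at $x$, each of $\dot\mu_\kappa$-mass $\frac{m(\ell)}{n_\ell}\mu_\kappa(\ell)$, so the rooted sum equals $\sum_\ell\frac{\xi(x,\ell)}{n_\ell}\mu_\kappa(\ell)\le\mu_\kappa(\ell:x,y\in\ell)$ as you say (the discrepancy between $\tau^+(y)\le n$ and $\tau^+(y)\le n-1$ is vacuous on $\{X_n=x\}$, and your parity restriction handles the bipartiteness). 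Where you diverge from the paper is in how the infinite mass is produced. The paper applies the two-set decomposition \eqref{eq:muS1S2} with $S_1=\{x\}$, $S_2=\{y\}$, keeps only the loops with $k(\ell)=1$, and lower-bounds the mass by $\mathbb E^x[(1+\kappa)^{-\tau(y)}1_{\tau(y)<\infty}]^2$; the geometric factor is then extracted by restricting to $\{n\le\tau(y)<\infty\}$ and bounding $\mathbb P^x[n\le\tau(y)<\infty]\ge c'(d)(\|x-y\|+n)^{2-d}$ via transience and the Green function estimates (Lemmas \ref{lem:hitting probability} and \ref{lem:Green function}), which is where $d\ge3$ enters. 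You instead root the loops at $x$, split at the first hitting time of $y$, and get the polynomial floor from the local CLT lower bound $\mathbb P^y[X_j=x]\ge cj^{-d/2}$. Both arguments rest on the same mechanism -- the weight $(1+\kappa)^{-n}$ grows geometrically while the relevant walk probabilities decay only polynomially -- but the paper's version recycles its standing machinery (\eqref{eq:muS1S2} and the Green function bounds), whereas yours is more self-contained and does not use transience at all, so it would apply verbatim in $d=1,2$ as well (where the $\kappa=0$ analogue is handled separately in Proposition \ref{prop: existence of percolation for recurrent graph and zero kappa}).
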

\begin{proof}
For $y\neq x$, by \eqref{def:dotmu} and \eqref{eq:muS1S2} applied to $S_1 = \{x\}$ and $S_2 = \{y\}$, and by ignoring all the loops with $k(\ell)\geq 2$, 
we get that for any $n\geq 1$, 
\begin{multline*}
\mu(\ell:x,y\in\ell) \geq 
\mathbb E^x \left[(1 + \kappa)^{-\tau(y)}\cdot 1_{\tau(y)<\infty}\right]\cdot 
\mathbb E^y \left[(1 + \kappa)^{-\tau(x)}\cdot 1_{\tau(x)<\infty}\right]\\
= \mathbb E^x \left[(1 + \kappa)^{-\tau(y)}\cdot 1_{\tau(y)<\infty}\right]^2
\geq 
(1+\kappa)^{-2n}\cdot \mathbb P^x\left[n\leq \tau(y)<\infty\right]^2.
\end{multline*}
Moreover, for any $n\geq 1$, 
\begin{multline*}
\mathbb P^x\left[n\leq \tau(y)<\infty\right] = 
\mathbb E^x\left[\tau(y)\geq n,\mathbb{P}^{X_n}[\tau(y)<\infty]\right]\\
\stackrel{\mathrm{Lemmas}~\ref{lem:hitting probability}, \ref{lem:Green function}}\geq 
c(d)\cdot \mathbb E^x\left[\tau(y)\geq n,(||X_n - y|| + 1)^{2-d}\right]\\
\geq c(d)\cdot \mathbb P^x[\tau(y) = \infty]\cdot (||x-y|| + n)^{2-d}
\geq c'(d)\cdot (||x-y|| + n)^{2-d} .
\end{multline*}
Since the above inequalities hold for all $n\geq 1$, $\mu(\ell:x,y\in\ell) = \infty$. 
Therefore, 
\[
\mathbb P[\exists \ell\in\mathcal L_\alpha~:~x,y\in\ell] = 1 - e^{-\alpha\cdot \mu(\ell~:~x,y\in\ell)} = 1.\qedhere
\]
\end{proof}

\section{First results for the one-arm connectivity for \texorpdfstring{$\mathbb{Z}^d$}{Zd} \texorpdfstring{$(d\geq 3)$}{(d≥3)} and \texorpdfstring{$\kappa=0$}{κ=0}}\label{sec: first results of one-arm connectivity}
\subsection{Lower bound: proof of Theorem \ref{thm:at most polynomial decay of the connectivity}}

For any $d\geq 3$, $\alpha>0$, and $n\geq 1$, 
\begin{multline*}
\mathbb{P}[0\overset{\mathcal{L}_{\alpha}}{\longleftrightarrow}\partial B(0,n)]
\geq \mathbb{P}[\exists \ell\in\mathcal{L}_{\alpha}~:~0\stackrel{\ell}\longleftrightarrow\partial B(0,n)]\\
=1-\exp\left\{-\alpha\mu(\ell~:~0\stackrel{\ell}\longleftrightarrow\partial B(0,n)) \right\}
\stackrel{\mathrm{Lemma}~\ref{lem:one loop connection on Zd}}\geq\alpha\cdot c(d)\Capa(\{0\})n^{2-d}.
\end{multline*}
\qed

\subsection{Upper bound: proof of Theorem~\ref{thm: polynomial decay}}

\subsubsection{\texorpdfstring{$\thran>0$}{α1>0}}

We will prove the following lemma. 
\begin{lem}\label{lem:uniform upper bound}
For $d\geq 3$ and $\beta>1$, there exists $C(d,\beta)<\infty$ such that for all $\alpha>0$ and $n\geq 1$,
\begin{equation}\label{eq:luub1}
\mathbb{P}[B(0,n)\overset{\mathcal{L}_{\alpha}}{\longleftrightarrow}\partial B(0,\lceil\beta n\rceil)]\leq C(d,\beta)\cdot\alpha.
\end{equation}
\end{lem}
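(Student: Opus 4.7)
The plan is to bound $\mathbb{P}[E_n]:=\mathbb{P}[B(0,n)\overset{\mathcal{L}_\alpha}{\longleftrightarrow}\partial B(0,\lceil\beta n\rceil)]$ via a first-moment estimate on the loops that realize the connection, combined with the single-loop measure bound of Lemma~\ref{lem:one loop connection on Zd}(a). Note first that the stated bound is trivial for $\alpha$ above any fixed positive threshold (since $\mathbb{P}[E_n]\leq 1$), so it suffices to treat small $\alpha$.

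If $E_n$ holds, then the cluster of $B(0,n)$ in $\mathcal{L}_\alpha$ must contain at least one loop $\ell\in\mathcal{L}_\alpha$ meeting $B(0,n)$ that is connected within $\mathcal{L}_\alpha$ to $\partial B(0,\lceil\beta n\rceil)$. Hence
\[
\mathbb{P}[E_n]\leq\mathbb{E}\bigl[\#\{\ell\in\mathcal{L}_\alpha: \ell\cap B(0,n)\neq\phi,\ \ell\overset{\mathcal{L}_\alpha}{\longleftrightarrow}\partial B(0,\lceil\beta n\rceil)\}\bigr],
\]
and by Slivnyak--Mecke the right-hand side equals $\alpha\int\mu(d\ell)\,\mathbf{1}_{\ell\cap B(0,n)\neq\phi}\cdot\mathbb{P}[\ell\overset{\mathcal{L}_\alpha\cup\{\ell\}}{\longleftrightarrow}\partial B(0,\lceil\beta n\rceil)]$. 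I split the integrand according to whether $\ell$ itself reaches $\partial B(0,\lceil\beta n\rceil)$. For the ``direct'' piece, where $\ell$ already crosses the annulus, Lemma~\ref{lem:one loop connection on Zd}(a) applied with $K=B(0,n)$ and $M=\lceil\beta n\rceil$, combined with $\Capa(B(0,n))\leq Cn^{d-2}$ from Lemma~\ref{lem: capacity}, immediately yields a contribution at most $C(d,\beta)\alpha$.

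The remaining ``indirect'' piece, where $\ell$ stays inside $B(0,\lceil\beta n\rceil)$ and the cluster reaches the outer sphere through a chain of further loops, is the main obstacle. The idea is to decompose the annulus into $K\sim\log\beta$ geometric sub-annuli at scales $r_k=n\gamma^k$, apply Lemma~\ref{lem:one loop connection on Zd}(a) at each scale to bound uniformly by a constant the $\mu$-measure of loops crossing one sub-annulus, and then control chains of $j$ loops by a BK-type disjoint occurrence estimate for the Poisson loop ensemble, giving a contribution at most $(C'\alpha)^j$. Summing this geometric series over $j\geq 1$ then yields $\mathbb{P}[E_n]\leq C(d,\beta)\alpha$ in the small-$\alpha$ regime; the complementary regime is trivial. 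The delicate step is justifying the chain bound rigorously in the Poisson loop setting, either via a BK-style argument or via a careful direct combinatorial estimate on the chain integrals using Lemma~\ref{lem:one loop connection on Zd}(a) scale by scale.
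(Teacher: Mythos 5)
Your ``direct'' contribution is fine and is also the first ingredient of the paper's proof: by Lemma~\ref{lem:one loop connection on Zd}(a) and Lemma~\ref{lem: capacity}, the $\mu$-mass of loops crossing the annulus is bounded by a constant $C(d,\beta)$, so the probability that a single loop of $\mathcal L_\alpha$ realizes the crossing is at most $C(d,\beta)\,\alpha$. The gap is the ``indirect'' piece. The bound ``chains of $j$ loops contribute at most $(C'\alpha)^j$'' is precisely the step that cannot be carried out for $d=3,4$, and the problem is not the rigor of BK but the finiteness of the resulting sums. Any first-moment or BK estimate on chains $\ell_1,\dots,\ell_j$ with $\ell_i\cap\ell_{i+1}\neq\phi$ requires summing over the intermediate vertices where consecutive loops meet, and the basic per-link quantity is $\sum_{y\neq 0}\mu(\ell:0\stackrel{\ell}\longleftrightarrow y)=\sum_{y\neq 0}-\log\bigl(1-(G(0,y)/G(0,0))^2\bigr)\asymp\sum_{y\neq 0}(||y||_\infty+1)^{2(2-d)}$, which diverges for $d=3$ and $d=4$; equivalently $\mathbb E[\#\mathcal C_\alpha(0,1)]=\infty$, which is exactly why $\thrcz=0$ in those dimensions (Proposition~\ref{prop:posivity of alpha *}). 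In addition, a connecting chain may consist of arbitrarily many microscopic loops, so you cannot reduce to loops crossing macroscopic sub-annuli, and the number $j$ of loops in a chain is not controlled by the number of sub-annuli. The paper does run a chain-plus-BK argument of the type you sketch, but only in the proof of Proposition~\ref{prop: alpha * is smaller than alpha **}, only for $d\geq 5$, and only under the hypothesis $\mathbb E[\#\mathcal C_\alpha(0)]<\infty$; it is not available here.

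What the paper does instead is a quadratic renormalization that avoids chains entirely. With $f^{(\alpha)}(n)=\sup_{k\leq n}\mathbb P[B(0,k)\overset{\mathcal L_\alpha}{\longleftrightarrow}\partial B(0,\lceil\beta k\rceil)]$, one proves $f^{(\alpha)}(4n)\leq C_2\alpha+C_2(f^{(\alpha)}(n))^2$: on the crossing event, either some single loop crosses a fixed intermediate sub-annulus (probability $O(\alpha)$ by the one-loop estimate), or the inner part of the crossing is realized by loops contained in a fixed ball and the outer part by loops not contained in it; these are independent subsets of the Poisson ensemble, and each of the two sub-crossings is covered by boundedly many translates of the scale-$n$ event. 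Combined with $f^{(\alpha)}(1)\leq C_1\alpha$ and induction over the scales $4^m$, this gives $f^{(\alpha)}(n)\leq C\alpha$ for small $\alpha$, and the large-$\alpha$ case is trivial as you observe. To repair your write-up you would need to replace the chain expansion by a recursion of this kind (or some other genuinely multiscale argument); as it stands, the central estimate of your indirect piece is false in dimensions $3$ and $4$.
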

Lemma~\ref{lem:uniform upper bound} implies that $\thran>0$ for all $d\geq 3$. 
\begin{proof}
Fix $d\geq 3$ and $\beta>1$. 
For $\alpha>0$ and $n\geq 1$, define the function
\[
f^{(\alpha)}(n)=\sup\limits_{k\leq n}\mathbb{P}[B(0,k)\overset{\mathcal{L}_{\alpha}}{\longleftrightarrow}\partial B(0,\lceil\beta k\rceil)].
\]
By Lemma~\ref{lem:one loop connection on Zd}, there exists $C_1 = C_1(d)<\infty$ such that
\[
f^{(\alpha)}(1)\leq \mathbb{P}[\exists \ell\in\mathcal{L}_{\alpha}~:~\ell\cap B(0,1)\neq\phi]\leq C_1\cdot \alpha. 
\]
We will prove that there exists $C_2 = C_2(d,\beta)<\infty$ such that for all $\alpha>0$ and $n\geq 1$, 
\begin{equation}\label{eq:renormalization}
f^{(\alpha)}(4n)\leq C_2\cdot \alpha + C_2\cdot (f^{(\alpha)}(n))^2.
\end{equation}
Before we prove \eqref{eq:renormalization}, we show how it implies \eqref{eq:luub1}. 
Take $C_3 = C_3(d,\beta)=\max\left(C_1,2C_2\right)$. 
On the one hand for $\alpha\leq C_3^{-2}$, by induction on $m$, we obtain that $f^{(\alpha)}(4^m)\leq C_3\cdot \alpha$ for all $m\geq 0$.
Then by monotonicity of $f^{(\alpha)}$, $f^{(\alpha)}(n)\leq C_3\cdot \alpha$ for all $\alpha\leq C_3^{-2}$ and $n\geq 1$.
On the other hand for $\alpha\geq C_3^{-2}$ and $n\geq 1$, $f^{(\alpha)}(n) \leq 1 \leq C_3^2\cdot \alpha$. 
Thus \eqref{eq:luub1} follows with $C(d,\beta) = \max(C_3,C_3^2)$. 

\medskip

It remains to prove \eqref{eq:renormalization}. For $x\in\mathbb Z^d$, $m\geq k\geq 1$, consider the events 
\[
E^\alpha(x,k,m) = \{B(x,k)\overset{\mathcal{L}_{\alpha}}{\longleftrightarrow}\partial B(x,m)\}.
\]

It suffices to show that 
\[
\mathbb P[E^\alpha(0,4k,\lceil 4\beta k\rceil)] \leq C(d,\beta)\cdot \alpha + C(d,\beta)\cdot \mathbb P[E^\alpha(0,k,\lceil \beta k\rceil)]^2.
\]
Moreover, we may suppose $k\geq \frac{1}{\beta-1}$. Let 
\[
a_k = 4k,\quad b_k = 4k + \lceil (\beta -1)k\rceil,\quad c_k = \lceil 4\beta k\rceil - \lceil (\beta-1) k\rceil, \quad d_k = \lceil 4\beta k\rceil.
\]
The key observation is that if $E^\alpha(0,a_k,d_k)$ occurs and 
$\mathcal L_\alpha$ does not contain a loop intersecting both $B(0,b_k)$ and $\partial B(0,c_k)$, 
then $B(x,a_k)$ is connected to $\partial B(0,b_k)$ by loops from $\mathcal L_\alpha$ which are contained in $B(0,c_k-1)$, and 
$B(0,c_k)$ is connected to $\partial B(0,d_k)$ by loops which are not contained in $B(0,c_k-1)$. 
Since the two collections of loops are disjoint and $\mathcal L_\alpha$ is a Poisson point process, the two events are independent. 
Thus, 
\begin{eqnarray*}
\mathbb P\left[E^\alpha(0,a_k,d_k)\right] &\leq 
&\mathbb P\left[\exists \ell\in\mathcal L_\alpha~:~B(0,b_k)\stackrel{\ell}\longleftrightarrow \partial B(0,c_k)\right]\\
&+ &\mathbb P\left[E^\alpha(0,a_k,b_k)\right]\cdot \mathbb P\left[E^\alpha(0,c_k,d_k)\right].
\end{eqnarray*}
Since $c_k>b_k$ and $\lim\limits_{k\rightarrow\infty}\frac{c_k}{b_k}=3$, by Lemmas~\ref{lem:one loop connection on Zd} and \ref{lem: capacity}, 
\[
\mathbb P\left[\exists \ell\in\mathcal L_\alpha~:~B(0,b_k)\stackrel{\ell}\longleftrightarrow\partial B(0,c_k)\right] \leq C(d,\beta)\cdot \alpha.
\]
It remains to show that for some $C(d,\beta)<\infty$, $\mathbb P\left[E^\alpha(0,a_k,b_k)\right]\leq C(d,\beta)\cdot \mathbb P[E^\alpha(0,k,\lceil\beta k\rceil)]$ 
and $\mathbb P\left[E^\alpha(0,c_k,d_k)\right]\leq C(d,\beta)\cdot \mathbb P[E^\alpha(0,k,\lceil\beta k\rceil)]$.

\begin{figure}[!htbp]
\begin{picture}(0,0)%
\includegraphics{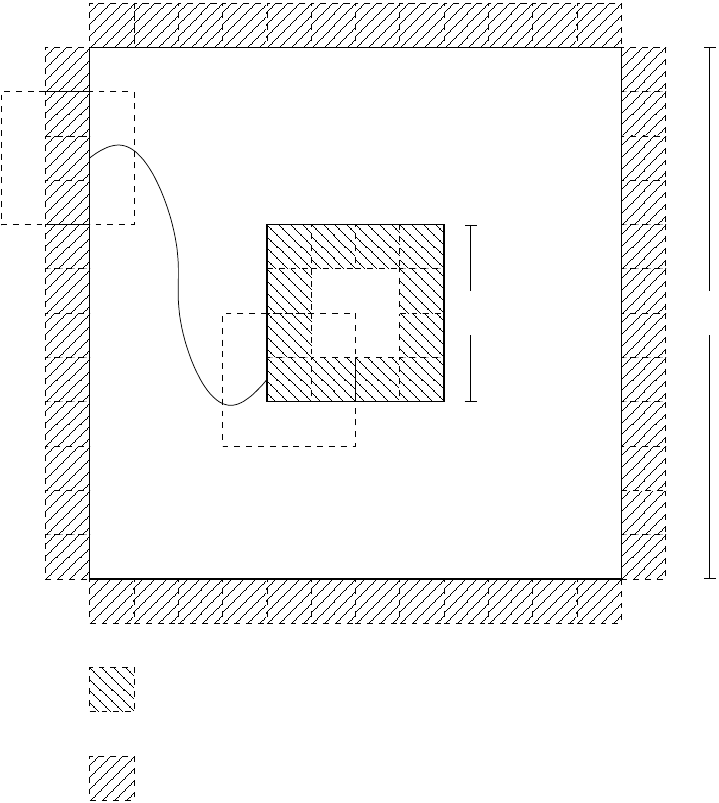}%
\end{picture}%
\setlength{\unitlength}{1865sp}%
\begingroup\makeatletter\ifx\SetFigFont\undefined%
\gdef\SetFigFont#1#2#3#4#5{%
  \reset@font\fontsize{#1}{#2pt}%
  \fontfamily{#3}\fontseries{#4}\fontshape{#5}%
  \selectfont}%
\fi\endgroup%
\begin{picture}(7284,8131)(889,-8630)
\put(2836,-7621){\makebox(0,0)[lb]{\smash{{\SetFigFont{9}{10.8}{\familydefault}{\mddefault}{\updefault}{\color[rgb]{0,0,0}$\{B(x_i,k):x_1,\ldots,x_s\}$}%
}}}}
\put(2791,-8521){\makebox(0,0)[lb]{\smash{{\SetFigFont{9}{10.8}{\familydefault}{\mddefault}{\updefault}{\color[rgb]{0,0,0}$\{B(y_i,k):y_1,\ldots,y_S\}$}%
}}}}
\put(5491,-3751){\makebox(0,0)[lb]{\smash{{\SetFigFont{9}{10.8}{\familydefault}{\mddefault}{\updefault}{\color[rgb]{0,0,0}$8k$}%
}}}}
\put(7831,-3796){\makebox(0,0)[lb]{\smash{{\SetFigFont{9}{10.8}{\familydefault}{\mddefault}{\updefault}{\color[rgb]{0,0,0}$2\lceil 4 \beta k\rceil$}%
}}}}
\end{picture}%
\caption{Illustration of event $E^{\alpha}(0,4k,\lceil 4\beta k\rceil)$ for $\beta=3$.}
\label{fig:renomalization}
\end{figure}

Let $s = s(d)$ be such that there exist $x_1,\dots,x_s\in \mathbb Z^d$ such that $B(x_i,k)\subseteq B(0,4k)$ and $\cup_{i=1}^s\partial B(x_i,k)\supseteq \partial B(0,4k)$.
Let $S = S(d,\beta)$ be such that there exist $y_1,\dots, y_S\in \mathbb Z^d$ such that $B(y_i,k)\cap B(0,\lceil 4\beta k\rceil-1)=\phi$ and 
$\cup_{i=1}^S\partial B(y_i,k)\supseteq \partial B(0,\lceil 4\beta k\rceil)$. 
Such a choice of $s$, $S$ always exists, and we fix some suitable $s$, $S$ and some corresponding $x_1,\dots, x_s$ and $y_1,\dots, y_S$. 
Note that $E^\alpha(0,a_k,b_k)$ implies that $E^\alpha(x_i,k,\lceil \beta k\rceil)$ occurs for some $1\leq i\leq s$, 
and $E^\alpha(0,c_k,d_k)$ implies that $E^\alpha(y_i,k,\lceil \beta k\rceil)$ occurs for some $1\leq i\leq S$. 
Thus using translation invariance, $\mathbb P[E^\alpha(0,a_k,b_k)]\leq s\cdot \mathbb P[E^\alpha(0,k,\lceil\beta k\rceil)]$ and 
$\mathbb P[E^\alpha(0,c_k,d_k)]\leq S\cdot \mathbb P[E^\alpha(0,k,\lceil\beta k\rceil)]$. 
The result follows. 
\end{proof}

\subsubsection{Upper bound on the one-arm probability for \texorpdfstring{$\alpha<\thran$}{α<α1}}

Let $d\geq 3$, $\alpha>0$ and $\beta>1$. 
Define two random sequences $(\mathcal{A}_n)_n$ and $(\mathcal{B}_n)_n$ as follows. Let $\mathcal B_n = \lceil \beta\mathcal A_n\rceil$ and 
\[
\mathcal A_1 = 1,\quad \mathcal A_n = \inf\left\{m: \begin{array}{c}\text{$B(0,m-1)$ contains all the loops from $\mathcal L_\alpha$}\\
\text{which intersect $B(0,\mathcal{B}_{n-1})$}\end{array}\right\}\quad (\text{for $n\geq 2$}).
\]
Since the graph $\mathbb{Z}^d$ is transient, the total mass under $\mu$ of the loops intersecting $B(0,N)$ is finite for any $N$ according to Lemma \ref{lem:non-trivial loop visiting F}. 
Therefore, in the Poisson loop ensemble $\mathcal{L}_{\alpha}$, the number of the loops intersecting $B(0,N)$ is almost surely finite. 
Thus, $\mathcal A_n$ are almost surely finite for all $n$. 

Consider
\begin{equation}\label{def:r}
r = r(d,\alpha,\beta) \stackrel{\mathrm{def}}= \sup_{n\geq 1}\mathbb P\left[B(0,n)\stackrel{\mathcal L_\alpha}\longleftrightarrow\partial B(0,\lceil \beta n\rceil)\right].
\end{equation}
We first show that for all $n\geq 1$, 
\begin{equation}\label{eq:one arm to mathcalBn}
\mathbb P\left[0\stackrel{\mathcal L_\alpha}\longleftrightarrow \partial B(0,\mathcal B_n)\right]\leq r^n .
\end{equation}
The proof is by induction on $n$. 
The case $n=1$ follows from the definitions of $r$, $\mathcal A_1$ and $\mathcal B_1$. 
For $n\geq 2$, 
\begin{multline*}
\mathbb P\left[0\stackrel{\mathcal L_\alpha}\longleftrightarrow \partial B(0,\mathcal B_n)\right]
\leq \mathbb P\left[0\stackrel{\mathcal L_\alpha}\longleftrightarrow \partial B(0,\mathcal B_{n-1}),~B(0,\mathcal A_n)\stackrel{\mathcal L_\alpha}\longleftrightarrow \partial B(0,\mathcal B_n)\right]\\
=\sum_{(b_{n-1},a_n)} \mathbb P\left[\mathcal B_{n-1} = b_{n-1},~\mathcal A_n = a_n,~
0\stackrel{\mathcal L_\alpha}\longleftrightarrow \partial B(0,b_{n-1}),~B(0,a_n)\stackrel{\mathcal L_\alpha}\longleftrightarrow \partial B(0,\lceil\beta a_n\rceil)\right].
\end{multline*}
Using the fact that the loops intersecting $B(0,\mathcal{B}_{n-1})$ never visit $\partial B(0,\mathcal{A}_n)$, 
we can rewrite summands in the above display as
\[
\mathbb P\left[\mathcal B_{n-1} = b_{n-1},~\mathcal A_n = a_n,~
0\stackrel{\mathcal L_\alpha}\longleftrightarrow \partial B(0,b_{n-1}),~B(0,a_n)\stackrel{\mathcal L^{B(0,b_{n-1})^c}_\alpha}\longleftrightarrow \partial B(0,\lceil\beta a_n\rceil)\right].
\]
The event $\{\mathcal B_{n-1} = b_{n-1},~\mathcal A_n = a_n,~0\stackrel{\mathcal L_\alpha}\longleftrightarrow \partial B(0,b_{n-1})\}$ 
is a measurable function of loops intersecting $B(0,b_{n-1})$, 
and the event $\{B(0,a_n)\stackrel{\mathcal L^{B(0,b_{n-1})^c}_\alpha}\longleftrightarrow \partial B(0,\lceil\beta a_n\rceil)\}$ 
depends only on loops which do not intersect $B(0,b_{n-1})$. 
Thus, the two events are independent. 
Moreover, the random loops $\mathcal L^{B(0,b_{n-1})^c}_\alpha$ avoiding $B(0,b_{n-1})$ is a subset of $\mathcal L_\alpha$. 
Thus, by monotonicity, 
\[
\mathbb P\left[B(0,a_n)\stackrel{\mathcal L^{B(0,b_{n-1})^c}_\alpha}\longleftrightarrow \partial B(0,\lceil\beta a_n\rceil)\right]
\leq \mathbb P\left[B(0,a_n)\stackrel{\mathcal L_\alpha}\longleftrightarrow \partial B(0,\lceil\beta a_n\rceil)\right]
\leq r.
\]
As a result, we get
\[
\mathbb P\left[0\stackrel{\mathcal L_\alpha}\longleftrightarrow \partial B(0,\mathcal B_n)\right]
\leq \mathbb P\left[0\stackrel{\mathcal L_\alpha}\longleftrightarrow \partial B(0,\mathcal B_{n-1})\right]\cdot r \leq r^n,
\]
which is precisely \eqref{eq:one arm to mathcalBn}.

\bigskip

Next we prove that there exists $C = C(d)<\infty$ such that for any $\delta\in(0,d-2)$ and $n\geq 1$, 
\begin{equation}\label{eq:moments of Bn}
\mathbb E[\mathcal B_n^\delta]\leq \beta^{\delta n}\cdot C^n\cdot \left(1 + \frac{\alpha}{d-2-\delta}\right)^n .
\end{equation}
Define $\mathcal{G}_{k}=\mathcal{F}_{\mathcal{B}_k}$. Since $\frac{\mathcal{B}_{k+1}}{\mathcal{B}_{k}}$ is $\mathcal{G}_k$-measurable, 
\begin{align*}
 \mathbb{E}\left[\left(\frac{\mathcal{B}_{k+2}}{\mathcal{B}_{k+1}}\right)^{\delta}\Big|\mathcal{G}_k\right]\leq
& (\beta+1)^{\delta}\cdot \mathbb{E}\left[\left(\frac{\mathcal{A}_{k+2}}{\mathcal{B}_{k+1}}\right)^{\delta}\Big|\mathcal{G}_k\right]\\
=&(\beta+1)^{\delta}\cdot \sum\limits_{b_k,b_{k+1}}1_{\{\mathcal{B}_k=b_k,\mathcal{B}_{k+1}=b_{k+1}\}}\mathbb{E}^{B(0,b_k)^c}\left[\left(\frac{\mathcal{A}(b_{k+1})}{b_{k+1}}\right)^{\delta}\right]
\end{align*}
where $\mathbb{P}^{B(0,b_k)^c}$ is the law of the loops avoiding $B(0,b_k)$ and
$$\mathcal{A}(b_{k+1})=\inf\{m: B(0,m-1)\text{ contains all the loops which intersect }B(0,b_{k+1})\}.$$
Since $$\mathbb{E}^{B(0,b_k)^c}\left[\left(\frac{\mathcal{A}(b_{k+1})}{b_{k+1}}\right)^{\delta}\right]\leq \mathbb{E}\left[\left(\frac{\mathcal{A}(b_{k+1})}{b_{k+1}}\right)^{\delta}\right],$$
\begin{multline*}
\mathbb{E}\left[\left(\frac{\mathcal{B}_{k+2}}{\mathcal{B}_{k+1}}\right)^{\delta}\Big|\mathcal{G}_k\right]
\leq (\beta+1)^{\delta}\cdot \sum\limits_{b_k,b_{k+1}}1_{\{\mathcal{B}_k=b_k,\mathcal{B}_{k+1}=b_{k+1}\}}\mathbb{E}\left[\left(\frac{\mathcal{A}(b_{k+1})}{b_{k+1}}\right)^{\delta}\right]\\
\leq (\beta+1)^{\delta}\cdot \sum\limits_{b_k,b_{k+1}}1_{\{\mathcal{B}_k=b_k,\mathcal{B}_{k+1}=b_{k+1}\}}
\left(2^{\delta}+\int\limits_2^{\infty}\,\mathrm{d}\lambda \cdot\delta {\lambda}^{\delta-1}\mathbb{P}[\mathcal{A}(b_{k+1})\geq \lambda b_{k+1}]\right).
\end{multline*}
By Lemma~\ref{lem:one loop connection on Zd}, there exists $C = C(d)<\infty$ such that for all $\lambda>2$, 
\[
\mathbb{P}[\mathcal{A}(b_{k+1})\geq \lambda b_{k+1}]=
\mathbb{P}[\exists \ell\in\mathcal{L}_{\alpha}: B(0,b_{k+1})\stackrel{\ell}\longleftrightarrow\partial B(0,\lceil \lambda b_{k+1}\rceil)]\\
\leq \alpha\cdot C\cdot {\lambda}^{2-d}.
\]
Therefore,
\begin{align*}
\mathbb{E}\left[\left(\frac{\mathcal{B}_{k+2}}{\mathcal{B}_{k+1}}\right)^{\delta}\Big|\mathcal{G}_k\right]\leq 
& (\beta+1)^{\delta}\left(2^{\delta}+\alpha\cdot C \cdot \delta\cdot \int\limits_{2}^{\infty}\mathrm{d}\lambda\cdot  \lambda^{-(d-1-\delta)}\right)\\
=& (\beta +1)^{\delta}\left(2^{\delta}+\frac{\alpha\cdot C\cdot \delta \cdot 2^{2+\delta-d}}{d-2-\delta}\right),
\end{align*}
and \eqref{eq:moments of Bn} follows. 

\bigskip

We can now complete the proof of Theorem~\ref{thm: polynomial decay}. From \eqref{eq:one arm to mathcalBn} and \eqref{eq:moments of Bn}, for $N\geq 1$ and $\epsilon\in(0,d-2)$, 
\begin{align*}
\mathbb{P}[0\overset{\mathcal{L}_{\alpha}}{\longleftrightarrow} \partial B(0,N)]\leq &
\mathbb{P}[0\overset{\mathcal{L}_{\alpha}}{\longleftrightarrow} \partial B(0,\mathcal{B}_{n})]+\mathbb{P}[\mathcal{B}_{n}>N]\notag\\
\leq & r^n+\frac{\mathbb{E}[\mathcal{B}_n^{d-2 - \epsilon}]}{N^{d-2 - \epsilon}}\notag\\
\leq & r^n+\beta^{(d-2) n}\cdot C^n\cdot \left(1 + \frac{\alpha}{\epsilon}\right)^n\cdot N^{2-d + \epsilon}.
\end{align*}
Choosing 
\[
n = \left\lfloor \frac{\epsilon \cdot \log N}{(d-2)\log\beta + \log C + \log (1 + \frac{\alpha}{\epsilon})}\right\rfloor,
\]
we get
\[
\mathbb{P}[0\overset{\mathcal{L}_{\alpha}}{\longleftrightarrow} \partial B(0,N)]\leq
\exp\left\{\log r\cdot \left\lfloor \frac{\epsilon \cdot \log N}{(d-2)\log\beta + \log C + \log (1 + \frac{\alpha}{\epsilon})}\right\rfloor\right\} + N^{2-d + 2\epsilon}.
\]
Note that for any $\alpha<\thran$, there exists $\beta>1$ such that $r<1$. 
Thus, there exist $C = C(d,\alpha)<\infty$ and $c = c(d,\alpha)\in(0,d-2)$ such that 
\[
\mathbb{P}[0\overset{\mathcal{L}_{\alpha}}{\longleftrightarrow} \partial B(0,N)]\leq C\cdot N^{-c}.
\]
Moreover, by Lemma~\ref{lem:uniform upper bound}, there exists $C' = C'(d,\beta)<\infty$ such that for all $\alpha$, $r\leq C'\cdot \alpha$. 
Therefore, one can choose $c(d,\alpha)$ above so that $\lim_{\alpha\to 0}(d-2-c)\cdot \log\frac{1}{\alpha}<\infty$. 
\qed

\section{Loop percolation in dimension \texorpdfstring{$d\geq 5$}{d≥5}}\label{sec: dimension larger than 5}
In this section we prove Theorems~\ref{thm:exponent of one-arm connectivity for d larger than 5} and \ref{thm: asymptotic of critical value in high dimensions}. 
The proof of Theorem~\ref{thm:exponent of one-arm connectivity for d larger than 5} is split into $5$ parts given in $5$ different subsections of this section, 
see, respectively, Propositions \ref{prop:posivity of alpha *}, \ref{prop:exponent for one-arm connectivity for d larger than 5}, 
\ref{prop:two point connectivity}, \ref{prop:exponent of size of cluster for d larger than 5}, and \ref{prop: alpha * is smaller than alpha **}. 
Theorem~\ref{thm: asymptotic of critical value in high dimensions} is restated and proved as Proposition~\ref{prop: asymptotic of critical value in high dimensions} 
in the last subsection. 

\subsection{\texorpdfstring{$\alpha_{\#}>0$}{α\#>0} if and only if \texorpdfstring{$d\geq 5$}{d≥5}}
We prove here that the expected size of $\mathcal C_\alpha(0)$ is finite for small enough $\alpha$ only if $d\geq 5$.
The size of $\mathcal C_\alpha(0)$ is stochastically dominated by the total progeny of 
a Galton-Watson process with offspring distribution defined by the size of 
\[
\mathcal C_\alpha(0,1) \stackrel{\mathrm{def}}= \{x\in\mathbb Z^d\setminus\{0\}~:~\exists\ell\in\mathcal L_\alpha\text{ such that }0\stackrel{\ell}\longleftrightarrow x\}.
\]
Thus, if for some $\alpha>0$, $\mathbb E[\#\mathcal C_\alpha(0,1)]<1$, then the Galton-Watson process is sub-critical, and the expected progeny is finite. 
Existence of such $\alpha$ will follow from the dominated convergence, as soon as we show that for some $\alpha>0$, $\mathbb E[\#\mathcal C_\alpha(0,1)]<\infty$. 
\begin{prop}\label{prop:posivity of alpha *}
$\thrcz>0$ if and only if $d\geq 5$.
\end{prop}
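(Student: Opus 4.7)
The plan is to follow the reduction sketched in the paragraph before the proposition, so that the whole statement reduces to an estimate on a single quantity, namely
\[
\mu(\ell : 0, x \in \ell) \;=\; -\log\!\left(1 - \tfrac{G(0,x)^2}{G(0,0)^2}\right),
\]
which comes from the corollary of Lemma~\ref{lem:non-trivial loop visiting F} applied to the pair $\{0,x\}$ together with the symmetries $G(x,y)=G(y,x)$ and $G(x,x)=G(0,0)$. By Lemma~\ref{lem:Green function}, the ratio $G(0,x)^2/G(0,0)^2$ is of order $(1+\|x\|_\infty)^{4-2d}$ for $x\neq 0$, so there exist $0<c(d)\leq C(d)<\infty$ with
\[
c(d)\,(1+\|x\|_\infty)^{4-2d} \;\leq\; \mu(\ell : 0, x \in \ell) \;\leq\; C(d)\,(1+\|x\|_\infty)^{4-2d}.
\]
A lattice integral test then says that $\sum_{x\neq 0}\mu(\ell : 0, x \in \ell)$ is finite if and only if $d\geq 5$; this dichotomy is the single analytic fact driving the proposition.

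For the direction $d\geq 5 \Rightarrow \thrcz>0$, I take the Galton--Watson domination from the preamble for granted (the exploration argument is standard: reveal $\mathcal L_\alpha$ vertex by vertex along the cluster, and at each newly discovered $v$ control its fresh offspring by $\#\mathcal C_\alpha(v,1)$, whose law by translation invariance equals that of $\#\mathcal C_\alpha(0,1)$). It then suffices to find $\alpha>0$ for which $\mathbb E[\#\mathcal C_\alpha(0,1)]<1$. I would write
\[
\mathbb E[\#\mathcal C_\alpha(0,1)] \;=\; \sum_{x\neq 0}\bigl(1-e^{-\alpha\mu(\ell:0,x\in\ell)}\bigr) \;\leq\; \alpha \sum_{x\neq 0}\mu(\ell:0,x\in\ell) \;\leq\; C(d)\,\alpha\sum_{x\neq 0}(1+\|x\|_\infty)^{4-2d},
\]
and invoke the summability above. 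Choosing $\alpha$ small enough makes this $<1$; the dominating GW then has finite mean total progeny, so $\mathbb E[\#\mathcal C_\alpha(0)]<\infty$ and $\thrcz>0$.

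For the direction $d\in\{3,4\} \Rightarrow \thrcz=0$, I would argue directly that for every $\alpha>0$, $\mathbb E[\#\mathcal C_\alpha(0)]=\infty$. The inequality $\#\mathcal C_\alpha(0)\geq \#\mathcal C_\alpha(0,1)$ and the bound $1-e^{-u}\geq u/2$ for $u\in[0,1]$ give
\[
\mathbb E[\#\mathcal C_\alpha(0)] \;\geq\; \sum_{x\neq 0}\bigl(1-e^{-\alpha\mu(\ell:0,x\in\ell)}\bigr) \;\geq\; \frac{\alpha}{2}\sum_{\|x\|_\infty\geq R_0}\mu(\ell:0,x\in\ell),
\]
once $R_0=R_0(d,\alpha)$ is large enough that $\alpha\mu(\ell:0,x\in\ell)\leq 1$ on $\{\|x\|_\infty\geq R_0\}$ (possible by the upper bound on $\mu$). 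Plugging in the lower bound $\mu(\ell:0,x\in\ell)\geq c(d)(1+\|x\|_\infty)^{4-2d}$ and summing over the lattice yields divergence precisely when $d\leq 4$, proving $\thrcz=0$ in these dimensions.

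The only non-routine point is justifying the Galton--Watson domination of $\#\mathcal C_\alpha(0)$ by a tree with offspring distribution $\#\mathcal C_\alpha(0,1)$: the children of distinct vertices are not independent under $\mathcal L_\alpha$, so one must use a sequential exploration together with the fact that removing already-revealed loops from the ensemble yields a smaller Poisson ensemble, whose induced one-loop neighbourhood is stochastically dominated by $\mathcal C_\alpha(\cdot,1)$ via translation invariance and monotonicity in the loop ensemble. Everything else is a direct computation using Lemmas~\ref{lem:non-trivial loop visiting F} and \ref{lem:Green function}.
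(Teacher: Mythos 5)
Your proposal is correct and follows essentially the same route as the paper: compute $\mathbb E[\#\mathcal C_\alpha(0,1)]$ via $\mu(\ell:0,x\in\ell)=-\log\bigl(1-G(0,x)^2/G(0,0)^2\bigr)$ and the Green function asymptotics, observe that the sum over $x$ converges iff $d\geq 5$, and in that case invoke the Galton--Watson domination with offspring law $\#\mathcal C_\alpha(0,1)$ for small $\alpha$; for $d=3,4$ the divergence of $\mathbb E[\#\mathcal C_\alpha(0,1)]$ already forces $\mathbb E[\#\mathcal C_\alpha(0)]=\infty$. Your explicit two-sided bounds $u/2\leq 1-e^{-u}\leq u$ merely make quantitative the dominated-convergence step the paper uses.
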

\begin{proof}
Let $d\geq 3$ and $\alpha>0$. We compute
\begin{multline*}
\mathbb{E}[\#\mathcal{C}_{\alpha}(0,1)]=\sum\limits_{x\in\mathbb{Z}^d,x\neq 0}\mathbb{P}[x\in\mathcal{C}_{\alpha}(0,1)]\\
=\sum\limits_{x\in\mathbb{Z}^d,x\neq 0}1-e^{-\alpha\mu(\ell~:~0\stackrel{\ell}\longleftrightarrow x)}
=\sum\limits_{x\in\mathbb{Z}^d,x\neq 0}1-\left(1-\left(\frac{G(0,x)}{G(0,0)}\right)^2\right)^{\alpha},
\end{multline*}
which is finite if and only if $d\geq 5$ by Lemma~\ref{lem:Green function}. 
In particular, if $d= 3$ or $4$, then $\mathbb E[\#\mathcal C_\alpha(0)] \geq \mathbb E[\#\mathcal C_{\alpha}(0,1)] = \infty$ for all $\alpha>0$. 
On the other hand, for $d\geq 5$, by the dominated convergence, there exists $\alpha>0$ such that $\mathbb E[\#\mathcal C_{\alpha}(0,1)]<1$. 
For such $\alpha$, $\#\mathcal C_\alpha(0)$ is dominated by the total progeny of a subcritical Galton-Watson process with offspring distribution 
$\mathbb P[\#\mathcal C_\alpha(0,1)\in\cdot]$. Thus, $\mathbb E[\#\mathcal C_\alpha(0)]<\infty$. 
\end{proof}
\begin{rem}
Domination of the cluster size by the total progeny of a Galton-Watson process is 
used rather often in studies of sub-critical percolation models. 
In the context of loop ensembles, it was used in \cite[Section~2]{Lemaire} 
to study the distribution of connected components of loops on the complete graph. 
\end{rem}

\subsection{One-arm connectivity}
In this section, we prove the second statement in Theorem~\ref{thm:exponent of one-arm connectivity for d larger than 5}, 
which we restate in the following proposition. 
\begin{prop}\label{prop:exponent for one-arm connectivity for d larger than 5}
 For $d\geq 5$ and $\alpha<\thrcz$, there exist constants $0<c(d,\alpha)<C(d,\alpha)<\infty$ such that for all $n$,
$$c(d,\alpha)n^{2-d}\leq\mathbb{P}[0\overset{\mathcal{L}_{\alpha}}{\longleftrightarrow}\partial B(0,n)]\leq C(d,\alpha)n^{2-d}.$$
\end{prop}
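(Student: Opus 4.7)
The lower bound is immediate from Theorem~\ref{thm:at most polynomial decay of the connectivity}, so I focus on establishing the matching upper bound $\psi_n := \mathbb{P}[0\overset{\mathcal{L}_\alpha}{\longleftrightarrow}\partial B(0,n)] \le C(d,\alpha)\,n^{2-d}$ for $\alpha<\thrcz$. The single new input, besides material from Section~2, is $\chi(\alpha) := \mathbb{E}[\#\mathcal{C}_\alpha(0)] < \infty$, which is precisely the definition of $\alpha<\thrcz$ (and is already in hand from Proposition~\ref{prop:posivity of alpha *}).

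The plan is to view the event $\{0\leftrightarrow\partial B(0,n)\}$ as the existence of a loop $L\in\mathcal{L}_\alpha$ meeting both $\partial B(0,n)$ and $\mathcal{C}_\alpha(0)$, and apply Slivnyak--Mecke. The elementary observation is that for any fixed loop $L$, adding $L$ to $\mathcal{L}_\alpha$ places $L$ in the cluster of $0$ iff $V(L)$ already meets $\mathcal{C}_\alpha(0)$ in the original $\mathcal{L}_\alpha$ (any connection through $L$ can be re-routed not to use $L$). Combining this with Markov's inequality on the count of such loops yields
\[
\psi_n \;\le\; \alpha\int\mu(dL)\,\mathbf{1}\{L\cap\partial B(0,n)\neq\emptyset\}\,\mathbb{P}[V(L)\cap\mathcal{C}_\alpha(0)\neq\emptyset].
\]

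Next, I would bound the inner probability by the first moment $\sum_{x\in V(L)}\tau(0,x)$, where $\tau(0,x):=\mathbb{P}[0\overset{\mathcal{L}_\alpha}{\longleftrightarrow} x]$, swap the sum and the integral, and obtain
\[
\psi_n \;\le\; \alpha\sum_{x\in\mathbb{Z}^d}\tau(0,x)\,\mu_x^n,\qquad \mu_x^n := \mu\bigl(L:x\in V(L),\,L\cap\partial B(0,n)\neq\emptyset\bigr).
\]
For $\|x\|_\infty\le n/2$, Lemma~\ref{lem:one loop connection on Zd}(a) applied with $K=\{x\}$ and reference ball $B(0,n)$ gives $\mu_x^n\le C\,n^{2-d}$; for $\|x\|_\infty\ge 2n$, the same lemma applied with balls recentred at $x$ gives $\mu_x^n\le C\,\|x\|_\infty^{2-d}$. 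Together with $\sum_x\tau(0,x)=\chi(\alpha)<\infty$, the contributions of these two regions are each controlled by $C\,\chi(\alpha)\,n^{2-d}$.

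The delicate part, and the main obstacle, is the intermediate annulus $\{n/2<\|x\|_\infty<2n\}$, where Lemma~\ref{lem:one loop connection on Zd}(a) is not directly applicable and the naive bound $\mu_x^n\le\log G(0,0)$ is too crude. Here I would replace it by the explicit two-point identity $\mu(L:x,y\in V(L)) = -\log\bigl(1 - G(x,y)G(y,x)/(G(x,x)G(y,y))\bigr) \le C\,\|x-y\|^{2(2-d)}$ derived from Lemma~\ref{lem:non-trivial loop visiting F} and Lemma~\ref{lem:Green function}, and union-bound over $y\in\partial B(0,n)$ to get a refined estimate of the form $\mu_x^n\le C\,(\mathrm{dist}(x,\partial B(0,n))+1)^{2-d}$; the tangential sum converges precisely because $d\ge 5$. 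After reinstating the factor $\tau(0,x)\le \psi_{\|x\|_\infty}$ and summing over shells $\{\mathrm{dist}(x,\partial B(0,n))=k\}$, the annular contribution must be matched to $n^{2-d}$, using the finiteness of $\chi(\alpha)$ to absorb the total mass. Making this last matching close cleanly — without already having the two-point estimate of Proposition~\ref{prop:two point connectivity} at one's disposal — is the technical heart of the argument, and the resulting constant $C(d,\alpha)$ will necessarily blow up as $\alpha\uparrow\thrcz$ since it is driven by $\chi(\alpha)$.
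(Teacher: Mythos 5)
Your reduction to the Mecke/first--moment bound
\[
\mathbb{P}\bigl[0\overset{\mathcal{L}_{\alpha}}{\longleftrightarrow}\partial B(0,n)\bigr]\;\le\;\alpha\sum_{x\in\mathbb{Z}^d}\tau(0,x)\,\mu_x^n,
\qquad \mu_x^n=\mu\bigl(\ell: x\in\ell,\ \ell\cap\partial B(0,n)\neq\phi\bigr),
\]
contains a genuine and unfixable loss of a factor $n$, and the difficulty you flag in the intermediate annulus is not a technical matter of matching constants --- the right-hand side is simply of strictly larger order than $n^{2-d}$. To see this, keep only the contribution of $x\in\partial B(0,n)$ and, inside $\mu_x^n$, only the loop of length two at $x$: this already gives $\mu_x^n\geq \tfrac12(2d)^{-2}$, while by the single-loop lower bound on the two-point function (Proposition~\ref{prop:two point connectivity}) one has $\tau(0,x)\geq c\,n^{2(2-d)}$ there. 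Since $\#\partial B(0,n)\asymp n^{d-1}$, the annular block of your sum is at least $c\,n^{d-1}\cdot n^{4-2d}=c\,n^{3-d}$. Probabilistically, this reflects the fact that conditionally on $\{0\leftrightarrow\partial B(0,n)\}$ the cluster typically contains of order $n$ loops meeting $\partial B(0,n)$ (the dominant macroscopic loop has local time of order $n$ on the sphere, each visit carrying decorations), so the expected number of loops you are counting is $\asymp n^{3-d}$ even though the probability you want is $\asymp n^{2-d}$. No refinement of the estimates on $\mu_x^n$ or $\tau(0,x)$ can close this; one would have to change the counted object (e.g.\ count only a ``first'' crossing loop in an exploration of the cluster), which is a different argument.

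The paper avoids this entirely by a renormalization in the \emph{loop distance} rather than a loop count on the sphere. Writing $\mathcal C_k=\bigcup_{i\le k}\mathcal C_\alpha(0,i)$ for the part of the cluster within loop distance $k$ of $0$, one splits $\{0\leftrightarrow\partial B(0,2n)\}$ according to whether $\mathcal C_k$ reaches $\partial B(0,n)$. If it does, some loop of the cluster has diameter $\geq n/k$, and a first-moment bound over the at most $\mathbb E[\#\mathcal C_\alpha(0)]$ cluster vertices combined with Lemma~\ref{lem:one loop connection on Zd} gives $C\,k^{d-2}n^{2-d}$ (this is where $\alpha<\thrcz$ enters, exactly as in your argument). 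If it does not, conditioning on $\mathcal C_{k-1}$ and using independence of the loops avoiding it yields the bound $\mathbb E[\#\mathcal C_\alpha(0,k)]\cdot\mathbb P[0\leftrightarrow\partial B(0,n)]$. Choosing $k$ so large that $\mathbb E[\#\mathcal C_\alpha(0,k)]\le 2^{1-d}$ (possible since the series $\sum_i\mathbb E[\#\mathcal C_\alpha(0,i)]$ converges) produces the recursion $\psi_{2n}\le C k^{d-2}n^{2-d}+2^{1-d}\psi_n$, which closes by induction on dyadic scales. You should keep your (correct) lower bound and the observation that $\alpha<\thrcz$ is the only input, but replace the Mecke count by a self-improving inequality of this type.
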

We need to introduce the notion of loop distance and decompose the cluster at $0$ according to the loop distance from $0$:
\begin{defn}\label{defn: loop distance and decompostion according to it}
Define a random loop distance $\mathfrak{d}$ on $\mathbb{Z}^d$:
$$\mathfrak{d}(x,y)=\left\{\begin{array}{ll}
                            \inf\left\{k\geq 1: \begin{array}{l}\exists \ell_1,\ldots,\ell_k\in\mathcal{L}_{\alpha}\text{ such that }\\x\in \ell_1,\ell_1\cap \ell_2\neq\phi,\ldots,\ell_{k-1}\cap \ell_k\neq\phi,y\in \ell_k\end{array}\right\} & \text{ for }x\neq y,\\
                            0 & \text{ for }x=y.
                           \end{array}
\right.$$
Then, we decompose $\mathcal{C}_{\alpha}(0)$ into a countable disjoint union:
\begin{equation}\label{def:Calpha0i}
\mathcal{C}_{\alpha}(0)=\bigcup\limits_{i=0}^{\infty}\mathcal{C}_{\alpha}(0,i),\text{ where }\mathcal{C}_{\alpha}(0,i)=\{z\in\mathbb{Z}^d:\mathfrak{d}(0,z)=i\}.
\end{equation}
\end{defn}
\begin{proof}[Proof of Proposition \ref{prop:exponent for one-arm connectivity for d larger than 5}]
The lower bound follows from Theorem~\ref{thm:at most polynomial decay of the connectivity}. 

For $k\geq 0$, set $\mathcal C_k=\bigcup\limits_{i=0}^{k}\mathcal{C}_{\alpha}(0,i)$. Then,
\begin{align*}
\mathbb{P}[0\overset{\mathcal{L}_{\alpha}}{\longleftrightarrow}\partial B(0,2n)]\leq&
\mathbb{P}\left[\mathcal C_k\cap \partial B(0,n)\neq\phi\right]\\
 &+\mathbb{P}\left[\mathcal C_k\cap \partial B(0,n)=\phi,0\overset{\mathcal{L}_{\alpha}}{\longleftrightarrow}\partial B(0,2n)\right].
 \end{align*}
If $\mathcal C_k\cap\partial B(0,n)\neq \phi$, then $\mathcal C_\alpha(0)$ contains a loop from $\mathcal L_\alpha$ with diameter $\geq \frac nk$.
By considering the loop distance $\mathfrak{d}(0,\ell)$ and using the first moment method, we get
\begin{multline*}
\mathbb{P}\left[\mathcal C_k\cap \partial B(0,n)\neq\phi\right]\leq 
\sum\limits_{i=0}^{\infty}\mathbb{E}[\#\mathcal{C}_{\alpha}(0,i)]\cdot \mathbb{P}\left[\exists \ell\in \mathcal{L}_{\alpha}:0\in \ell,Diam(\ell)\geq \frac nk\right]\\
\leq \mathbb{E}[\#\mathcal{C}_{\alpha}(0)]\cdot \mathbb{P}\left[\exists \ell\in\mathcal{L}_{\alpha}:0\stackrel{\ell}\longleftrightarrow\partial B\left(0,\left\lfloor \frac{n}{2k}\right\rfloor\right)\right]
\stackrel{\mathrm{Lemma}~\ref{lem:one loop connection on Zd}}\leq C(d,\alpha)\cdot k^{d-2}\cdot n^{2-d}. 
 \end{multline*}
On the other hand,
\[
\mathbb{P}\left[\mathcal C_k\cap \partial B(0,n)=\phi,0\overset{\mathcal{L}_{\alpha}}{\longleftrightarrow}\partial B(0,2n)\right]
\leq\sum\limits_{x\in B(0,n)}\mathbb{P}[x\in\mathcal{C}_{\alpha}(0,k),x\overset{(\mathcal{L}_{\alpha})^{(\mathcal C_{k-1})^c}}{\longleftrightarrow}\partial B(0,2n)].
\]
Since $\{x\in\mathcal{C}_{\alpha}(0,k)\}$ is $\mathcal{F}_{\mathcal C_{k-1}}$ measurable and 
\[
\mathbb{P}[x\overset{(\mathcal{L}_{\alpha})^{(\mathcal C_{k-1})^c}}{\longleftrightarrow}\partial B(0,2n)|\mathcal{F}_{\mathcal C_{k-1}}]
\leq \mathbb{P}[x\overset{\mathcal{L}_{\alpha}}{\longleftrightarrow}\partial B(0,2n)]
\leq\mathbb{P}[0\overset{\mathcal{L}_{\alpha}}{\longleftrightarrow}\partial B(0,n)],
\]
we get
\[
\mathbb{P}\left[\mathcal C_k\cap \partial B(0,n)=\phi,0\overset{\mathcal{L}_{\alpha}}{\longleftrightarrow}\partial B(0,2n)\right]
\leq \mathbb{E}[\#\mathcal{C}_{\alpha}(0,k)]\cdot \mathbb{P}[0\overset{\mathcal{L}_{\alpha}}{\longleftrightarrow}\partial B(0,n)].
\]
Putting two bounds together, 
\[
\mathbb{P}[0\overset{\mathcal{L}_{\alpha}}{\longleftrightarrow}\partial B(0,2n)]\leq 
C(d,\alpha)\cdot k^{d-2}\cdot n^{2-d}+\mathbb{E}[\#\mathcal{C}_{\alpha}(0,k)]\cdot \mathbb{P}[0\overset{\mathcal{L}_{\alpha}}{\longleftrightarrow}\partial B(0,n)].
\]
We choose $k=k_0$ large enough such that $\mathbb{E}[\#\mathcal{C}_{\alpha}(0,k_0)]\leq 2^{1-d}$ and take
$C'(d,\alpha)=\max(2^{d-1}k_0^{d-2}C(d,\alpha),1)$. Then, by induction on $n$, 
\[
\mathbb{P}[0\overset{\mathcal{L}_{\alpha}}{\longleftrightarrow}\partial B(0,2^n)]\leq C'(d,\alpha)\cdot (2^n)^{2-d}.
\]
The proof is complete by the monotonicity of $\mathbb{P}[0\overset{\mathcal{L}_{\alpha}}{\longleftrightarrow}\partial B(0,n)]$ in $n$.
\end{proof}

\medskip

\begin{rem}\ \label{rem:remark for d larger than 5}
Recall from Theorem~\ref{thm:at most polynomial decay of the connectivity} and Lemma~\ref{lem:one loop connection on Zd} that 
the probability that a single loop from $\mathcal L_\alpha$ passing through a given vertex $x\in\mathbb Z^d$ has diameter $\geq n$ is of order $n^{2-d}$. 
Thus, for $d\geq 5$ and $\alpha<\thrcz$, 
the probability that $\mathcal C_\alpha(0)$ contains a loop of diameter $\geq n$ is 
of the same order as the probability of one arm to $\partial B(0,n)$. 
This suggests that long connections in $\mathcal L_\alpha$ for $d\geq 5$ and $\alpha<\thrcz$ arise because of a single big loop. 
It is indeed the case, as one can show that the probability that $\mathcal C_\alpha(0)$ contains at least two loops of diameter $\geq m$ 
is $O(m^{6-2d})$, and the probability of having a path from $0$ to $\partial B(0,n)$ only through loops of $\mathcal L_\alpha$ of diameter $\leq m$ is $O(e^{-c(d,\alpha)\cdot \frac nm})$. 
Since we do not use such refined estimates, we omit details of their proofs. 
Curiously, as we will see later, the situation in dimensions $d=3$ and $4$ is rather different, as long connections through small loops are more likely than 
connections through a single big loop. 
\end{rem}
\subsection{Two point connectivity}
In this section we prove the third statement of Theorem~\ref{thm:exponent of one-arm connectivity for d larger than 5} about the bounds on the two point connectivity, 
which we restate in the next proposition. 
As in the case of one arm connectivity, the lower bound here is given by one loop connection. 
While the upper bound is obtained by one loop connection together with the upper bound for the decay of one arm connectivity.
\begin{prop}\label{prop:two point connectivity}
For $d\geq 5$ and $\alpha<\thrcz$,
\[
0<\inf\limits_{x\in\mathbb{Z}^d}\mathbb{P}[x\in\mathcal{C}_{\alpha}(0)](||x||_{\infty}+1)^{2(d-2)}\leq \sup\limits_{x\in\mathbb{Z}^d}\mathbb{P}[x\in\mathcal{C}_{\alpha}(0)](||x||_{\infty}+1)^{2(d-2)}<\infty.
\]
\end{prop}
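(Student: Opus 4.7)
The lower bound is immediate from single-loop connection: Lemma~\ref{lem:non-trivial loop visiting F} applied to $F=\{0,x\}$ gives
\[
\mu(\ell:0,x\in\ell)=-\log\!\Bigl(1-\tfrac{G(0,x)^{2}}{G(0,0)^{2}}\Bigr),
\]
which by Lemma~\ref{lem:Green function} is of order $\|x\|_\infty^{-2(d-2)}$, so that $\mathbb P[x\in\mathcal C_\alpha(0)]\geq 1-\exp(-\alpha\mu(\ell:0,x\in\ell))\geq c(d,\alpha)(\|x\|_\infty+1)^{-2(d-2)}$.

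For the upper bound, I would split the Poisson ensemble $\mathcal L_\alpha=\mathcal L_\alpha^{0}\sqcup\mathcal L_\alpha^{\neg 0}$ into loops through $0$ and loops avoiding $0$, which are independent Poisson sub-ensembles. Writing $\mathcal K_1(0)=\{0\}\cup\{y:\exists\ell\in\mathcal L_\alpha^{0},\,y\in\ell\}$ (a.s.\ finite by Lemma~\ref{lem:cut the loop passing through a vertex into excursions}) and $q(y):=\mathbb P[y\in\mathcal K_1(0)]$, a direct calculation via Lemma~\ref{lem:non-trivial loop visiting F} applied to $F=\{0,y\}$ gives $q(y)\leq C(d,\alpha)(\|y\|_\infty+1)^{-2(d-2)}$. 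Since $\mathcal C_\alpha(0)=\bigcup_{y\in\mathcal K_1(0)}\mathcal C_\alpha^{\neg 0}(y)$, independence of the two sub-ensembles together with the stochastic domination $\mathcal L_\alpha^{\neg 0}\subset\mathcal L_\alpha$ yield the renewal-type inequality
\[
p(x)\leq q(x)+\sum_{y\in\mathbb Z^d\setminus\{0,x\}}q(y)\,p(x-y),\qquad x\neq 0,
\]
where $p(x)=\mathbb P[x\in\mathcal C_\alpha(0)]$. Iterating formally produces $p\leq\sum_{k\geq 1}q^{\ast k}$, and a heavy-tailed \emph{single big jump} convolution estimate, proved inductively by splitting the convolution sum at $\|y\|_\infty=\|x\|_\infty/2$ and using that $2(d-2)>d$ for $d\geq 5$ makes $q$ summable, delivers $q^{\ast k}(x)\leq C_{k,d,\alpha}(\|x\|_\infty+1)^{-2(d-2)}$ with an explicit dependence of $C_{k,d,\alpha}$ on $\sum_{y}q(y)$.

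The main obstacle is closing this iteration throughout the whole subcritical range: the total mass $\sum_{y\neq 0}q(y)=\mathbb E[\#\mathcal C_\alpha(0,1)]$ need not be less than $1$ for $\alpha$ close to $\thrcz$, in which case the geometric series $\sum_k q^{\ast k}$ diverges. To bypass this, I would use the loop distance decomposition of Definition~\ref{defn: loop distance and decompostion according to it}: exposing the full cluster at loop distance $\leq k_0-1$ and appealing once again to independence and domination gives the $k_0$-step recursion
\[
p(x)\leq Q_{k_0}(x)+\sum_{y\in\mathbb Z^d\setminus\{0,x\}}\tilde Q_{k_0}(y)\,p(x-y),
\]
with $Q_{k_0}(x)=\mathbb P\bigl[x\in\bigcup_{i=1}^{k_0}\mathcal C_\alpha(0,i)\bigr]$ and $\tilde Q_{k_0}(y)=\mathbb P[y\in\mathcal C_\alpha(0,k_0)]$. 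Both kernels still decay like $(\|\cdot\|_\infty+1)^{-2(d-2)}$, since $\mathbb P[x\in\mathcal C_\alpha(0,i)]$ is bounded by the expected number of $i$-loop chains from $0$ to $x$, which is at most $\tilde q^{\ast i}(x)$ with $\tilde q(y)=\alpha\mu(\ell:0,y\in\ell)$ by Mecke's formula, followed by the same big-jump estimate. The assumption $\alpha<\thrcz$ gives $\sum_{i\geq 1}\mathbb E[\#\mathcal C_\alpha(0,i)]=\mathbb E[\#\mathcal C_\alpha(0)]-1<\infty$, hence $\sum_{y}\tilde Q_{k_0}(y)=\mathbb E[\#\mathcal C_\alpha(0,k_0)]\to 0$ as $k_0\to\infty$. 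Fixing $k_0$ so that this mass is strictly less than $1$, iterating the $k_0$-step recursion yields $p\leq\sum_{j\geq 0}\tilde Q_{k_0}^{\ast j}\ast Q_{k_0}$, and a final application of the big-jump convolution estimate gives the claimed $p(x)\leq C(d,\alpha)(\|x\|_\infty+1)^{-2(d-2)}$.
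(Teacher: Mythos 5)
Your argument is correct in substance but follows a genuinely different route from the paper. The paper's proof of the upper bound is geometric: with $n=\|x\|_\infty$ it splits $\mathcal L_\alpha$ into four independent families according to whether a loop meets $B(0,\lfloor n/3\rfloor)$ and/or $B(x,\lfloor n/3\rfloor)$, and bounds $\mathbb P[x\in\mathcal C_{\alpha}(0)]$ by the probabilities of four events (two independent one-arm events, a single loop of $\mathcal L_{1,1}$ joining the two local clusters, or two such large loops), using the already-established one-arm bound of Proposition~\ref{prop:exponent for one-arm connectivity for d larger than 5} together with $\mathbb E[\#\mathcal C_\alpha(0)]<\infty$. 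You instead set up a renewal-type convolution inequality indexed by the loop distance of Definition~\ref{defn: loop distance and decompostion according to it} and extract the decay from a single-big-jump estimate for convolutions of kernels decaying like $(\|x\|_\infty+1)^{-2(d-2)}$, which are summable precisely when $d\geq 5$; this bypasses the one-arm estimate entirely, and the exposure step you need for the $k_0$-step recursion is exactly the conditioning on $\mathcal F_{\mathcal C_{k_0-1}}$ already used in the paper's proof of Proposition~\ref{prop:exponent for one-arm connectivity for d larger than 5}, so all ingredients are available; your route also makes explicit the ``one macroscopic loop dominates'' picture of Remark~\ref{rem:remark for d larger than 5}. Two points to nail down. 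First, the bound $\mathbb P[x\in\mathcal C_\alpha(0,i)]\le \tilde q^{*i}(x)$ via Mecke's formula needs the loops of the chain to be pairwise distinct; this holds because a chain realizing $\mathfrak d(0,x)=i$ cannot repeat a loop. Second, and more seriously, if the convolution estimate is proved by the pairwise split at $\|y\|_\infty=\|x\|_\infty/2$ as you propose, each convolution costs a factor $2^{2(d-2)}$, so the series $\sum_j\tilde Q_{k_0}^{*j}$ is controlled only when $\mathbb E[\#\mathcal C_\alpha(0,k_0)]<2^{-2(d-2)}$, not merely $<1$. This is harmless, since that mass tends to $0$ as $k_0\to\infty$ and you may simply take $k_0$ larger; alternatively, split the $k$-fold convolution at $\|y\|_\infty=\|x\|_\infty/k$ (some increment must be that large), which gives $q^{*k}(x)\le C\, k^{1+2(d-2)}\,\Sigma^{k-1}(\|x\|_\infty+1)^{-2(d-2)}$ and closes the argument under the mere assumption $\Sigma<1$. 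With either fix the proof is complete.
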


\begin{proof}
Since $\mathbb{P}[x\in\mathcal{C}_{\alpha}(0)]\geq\mathbb{P}[\exists \ell\in\mathcal{L}_{\alpha}:0\stackrel{\ell}\longleftrightarrow x]$, 
the lower bound is given by Lemmas \ref{lem:non-trivial loop visiting F} and \ref{lem:Green function}. It remains to show the upper bound. 

Let $n=||x||_{\infty}$. Without loss of generality we may suppose $n\geq 3$. We divide the loops $\mathcal{L}_{\alpha}$ into four independent set of loops as follows:
\begin{itemize}
 \item $\mathcal{L}_{1,1}\overset{\text{def}}{=}\{\ell\in\mathcal{L}_{\alpha}:\ell\text{ intersects }B(0,\lfloor n/3\rfloor)\text{ and }B(x,\lfloor n/3\rfloor)\}$,
 \item $\mathcal{L}_{1,0}\overset{\text{def}}{=}\{\ell\in\mathcal{L}_{\alpha}:\ell\text{ intersects }B(0,\lfloor n/3\rfloor)\text{ but not }B(x,\lfloor n/3\rfloor)\}$,
 \item $\mathcal{L}_{0,1}\overset{\text{def}}{=}\{\ell\in\mathcal{L}_{\alpha}:\ell\text{ intersects }B(x,\lfloor n/3\rfloor)\text{ but not }B(0,\lfloor n/3\rfloor)\}$,
 \item $\mathcal{L}_{0,0}\overset{\text{def}}{=}\{\ell\in\mathcal{L}_{\alpha}:\ell\text{ avoids }B(0,\lfloor n/3\rfloor)\text{ and }B(x,\lfloor n/3\rfloor)\}$.
\end{itemize}
Let $\mathcal C_\alpha^k(z)$ be the cluster of $z$ induced by the loops of $\mathcal L_\alpha$ which are entirely contained in $B(z,k)$. 
The main observation is that when $x\in\mathcal{C}_{\alpha}(0)$, at least one of the four events occurs:
\begin{itemize}
\item 
$E_1\overset{\text{def}}{=}\left\{0\overset{\mathcal{L}_{1,0}}{\longleftrightarrow}\partial B(0,\lfloor n/3\rfloor)\right\}
\cap\left\{x\overset{\mathcal{L}_{0,1}\cup\mathcal{L}_{1,1}}{\longleftrightarrow}\partial B(x,\lfloor n/3\rfloor)\right\}$,
\item 
$E_2\overset{\text{def}}{=}\left\{0\overset{\mathcal{L}_{1,0}\cup\mathcal{L}_{1,1}}{\longleftrightarrow}\partial B(0,\lfloor n/3\rfloor)\right\}
\cap\left\{x\overset{\mathcal{L}_{0,1}}{\longleftrightarrow}\partial B(x,\lfloor n/3\rfloor)\right\}$,
\item 
$E_3\overset{\text{def}}{=}
\left\{\exists a\in \mathcal C_\alpha^{\lfloor n/3\rfloor}(0),~b\in \mathcal C_\alpha^{\lfloor n/3\rfloor}(x),~\ell\in\mathcal L_{1,1}~:~a\stackrel{\ell}\longleftrightarrow b\right\}$,

\item
$E_4\overset{\text{def}}{=}\left\{\exists a\in \mathcal C_\alpha^{\lfloor n/3\rfloor}(0),~b\in \mathcal C_\alpha^{\lfloor n/3\rfloor}(x),~\ell_1,\ell_2\in\mathcal L_{1,1}~:~
\substack{a\in\ell_1,~\ell_1\cap \mathcal C_\alpha^{\lfloor n/3\rfloor}(x) = \phi,\\
b\in\ell_2,~\ell_2\cap \mathcal C_\alpha^{\lfloor n/3\rfloor}(0) = \phi}\right\}$.

\end{itemize}
Thus, $\mathbb{P}[x\in\mathcal{C}_{\alpha}(0)]\leq\mathbb{P}[E_1]+\mathbb{P}[E_2]+\mathbb{P}[E_3]+\mathbb P[E_4]$. 
By independence of $\mathcal{L}_{0,1},\mathcal{L}_{1,0}$, and $\mathcal{L}_{1,1}$, translation invariance of $\mathcal L_\alpha$, and Proposition~\ref{prop:exponent for one-arm connectivity for d larger than 5},
\[
\mathbb{P}[E_1]=\mathbb{P}[E_2]\leq(\mathbb{P}[0\overset{\mathcal{L}_{\alpha}}{\longleftrightarrow}\partial B(0,\lfloor n/3\rfloor)])^2\leq C_1(d,\alpha)\cdot n^{2(2-d)}. 
\]
Next we estimate $\mathbb P[E_3]$:
\begin{align*}
\mathbb{P}[E_3]\leq &\sum\limits_{\substack{a\in B(0,\lfloor n/3\rfloor),\\b\in B(x,\lfloor n/3\rfloor)}}
\mathbb{P}[\exists \ell\in \mathcal{L}_{1,1}:a\stackrel{\ell}\longleftrightarrow b]\cdot 
\mathbb{P}[0\overset{\mathcal{L}_{1,0}}{\longleftrightarrow}a]\cdot \mathbb{P}[x\overset{\mathcal{L}_{0,1}}{\longleftrightarrow}b]\\
\leq &(\mathbb{E}[\#\mathcal{C}_{\alpha}(0)])^2\cdot \max\limits_{\substack{a\in B(0,\lfloor n/3\rfloor),\\b\in B(x,\lfloor n/3\rfloor)}}
\mathbb{P}[\exists \ell\in\mathcal{L}_{\alpha}:a\stackrel{\ell}\longleftrightarrow b]\\
\leq &\alpha\cdot (\mathbb{E}[\#\mathcal{C}_{\alpha}(0)])^2\cdot \max\limits_{\substack{a\in B(0,\lfloor n/3\rfloor),\\b\in B(x,\lfloor n/3\rfloor)}}\mu(\ell~:~a\stackrel{\ell}\longleftrightarrow b).
\end{align*}
Since $||a-b||_{\infty}\geq n/3$ for all $a\in B(0,\lfloor n/3\rfloor)$ and $b\in B(x,\lfloor n/3\rfloor)$, 
by Lemmas \ref{lem:non-trivial loop visiting F} and \ref{lem:Green function}, $\mathbb{P}[E_3]\leq C_2(d,\alpha)\cdot n^{2(2-d)}$.

Finally, we estimate $\mathbb P[E_4]$ by first conditioning on $\mathcal C_\alpha^{\lfloor n/3\rfloor}(0)$ and $\mathcal C_\alpha^{\lfloor n/3\rfloor}(x)$, 
and then using independence between loops from $\mathcal L_{1,1}$ that intersect $\mathcal C_\alpha^{\lfloor n/3\rfloor}(0)$ and do not intersect it:
\begin{multline*}
\mathbb P[E_4]\leq \mathbb E[\#\mathcal C_\alpha^{\lfloor n/3\rfloor}(0)]\cdot 
\mathbb E[\#\mathcal C_\alpha^{\lfloor n/3\rfloor}(x)] \cdot
\mathbb P[\exists \ell\in\mathcal L_\alpha~:~0\stackrel{\ell}\longleftrightarrow \partial B(0,\lfloor n/3\rfloor)]^2\\
\stackrel{\mathrm{Lemmas}~\ref{lem:non-trivial loop visiting F},~\ref{lem:Green function}}\leq C_3(d,\alpha)\cdot n^{2(2-d)}.
\end{multline*}

Thus, for $||x||_{\infty}\geq 3$, $\mathbb{P}[x\in\mathcal C_\alpha(0)]\leq (2C_1+C_2+C_3)\cdot ||x||^{2(2-d)}$.
\end{proof}

\medskip
\begin{rem}
As in the case of one arm connectivity, see Remark~\ref{rem:remark for d larger than 5}, 
one can show that for $d\geq 5$ and $\alpha<\thrcz$, the most likely situation for $0\stackrel{\mathcal L_\alpha}\longleftrightarrow x$ is to have a large loop which passes near $0$ and near $x$, 
i.e., the existence of connections between $0$ and $x$ with two large loops or with only small loops are both of probability $o(||x||^{2(2-d)})$. 
\end{rem}

\subsection{Tail of the cluster size}
In this section we prove the fourth statement of Theorem~\ref{thm:exponent of one-arm connectivity for d larger than 5}, 
showing that the tail of the distribution of $\#\mathcal C_\alpha(0)$ is of order $n^{1-d/2}$, see Proposition \ref{prop:exponent of size of cluster for d larger than 5}. 
The lower bound is given by the loops passing through $0$. 
Roughly speaking, the upper bound is given by the total progeny of a sub-critical Galton-Watson process which dominates the cluster size. 
The existence of such sub-critical Galton-Watson process is guaranteed by assumption $\alpha<\thrcz$. 
An upper bound for the sub-critical Galton-Watson process is given in Lemma \ref{lem:upper bound of the tail of total progeny of our Galton-Watson tree}. 
Later we will take the offspring distribution to be the distribution of $\#\mathcal{U}_{\alpha}(0,K)$ where for $K\geq 1$,
\begin{equation}\label{def:U0K}
 \#\mathcal{U}_{\alpha}(0,K)\overset{\text{def}}=\left\{x\in\mathbb{Z}^d:
 \begin{array}{l}
  \exists \ell_1,\ldots,\ell_K\in\mathcal{L}_{\alpha}\text{ such that }0\in \ell_1,x\in \ell_K\\
  \text{and }\ell_i\cap \ell_{j}\neq\phi\text{ iff }|i-j|\leq 1
 \end{array}
 \right\}.
\end{equation}
The crucial point is that for $x,y\in\mathbb{Z}^d$, the following is an increasing event:
\[
\left\{\exists \ell_1,\ldots,\ell_K\in\mathcal{L}_{\alpha}:x\in \ell_1,y\in \ell_K\text{ and that }\ell_i\cap \ell_{j}\neq\phi\text{ iff }|i-j|\leq 1\right\}.
\]
This enables us to dominate $(\#\mathcal{C}_{\alpha}(0,Ki))_{i\geq 0}$ by a sub-critical Galton-Watson process with offspring $\#\mathcal{U}_{\alpha}(0,K)$. In order to apply Lemma \ref{lem:upper bound of the tail of total progeny of our Galton-Watson tree} to dominate the total progeny, we need an upper bound estimate for the tail of $\mathcal{U}_{\alpha}(0,K)$. This is given in Lemma \ref{lem:bounds for C(d,K)} .

\begin{prop}\label{prop:exponent of size of cluster for d larger than 5}
For $d\geq 5$ and $\alpha<\thrcz$, there exist $0<c(d,\alpha)<C(d,\alpha)<\infty$ such that for all $n$,
\begin{equation}\label{eq:bound for the size of cluster}
 c(d,\alpha)n^{1-d/2}<\mathbb{P}[\#\mathcal{C}_{\alpha}(0)>n]<C(d,\alpha)n^{1-d/2}.
\end{equation}
\end{prop}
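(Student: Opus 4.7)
The plan is to produce a single large loop through $0$. Since $F(d,M)=M^2$ for $d\geq 5$, Lemma~\ref{lem:one loop connection on Zd}(b) applied with $K=\{0\}$ and some $\lambda>1$ gives
\[
\mu\bigl(\ell:0\in\ell,\ \Capa(\ell)\geq c(d)M^2\bigr)\geq c(d)\cdot \Capa(\{0\})\cdot M^{2-d}.
\]
Since $\Capa(S)\leq\#S$ for any finite $S$, such a loop visits at least $c(d)M^2$ distinct vertices, all lying in $\mathcal C_\alpha(0)$. Taking $M\asymp\sqrt n$ and using $1-e^{-x}\geq x/2$ for small $x>0$ then yields $\mathbb P[\#\mathcal C_\alpha(0)\geq n]\geq c(d,\alpha)\cdot n^{1-d/2}$.

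\textbf{Upper bound.} Following the scheme outlined in the preamble, first decompose $\mathcal C_\alpha(0)=\bigsqcup_{i\geq 0}\mathcal C_\alpha(0,i)$ by loop distance. For fixed $K\geq 1$, the monotonicity of the event defining $\mathcal U_\alpha$ in the loop ensemble will allow a sequential exploration: reveal at each step only the loops meeting the current frontier, and use translation invariance to identify offspring laws at distinct frontier vertices. The block process $(\#\mathcal C_\alpha(0,Ki))_{i\geq 0}$ should then be stochastically dominated by the generations of a Galton--Watson tree with offspring law $\#\mathcal U_\alpha(0,K)$, so that $\#\mathcal C_\alpha(0)$ is bounded by the total progeny $T_K$.

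To make the tree sub-critical, the hypothesis $\alpha<\thrcz$ will be used: since $\sum_i\mathbb E[\#\mathcal C_\alpha(0,i)]=\mathbb E[\#\mathcal C_\alpha(0)]<\infty$, the layer expectations tend to $0$, and for $K$ large enough $\mathbb E[\#\mathcal U_\alpha(0,K)]<1$. The Galton--Watson tail lemma (Lemma~\ref{lem:upper bound of the tail of total progeny of our Galton-Watson tree}) will then transfer a power-law tail on the offspring into a tail of the same order on $T_K$. The missing input is the offspring tail estimate
\[
\mathbb P[\#\mathcal U_\alpha(0,K)>n]\leq C(d,K,\alpha)\cdot n^{1-d/2}
\]
(Lemma~\ref{lem:bounds for C(d,K)}), whose heuristic mirrors the lower bound: one large loop in the $K$-chain should produce essentially all the mass. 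The plan for this estimate is a union bound over which of the $K$ loops is large, using Lemma~\ref{lem:one loop connection on Zd}(a) to control its $\mu$-cost and capacity-to-size comparison, together with second-moment estimates controlling the contributions of the other $K-1$ loops (where Proposition~\ref{prop:two point connectivity} is useful).

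The hard part will be this offspring tail estimate in the ``many small loops'' regime: a naive union over chains overcounts because distinct chains share common vertices and loops. To match the target exponent $1-d/2$ one will need to use the sub-criticality $\alpha<\thrcz$ quantitatively, for instance through bounds on $\mathbb E[\#\mathcal U_\alpha(0,K)^2]$, to rule out combinatorial blow-up from chains sharing a common midpoint. Once both the Galton--Watson tail lemma and the offspring tail estimate are in place, combining them with the domination gives $\mathbb P[\#\mathcal C_\alpha(0)>n]\leq C(d,\alpha)\cdot n^{1-d/2}$, matching the lower bound and completing the proof.
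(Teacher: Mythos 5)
Your lower bound is correct and is a legitimate variant of the paper's: you get $n$ vertices from one loop through $0$ with $\Capa(\ell)\gtrsim M^2$ via Lemma~\ref{lem:one loop connection on Zd}(b) and $\Capa(S)\leq\#S$, whereas the paper reads the lower bound off the sharp asymptotics $\mathbb P[\#\mathcal C_\alpha(0,1)>x]\sim c\,x^{1-d/2}$ of Lemma~\ref{lem:tail of the range of the loop intersecting 0}. Your upper-bound architecture (loop-distance layers, Galton--Watson domination with offspring $\#\mathcal U_\alpha(0,K)$, subcriticality from $\alpha<\thrcz$, Lemma~\ref{lem:upper bound of the tail of total progeny of our Galton-Watson tree}) is exactly the paper's. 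But the step you defer — the offspring tail $\mathbb P[\#\mathcal U_\alpha(0,K)>n]\leq Cn^{1-d/2}$ — is the heart of the proof, and the route you sketch for it would fail. First, your proposed quantitative input, a bound on $\mathbb E[\#\mathcal U_\alpha(0,K)^2]$, does not exist in the relevant dimensions: by Lemma~\ref{lem:tail of the range of the loop intersecting 0}, $\mathbb E[(\#\mathcal C_\alpha(0,1))^p]<\infty$ iff $p<d/2-1$, so second moments are infinite for $d=5,6$ — precisely the hardest cases of the proposition. Second, your mechanism for bounding the size contributed by one large loop is backwards: $\Capa(S)\leq\#S$ gives lower bounds on size from capacity, but an upper bound on the number of vertices of a loop requires controlling its length, i.e.\ the tail of $\tau^+(0)$ via the local limit theorem (Griffin's asymptotics) and the range of the excursion (Hamana--Kesten); none of this is reachable from Lemma~\ref{lem:one loop connection on Zd}(a).

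The paper's actual route avoids both problems: the base case $K=1$ is the excursion decomposition of Lemma~\ref{lem:cut the loop passing through a vertex into excursions} combined with subexponentiality theory to get the exact tail of $\#\mathcal C_\alpha(0,1)$, and the induction on $K$ uses Nagaev's one-sided large-deviation inequality for sums of heavy-tailed variables (Lemma~\ref{lem: tail of one generation GW}), which needs only first moments, to show that $\sum_{i=1}^{\#\mathcal C_\alpha(0,m)}\eta_i$ inherits the $n^{1-d/2}$ tail. You should also note two smaller slips: $\mathcal U_\alpha(0,K)$ is not a layer $\mathcal C_\alpha(0,K)$, so "layer expectations tend to $0$" does not directly give $\mathbb E[\#\mathcal U_\alpha(0,K)]<1$ (the paper uses $\#\mathcal U_\alpha(0,K)\to 0$ a.s.\ dominated by $\#\mathcal C_\alpha(0)$); and the total progeny of the GW tree bounds only $\sum_i\#\mathcal C_\alpha(0,Ki)$, so the intermediate layers $\sum_i\#\mathcal C_\alpha(0,Ki+j)$, $j=1,\dots,K-1$, require the additional application of Lemma~\ref{lem: tail of one generation GW} carried out at the end of the paper's proof.
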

The proof of the proposition is based on the following lemmas. 
\begin{lem}\label{lem: tail of one generation GW}
Suppose $\xi$ and $\eta$ are $\mathbb{N}$ valued variables with finite means. 
Denote by $\bar{F}$ the tail of the distribution function of $\xi$ and by $\bar{G}$ that of $\eta$. 
Suppose $\bar{F}(x),\bar{G}(x)\leq x^{-a}h(x)$ where $a>1$ and $h$ slowly varies as $x\rightarrow\infty$. 
Take a sequence $(\eta_i)_{i\geq 0}$ of independent copies of $\eta$ which is also independent of $\xi$. 
Then there exists $C<\infty$ such that for $n\geq 1$,
\[
\mathbb{P}\left[\sum\limits_{i=1}^{\xi}\eta_i> n\right]\leq Cn^{-a}h(n).
\]
\end{lem}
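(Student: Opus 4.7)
Let $m := \mathbb{E}[\eta]$ and $N := \lceil n/(2m)\rceil$. The plan is to decompose the event $\{\sum_{i=1}^{\xi} \eta_i > n\}$ into three regimes according to the sizes of $\xi$ and of the individual $\eta_i$'s, and to bound each piece by $Cn^{-a}h(n)$:
\begin{itemize}\itemsep0pt
\item[(i)] $\{\xi > N\}$, whose probability is $\bar{F}(N) \leq N^{-a}h(N) \leq Cn^{-a}h(n)$ by slow variation of $h$;
\item[(ii)] $\{\xi \leq N,\ \exists\, i \leq \xi: \eta_i > n/2\}$, whose probability, after conditioning on $\xi$ and applying a union bound, is at most $\sum_{k \leq N} \mathbb{P}[\xi = k]\cdot k\bar{G}(n/2) \leq \mathbb{E}[\xi]\,\bar{G}(n/2) \leq Cn^{-a}h(n)$, finite since $\mathbb{E}[\xi]<\infty$ by hypothesis;
\item[(iii)] $\{\xi \leq N,\ \max_{i \leq \xi} \eta_i \leq n/2,\ \sum_{i=1}^{\xi} \eta_i > n\}$, the remaining (and most delicate) case.
\end{itemize}

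For (iii), set $\tilde{\eta}_i := \eta_i \mathbf{1}_{\eta_i \leq n/2}$. On the event in question, $\sum_{i=1}^{\xi} \tilde{\eta}_i = \sum_{i=1}^{\xi} \eta_i > n$, whereas $\mathbb{E}[\sum_{i=1}^{\xi} \tilde{\eta}_i \mid \xi] \leq \xi m \leq Nm \leq n/2 + m$. Conditioning on $\xi = k \leq N$, it would suffice to prove
$$\mathbb{P}\Big[\textstyle\sum_{i=1}^{k}(\tilde{\eta}_i - \mathbb{E}[\tilde{\eta}]) > n/3\Big] \leq Ckn^{-a}h(n),$$
since averaging in $k$ against $\mathbb{P}[\xi = k]$ then yields $C\mathbb{E}[\xi]n^{-a}h(n)$. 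To obtain this deviation bound I would use a Fuk--Nagaev-type splitting: introduce a secondary truncation level $L \in (0, n/2]$, decompose each $\tilde{\eta}_i$ into its part $\leq L$ and its part in $(L, n/2]$; the latter contributes at most $k\bar{G}(L)$ by a union bound, while the former is bounded by $L$ so Bernstein's inequality gives exponentially small deviation once $L$ is tuned so that $k\operatorname{Var}(\tilde{\eta}\mathbf{1}_{\tilde{\eta}\leq L})$ is small compared with $n^2$. Balancing the two contributions produces the claimed $Ckn^{-a}h(n)$ estimate.

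The main obstacle lies in step (iii): a direct application of Chebyshev's or Bernstein's inequality to the centred truncated sum is too weak, because $\operatorname{Var}(\tilde{\eta})$ may grow polynomially in $n$ when $1 < a < 2$ (and carries a $\log n$ factor when $a = 2$). The Fuk--Nagaev decomposition, which isolates a ``one big jump'' contribution of order $k\bar{G}(L)$ from an exponentially small Gaussian-type remainder, is what makes the argument sharp. Finally, careful handling of the slowly varying factor $h$ via Potter-type bounds (so that $h(N),h(n/2),h(L)$ can all be compared to $h(n)$ up to constants) is needed to ensure the final estimate is precisely of the form $Cn^{-a}h(n)$ without spurious logarithmic corrections.
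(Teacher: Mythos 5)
Your outer decomposition is exactly the paper's: split on $\{\xi>n/(2\mathbb E[\eta])\}$ (handled by $\bar F$), and on the complementary event reduce to a uniform deviation bound of the form $\mathbb P\bigl[\sum_{i=1}^k\eta_i-k\mathbb E[\eta]>x\bigr]\leq Ck\,\bar G(x)$ valid for $x\geq\gamma k$, then average against $\mathbb P[\xi=k]$ using $\mathbb E[\xi]<\infty$. The paper simply quotes this deviation bound from Nagaev \cite[Theorem~2]{NagaevMR616627} and is done in three lines. The gap in your proposal is in your attempt to reprove that bound in step (iii): the ``union bound at level $L$ plus Bernstein below $L$'' balancing does not close. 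To make the big-jump term acceptable you need $k\bar G(L)\leq Ck n^{-a}h(n)$, i.e.\ $L^{-a}h(L)\leq Cn^{-a}h(n)$, and by Potter's bounds this forces $L\geq cn$ for a fixed $c>0$ (any $L=n/s(n)$ with $s(n)\to\infty$ makes $\bar G(L)/\bar G(n)\to\infty$). But with variables bounded by $L\asymp n$ and deviation $t\asymp n$, Bernstein gives only
\[
\exp\Bigl(-\tfrac{t^2/2}{k\sigma_L^2+Lt/3}\Bigr)\;\geq\;\exp\Bigl(-\tfrac{3t}{2L}\Bigr)\;=\;\text{const},
\]
because the linear term $Lt/3\asymp n^2$ dominates the denominator. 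So no choice of $L$ makes both contributions $o(1)$, let alone $O(kn^{-a}h(n))$.

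The fix is to replace Bernstein by the Bennett/Fuk--Nagaev form of the exponential bound: with truncation $y=t/r$ one gets a remainder of order $\bigl(e\,kA/(ty)\bigr)^{t/y}=\bigl(Cr\,kA/t^2\bigr)^{r}$ with $A=k\mathbb E[\eta^2 1_{\eta\leq t/r}]$; since $kA/n^2\to0$ (here $k\lesssim n$ and $\mathbb E[\eta^2 1_{\eta\leq n}]\lesssim n^{2-a}h(n)\vee C$), taking $r$ a large fixed constant beats $n^{-a}h(n)$, while the big-jump term $k\bar G(n/r)$ stays of the right order. (This is precisely the content of Nagaev's theorem, so citing it, as the paper does, is the shortest route.) A minor further point: your diagnosis of why plain Chebyshev fails is inverted --- for $1<a<2$ one has $\operatorname{Var}(\tilde\eta)\lesssim n^{2-a}h(n)$ and Chebyshev on the truncated sum actually gives exactly $kn^{-a}h(n)$; it is for $a\geq2$, where the variance is bounded (or logarithmic) and $kn^{-2}\gg kn^{-a}h(n)$, that a second-moment bound is genuinely too weak.
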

\begin{lem}\label{lem:upper bound of the tail of total progeny of our Galton-Watson tree}
Let $\bar F(x) = 1 - F(x)$ be the tail of a distribution function $F$, and suppose that 
$\bar{F}(x)\leq x^{-a}h(x)$ where $a>1$ and $h$ is slowly varying when $x\rightarrow\infty$.
For a sub-critical Galton-Watson process $(Z_n)_{n\geq 0}$ with offspring distribution $F$, let $S_n=\sum\limits_{i=0}^{n}Z_i$. 
Then there exists a constant $C<\infty$ such that
\[
\mathbb{P}[S_{\infty}>n]<C n^{-a}h(n).
\]
\end{lem}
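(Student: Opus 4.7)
The plan is to leverage the generation-by-generation decomposition $S_\infty = \sum_{k \geq 0} Z_k$, iterating Lemma~\ref{lem: tail of one generation GW} to control the tail of each $Z_k$ and then assembling a weighted union bound. Writing $Z_k = \sum_{i=1}^{Z_{k-1}} \xi_i^{(k)}$ with $\xi_i^{(k)}$ i.i.d.\ copies of $Z_1$, independent of $Z_{k-1}$, Lemma~\ref{lem: tail of one generation GW} applied inductively on $k$ yields
\[
\mathbb{P}[Z_k > n] \leq C_k\, n^{-a} h(n)
\]
for constants $C_k$ that I would need to track carefully.

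The key point is that the subcritical mean $m = \mathbb{E}[Z_1] < 1$ forces the constants $C_k$ to decay geometrically. Inspecting the proof of Lemma~\ref{lem: tail of one generation GW}, the dependence of its constant on the tail constants of the inputs is essentially linear, with the ``one-big-jump'' contributions scaling like $\mathbb{E}[\xi]\cdot K_\eta$ (big~$\eta$) and $K_\xi\cdot(\mathbb{E}[\eta])^{a}$ (big~$\xi$). Applied to $\xi=Z_{k-1}$ (so $\mathbb{E}[\xi]=m^{k-1}$, $K_\xi=C_{k-1}$) and $\eta=\xi^{(k)}$ (so $\mathbb{E}[\eta]=m$, $K_\eta=1$), this produces a recursion of the form $C_k \leq A\, m^{k-1} + m^{a}\, C_{k-1}$ for a universal constant $A=A(a,h,F)$. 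Since $a>1$ and $m<1$ give $m^{a-1}<1$, iterating resolves it to $C_k \leq C'\, m^{k-1}/(1-m^{a-1})$, i.e.\ $C_k = O(m^k)$.

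With this in hand, I would choose weights $w_k = c\, r^{-k}$ with $r>1$ and $c<1-1/r$ so that $\sum_k w_k < 1$. Then $\{S_\infty > n\} \subseteq \bigcup_{k\geq 0}\{Z_k > n w_k\}$, and by Potter's bound for the slowly varying $h$, one has $h(n w_k)\leq w_k^{-\epsilon}h(n)$ for $n$ large and any fixed $\epsilon>0$. Therefore,
\[
\mathbb{P}[S_\infty > n] \leq \sum_{k\geq 0} \mathbb{P}[Z_k > n w_k] \leq C''\, n^{-a}h(n) \sum_{k\geq 0} (m\, r^{a+\epsilon})^k.
\]
Picking $r$ close enough to $1$ that $m\, r^{a+\epsilon}<1$ (possible since $m<1$), the geometric series converges, and the desired bound $\mathbb{P}[S_\infty > n] \leq C\, n^{-a} h(n)$ follows.

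The main obstacle is making the linear dependence of the constant in Lemma~\ref{lem: tail of one generation GW} on the input tail constants explicit enough to yield the recursion $C_k \leq A\, m^{k-1}+m^a C_{k-1}$; without this linearity the $C_k$ might grow in $k$ and the generation-wise union bound would fail. Once that bookkeeping is done, the rest is routine slow-variation manipulation (Potter's bound) and geometric summation, with trivial estimates $\mathbb{P}[Z_k>n]\leq 1$ handling the small-$n$ regime where Potter's bound does not apply.
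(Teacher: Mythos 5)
Your overall strategy -- a weighted union bound $\{S_\infty>n\}\subseteq\bigcup_k\{Z_k>nw_k\}$ with geometric weights, combined with per-generation tail bounds whose constants decay geometrically in $k$ -- is viable, but the step you yourself flag as ``the main obstacle'' is where the argument actually breaks as written. The recursion $C_k\leq A\,m^{k-1}+m^a C_{k-1}$ does \emph{not} follow from Lemma~\ref{lem: tail of one generation GW} as stated or as proved. That proof splits at $\xi\leq n/(2\mathbb{E}[\eta])$, so the ``big $\xi$'' contribution is $\mathbb{P}[\xi>n/(2\mathbb{E}[\eta])]\leq K_\xi\,(2\mathbb{E}[\eta])^a\,n^{-a}h\bigl(\tfrac{n}{2\mathbb{E}[\eta]}\bigr)$, i.e.\ the coefficient of $C_{k-1}$ in the recursion is $(2m)^a$ times a Potter correction, not $m^a$. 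As soon as $m>1/2$ (or, more generally, once the Potter constant is accounted for), this coefficient can exceed $1$, in which case the $C_k$ grow geometrically and the final sum $\sum_k C_k\,r^{(a+\epsilon)k}$ diverges for every $r>1$. The gap is repairable -- split instead at $Z_{k-1}\leq(1-\epsilon')n/m$, apply Nagaev's inequality \eqref{eq:lubottotpooGt2} with $x=\epsilon'n$ and $\gamma=\epsilon'm/(1-\epsilon')$, and use the sharp form of Potter's bound (constant arbitrarily close to $1$ for arguments above a threshold, with the small-$n$ regime handled by $\mathbb{P}[Z_k>0]\leq m^k$) to get a contraction ratio close to $m^{a-\delta}<1$ -- but this requires reworking the proof of Lemma~\ref{lem: tail of one generation GW} rather than citing it, and that reworking is precisely the content you left unestablished.

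It is worth noting that the paper avoids the induction on tail constants altogether. Instead of bounding $\mathbb{P}[Z_k>nw_k]$ for each $k$, it decomposes according to the \emph{first} generation at which $Z_k\geq\rho^k n$ (for $\rho\in\,]m,1[$ with $m\rho^{-a-\delta}<1$): on the event $\{Z_k\geq\rho^k n,\ Z_{k-1}<\rho^{k-1}n\}$ the bound $Z_{k-1}<\rho^{k-1}n$ is available for free, so $Z_k-mZ_{k-1}\geq(\rho-m)\rho^{k-1}n>(\rho-m)Z_{k-1}$ and a single application of Nagaev's inequality gives the bound $C\,\mathbb{E}[Z_{k-1}]\,H(\gamma\rho^{k-1}n)=C\,m^{k-1}H(\gamma\rho^{k-1}n)$, using only $\mathbb{E}[Z_{k-1}]=m^{k-1}$ and no tail information about $Z_{k-1}$. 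Potter's bound and the choice of $\rho$ then make the sum over $k$ geometric. If you want to keep your route, you must carry out the constant-tracking honestly; otherwise the first-crossing decomposition is the cleaner way to exploit subcriticality here.
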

Recall the definition of the partition $(\mathcal C_\alpha(0,i),i\geq 0)$ of $\mathcal C_\alpha(0)$ from \eqref{def:Calpha0i}. 
\begin{lem}\label{lem:bounds for C(d,K)}
 For $d\geq 5$ and $K\geq 1$, there exist $0<c(d,\alpha)\leq C(d,\alpha,K)<\infty$ such that
\begin{equation}\label{eq:lbfC1}
c(d,\alpha)\cdot n^{1-d/2}\leq \mathbb P[\#\mathcal C_\alpha(0,1)>n]\leq \mathbb{P}\left[\bigcup\limits_{i=1}^{K}\#\mathcal{C}_{\alpha}(0,K)> n\right]\leq C(d,\alpha,K)\cdot n^{1-d/2}.
\end{equation}
 Since $\mathcal{U}_{\alpha}(0,K)\subset\bigcup\limits_{i=1}^{K}\#\mathcal{C}_{\alpha}(0,K)$, the same upper bound holds for the tail distribution of $\#\mathcal{U}_{\alpha}(0,K)$:
 \begin{equation}\label{eq:lbfC2}
  \mathbb{P}[\#\mathcal{U}_{\alpha}(0,K)> n]\leq C(d,\alpha,K)\cdot n^{1-d/2}.
 \end{equation}
\end{lem}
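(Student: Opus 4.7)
The lemma decomposes into three tasks. The middle inequality $\mathbb P[\#\mathcal C_\alpha(0,1)>n]\le\mathbb P[\#\bigcup_{i=1}^K\mathcal C_\alpha(0,i)>n]$ is immediate from $\mathcal C_\alpha(0,1)\subseteq\bigcup_{i=1}^K\mathcal C_\alpha(0,i)$, and \eqref{eq:lbfC2} will follow from the inclusion $\mathcal U_\alpha(0,K)\subseteq\bigcup_{i=1}^K\mathcal C_\alpha(0,i)$ already recorded in the statement. So the plan is to prove the lower bound on $\#\mathcal C_\alpha(0,1)$ and the upper bound on $\#\bigcup_{i=1}^K\mathcal C_\alpha(0,i)$.

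For the \emph{lower bound}, a single long loop through $0$ will suffice. By Lemma~\ref{lem:cut the loop passing through a vertex into excursions}, cutting the loops of $\mathcal L_\alpha$ at $0$ produces $\xi(0,\mathcal L_\alpha)$ i.i.d.\ SRW excursions from $0$ conditioned on being finite, and $\mathbb P[\xi(0,\mathcal L_\alpha)\geq 1]$ is a positive constant. Combining the local CLT asymptotic $\mathbb P^0[X_k=0]\sim c_d k^{-d/2}$ with the renewal identity relating the return-time and first-return generating functions yields the excursion-length tail $\mathbb P^0[\tau^+(0)>m\mid\tau^+(0)<\infty]\sim c'_d\, m^{1-d/2}$ for $d\geq 3$. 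Hence with probability at least $c\cdot n^{1-d/2}$ there is at least one excursion of length $\geq 2n$, and for transient SRW in $d\geq 3$ its range contains $\geq n$ distinct vertices with probability bounded below uniformly in $n$.

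For the \emph{upper bound}, the plan is an induction on $K$ driven by Lemma~\ref{lem: tail of one generation GW}. For $K=1$, bound $\#\mathcal C_\alpha(0,1)$ by the total length $\sum_{i=1}^{\xi(0,\mathcal L_\alpha)} L_i$ produced by the excursion decomposition, where $\xi(0,\mathcal L_\alpha)$ is negative binomial (exponential tails) and the i.i.d.\ lengths $L_i$ have tail $\leq C m^{1-d/2}$. For $d\geq 5$ the exponent $a=d/2-1$ exceeds $1$, so Lemma~\ref{lem: tail of one generation GW} gives the required bound $C\cdot n^{1-d/2}$. For the step, write $\#\bigcup_{i=1}^{K}\mathcal C_\alpha(0,i)\leq\#\bigcup_{i=1}^{K-1}\mathcal C_\alpha(0,i)+\sum_{x\in\mathcal C_\alpha(0,K-1)} L(x)$ with $L(x)=\sum_{\ell\in\mathcal L_\alpha,\,x\in\ell}|\ell|$, using that every vertex of $\mathcal C_\alpha(0,K)$ lies on a loop meeting the frontier $\mathcal C_\alpha(0,K-1)$. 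By translation invariance each $L(x)$ is distributed as $L(0)$, whose tail is controlled by the base case. A second application of Lemma~\ref{lem: tail of one generation GW} with $\xi\stackrel{d}{=}\#\bigcup_{i=1}^{K-1}\mathcal C_\alpha(0,i)$ (tail $\leq C(K-1)n^{1-d/2}$ by induction) and $\eta\stackrel{d}{=} L(0)$ then closes the induction.

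The \emph{main obstacle} is that the summands $(L(x))_x$ and the index set $\mathcal C_\alpha(0,K-1)$ are measurable functions of the same loop ensemble $\mathcal L_\alpha$, while Lemma~\ref{lem: tail of one generation GW} requires $\xi$ to be independent of the family $(\eta_i)$. The remedy is a decoupling through the filtration used in the paper: conditioning on $\mathcal F_{V_{K-2}}$ (with $V_{K-2}=\bigcup_{i=0}^{K-2}\mathcal C_\alpha(0,i)$) freezes $V_{K-1}$ and hence the frontier $\mathcal C_\alpha(0,K-1)$, while the loops contributing to $\#\mathcal C_\alpha(0,K)$ are the ones avoiding $V_{K-2}$, which under this conditioning form an independent Poisson ensemble of intensity $\alpha\mu\cdot\mathbf{1}_{\ell\cap V_{K-2}=\emptyset}$. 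Translation invariance of $\mu$ then allows one to couple, vertex by vertex along the frontier, to fresh independent copies of the base-case process, yielding an i.i.d.\ majorant $\sum_{i\leq\xi}\tilde L_i$ to which Lemma~\ref{lem: tail of one generation GW} does apply, with exponent $a=d/2-1>1$ for $d\geq 5$.
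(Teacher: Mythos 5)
Your proposal is correct in outline and is essentially the argument the paper gives: the base case rests on the excursion decomposition at $0$ (Lemma~\ref{lem:cut the loop passing through a vertex into excursions}) together with Griffin's tail asymptotics for $\tau^{+}(0)$, and the general case follows by induction over the loop-distance generations, dominating each new generation by $\sum_{i=1}^{\xi}\eta_i$ with i.i.d.\ $\eta_i$ independent of $\xi$ and invoking Lemma~\ref{lem: tail of one generation GW}. Three remarks on where you diverge or are too quick. First, for $K=1$ the paper proves the sharper Lemma~\ref{lem:tail of the range of the loop intersecting 0} (exact asymptotics, via subexponentiality), whereas you only extract two-sided bounds and replace $\#\mathcal C_\alpha(0,1)$ by the total length of loops through $0$; this is legitimate and slightly more elementary, since $d\geq 5$ gives the exponent $d/2-1>1$ needed for Lemma~\ref{lem: tail of one generation GW} and for finite means. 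Second, your lower bound asserts that an excursion conditioned to be long has range proportional to its length with probability bounded below uniformly in $n$; this is exactly the nontrivial content of the paper's Lemma~\ref{lem:tail of the length of first finite excursion}, proved there via the Hamana--Kesten large deviation bound for the range together with a Radon--Nikodym estimate for the length conditioning, so in your write-up this step needs such an input (under your weaker conditioning $\{\tau^{+}(0)\geq 2n\}$ a range large-deviation bound plus $1-F=1/G(0,0)>1/2$ suffices), it cannot simply be asserted. Third, in the induction the literal majorant $\sum_{x\in\mathcal C_\alpha(0,K-1)}L(x)$ counts a loop once for every frontier vertex it visits, and a sum with such multiplicities is not obviously dominated by an i.i.d.\ sum; the correct (and standard) version is the sequential exploration you in fact describe, in which each loop is revealed only once, at the first frontier vertex from which it is discovered, and the conditional intensity of unrevealed loops is dominated by $\alpha\mu$. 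With that phrasing your decoupling via $\mathcal F_{V_{K-2}}$ makes explicit the stochastic domination that the paper states without proof, which is a useful addition rather than a deviation.
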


We postpone the proof of the lemmas until the end of this section.

\begin{proof}[Proof of Proposition \ref{prop:exponent of size of cluster for d larger than 5}]
The lower bound follows from Lemma~\ref{lem:bounds for C(d,K)}. 

For the upper bound, let $d\geq 5$ and $\alpha<\thrcz$. 
Since $\#\mathcal{C}_{\alpha}(0)$ is finite almost surely, $\lim\limits_{K\rightarrow\infty}\#\mathcal{U}_{\alpha}(0,K)=0$. 
By the dominated convergence, we can choose $K$ large enough such that $\mathbb{E}[\#\mathcal{U}_{\alpha}(0,K)]<1$. 
Then we define a sub-critical Galton-Watson process $(Z_i)_{i\geq 0}$ with offspring distribution $\mathbb{P}[\#\mathcal{U}_{\alpha}(0,K)\in\cdot]$, 
so that $\sum\limits_{i\geq 0}\#\mathcal{C}_{\alpha}(0,Ki)$ is stochastically dominated by $S_{\infty}\overset{\text{def}}{=}\sum\limits_{i=0}^{\infty}Z_i$.
By Lemma \ref{lem:bounds for C(d,K)} and Lemma \ref{lem:upper bound of the tail of total progeny of our Galton-Watson tree}, there exists $C=C(d,\alpha)<\infty$ such that
\[
\mathbb{P}\left[\sum\limits_{i\geq 0}\#\mathcal{C}_{\alpha}(0,Ki)>n\right]\leq \mathbb{P}[S_{\infty}>n]<C \cdot n^{1-d/2}.
\]
Let $(\eta_i)_i$ be a sequence of independent copies of $S_{\infty}$. 
We further suppose that they are independent of $\mathcal{L}_{\alpha}$. 
Then for $j=1,\ldots,K-1$, $\sum\limits_{i\geq 0}\#\mathcal{C}_{\alpha}(0,Ki+j)$ is stochastically dominated by $\sum\limits_{i=1}^{\#\mathcal{C}_{\alpha}(0,j)}\eta_i$. 
By applying Lemma \ref{lem: tail of one generation GW} for $\sum\limits_{i=1}^{\#\mathcal{C}_{\alpha}(0,j)}\eta_i$, there exists $C'=C'(d,\alpha)<\infty$ such that for $j=1,\ldots,K-1$,
\[
\mathbb{P}\left[\sum\limits_{i\geq 0}\#\mathcal{C}_{\alpha}(0,Ki+j)>n\right]\leq \mathbb{P}\left[\sum\limits_{i=1}^{\#\mathcal{C}_{\alpha}(0,j)}\eta_i>n\right]<C' \cdot n^{1-d/2}.
\]
Similar upper bound with a bigger constant holds for $\#\mathcal{C}_{\alpha}(0)$, since
\[
\{\#\mathcal{C}_{\alpha}(0)>n\}\subset\bigcup\limits_{j=0}^{K-1}\left\{\sum\limits_{i=0}^{\infty}\#\mathcal{C}_{\alpha}(0,Ki+j)>\frac{n}{K}\right\}.
\]
The proof is complete.
\end{proof}

It remains to prove the lemmas.

\subsubsection{Proof of Lemma~\ref{lem: tail of one generation GW}}

By \cite[Theorem~2]{NagaevMR616627}, for $\gamma>0$,
\[
\sup\limits_{\{k,x:x\geq \gamma k\}}\frac{\mathbb{P}\left[\sum\limits_{i=1}^{k}\eta_i-k\mathbb{E}[\eta]>x\right]}{k\mathbb{P}[\eta>x]}<\infty.
\]
By taking $\gamma=\mathbb{E}[\eta]$, $k=\xi$ and $x=n/2$, there exists $C<\infty$ such that
\[
\mathbb{P}\left[\sum\limits_{i=1}^{\xi}\eta_i>n,\xi\leq\frac{n}{2\mathbb{E}[\eta]}\right]\leq C\cdot \mathbb{E}[\xi]\cdot \mathbb{P}[\eta>n/2]
\leq C\cdot 2^a\cdot \mathbb{E}[\xi]\cdot n^{-a}h(n/2).
\]
Since $\mathbb{P}[\xi>\frac{n}{2\mathbb{E}[\eta]}]\leq (2\mathbb{E}[\eta])^an^{-a}h\left(\frac{n}{2\mathbb{E}[\eta]}\right)$ by assumption,
\[
\mathbb{P}\left[\sum\limits_{i=1}^{\xi}\eta_i>n\right]\leq 2^a\cdot C\cdot \mathbb{E}[\xi]\cdot n^{-a}h(n/2)+(2\mathbb{E}[\eta])^a\cdot n^{-a}h\left(\frac{n}{2\mathbb{E}[\eta]}\right).
\]
By the definition of slowly varying function, there exists $C'<\infty$ such that
\[
\mathbb{P}\left[\sum\limits_{i=1}^{\xi}\eta_i>n\right]\leq C'\cdot n^{-a}h(n).
\]
\qed

\subsubsection{Proof of Lemma~\ref{lem:upper bound of the tail of total progeny of our Galton-Watson tree}}

Set $H(x)=x^{-a}h(x)$. We may assume that $H(x)\leq 1$ for $0<x \leq 1$.
Denote by $m\overset{\text{def}}{=}\mathbb{E}[Z_1]$. 
By assumption, $m<1$, hence $\mathbb{E}[Z_{k-1}]=m^{k-1}$ and $\mathbb{P}[Z_{k-1}>0]\leq \mathbb{E}[Z_{k-1}]=m^{k-1}$. 

Fix $\delta>0$ and take $\rho\in]m,1[$ close enough to $1$ so that $m\rho^{-a-\delta}<1$. (This particular choice of $\rho$ will be clear during the proof.) 
Let $\gamma=\rho-m$.  
Then, 
\begin{equation}\label{eq:lubottotpooGt1}
\mathbb{P}\left[Z_k\geq \rho^k n,Z_{k-1}<\rho^{k-1} n\right]
\leq\mathbb{P}\left[Z_k-mZ_{k-1}\geq \gamma\rho^{k-1}n>\gamma Z_{k-1},Z_{k-1}>0\right].
\end{equation}
 By \cite[Theorem 2]{NagaevMR616627}, for $\gamma>0$,
 \begin{equation}\label{eq:lubottotpooGt2}
  C(\gamma)\overset{\text{def}}{=}\sup\limits_{p\geq 1}\sup\limits_{x\geq \gamma p}\frac{\mathbb{P}\left[\sum\limits_{i=1}^{p}\eta_i-pm>x\right]}{pH(x)}<\infty
 \end{equation}
 where $(\eta_i)_i$ are i.i.d. variables with the distribution $F$. By conditioning on $Z_{k-1}$ and then applying \eqref{eq:lubottotpooGt2} with $p=Z_{k-1}$ and $x=\gamma\rho^{k-1}n$, 
 \begin{equation}\label{eq:lubottotpooGt3}
 \eqref{eq:lubottotpooGt1}\leq  C(\gamma)\cdot \mathbb{E}[Z_{k-1}]\cdot H(\gamma\rho^{k-1}n)
 =C(\gamma)\cdot m^{k-1}\cdot H(\gamma\rho^{k-1}n).
 \end{equation}
By Protter's Theorem, see \cite[Theorem 1.5.6]{BinghamMR1015093}, 
there exists $C'= C'(\delta)$ such that for all $n\geq 1$ and $c\in]0,1[$, $H(cn)\leq C'\cdot c^{-a-\delta}\cdot H(n)$. 
Therefore, 
\[
\eqref{eq:lubottotpooGt3}\leq C'(\delta)C(\gamma)\gamma^{-a-\delta}(m\rho^{-a-\delta})^{k-1}H(n).
\]
For $n>1/\rho$, $Z_0=1<\rho n$. Thus,
\[
\left\{S_{\infty}>\frac{1}{1-\rho}n\right\}\subset\{\exists k\geq 1:Z_k\geq \rho^kn,Z_{k-1}<\rho^{k-1}n\},
\]
and for $n>1/\rho$,
\begin{align*}
\mathbb{P}\left[S_{\infty}>\frac{\rho}{1-\rho}n\right]\leq & \mathbb{P}[\exists k\geq 1:Z_k\geq \rho^kn,Z_{k-1}<\rho^{k-1}n]\\
\leq &\sum\limits_{k\geq 1}\mathbb{P}[Z_k\geq \rho^kn,Z_{k-1}<\rho^{k-1}n]\\
\leq &\frac{C'(\delta)C(\rho-m)}{(\rho-m)^{a+\delta}(1-m\rho^{-a-\delta})}H(n).
\end{align*}
Finally, there exists $C''<\infty$ such that for $n\geq 1$, $\mathbb{P}[S_{\infty}>n]<C''\cdot H(n)$.
\qed

\subsubsection{Proof of Lemma \ref{lem:bounds for C(d,K)}}

Lemma~\ref{lem:bounds for C(d,K)} follows from the lemma:
\begin{lem}\label{lem:tail of the range of the loop intersecting 0}
For $d\geq 3$ and $\alpha>0$, as $x\rightarrow\infty$,
$$\mathbb{P}[\#\mathcal{C}_{\alpha}(0,1)>x]\sim \frac{\alpha d^{d/2}}{(d/2-1)(2\pi G(0,0))^{d/2}}x^{1-\frac{d}{2}}.$$
In particular, for $p\geq 0$,
$\mathbb{E}[(\#\mathcal{C}_{\alpha}(0,1))^{p}]<\infty$ iff $p<\frac{d}{2}-1$.
\end{lem}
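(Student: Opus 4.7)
The plan is to apply Lemma~\ref{lem:cut the loop passing through a vertex into excursions} to reduce the tail computation to a question about SRW excursions from $0$. Cutting every loop of $\mathcal L_\alpha$ that visits $0$ at the vertex $0$ produces $N:=\xi(0,\mathcal L_\alpha)$ i.i.d.\ finite-length excursions $e_1,\dots,e_N$ from $0$, where $N$ is negative binomial and, by a Campbell-type identity applied to $\mu$ together with the computation $\int\xi(0,\ell)\,d\mu(\ell)=\sum_{n\geq 2}\mathbb P^0[X_n=0]=G(0,0)-1$, we have $\mathbb E[N]=\alpha(G(0,0)-1)$. Denoting by $\mathcal R(e_i)$ the vertex set visited by $e_i$,
\[
\mathcal C_\alpha(0,1)\cup\{0\}=\bigcup_{i=1}^N\mathcal R(e_i),
\]
so the lemma reduces to analysing the size of this random union.

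The main input is the tail of $|\mathcal R(e_1)\setminus\{0\}|$, which I would obtain in two steps. First, the generating-function identity $F(z)=1-1/G(z)$ with $G(z):=\sum_n\mathbb P^0[X_n=0]\,z^n$ and $F(z):=\sum_n\mathbb P^0[\tau^+(0)=n]\,z^n$, combined with the local central limit theorem $\mathbb P^0[X_{2n}=0]\sim 2(d/(4\pi n))^{d/2}$ and singularity analysis of $G$ at $z=1$ (alternatively, direct Fourier-based evaluation of $G(z)$ near $z=1$), yields
\[
\mathbb P^0[n<\tau^+(0)<\infty]\sim\frac{d^{d/2}}{(d/2-1)(2\pi)^{d/2}G(0,0)^2}\,n^{1-d/2}.
\]
Second, by the law of large numbers for the range of SRW on $\mathbb Z^d$ ($d\geq 3$), $|\mathcal R(X_0,\dots,X_n)|/n\to 1/G(0,0)$ almost surely; combining this with a concentration estimate on the rare event that the excursion is long, one transfers the length tail to the range tail:
\[
\mathbb P[|\mathcal R(e_1)\setminus\{0\}|>x]\sim\frac{d^{d/2}}{(d/2-1)(2\pi G(0,0))^{d/2}(G(0,0)-1)}\,x^{1-d/2}.
\]

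Since this tail is regularly varying with index $1-d/2<-1$ (hence subexponential) and $N$ has exponential moments, the classical theorem on random sums of subexponential summands (Embrechts--Kl\"uppelberg--Mikosch) gives
\[
\mathbb P\Bigl[\sum_{i=1}^N(|\mathcal R(e_i)|-1)>x\Bigr]\sim\mathbb E[N]\cdot\mathbb P[|\mathcal R(e_1)\setminus\{0\}|>x].
\]
The size of the union $\#\mathcal C_\alpha(0,1)=\bigl|\bigcup_{i=1}^N\mathcal R(e_i)\setminus\{0\}\bigr|$ agrees with this sum up to an overlap correction bounded by the probability that two excursions both have range $\geq cx$, which is $O(x^{2(1-d/2)})=o(x^{1-d/2})$; hence the same asymptotic holds for $\#\mathcal C_\alpha(0,1)$. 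Substituting $\mathbb E[N]=\alpha(G(0,0)-1)$, the factor $(G(0,0)-1)$ cancels and the claimed constant $\alpha d^{d/2}/\bigl((d/2-1)(2\pi G(0,0))^{d/2}\bigr)$ emerges.

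The main obstacle is the sharp asymptotic for $q_n:=\mathbb P^0[\tau^+(0)=n]$. Karamata's Tauberian theorem applied directly to $F(z)$ recovers the correct tail of $\sum_{k>n}q_k$ only when the singular exponent $d/2-1$ lies in $(0,1)$, i.e.\ when $d=3$; for $d\geq 4$ one needs either a finer Tauberian statement exploiting the regular variation of $q_n$ itself, or an explicit contour-integral computation of $q_n$ from the singular expansion of $G(z)$ at $z=1$, carefully separating singular from analytic parts (and handling potential logarithmic corrections at even $d$, which however do not affect the leading power-law constant). A secondary technical point is upgrading the LLN for the range to a quantitative concentration statement on the rare event $\{\tau^+(0)\sim G(0,0)x\}$, sharp enough to yield an asymptotic equivalence with the right constant rather than only the correct order.
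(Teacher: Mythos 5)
Your skeleton is the same as the paper's: cut the loops through $0$ into $\xi(0,\mathcal L_\alpha)$ i.i.d.\ finite excursions via Lemma~\ref{lem:cut the loop passing through a vertex into excursions}, obtain the tail of a single excursion's range (return-time tail combined with the range density $1-F=1/G(0,0)$), and conclude via the theory of subexponential random sums, using that $\xi(0,\mathcal L_\alpha)$ is negative binomial with $\mathbb E[\xi(0,\mathcal L_\alpha)]=\alpha(G(0,0)-1)$; your constants agree with part b) of Lemma~\ref{lem:tail of the length of first finite excursion}. However, the two points you defer as ``obstacles'' are precisely where the real work lies, and as written they are gaps. For the return-time tail no Tauberian or contour-integral analysis is needed: the paper cites Griffin \cite{GriffinMR1060695}, who proves the local asymptotic $\mathbb P^0[\tau^+(0)=2n]\sim(1-F)^2\,\mathbb P^0[X_{2n}=0]$ for $d\geq 3$, and combines it with the local CLT; your concern about $d\geq 4$ disappears once one works with this local (rather than integrated) statement. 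The second deferred point is more serious: you must show that, conditionally on the polynomially rare event $\{\tau^+(0)=2n\}$, the range is still concentrated near $(1-F)\cdot 2n$, and the unconditional LLN for the range gives nothing here. The paper proves this (part a) of Lemma~\ref{lem:tail of the length of first finite excursion}) by controlling the upward deviation with the Hamana--Kesten large deviation bound \cite{HamanaKestenMR1841327}, which is exponentially small and hence negligible against the polynomially small conditioning probability, and the downward deviation by bounding the Radon--Nikodym derivative of the conditioned excursion with respect to $\mathbb P^0$ on $\sigma(X_0,\dots,X_{\lfloor 2n-n\delta\rfloor})$ (local CLT again), so that the weak LLN for the range transfers. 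Without an argument of this kind, your ``transfer of the length tail to the range tail'' with the exact constant is not established.

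A smaller point: your union-versus-sum step is loose. The inequality $|\bigcup_i\mathcal R(e_i)|\leq\sum_i|\mathcal R(e_i)|$ gives the upper bound, but the ``overlap correction bounded by two simultaneously large excursions'' does not by itself yield the matching lower bound, since overlaps can accumulate from many moderate excursions. The clean route, which is the one the paper takes, is the sandwich $\max_i|\mathcal R(e_i)|\leq|\bigcup_i\mathcal R(e_i)|\leq\sum_i|\mathcal R(e_i)|$ together with the direct computation (dominated convergence) that $\mathbb P[\max_i\eta_i>x]\sim\mathbb E[\xi(0,\mathcal L_\alpha)]\,\mathbb P[\eta_1>x]$, which squeezes the union. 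With that fix, and with the two conditional/local estimates above supplied or cited, your argument coincides with the paper's proof.
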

Indeed, the case $K=1$ in Lemma~\ref{lem:bounds for C(d,K)} follows from Lemma~\ref{lem:tail of the range of the loop intersecting 0}. 
We prove the general case by induction on $K$. 
Suppose that for any $K\leq m$ there exists $C(d,\alpha,K)<\infty$ such that 
\begin{equation}\label{eq:lbfC3}
\mathbb{P}\left[\#\mathcal{C}_{\alpha}(0,K)> n\right]\leq C(d,\alpha,K)\cdot n^{1-d/2}.
\end{equation}
Let $(\eta_i)_{i}$ be a sequence of independent variables with distribution $\mathbb{P}[\#\mathcal{C}_{\alpha}(0,1)\in \cdot]$ which is also independent from $\mathcal{L}_{\alpha}$. 
Then, $\#\mathcal{C}_{\alpha}(0,m+1)$ is stochastically dominated by $\sum\limits_{i=1}^{\#\mathcal{C}_{\alpha}(0,m)}\eta_i$. 
The proof is complete by using \eqref{eq:lbfC3} for $K=1$ and $K=m$, and by applying Lemma~\ref{lem: tail of one generation GW} with $\xi=\#\mathcal{C}_{\alpha}(0,m)$.
\qed

\medskip

It remains to prove Lemma~\ref{lem:tail of the range of the loop intersecting 0}. 
We first prove a result on the tail of the range of the first finite excursion of a SRW at $0$.

\begin{lem}\label{lem:tail of the length of first finite excursion}
Let $\mathbb{P}^{0}$ be the law of SRW on $\mathbb{Z}^d$ starting from $0$. 
Let $\mathbb{P}^{ex}$ be the law of the first excursion under $\mathbb{P}^{0}[\cdot|\tau^{+}(0)<\infty]$. 
Let $\mathbb{P}^{ex,2n}$ be the law of the first excursion given that the first excursion has exactly $2n$ jumps. 
Set $F=\mathbb{P}^{0}[\tau^{+}(0)<\infty]=1-\frac{1}{G(0,0)}$. Then,
\begin{itemize}
 \item[a)] for any $\epsilon>0$,
$$\lim\limits_{n\rightarrow\infty}\mathbb{P}^{ex,2n}\left[\left|\frac{1}{2n}\#(\text{Range of excursion})-(1-F)\right|>\epsilon\right]=0;$$
 \item[b)] $\mathbb{P}^{ex}[\#(\text{Range of excursion})>x]\sim \frac{d^{d/2}(1-F)^{d/2+1}}{(d/2-1)(2\pi)^{d/2}F}x^{1-\frac{d}{2}}$.
\end{itemize}

\end{lem}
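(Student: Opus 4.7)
My plan has three steps. First, I would establish the tail of $\tau^+(0)$ under $\mathbb{P}^0$ via the local central limit theorem and a standard Tauberian argument. Second, I would prove part (a) via a law of large numbers for the range of the conditioned walk, using an absolute continuity argument. Third, I would combine these to prove part (b).

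For the first step, the local central limit theorem gives $p_{2n}:=\mathbb{P}^0[X_{2n}=0]\sim 2(d/(4\pi n))^{d/2}$. Setting $P(z)=\sum_n p_n z^n$ and $\Phi(z)=\sum_n \mathbb{P}^0[\tau^+(0)=n]\,z^n$, the first-return decomposition yields $\Phi(z)=1-1/P(z)$. Since $P(1)=G(0,0)=(1-F)^{-1}$, writing $\Phi(1)-\Phi(z)=(P(1)-P(z))/(P(z)P(1))$ and applying a Karamata-type Tauberian theorem to the polynomial decay of $p_n$ gives $\mathbb{P}^0[\tau^+(0)=2n]\sim (1-F)^2\,p_{2n}$, whence
\[
\mathbb{P}^0[\tau^+(0)>2n]\sim \frac{2(1-F)^2(d/(4\pi))^{d/2}}{d/2-1}\,n^{1-d/2}.
\]

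For part (a), let $R_k=\#\{X_0,\dots,X_k\}$. By the Dvoretzky--Erd\H{o}s theorem, $R_k/k\to 1-F$ almost surely under $\mathbb{P}^0$. For $0<k<2n$ and any path with $X_i\neq 0$ for $1\leq i\leq k$, the Radon--Nikodym derivative of $\mathbb{P}^{ex,2n}$ with respect to $\mathbb{P}^0$ on $\sigma(X_0,\dots,X_k)$ equals $\mathbb{P}^{X_k}[\tau(0)=2n-k]/\mathbb{P}^0[\tau^+(0)=2n]$. Combining Step~1 with local CLT estimates for $\mathbb{P}^{x}[\tau(0)=2n-k]$ at points $x$ of typical distance $O(\sqrt{k})$ from the origin, this density stays uniformly bounded with high probability when $k\leq (1-\delta)\cdot 2n$, which transfers the LLN to the first $(1-\delta)\cdot 2n$ steps of the excursion. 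Since the excursion law is invariant under time-reversal $X_i\mapsto X_{2n-i}$, the same holds for the last $(1-\delta)\cdot 2n$ steps; transience ensures that the overlap of the two half-ranges is $o(n)$ in probability. Letting $\delta\to 0$ yields $R_{2n}/(2n)\to 1-F$ in probability under $\mathbb{P}^{ex,2n}$.

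For part (b), let $T=\tau^+(0)$. Part (a) implies $\mathbb{P}^{ex,2n}[R_{2n}>x]=\mathbf{1}_{\{(1-F)\cdot 2n>x\}}+o(1)$ uniformly in $n$ with $|(1-F)\cdot 2n-x|>\epsilon x$, while the transition window $n\in[(1-\epsilon)x/(2(1-F)),(1+\epsilon)x/(2(1-F))]$ contributes at most $O(\epsilon\cdot x^{1-d/2})$ to $\mathbb{P}^{ex}[R>x]$. Averaging against $\mathbb{P}^{ex}[T=2n]=\mathbb{P}^0[\tau^+(0)=2n]/F$ and invoking Step~1,
\[
\mathbb{P}^{ex}[R>x]\sim\frac{1}{F}\,\mathbb{P}^0\bigl[\tau^+(0)>\tfrac{x}{1-F}\bigr]\sim \frac{d^{d/2}(1-F)^{d/2+1}}{(d/2-1)(2\pi)^{d/2}\,F}\,x^{1-d/2}.
\]
The main obstacle is the LLN transfer in Step~2: one must bound the Radon--Nikodym density uniformly in $n$ (done via the explicit formula and the local CLT) and then handle the singular endpoint regime via the time-reversal symmetry of the excursion law combined with a transience-based overlap estimate.
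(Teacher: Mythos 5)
Your overall strategy for parts (a) and (b) is the same as the paper's: transfer the law of large numbers for the range from $\mathbb{P}^0$ to $\mathbb{P}^{ex,2n}$ by bounding the Radon--Nikodym density on the first $\lfloor(1-\delta)2n\rfloor$ steps, then obtain (b) by decomposing over $\tau^{+}(0)=2n$ and summing the return-time tail; your final constant checks out. The one genuine gap is in your Step 1: the term-by-term asymptotic $\mathbb{P}^0[\tau^{+}(0)=2n]\sim(1-F)^2\,p_{2n}$ does \emph{not} follow from the identity $\Phi(z)=1-1/P(z)$ by ``a Karamata-type Tauberian argument.'' Karamata-type theorems convert the behavior of $P(z)$ near $z=1$ into asymptotics of partial sums (or, via the monotone density theorem, of tails of a sequence already known to be eventually monotone or otherwise regular); they never yield asymptotics of the individual, non-monotone coefficients of $1/P(z)$, which is exactly what you need both here and later to lower-bound the denominator $\mathbb{P}^0[\tau^{+}(0)=2n]$ in your density estimate. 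This local limit theorem for the first return time is precisely Griffin's theorem, which the paper simply cites; if you do not cite it you must supply its (non-trivial) proof.

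Two smaller points on part (a). First, your time-reversal plus ``overlap is $o(n)$'' device is unnecessary and the overlap claim is not correct for fixed $\delta$ (the last $\lceil 2\delta n\rceil$ steps can contribute up to order $\delta n$ to the range); but since the full range differs from the range of the first $\lfloor(1-\delta)2n\rfloor$ steps by at most $2\delta n$ anyway, the one-sided density transfer already gives both the upper and lower deviations after letting $\delta\to0$, so the argument survives without time reversal. Second, the density bound holds uniformly over all endpoints $X_k$ (since $\mathbb{P}^{x}[\tau(0)=m]\leq\mathbb{P}^{x}[X_m=0]\leq Cm^{-d/2}$), so there is no need to restrict to ``typical'' positions at distance $O(\sqrt{k})$. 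For comparison, the paper handles the upward deviation of the range by the Hamana--Kesten large deviation bound under $\mathbb{P}^0$ divided by the polynomially small $\mathbb{P}^0[\tau^{+}(0)=2n]$, and the downward deviation by the density transfer, which is marginally cleaner but equivalent in substance.
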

\begin{proof}\
Firstly, by \cite{GriffinMR1060695}, for $d\geq 3$, as $n\rightarrow\infty$,
$$\mathbb{P}^0[\tau^{+}(0)=2n]\sim (1-F)^2\mathbb{P}^0[X_{2n}=0].$$
Then, by \cite[Theorem 1.2.1]{LawlerMR2985195},
$$\mathbb{P}^0[X_{2n}=0]\sim \frac{2d^{d/2}}{(4\pi n)^{d/2}}\text{ as }n\rightarrow\infty.$$
Thus,
\begin{equation}\label{eq:Griffin}
 \mathbb{P}^{0}[\tau^{+}(0)=2n]\sim (1-F)^2 \frac{2d^{d/2}}{(4\pi n)^{d/2}}.
\end{equation}
\begin{itemize}
 \item[a)] We see that $\mathbb{P}^{0}[\tau^{+}(0)=2n]$ decays polynomially. By \cite[Theorem 1]{HamanaKestenMR1841327},
 $$\mathbb{P}^{0}\left[\frac{1}{2n}\#(\text{Range of }\{X_0,\ldots,X_{2n}\})>1-F+\epsilon\right]\text{ goes to }0\text{ exponentially fast}.$$
 Then,
 \begin{multline*}
  \mathbb{P}^{ex,2n}\left[\frac{1}{2n}\#(\text{Range of excursion})>1-F+\epsilon\right]\\
  =\frac{1}{\mathbb{P}^{0}[\tau^{+}(0)=2n]}\mathbb{P}^{0}\left[\frac{1}{2n}\#(\text{Range of }\{X_0,\ldots,X_{2n}\})>1-F+\epsilon,\tau^{+}(0)=2n\right]\\
  \leq\frac{1}{\mathbb{P}^{0}[\tau^{+}(0)=2n]}\mathbb{P}^{0}\left[\frac{1}{2n}\#(\text{Range of }\{X_0,\ldots,X_{2n}\})>1-F+\epsilon\right]
 \end{multline*}
 which also goes to $0$ exponentially fast.
 On the other hand, for any fixed $\delta>0$, the Radon-Nikodym derivative $\frac{d\mathbb{P}^{ex,2n}}{d\mathbb{P}^{0}}$ with respect to $\sigma(X_0,\ldots,X_{\lfloor 2n-n\delta\rfloor})$ is bounded by a constant $c(\epsilon)$:
 \begin{align*}
  \mathbb{E}^0\left[\frac{d\mathbb{P}^{ex,2n}}{d\mathbb{P}^{0}}\Big|\sigma(X_0,\ldots,X_{\lfloor 2n-n\delta\rfloor})\right]=&1_{\{\tau^{+}(0)>\lfloor 2n-n\delta\rfloor\}}\frac{\mathbb{P}^{X_{\lfloor 2n-n\delta\rfloor}}\left[\tau^{+}(0)=\lceil n\delta\rceil\right]}{\mathbb{P}^0[\tau^{+}(0)=2n]}\\
 \leq & \frac{\mathbb{P}^{X_{\lfloor 2n-n\delta\rfloor}}\left[X_{\lceil n\delta\rceil}=0\right]}{\mathbb{P}^0[\tau^{+}(0)=2n]}\\
 \leq & c(\delta)
 \end{align*}
 where the last inequality comes from local central limit theorem with Equation \eqref{eq:Griffin}.

 For any fixed $\epsilon>0$, choose $\delta$ small enough such that $(1-F-\epsilon)\frac{2}{2-\delta}<1-F-\epsilon/2$. Then, for $n$ large enough,
 \begin{align*}
  \mathbb{P}^{ex,2n} & \left[\frac{1}{2n}\#(\text{Range of excursion})<1-F-\epsilon\right]\\
  & \leq \mathbb{P}^{ex,2n}\left[\frac{1}{2n}\#(\text{Range of }\{X_0,\ldots,X_{\lfloor 2n-n\delta\rfloor}\})<1-F-\epsilon\right]\\
  & \leq \mathbb{P}^{ex,2n}\left[\frac{1}{\lfloor 2n-n\delta\rfloor}\#(\text{Range of }\{X_0,\ldots,X_{\lfloor 2n-n\delta\rfloor}\})<1-F-\epsilon/2\right]\\
  & \leq c(\epsilon)\mathbb{P}^{0}\left[\frac{1}{\lfloor 2n-n\delta\rfloor}\#(\text{Range of }\{X_0,\ldots,X_{\lfloor 2n-n\delta\rfloor}\})<1-F-\epsilon/2\right]
 \end{align*}
 which goes to $0$ as $n$ tends to infinity since under $\mathbb{P}^0$,
 $$\frac{1}{n}\#(\text{Range of }\{X_0,\ldots,X_n\})\overset{n\rightarrow\infty}{\longrightarrow} 1-F\text{ in probability},$$
 see e.g. \cite[T1 in Section 4]{SpitzerMR0388547}.
 \item[b)] By decomposing the event according to the value of $\tau^{+}(0)$, we see that as $x\rightarrow\infty$,
 \begin{align*}
  \mathbb{P}^{ex}[\#(\text{Range of excursion})>x]=& \sum\limits_{n\geq \lfloor x/2\rfloor}\mathbb{P}^{0}[\tau^{+}(0)=2n]/\mathbb{P}^{0}[\tau^{+}(0)<\infty]\\
  & \times\mathbb{P}^{ex,2n}[\#(\text{Range of excursion})>x].
 \end{align*}
 By using the result in the first part and Equation \eqref{eq:Griffin}, as $x\rightarrow\infty$
 \begin{align*}
  \mathbb{P}^{ex}[\#(\text{Range of excursion})>x]\sim& \sum\limits_{n\geq \frac{x}{2(1-F)}}\mathbb{P}^{0}[\tau^{+}(0)=2n]/\mathbb{P}^{0}[\tau^{+}(0)<\infty]\\
  \sim&\sum\limits_{n\geq \frac{x}{2(1-F)}}\frac{(1-F)^2}{F} \frac{2d^{d/2}}{(4\pi n)^{d/2}}\\
  \sim&\frac{d^{d/2}(1-F)^{d/2+1}}{(d/2-1)(2\pi)^{d/2}F}x^{1-\frac{d}{2}}.\qedhere
 \end{align*}
\end{itemize}
\end{proof}

\medskip

\begin{proof}[Proof of Lemma \ref{lem:tail of the range of the loop intersecting 0}]
By \cite[Theorem 3.29]{StanMR3097424}, the result of Lemma \ref{lem:tail of the length of first finite excursion} implies that 
the distribution of $\#(\text{Range of excursion})$ under $\mathbb{P}^{ex}$ is sub-exponential for $d\geq 3$. 
By \cite[Theorem 3.37]{StanMR3097424}, the random stopped sum $S_{\tau}\overset{\text{def}}{=}\sum\limits_{i=1}^{\tau}\eta_i$ of i.i.d. sub-exponential variables $(\eta_i)_i$ is again sub-exponential if $\mathbb{E}[(1+\delta)^{\tau}]<\infty$ for some $\delta>0$. Moreover,
$$\mathbb{P}[S_{\tau}>x]\sim\mathbb{E}[\tau]\mathbb{P}[\eta_1>x]\text{ as }x\rightarrow\infty.$$
Here, we are in a slightly different situation. In fact, by Lemma \ref{lem:cut the loop passing through a vertex into excursions}, we have that
$$\#(\mathcal{C}_{\alpha}(0,1)\cup\{0\})\overset{\text{law}}{=}\#\left(\bigcup\limits_{i=1}^{\xi(0,\mathcal{L}_{\alpha})}\mathrm{range}_i\right)$$
where $(\mathrm{range}_i)_i$ are i.i.d. variables which are independent of the Poisson loop soup and follow the distribution of the range of excursion under $\mathbb{P}^{ex}$. We see that
$$\max\limits_{i=1}^{\xi(0,\mathcal{L}_{\alpha})}\eta_i\leq\#\left(\bigcup\limits_{i=1}^{\xi(0,\mathcal{L}_{\alpha})}\mathrm{range}_i\right)\leq \sum\limits_{i=1}^{\xi(0,\mathcal{L}_{\alpha})}\eta_i$$
where $\eta_i\overset{\text{def}}{=}\#\mathrm{range}_i$ for all $i$. 
We will see that $\max\limits_{i=1}^{\xi(0,\mathcal{L}_{\alpha})}\eta_i$ and $\sum\limits_{i=1}^{\xi(0,\mathcal{L}_{\alpha})}\eta_i$ have the same tail behavior: 
On one hand, since $\xi(0,\mathcal{L}_{\alpha})$ has an exponentially decayed tail by Lemma \ref{lem:cut the loop passing through a vertex into excursions}, 
we can apply \cite[Theorem 3.37]{StanMR3097424}. Then, as $x\rightarrow\infty$,
$$\mathbb{P}\left[\sum\limits_{i=1}^{\xi(0,\mathcal{L}_{\alpha})}\eta_i>x\right]\sim\mathbb{E}[\xi(0,\mathcal{L}_{\alpha})]\mathbb{P}[\eta_1>x].$$
On the other hand,
\begin{align*}
 \mathbb{P}\left[\max\limits_{i=1}^{\xi(0,\mathcal{L}_{\alpha})}\eta_i>x\right]=&\mathbb{E}\left[1-\left(1-\mathbb{P}[\eta_1>x]\right)^{\xi(0,\mathcal{L}_{\alpha})}\right]\\
 =&\mathbb{P}[\eta_1>x]\mathbb{E}\left[\frac{1-\left(1-\mathbb{P}[\eta_1>x]\right)^{\xi(0,\mathcal{L}_{\alpha})}}{\mathbb{P}[\eta_1>x]\xi(0,\mathcal{L}_{\alpha})}\xi(0,\mathcal{L}_{\alpha})\right].
\end{align*}
By Lemma \ref{lem:tail of the length of first finite excursion}, $\mathbb{P}[\eta_1>x]\rightarrow 0$ as $x\rightarrow\infty$. 
Then, by the dominated convergence, 
\[
\mathbb{E}\left[\frac{1-\left(1-\mathbb{P}[\eta_1>x]\right)^{\xi(0,\mathcal{L}_{\alpha})}}{\mathbb{P}[\eta_1>x]\xi(0,\mathcal{L}_{\alpha})}\xi(0,\mathcal{L}_{\alpha})\right]
\overset{x\rightarrow\infty}{\sim}\mathbb{E}[\xi(0,\mathcal{L}_{\alpha})].
\]
Therefore, as $x\rightarrow\infty$,
$$\mathbb{P}\left[\max\limits_{i=1}^{\xi(0,\mathcal{L}_{\alpha})}\eta_i>x\right]\sim\mathbb{E}[\xi(0,\mathcal{L}_{\alpha})]\cdot \mathbb{P}[\eta_1>x].$$

Thus, we must have that as $x\rightarrow\infty$,
$$\mathbb{P}\left[\#\mathcal{C}_{\alpha}(0,1)>x\right]\sim\mathbb{E}[\xi(0,\mathcal{L}_{\alpha})]\mathbb{P}[\eta_1>x].$$
By Lemma \ref{lem:cut the loop passing through a vertex into excursions},
$$\mathbb{E}[\xi(0,\mathcal{L}_{\alpha})]=\alpha(G(0,0)-1).$$
By Lemma \ref{lem:tail of the length of first finite excursion}, as $x\rightarrow\infty$,
$$\mathbb{P}[\eta_1>x]\sim \frac{d^{d/2}}{(G(0,0)-1)(d/2-1)(2\pi G(0,0))^{d/2}}x^{1-\frac{d}{2}}.$$ 
Thus, as $x\rightarrow\infty$,
$$\mathbb{P}[\#\mathcal{C}_{\alpha}(0,1)>x]\sim \frac{\alpha d^{d/2}}{(d/2-1)(2\pi G(0,0))^{d/2}}x^{1-\frac{d}{2}}.$$ 

In particular, for $p\geq 0$,
$\mathbb{E}[(\#\mathcal{C}_{\alpha}(0,1))^{p}]<\infty$ iff $p<\frac{d}{2}-1$.
\end{proof}

\subsection{\texorpdfstring{$\thrcz\leq \thran$}{α\#≤α1}}
In this section we complete the proof of Theorem~\ref{thm:exponent of one-arm connectivity for d larger than 5} by showing the relation between $\thrcz$ and $\thran$:
\begin{prop}\label{prop: alpha * is smaller than alpha **}
 For $d\geq 5$, $\thrcz\leq\thran$.
\end{prop}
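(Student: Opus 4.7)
Fix $\alpha<\thrcz$. By Proposition~\ref{prop:exponent for one-arm connectivity for d larger than 5} there is $C_0=C_0(d,\alpha)<\infty$ with
\[
\mathbb{P}\bigl[0\stackrel{\mathcal L_\alpha}\longleftrightarrow\partial B(0,n)\bigr]\leq C_0\,n^{2-d}\qquad(n\geq1).
\]
The plan is to exhibit $\beta>1$ (depending on $\alpha$ and $d$) for which
\[
\limsup_{n\to\infty}\mathbb{P}\bigl[B(0,n)\stackrel{\mathcal L_\alpha}\longleftrightarrow\partial B(0,\lceil\beta n\rceil)\bigr]<1,
\]
since by the very definition of $\thran$ this is equivalent to $\alpha<\thran$.

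I would re-run the renormalization of Lemma~\ref{lem:uniform upper bound}. Setting
\[
f(n)\;=\;\sup_{k\leq n}\mathbb{P}\bigl[B(0,k)\stackrel{\mathcal L_\alpha}\longleftrightarrow\partial B(0,\lceil\beta k\rceil)\bigr],
\]
the same decomposition as in that proof --- splitting the crossing event into (i) the existence of a single loop from $B(0,b_k)$ to $\partial B(0,c_k)$, whose probability is bounded by $c_1(d,\beta)\,\alpha$ via Lemma~\ref{lem:one loop connection on Zd} (with $\Capa(B(0,b_k))\lesssim b_k^{d-2}$), and (ii) the complementary event which, by independence of disjointly supported loop configurations and a covering argument, reduces to a product of two smaller-scale crossings --- yields a recursion
\[
f(4n)\leq c_1(d,\beta)\,\alpha+c_2(d,\beta)\,f(n)^2.
\]
The decisive new input, replacing the trivial initialization $f(1)\leq C(d)\alpha$ used in Lemma~\ref{lem:uniform upper bound}, is that Proposition~\ref{prop:exponent for one-arm connectivity for d larger than 5} combined with a union bound over $B(0,n_0)$ gives, for every $n_0\geq1$,
\[
f(n_0)\leq(2n_0+1)^d\,C_0\,\bigl((\beta-1)n_0\bigr)^{2-d},
\]
which can be made arbitrarily small by sending $\beta\to\infty$. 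Choosing $\beta$ so that both $4c_1c_2\alpha<1$ (so the quadratic $x\mapsto c_1\alpha+c_2x^2$ admits two real fixed points $x^-<x^+\leq1$) and $f(n_0)<x^+$, iteration of the recursion yields $f(4^k n_0)\leq x^+<1$ for every $k\geq0$; by monotonicity of $f$, the desired $\limsup<1$ follows.

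The main obstacle is the careful bookkeeping of constants. Enlarging $\beta$ shrinks the initial value $f(n_0)$ at rate $(\beta-1)^{2-d}$ but inflates the covering constant $c_2(d,\beta)\sim\beta^{d-1}$, while the big-loop constant $c_1(d,\beta)$ from Lemma~\ref{lem:one loop connection on Zd} can be kept bounded since the intermediate radii $b_k,c_k$ are chosen with ratios depending only on $d$. The stability condition $4c_1c_2\alpha<1$ constrains $\beta\lesssim\alpha^{-1/(d-1)}$, and the initial-value condition $f(n_0)<x^+\asymp 1/c_2$ imposes a second polynomial constraint coupling $n_0$, $\beta$, and $C_0(d,\alpha)$. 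Verifying that these two constraints can be met simultaneously for every $\alpha<\thrcz$ --- by choosing $n_0$ depending on $C_0$ and then tuning $\beta$ within the resulting admissible interval --- is the technical heart of the argument; the fact that $C_0(d,\alpha)<\infty$ for all $\alpha<\thrcz$ is exactly what makes the interval non-empty.
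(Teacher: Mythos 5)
Your strategy---re-running the quadratic renormalization $f(4n)\leq c_1(d,\beta)\alpha+c_2(d,\beta)f(n)^2$ of Lemma~\ref{lem:uniform upper bound} with a better initialization supplied by Proposition~\ref{prop:exponent for one-arm connectivity for d larger than 5}---cannot reach all $\alpha<\thrcz$, and the obstruction is exactly the one you flag as ``the technical heart'' but do not resolve. For the iteration $x\mapsto c_1\alpha+c_2x^2$ to stay bounded away from $1$ you need the discriminant condition $4c_1(d,\beta)c_2(d,\beta)\alpha\leq 1$, and this condition does not involve the initialization at all. Here $c_2(d,\beta)$ is at least the covering number $s(d)>1$, and $c_1(d,\beta)$ is bounded \emph{below} by a positive constant $c(d)$, because by part (b) of Lemma~\ref{lem:one loop connection on Zd} the single-loop crossing probability of an annulus of bounded aspect ratio is at least of order $\alpha$ (the one-loop estimate is two-sided, not only an upper bound). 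Hence the recursion closes only for $\alpha\leq\alpha_0(d):=(4c(d)s(d))^{-1}$, a constant depending on $d$ alone---which is precisely the content of Lemma~\ref{lem:uniform upper bound} and yields nothing beyond $\thran>0$. Since $\thrcz$ need not lie below $\alpha_0(d)$ (by Theorem~\ref{thm: asymptotic of critical value in high dimensions} it is at least $2d-6+O(d^{-1})$, while $\alpha_0(d)$ shrinks with the covering numbers), the range $\alpha\in(\alpha_0(d),\thrcz)$ is left untreated. A second, independent problem is the requirement $f(n_0)\leq x^+\leq 1/c_2(d,\beta)\lesssim\beta^{1-d}$: again by Lemma~\ref{lem:one loop connection on Zd}(b), $f(n_0)\geq c(d)\min(\alpha\beta^{2-d},1)$ uniformly in $n_0$, because a single macroscopic loop crosses the annulus with at least this probability; so no upper bound on $f(n_0)$, however obtained, can get below $\beta^{1-d}$ for large $\beta$, and the two constraints on $\beta$ are compatible only when $\alpha$ is again smaller than a dimensional constant.

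The paper therefore abandons the recursion entirely for this proposition. It fixes $\beta$ large so that, by Lemma~\ref{lem: five annuli}, with probability at least $1/2$ none of five concentric subannuli of $B(0,\beta^5n)\setminus B(0,n)$ is crossed by a single loop; it shows via the BK inequality and $\mathbb E[\#\mathcal C_\alpha(0)]<\infty$ that a chain of loops containing three loops of diameter at least $n^{5/6}$ has probability $o(1)$ after a union bound over $\partial B(0,n)$ (Lemma~\ref{lem: several loops in order}); and it observes that on the complement any crossing must traverse some subannulus using at least $(\beta-1)n^{1/6}$ loops, an event of stretched-exponentially small probability by the Galton--Watson domination of Lemma~\ref{lem: exponential decay of the total generation}. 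This yields $\limsup_n\mathbb P[B(0,n)\stackrel{\mathcal L_\alpha}\longleftrightarrow\partial B(0,\beta^5n)]\leq 1/2$ for every $\alpha<\thrcz$. To salvage your approach you would need a mechanism of this kind, one that does not pay a covering constant multiplicatively against $\alpha$.
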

We prove Proposition~\ref{prop: alpha * is smaller than alpha **} by showing that if $\mathbb E[\#\mathcal C_\alpha(0)]<\infty$, 
then the probabilities of crossing annuli of large enough aspect ratio by chains of loops from $\mathcal L_\alpha$ are uniformly smaller than $1$. 
For a given annulus, we distinguish three possible situations: (a) one of the $5$ disjoint subannuli is crossed by a single loop from $\mathcal L_\alpha$, 
(b) there is a crossing with at least $3$ big loops, and (c) every crossing contains many loops. 
Probabilities of these events are estimated in $3$ lemmas below. 

\medskip

The first lemma estimates the probability that one of the $5$ disjoint subannuli of a given annulus is crossed by a loop. 
\begin{lem}\label{lem: five annuli}
For integers $\beta\geq 2$, $n\geq 1$, and $i=1,2,3,4,5$, let
\[ 
W_{n,i}=\{\exists \ell\in \mathcal{L}_{\alpha}~:~B(0,\beta^{i-1} n)\stackrel{\ell}\longleftrightarrow\partial B(0,\beta^{i} n)\}.
\]
Then, there exists $C(d)<\infty$ such that for $n\geq 1$,
\[
\mathbb{P}\left[\bigcup\limits_{i=1}^{5}W_{n,i}\right]\leq 5\alpha C(d)\beta^{2-d}.
\]
\end{lem}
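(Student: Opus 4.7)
The plan is to apply the standard union bound together with the first-moment estimate for a Poisson loop ensemble, and then quote the single-loop crossing estimate of Lemma~\ref{lem:one loop connection on Zd}a) on each of the five subannuli separately. The key observation is that each of the annuli $B(0,\beta^i n)\setminus B(0,\beta^{i-1} n)$ has the \emph{same} aspect ratio $\beta\geq 2$, so the bound on each $\mathbb{P}[W_{n,i}]$ comes out scale-invariant in $i$ and $n$, producing only a $\beta^{2-d}$ factor.

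More concretely, I first bound
\[
\mathbb{P}[W_{n,i}]\;\leq\;\mathbb{E}\bigl[\#\{\ell\in\mathcal{L}_\alpha:B(0,\beta^{i-1}n)\stackrel{\ell}\longleftrightarrow\partial B(0,\beta^i n)\}\bigr]\;=\;\alpha\cdot\mu\bigl(\ell:B(0,\beta^{i-1}n)\stackrel{\ell}\longleftrightarrow\partial B(0,\beta^i n)\bigr),
\]
using either the inequality $1-e^{-x}\leq x$ applied to the Poisson void probability, or equivalently Markov's inequality on the Poissonian count. Then I apply Lemma~\ref{lem:one loop connection on Zd}a) with $N=\beta^{i-1}n$, $M=\beta^i n$, $\lambda=\beta\geq 2$, and $K=B(0,\beta^{i-1}n)$, which yields
\[
\mu\bigl(\ell:B(0,\beta^{i-1}n)\stackrel{\ell}\longleftrightarrow\partial B(0,\beta^i n)\bigr)\;\leq\;C(d)\cdot\Capa(B(0,\beta^{i-1}n))\cdot(\beta^i n)^{2-d}.
\]
Note that the constant here can be taken uniform in $\beta\geq 2$ by fixing $\lambda=2$ in the application. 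Combining this with the capacity estimate $\Capa(B(0,\beta^{i-1}n))\leq C(d)(\beta^{i-1}n)^{d-2}$ from Lemma~\ref{lem: capacity} gives
\[
\mu\bigl(\ell:B(0,\beta^{i-1}n)\stackrel{\ell}\longleftrightarrow\partial B(0,\beta^i n)\bigr)\;\leq\;C(d)\cdot(\beta^{i-1}n)^{d-2}\cdot(\beta^i n)^{2-d}\;=\;C(d)\cdot\beta^{2-d},
\]
which is independent of both $i$ and $n$. Hence $\mathbb{P}[W_{n,i}]\leq \alpha\,C(d)\,\beta^{2-d}$ for each $i\in\{1,2,3,4,5\}$.

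Finally, a union bound over $i=1,\dots,5$ yields the claimed inequality
\[
\mathbb{P}\!\left[\bigcup_{i=1}^{5} W_{n,i}\right]\;\leq\;5\alpha\,C(d)\,\beta^{2-d}.
\]
There is no real obstacle; the only point deserving mild care is to check that the constant coming out of Lemma~\ref{lem:one loop connection on Zd}a) can be chosen independently of $\beta\geq 2$, which is immediate since the assumption $\lambda>1$ there is amply satisfied, and one simply uses the bound for $\lambda=2$ uniformly.
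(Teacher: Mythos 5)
Your proof is correct and follows essentially the same route as the paper: a union bound over the five subannuli, the Poissonian bound $\mathbb{P}[W_{n,i}]\leq\alpha\,\mu(\cdot)$, and then Lemma~\ref{lem:one loop connection on Zd}a) combined with the capacity estimate of Lemma~\ref{lem: capacity} to get a bound $C(d)\beta^{2-d}$ uniform in $i$ and $n$. Your remark about fixing $\lambda=2$ to make the constant uniform over $\beta\geq 2$ is a correct and worthwhile clarification of a point the paper leaves implicit.
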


\medskip

The next lemma estimates the probability that there is a chain of loops from a given vertex which contains at least $k$ loops of diameter $\geq m$. 
(In the proof of Proposition~\ref{prop: alpha * is smaller than alpha **} we will only need the case $k=3$.)
To state the lemma, we introduce some notation. 
For $x,y\in\mathbb{Z}^d$ and $m\geq 1$, define the events $J_{x,m}$ and $J_{x,y,m}$ by
\[
J_{x,y,m}=\{\exists \ell\in\mathcal{L}_{\alpha}:x,y\in \ell,Diam(\ell)\geq m\}\quad\mbox{and}\quad J_{x,m} = J_{x,x,m}.
\]
For $x\in\mathbb{Z}^d$, define the event
\[
E^{x}_{m,k}\overset{\text{def}}{=}\left\{
\begin{array}{r}
\bigcup\limits_{\substack{x_1,\ldots,x_k\\y_1,\ldots,y_{k-1}}}\{x\overset{\mathcal{L}_{\alpha}}{\longleftrightarrow}x_1\}\circ\{y_1\overset{\mathcal{L}_{\alpha}}{\longleftrightarrow}x_2\}\circ\cdots \circ\{y_{k-1}\overset{\mathcal{L}_{\alpha}}{\longleftrightarrow}x_k\}\\
\circ J_{x_1,y_1,m}\circ \cdots \circ J_{x_{k-1},y_{k-1},m}\circ J_{x_k,m}
\end{array}
\right\},
\]
 where the notation ``$\circ$'' means the disjoint occurrence in the sense of loops instead of edges. To be more precise, for $x,y\in\mathbb{Z}^d$ and a set of loops $O$, denote by $x\overset{O}{\longleftrightarrow}y$ the connection from $x$ to $y$ by loops in $O$:
\[
\{x\overset{O}{\longleftrightarrow}y\} = \left\{\exists \ell_1,\ldots,\ell_p\in O~:~x\in \ell_1,~\ell_1\cap \ell_2\neq \phi,\ldots,\ell_{p-1}\cap \ell_p\neq\phi,~y\in \ell_p\right\}
\]
with the conventions that $\{x\overset{\phi}{\longleftrightarrow}x\}$ is the sure event. 
Then, 
\[
E^{x}_{m,k}=\bigcup\limits_{\left\{\substack{\text{ disjoint finite loop sets }\\ O_1,\ldots,O_{k}\text{ such that}\\ 
x\overset{O_1}{\longleftrightarrow}x_1,~y_1\overset{O_2}{\longleftrightarrow}x_2,\ldots,y_{k-1}\overset{O_k}{\longleftrightarrow}x_k}\right\}}
\bigcup\limits_{\left\{\substack{\text{Different }\ell_1,\ldots,\ell_k\notin\bigcup\limits_{i=1}^{k}O_i\text{ such that}\\ 
Diam(\ell_1)\geq m,\ldots,Diam(\ell_k)\geq m\\ x_1\overset{\ell_1}{\longleftrightarrow}y_1,\ldots,x_{k-1}\overset{\ell_1}{\longleftrightarrow}y_{k-1},~x_k\in \ell_k}\right\}}
\left\{\bigcup\limits_{i=1}^{k}(O_{i}\cup\{\ell_i\})\subset\mathcal{L}_{\alpha}\right\}.
\]
Figure~\ref{fig:manybigloops} is an illustration of event $E^{x}_{m,3}$.
\begin{figure}[!htbp]
\begin{picture}(0,0)%
\includegraphics{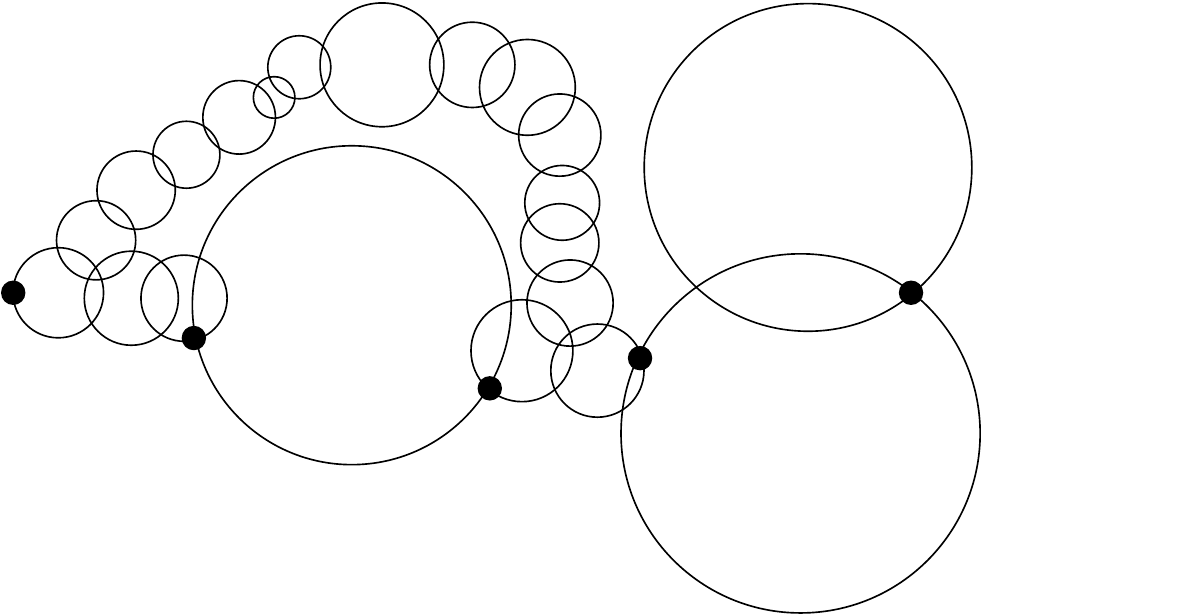}%
\end{picture}%
\setlength{\unitlength}{4144sp}%
\begingroup\makeatletter\ifx\SetFigFont\undefined%
\gdef\SetFigFont#1#2#3#4#5{%
  \reset@font\fontsize{#1}{#2pt}%
  \fontfamily{#3}\fontseries{#4}\fontshape{#5}%
  \selectfont}%
\fi\endgroup%
\begin{picture}(5445,2805)(1816,-1962)
\put(3241,-61){\makebox(0,0)[lb]{\smash{{\SetFigFont{12}{14.4}{\familydefault}{\mddefault}{\updefault}{\color[rgb]{0,0,0}$l_1$}%
}}}}
\put(5401,569){\makebox(0,0)[lb]{\smash{{\SetFigFont{12}{14.4}{\familydefault}{\mddefault}{\updefault}{\color[rgb]{0,0,0}$l_3$}%
}}}}
\put(5311,-1771){\makebox(0,0)[lb]{\smash{{\SetFigFont{12}{14.4}{\familydefault}{\mddefault}{\updefault}{\color[rgb]{0,0,0}$l_2$}%
}}}}
\put(2791,-781){\makebox(0,0)[lb]{\smash{{\SetFigFont{12}{14.4}{\familydefault}{\mddefault}{\updefault}{\color[rgb]{0,0,0}$x_1$}%
}}}}
\put(3781,-961){\makebox(0,0)[lb]{\smash{{\SetFigFont{12}{14.4}{\familydefault}{\mddefault}{\updefault}{\color[rgb]{0,0,0}$y_1$}%
}}}}
\put(4861,-871){\makebox(0,0)[lb]{\smash{{\SetFigFont{12}{14.4}{\familydefault}{\mddefault}{\updefault}{\color[rgb]{0,0,0}$x_2$}%
}}}}
\put(6121,-511){\makebox(0,0)[lb]{\smash{{\SetFigFont{12}{14.4}{\familydefault}{\mddefault}{\updefault}{\color[rgb]{0,0,0}$y_2=x_3$}%
}}}}
\put(1936,-556){\makebox(0,0)[lb]{\smash{{\SetFigFont{12}{14.4}{\familydefault}{\mddefault}{\updefault}{\color[rgb]{0,0,0}$x$}%
}}}}
\put(7246,614){\makebox(0,0)[lb]{\smash{{\SetFigFont{12}{14.4}{\familydefault}{\mddefault}{\updefault}{\color[rgb]{0,0,0}$Diam(l_1)\geq m$}%
}}}}
\put(7246,299){\makebox(0,0)[lb]{\smash{{\SetFigFont{12}{14.4}{\familydefault}{\mddefault}{\updefault}{\color[rgb]{0,0,0}$Diam(l_2)\geq m$}%
}}}}
\put(7246,-16){\makebox(0,0)[lb]{\smash{{\SetFigFont{12}{14.4}{\familydefault}{\mddefault}{\updefault}{\color[rgb]{0,0,0}$Diam(l_3)\geq m$}%
}}}}
\end{picture}%
\caption{Illustration of event $E^{x}_{m,k}$ for $k=3$}
\label{fig:manybigloops}
\end{figure}
\begin{lem}\label{lem: several loops in order}
 For $d\geq 5$, $\alpha<\thrcz$, and $k\geq 1$, there exists constant $C_k(d,\alpha)$ such that 
\begin{equation}\label{eq:lslio1}
\mathbb{P}[E^0_{m,k}]\leq C_k(d,\alpha)\cdot m^{-(k(d-4)+2)}.
\end{equation}
\end{lem}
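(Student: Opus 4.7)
The plan is to combine a van den Berg--Kesten (BK) type inequality for Poisson point processes with a sharp first-moment estimate for length-weighted loop integrals. Since $E^0_{m,k}$ is a union of disjoint occurrences of increasing events---the connectivity events $\{u\overset{\mathcal L_\alpha}\longleftrightarrow v\}$ and the big-loop events $J_{u,v,m}$---the BK inequality for Poisson ensembles provides the starting bound
\[
\mathbb P[E^0_{m,k}]\leq \sum_{x_1,y_1,\ldots,y_{k-1},x_k\in\mathbb Z^d}\mathbb P[0\overset{\mathcal L_\alpha}\longleftrightarrow x_1]\prod_{i=1}^{k-1}\mathbb P[J_{x_i,y_i,m}]\,\mathbb P[y_i\overset{\mathcal L_\alpha}\longleftrightarrow x_{i+1}]\cdot \mathbb P[J_{x_k,m}].
\]

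I will control each marginal through three estimates. First, $\mathbb P[J_{x,m}]\leq \alpha\mu(\ell:x\in\ell,\,Diam(\ell)\geq m)\leq C(d)\alpha\, m^{2-d}$ by Lemma~\ref{lem:one loop connection on Zd}(a). Second, $\sum_x \mathbb P[y\overset{\mathcal L_\alpha}\longleftrightarrow x]=\mathbb E[\#\mathcal C_\alpha(0)]<\infty$ by the hypothesis $\alpha<\thrcz$. The third---and crucial---estimate is $\sum_y\mathbb P[J_{x,y,m}]\leq C(d,\alpha)\,m^{4-d}$. For this, Campbell's formula gives $\sum_y\mathbb P[J_{x,y,m}]\leq \alpha\int|\mathrm{Range}(\ell)|\cdot 1_{x\in\ell,Diam(\ell)\geq m}\,d\mu(\ell)\leq \alpha\int n(\ell)\cdot 1_{x\in\ell,Diam\geq m}\,d\mu(\ell)$. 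Using $1_{x\in\ell}\leq \xi(x,\ell)$ together with the based-loop identity $\dot\mu(\dot\ell)=n^{-1}Q^{x_1}_{x_2}\cdots Q^{x_n}_{x_1}$ (in the spirit of Lemma~\ref{lem:cut the loop passing through a vertex into excursions}), the integral is dominated by $\sum_{n\geq 1}n\cdot\mathbb P^x[X_n=x,\,Diam(\{X_0,\ldots,X_n\})\geq m]$, and the strong Markov property at $\tau=\tau(\partial B(x,\lceil m/2\rceil))$ bounds this by $\mathbb E^x[\tau\cdot G(X_\tau,x)]+\mathbb E^x[\sum_n nP_n(X_\tau,x)]$. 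Both summands are $O(m^{4-d})$: for the first, $\mathbb E^x[\tau]\asymp m^2$ and $G(z,x)\lesssim m^{2-d}$ for $z\in\partial B(x,\lceil m/2\rceil)$; for the second, the local CLT yields $\sum_n nP_n(z,x)\asymp \|z-x\|^{4-d}$, a series convergent precisely when $d\geq 5$.

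Plugging these estimates into the BK bound and summing telescopically from the inside out, the innermost sum gives $\sum_{x_k}\mathbb P[y_{k-1}\overset{\mathcal L_\alpha}\longleftrightarrow x_k]\mathbb P[J_{x_k,m}]\leq C\alpha\, m^{2-d}\mathbb E[\#\mathcal C_\alpha(0)]$; each subsequent sum over $y_i$ contributes a factor $C\alpha\, m^{4-d}$ and each sum over $x_i$ a factor $\mathbb E[\#\mathcal C_\alpha(0)]$. The cumulative power of $m$ is $(k-1)(4-d)+(2-d)=-(k(d-4)+2)$, and the constants coalesce into $C_k(d,\alpha)$. The main obstacle is the estimate $\sum_y\mathbb P[J_{x,y,m}]\leq C(d,\alpha)\,m^{4-d}$: the Palm/based-loop manipulation must be carried out carefully, and the dimension restriction $d\geq 5$ is unavoidable here since for $d\leq 4$ the series $\sum_n nP_n(z,x)$ diverges and the whole scheme breaks.
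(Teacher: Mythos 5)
Your proposal is correct and its overall architecture coincides with the paper's: a BK inequality for the Poisson loop ensemble reduces $\mathbb P[E^0_{m,k}]$ to a product of marginals, and the three estimates you isolate --- $\mathbb P[J_{x,m}]\lesssim \alpha m^{2-d}$, $\sum_x\mathbb P[y\overset{\mathcal L_\alpha}\longleftrightarrow x]=\mathbb E[\#\mathcal C_\alpha(0)]<\infty$, and the crucial $\sum_y\mathbb P[J_{x,y,m}]\lesssim m^{4-d}$ --- are exactly the ones the paper combines (the paper organizes the bookkeeping as an induction on $k$, peeling off one pair of factors per step, whereas you sum the full product telescopically; this is an immaterial difference). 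The exponent count $(2-d)+(k-1)(4-d)=-(k(d-4)+2)$ matches.

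The one place where you genuinely diverge is the proof of the key estimate $\sum_y\mathbb P[J_{x,y,m}]\leq C(d,\alpha)m^{4-d}$. The paper gets it by splitting according to whether $\#\mathcal C_\alpha(0,1)\geq m^2$, bounding $\mathbb E\bigl[1_{\{\#\mathcal C_\alpha(0,1)\geq m^2\}}\#\mathcal C_\alpha(0,1)\bigr]\leq Cm^{4-d}$ via the tail asymptotics $\mathbb P[\#\mathcal C_\alpha(0,1)>x]\sim cx^{1-d/2}$ of Lemma~\ref{lem:tail of the range of the loop intersecting 0} (itself resting on excursion decomposition and the Griffin/Hamana--Kesten inputs), and estimating the remainder by $m^2\cdot\mathbb P[E^0_{m,1}]$. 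You instead compute the length-weighted loop integral directly via Campbell's formula and the based-loop identity, reducing to $\sum_n n\,\mathbb P^x[X_n=x,\,Diam\geq m]$ and then to Green-function and local-CLT bounds after the strong Markov property at the exit of $B(x,\lceil m/2\rceil)$; this is self-contained, correctly exhibits $d\geq 5$ as the convergence threshold of $\sum_n nP_n(z,x)$, and avoids any appeal to the cluster-size tail lemma. Both routes are valid and yield the same $m^{4-d}$; yours trades the paper's reliance on Lemma~\ref{lem:tail of the range of the loop intersecting 0} for a somewhat more delicate (but standard) random-walk computation.
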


\medskip

Finally, the next lemma provides a bound on the probability of a long ``geodesic'' chain of loops from $0$:
\begin{lem}\label{lem: exponential decay of the total generation}
For $d\geq 5$ and $\alpha<\thrcz$, there exist $c(d,\alpha)>0$ and $C(d,\alpha)<\infty$ such that for $k\geq 0$
\[
\mathbb{P}[\#\mathcal{C}_{\alpha}(0,k)>0]\leq C(d,\alpha)\cdot e^{-c(d,\alpha)k}.
\]
\end{lem}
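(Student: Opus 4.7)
The plan is to combine the subcritical Galton-Watson domination already built up in the proof of Proposition~\ref{prop:exponent of size of cluster for d larger than 5} with a simple monotonicity property of the event $\{\#\mathcal{C}_\alpha(0,k) > 0\}$ in $k$.

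First I check that $k \mapsto \mathbb{P}[\#\mathcal{C}_\alpha(0,k) > 0]$ is non-increasing. If $\mathfrak{d}(0,x) = k+1$ and $\ell_1, \dots, \ell_{k+1} \in \mathcal{L}_\alpha$ realise this distance, pick any $y \in \ell_k \cap \ell_{k+1}$. The chain $\ell_1, \dots, \ell_k$ shows $\mathfrak{d}(0,y) \leq k$; conversely, $\mathfrak{d}(0,y) < k$ would yield (by appending $\ell_{k+1}$) a chain of length $<k+1$ from $0$ to $x$, contradicting $\mathfrak{d}(0,x) = k+1$. Hence $y \in \mathcal{C}_\alpha(0,k)$, so $\#\mathcal{C}_\alpha(0,k) > 0$.

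Second, since $\alpha < \thrcz$, the cluster $\#\mathcal{C}_\alpha(0)$ is finite almost surely and therefore touched by only finitely many loops of $\mathcal{L}_\alpha$. For $K$ larger than the total number of such loops, no chain of $K$ pairwise-disjoint-except-adjacent loops starting at $0$ can exist, so $\#\mathcal{U}_\alpha(0, K) \to 0$ almost surely as $K \to \infty$. Combined with the bound $\#\mathcal{U}_\alpha(0,K) \leq \#\mathcal{C}_\alpha(0) \in L^1$, dominated convergence gives $\mathbb{E}[\#\mathcal{U}_\alpha(0, K)] \to 0$, and I may fix $K_0 = K_0(d,\alpha)$ with $m := \mathbb{E}[\#\mathcal{U}_\alpha(0, K_0)] < 1$.

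Now I invoke the Galton-Watson domination already established in the proof of Proposition~\ref{prop:exponent of size of cluster for d larger than 5}: the sequence $(\#\mathcal{C}_\alpha(0, K_0 j))_{j \geq 0}$ is stochastically dominated by a subcritical Galton-Watson process $(Z_j)_{j \geq 0}$ with $Z_0 = 1$ and offspring distribution equal in law to $\#\mathcal{U}_\alpha(0, K_0)$. Markov's inequality then yields
\[
\mathbb{P}[\#\mathcal{C}_\alpha(0, K_0 j) > 0] \leq \mathbb{E}[Z_j] = m^j,
\]
and combining with the first-step monotonicity applied with $j = \lfloor k/K_0 \rfloor$ delivers $\mathbb{P}[\#\mathcal{C}_\alpha(0, k) > 0] \leq m^{-1} e^{-c(d,\alpha) k}$ with $c(d,\alpha) = \log(1/m)/K_0 > 0$. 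The only substantive ingredient is the Galton-Watson domination itself, which the authors justify from the fact that, for fixed $x,y$, the event $\{y \in \mathcal{U}_\alpha(x, K_0)\}$ is increasing in $\mathcal{L}_\alpha$, together with the Poisson independence structure of loops supported on disjoint vertex sets.
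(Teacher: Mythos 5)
Your proposal is correct and follows essentially the same route as the paper: pick $K$ so that $\mathbb{E}[\#\mathcal{U}_{\alpha}(0,K)]<1$, dominate $(\#\mathcal{C}_{\alpha}(0,Kj))_{j\geq 0}$ by a subcritical Galton--Watson process, and combine the first-moment bound $m^{j}$ with monotonicity of $k\mapsto\mathbb{P}[\#\mathcal{C}_{\alpha}(0,k)>0]$. The only differences are cosmetic (the paper normalizes the mean below $e^{-1}$, and you spell out the monotonicity step that the paper leaves implicit).
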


\bigskip

We can now deduce Proposition~\ref{prop: alpha * is smaller than alpha **} from the above three lemmas, and after that prove the lemmas.

\begin{proof}[Proof of Proposition \ref{prop: alpha * is smaller than alpha **}]
Let $d\geq 5$ and $\alpha<\thrcz$. 
To prove that $\alpha\leq \thran$, we need to show that there exist $\beta\geq 2$ and $r<1$ such that
\begin{equation}\label{eq:crossingr}
\limsup\limits_{n\rightarrow\infty}\mathbb{P}[B(0,n)\overset{\mathcal{L}_{\alpha}}{\longleftrightarrow}\partial B(0,\beta^5 n)]\leq r.
\end{equation}
We cut the annulus into $5$ concentric annuli and denote by $W_{n,i}$ the one loop crossing event for each annulus for $i=1,2,3,4,5$:
\[
W_{n,i}=\left\{\exists \ell\in \mathcal{L}_{\alpha}~:~B(0,\beta^{i-1} n)\stackrel{\ell}\longleftrightarrow\partial B(0,\beta^{i} n)\right\}.
\]
By Lemma \ref{lem: five annuli}, we can choose $\beta$ large enough such that for $n\geq 1$,
\[
\mathbb{P}\left[\bigcup\limits_{i=1}^{5}W_{n,i}\right]\leq \frac{1}{2}.
\]

Next, we estimate the probability that a path of loops from $B(0,n)$ to $\partial B(\beta^5n)$ contains at least $3$ large loops. 
Take $m=\lfloor n^{5/6}\rfloor$. By Lemma~\ref{lem: several loops in order}, 
\[
\mathbb P\left[\bigcup_{x\in\partial B(0,n)} E^x_{m,3}\right] \leq C(d,\alpha)\cdot \#\partial B(0,n)\cdot m^{10-3d} ,
\]
which tends to $0$ as $n\to\infty$. 

On the event 
\[
\left\{B(0,n)\overset{\mathcal{L}_{\alpha}}{\longleftrightarrow}\partial B(0,\beta^5 n)\right\}\cap\left( \bigcup\limits_{i=1}^{5}W_{n,i}\right)^c\cap\left(\bigcup_{x\in\partial B(0,n)} E^x_{m,3}\right)^c,
\]
every path of loops in $\mathcal L_\alpha$ from $B(0,n)$ to $\partial B(0,\beta^5n)$ must cross at least one of the $5$ subannuli only by loops of diameter smaller than $m$. 
Since the infinity distance between the inner boundary and the outer boundary of each subannulus is at least $(\beta-1)n$, 
every such path must consist of at least $\frac{(\beta-1)n}{m}$ loops, which implies that for some $x\in\partial B(0,n)$, $\mathcal{C}_{\alpha}(x,\frac{(\beta-1)n}{m})\neq\phi$. 
By Lemma~\ref{lem: exponential decay of the total generation}, the probability of this event is bounded from above by $C(d,\alpha)\cdot n^{d-1}\exp\{-c(d,\alpha)n^{1/6}\}$, 
which goes to $0$ as $n\to\infty$.

Finally, we conclude \eqref{eq:crossingr} with $r=1/2$.
\end{proof}

\medskip

It remains to prove the lemmas.

\begin{proof}[Proof of Lemma \ref{lem: five annuli}]
 By Lemmas \ref{lem:one loop connection on Zd} and \ref{lem: capacity}, there exists $C(d)<\infty$ such that for $n\geq 1$,
\begin{align*}
\mathbb{P}\left[\bigcup\limits_{i=1}^{5}W_{n,i}\right]\leq 
&\sum\limits_{i=1}^{5}\mathbb{P}\left[\exists \ell\in\mathcal{L}_{\alpha}~:~ B(0,\beta^{i-1} n)\stackrel{\ell}\longleftrightarrow\partial B(0,\beta^{i} n)\right]\\
\leq&\sum\limits_{i=1}^{5}\alpha\mu(\ell~:~ B(0,\beta^{i-1} n)\stackrel{\ell}\longleftrightarrow\partial B(0,\beta^{i} n))\\
  \leq &5\alpha C(d)\beta^{2-d}.\qedhere
 \end{align*}
\end{proof}

\medskip

\begin{proof}[Proof of Lemma \ref{lem: several loops in order}]\ 
We prove \eqref{eq:lslio1} by induction in the following three steps:
\begin{itemize}
\item[I)] Proof of \eqref{eq:lslio1} for $k=1$: By considering
 $$\min\{q\geq 0:\text{there exists }\ell\in\mathcal{L}_{\alpha}\text{ such that }\ell\cap\mathcal{C}_{\alpha}(0,q)\neq\phi,Diam(\ell)\geq m\}$$
 and then using the first moment method,
\begin{align*}
  \mathbb{P}[E^{0}_{m,1}]\leq & \mathbb{E}[\#\mathcal{C}_{\alpha}(0)]\cdot \mathbb{P}[\exists \ell\in\mathcal{L}_{\alpha}~:~0\in \ell,~Diam(\ell)\geq m]\\
  \leq&\mathbb{E}[\#\mathcal{C}_{\alpha}(0)]\cdot \mathbb{P}[\exists \ell\in\mathcal{L}_{\alpha}~:~0\stackrel{\ell}\longleftrightarrow\partial B(0,\lfloor m/2\rfloor)]\\
  \leq&\alpha\cdot \mathbb{E}[\#\mathcal{C}_{\alpha}(0)]\cdot \mu(\ell~:~0\stackrel{\ell}\longleftrightarrow\partial B(0,\lfloor m/2\rfloor)).
 \end{align*}
 The proof of \eqref{eq:lslio1} for $k=1$ is complete by Lemma \ref{lem:one loop connection on Zd}.
 \end{itemize}
 \item[II)] Suppose \eqref{eq:lslio1} holds for $k\leq K$. 
Then for $k=K+1$, since $E^x_{m,K}$ are increasing events and $\mathcal{L}_{\alpha}$ is translation invariant, by the first moment method,
 \begin{equation}\label{eq:lslio2}
  \mathbb{P}[E^{0}_{m,K+1}]\leq\sum\limits_{x,y\in\mathbb{Z}^d}\mathbb{P}\left[\{0\overset{\mathcal{L}_{\alpha}}{\longleftrightarrow} x\}\circ\{\exists \ell\in\mathcal{L}_{\alpha}:x,y\in \ell,Diam(\ell)\geq m\}\circ E^y_{m,K}\right].
 \end{equation}
 where $A\circ B$ means the disjoint occurrence for two increasing events $A$ and $B$ in the sense of loops instead of edges. 
We set $\omega(\ell)=1$ iff $\ell\in\mathcal{L}_{\alpha}$. Similarly to the primitive loops considered in \cite[Section~2.1]{LemaireLeJan}, 
the distribution of the random configuration measure $(\omega(\ell))_{\ell}$ is a product measure by the definition of Poisson random measure. 
It is known that the BK inequality holds for product measure on $\{0,1\}^m$ for finite $m\geq 1$, see e.g. \cite[Theorem 2.12]{GrimmettMR1707339}. 
Although $\{0,1\}^{\{\text{loop space on }\mathbb{Z}^d\}}$ is not finite, the finite volume approximation works in many situations. 
Indeed, the increasing event considered in this proof can be approximated by monotone sequence of finitely loop dependent events. Thus, we can apply BK inequality:
 \begin{equation*}
  \eqref{eq:lslio2}\leq\sum\limits_{x,y\in\mathbb{Z}^d}\mathbb{P}[0\overset{\mathcal{L}_{\alpha}}{\longleftrightarrow} x]\cdot 
\mathbb{P}[\exists \ell\in\mathcal{L}_{\alpha}:x,y\in \ell,Diam(\ell)\geq m]\cdot \mathbb{P}[E^y_{m,K}].
 \end{equation*}
By translation invariance of $\mathcal{L}_{\alpha}$, 
\begin{equation*}
\mathbb{P}[E^{0}_{m,K+1}]\leq\mathbb{E}[\#\mathcal{C}_{\alpha}(0)]\cdot \mathbb{P}[E^{0}_{m,K}]\cdot \sum\limits_{y\in\mathbb{Z}^d}\mathbb{P}[\exists \ell\in\mathcal{L}_{\alpha}~:~0,y\in \ell,Diam(\ell)\geq m].
\end{equation*}
 In the next step, we will bound $\sum\limits_{y\in\mathbb{Z}^d}\mathbb{P}[\exists \ell\in\mathcal{L}_{\alpha}~:~0,y\in \ell,~Diam(\ell)\geq m]$ by $C(d,\alpha)\cdot m^{4-d}$, which 
will finish the proof of the lemma.
 \item[III)] 
By comparing $\#\mathcal{C}_{\alpha}(0,1)$ with $m^2$, 
 \begin{align*}
  \sum\limits_{y\in\mathbb{Z}^d}\mathbb{P}[\exists \ell\in\mathcal{L}_{\alpha}~:~0,y\in \ell,~Diam(\ell)\geq m]
\leq &\mathbb{E}\left[1_{\{\#\mathcal{C}_{\alpha}(0,1)\geq m^2\}}\cdot\#\mathcal{C}_{\alpha}(0,1)\right]\\
  &+m^2\mathbb{P}[\exists \ell\in\mathcal{L}_{\alpha}~:~0\in \ell,~Diam(\ell)\geq m].
 \end{align*}
 By Lemma \ref{lem:tail of the range of the loop intersecting 0}, there exists $C(d,\alpha)<\infty$ such that 
\[
\mathbb{E}\left[1_{\{\#\mathcal{C}_{\alpha}(0,1)\geq m^2\}}\cdot\#\mathcal{C}_{\alpha}(0,1)\right]\leq C(d,\alpha)\cdot m^{4-d}.
\]
The second term is estimated by $m^2\cdot \mathbb P[E^0_{m,1}]\leq C'(d,\alpha)\cdot m^{4-d}$. 
The proof of the lemma is thus complete. 
\end{proof}

\medskip

\begin{proof}[Proof of Lemma \ref{lem: exponential decay of the total generation}]
Recall the definition of $\mathcal U_\alpha(0,K)$ from \eqref{def:U0K}. 

For $d\geq 5$ and $\alpha<\thrcz$, there exists $K(d,\alpha)\in\mathbb{N}$ large enough such that 
\[
\mathbb{E}[\#\mathcal{U}_{\alpha}(0,K(d,\alpha))]<e^{-1}.
\]
Since $\left(\#\mathcal{C}(0,K(d,\alpha)i)\right)_{i\geq 0}$ is dominated by a sub-critical Galton-Watson process with offspring distribution 
$\mathbb{P}[\#\mathcal{U}_{\alpha}(0,K(d,\alpha))\in\cdot]$,
\[
\mathbb{P}[\#\mathcal{C}_{\alpha}(0,k)>0]\leq \mathbb{P}\left[\#\mathcal{C}_{\alpha}\left(0,K(d,\alpha)\left\lfloor \frac{k}{K(d,\alpha)}\right\rfloor\right)>0\right]
\leq \exp\left\{-\left\lfloor \frac{k}{K(d,\alpha)}\right\rfloor\right\}.\qedhere
\]
\end{proof}

\subsection{Asymptotic expression for \texorpdfstring{$\thr$}{α1} as \texorpdfstring{$d\to\infty$}{d→∞}}

The main result of this section is Proposition~\ref{prop: asymptotic of critical value in high dimensions}, 
which shows that all the critical thresholds defined in this paper asymptotically coincide as the dimension $d\to\infty$, 
and gives also their asymptotic value. 
The proof involves a careful estimate of $\mathbb{E}[\#\mathcal{C}_{\alpha}(0,1)]$ together with an upper bound of critical value $\thr$ in \cite[Proposition 4.3(ii)]{LemaireLeJan}.
\begin{prop}\label{prop: asymptotic of critical value in high dimensions}
Asymptotically, as $d\to\infty$,
\[
2d-6+O(d^{-1})~\leq ~\thrcz~\leq~ \thr~\leq~ 2d+\frac{3}{2}+O(d^{-1}).
\]
\end{prop}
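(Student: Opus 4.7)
The plan is to handle the three inequalities separately. The middle one, $\thrcz \leq \thr$, is immediate from the definitions: for $\alpha < \thrcz$, $\mathbb{E}[\#\mathcal{C}_{\alpha}(0)] < \infty$ forces $\mathcal{C}_{\alpha}(0)$ to be almost surely finite, so $\theta(\alpha) = 0$ and hence $\alpha \leq \thr$.

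For the upper bound $\thr \leq 2d + \tfrac{3}{2} + O(d^{-1})$, I would compare the loop percolation with Bernoulli bond percolation via two-step loops. Since distinct loops correspond to distinct points of the Poisson ensemble, the indicators $B_e$ that the length-two loop on edge $e$ of $\mathbb{Z}^d$ is present in $\mathcal{L}_{\alpha}$ are independent across $e$, each Bernoulli with parameter $p(\alpha) := 1 - e^{-\alpha/(2d)^2}$; indeed $\mu(\{x,y\}) = (2d)^{-2}$ by \eqref{def:dotmu}. Since $B_e = 1$ forces $e$ to be open, the open-edge field stochastically dominates i.i.d.\ Bernoulli bond percolation on $\mathbb{Z}^d$ with parameter $p(\alpha)$; this gives $\thr \leq \alpha^{\star}(d)$ with $p(\alpha^{\star}) = p_c^{\mathrm{bond}}(\mathbb{Z}^d)$. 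Combining the asymptotic $p_c^{\mathrm{bond}}(\mathbb{Z}^d) = \tfrac{1}{2d-1} + O(d^{-3})$ (Kesten; Hara--Slade) with the expansion $\alpha^{\star} = (2d)^2 p_c + \tfrac{(2d)^2}{2} p_c^2 + O(d^{-1})$, the contributions $\tfrac{(2d)^2}{2d-1} = 2d + 1 + O(d^{-1})$ and $\tfrac{(2d)^2}{2(2d-1)^2} = \tfrac{1}{2} + O(d^{-1})$ combine to give $\alpha^{\star} = 2d + \tfrac{3}{2} + O(d^{-1})$. This argument is essentially \cite[Proposition~4.3(ii)]{LemaireLeJan}.

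For the lower bound $\thrcz \geq 2d - 6 + O(d^{-1})$, I would make the Galton--Watson domination of Proposition~\ref{prop:posivity of alpha *} quantitative: since $\#\mathcal{C}_{\alpha}(0)$ is stochastically dominated by the total progeny of a Galton--Watson tree with offspring law $\mathbb{P}[\#\mathcal{C}_{\alpha}(0,1) \in \cdot\,]$, it suffices to find $\alpha = 2d - 6 - K/d$ (for a sufficiently large constant $K$) with $\mathbb{E}[\#\mathcal{C}_{\alpha}(0,1)] < 1$. Starting from the explicit formula
\[
\mathbb{E}[\#\mathcal{C}_{\alpha}(0,1)] = \sum_{x \neq 0} \Bigl[1 - \bigl(1 - (G(0,x)/G(0,0))^{2}\bigr)^{\alpha}\Bigr]
\]
derived in the proof of Proposition~\ref{prop:posivity of alpha *} and Bernoulli's inequality $(1-y)^{\alpha} \geq 1 - \alpha y$ (valid for $\alpha \geq 1$ and $y \in [0,1]$), this expectation is at most $\alpha \bigl(G^{(2)}(0,0)/G(0,0)^{2} - 1\bigr)$, where $G^{(2)}(0,0) = \sum_{n \geq 0}(n+1)p_n(0,0)$ is the diagonal of the matrix $(I-Q)^{-2}$. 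Counting closed walks of length at most four on $\mathbb{Z}^d$ (namely $2d$ of length two and $12d^2 - 6d$ of length four) yields
\[
G(0,0) = 1 + \tfrac{1}{2d} + \tfrac{3}{4d^2} + O(d^{-3}), \qquad G^{(2)}(0,0) = 1 + \tfrac{3}{2d} + \tfrac{15}{4d^2} + O(d^{-3}),
\]
so that $G^{(2)}(0,0)/G(0,0)^{2} - 1 = \tfrac{1}{2d} + \tfrac{3}{2d^2} + O(d^{-3})$. Substituting $\alpha = 2d - 6 - K/d$ and expanding, the upper bound becomes $1 - (K/2 + 9)/d^2 + O(d^{-2})$, which is strictly less than one once $K$ and $d$ are large enough.

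The main delicacy lies in the second-order cancellation on the lower-bound side: since $\alpha/(2d)$ already equals one at $\alpha = 2d$, the constant $-6$ is produced entirely by the subleading term $3\alpha/(2d^2)$ of $G^{(2)}/G^{2} - 1$. It is therefore essential to expand both $G(0,0)$ and $G^{(2)}(0,0)$ to order $d^{-2}$ with quantitative $O(d^{-3})$ remainders that are uniform in $d$. On the upper-bound side, the only nontrivial input beyond the independence of two-step loops is the second-order sharpness of the Bernoulli bond percolation threshold on $\mathbb{Z}^d$.
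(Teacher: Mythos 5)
Your overall route coincides with the paper's: the middle inequality is definitional, the upper bound is the two-step-loop comparison with Bernoulli bond percolation from \cite[Proposition~4.3(ii)]{LemaireLeJan} combined with the second-order expansion of $p_c$, and the lower bound reduces $\thrcz$ to $\mathbb{E}[\#\mathcal{C}_{\alpha}(0,1)]<1$ and then to the expansion of $\sum_{x}G(0,x)^2/G(0,0)^2$. All of your numerology checks out (the count $W_4=12d^2-6d$, the coefficients $1+\tfrac{3}{2d}+\tfrac{15}{4d^2}$ and $1+\tfrac{1}{2d}+\tfrac{3}{4d^2}$, the resulting $\tfrac{1}{2d}+\tfrac{3}{2d^2}$, and the $2d+\tfrac32$ on the Bernoulli side), and your Poissonian form $1-e^{-\alpha(2d)^{-2}}$ of the edge probability is an acceptable variant of the paper's $1-(1-(2d)^{-2})^{\alpha}$, since the two agree to the order needed.

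The one genuine gap is the step you yourself flag as ``the main delicacy'': justifying that the remainders in the expansions of $G(0,0)=\sum_{m\ge 0}p_{2m}(0,0)$ and $G^{(2)}(0,0)=\sum_{m\ge 0}(2m+1)p_{2m}(0,0)$ are $O(d^{-3})$ \emph{uniformly in $d$}. Counting closed walks gives each individual term $p_{2m}(0,0)=O(d^{-m})$, but the implied constants grow with $m$ and the series is infinite, so ``counting walks of length at most four'' does not by itself yield the expansion; one must bound $\sum_{m\ge 3}(2m+1)p_{2m}(0,0)$ by $Cd^{-3}$ with $C$ independent of $d$. This is exactly where the paper spends most of its effort: writing $\sum_x G(0,x)^2=\mathbb{E}[(1-Z_d)^{-2}]$ with $Z_d=\frac1d\sum_{i=1}^d\cos U_i$ via Parseval, truncating the power series, and controlling the remainder by moment bounds on $Z_d$, the exponential Markov inequality on $\{|Z_d|\ge \tfrac12\}$, H\"older, and a $d$-uniform bound on $\mathbb{E}[(1-Z_d)^{-7/3}]$ proved by a convexity induction in $d$ (see \eqref{eq:Zdrecursion}). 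Some such uniform integrability or large-deviation input is unavoidable, because near $|Z_d|=1$ the series $\sum(2m+1)Z_d^{2m}=(1-Z_d^2)^{-2}(1+Z_d^2)$ blows up; you acknowledge the issue but do not supply the argument, so your proof is incomplete at precisely this point.
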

\begin{proof}
The upper bound of the critical value $\thr$ follows from the comparison between the loop percolation 
and Bernoulli bond percolation in \cite[Proposition 4.3(ii)]{LemaireLeJan}, 
\[
\left(1 - \frac{1}{(2d)^2}\right)^{\thr} \leq 1 - p_c,
\]
where $p_c$ is the critical value of Bernoulli bond percolation, 
and the asymptotic expansion for $p_c$ as in \cite[(11.19)]{SladeMR2239599}, 
\[
p_c = \frac{1}{2d} + \frac{1}{(2d)^2} + O(d^{-3}).
\]

For the lower bound on $\thrcz$, recall from the proof of Proposition~\ref{prop:posivity of alpha *} that for $d\geq 5$, 
\begin{align*}
\thrcz\geq &\inf\{\alpha>0~:~\mathbb E \#\mathcal C_\alpha(0,1)<1\}\\
=&\inf\left\{\alpha>0:\sum\limits_{x\in\mathbb{Z}^d,x\neq 0}1-\left(1-\left(\frac{G(0,x)}{G(0,0)}\right)^2\right)^{\alpha}<1\right\}.
\end{align*}
For $\alpha\geq 1$, 
\[
\sum\limits_{x\in\mathbb{Z}^d,x\neq 0}1-\left(1-\left(\frac{G(0,x)}{G(0,0)}\right)^2\right)^{\alpha}\leq \alpha\sum\limits_{x\in\mathbb{Z}^d,x\neq 0}\left(\frac{G(0,x)}{G(0,0)}\right)^2.
\] 
Thus, the lower bound on $\thrcz$ follows from the following claim: 
\begin{equation}\label{eq:paocvihd2}
\sum\limits_{x\in\mathbb{Z}^d,x\neq 0}\left(\frac{G(0,x)}{G(0,0)}\right)^2=\frac{1}{2d}\left(1+\frac{3}{d}+O(d^{-2})\right),\text{ as }d\rightarrow\infty.
\end{equation}
For \eqref{eq:paocvihd2}, it suffices to show that
\begin{equation}\label{eq:Gexpansion}
\sum\limits_{x\in\mathbb{Z}^d}(G(0,x))^2=1+\frac{3}{2d}+\frac{15}{4d^2}+O(d^{-3})\quad\mbox{and}\quad
G(0,0)= 1+\frac{1}{2d}+\frac{3}{4d^2}+O(d^{-3}).
\end{equation}
We use Fourier transforms.  
For an absolutely summable function $f~:~\mathbb Z^d\to \mathbb C$ and $k\in]-\pi,\pi[^d$, 
the Fourier transform of $f$ at $k$ is defined by 
\[
\hat f(k) = \sum_{x\in\mathbb Z^d} e^{ik\cdot x} f(x), \quad\mbox{where } k\cdot x = \sum_{j=1}^d k_jx_j.
\]
Let $D(x) = \frac{1}{2d} \mathrm{1}_{\{||x||_2 = 1\}}$ be the transition probability for SRW from $0$. 
Then $\hat{D}(k)=\frac{1}{d}\sum\limits_{i=1}^{d}\cos(k_i)$ and $\hat{G}(k)=\frac{1}{1-\hat{D}(k)}$ in the sense of $L^2(\mathrm{d}k)$ for $d\geq 5$.

We first consider $\sum\limits_{x\in\mathbb{Z}^d}(G(0,x))^2$.
By Parseval's identity, for $d\geq 5$, 
\[
\sum\limits_{x\in\mathbb{Z}^d}(G(0,x))^2=\frac{1}{(2\pi)^d}\int\limits_{]-\pi,\pi[^d}\left(\frac{1}{1-\hat{D}(k)}\right)^2\mathrm{d}^dk.
\]
It will be convenient to use the probabilistic interpretation. 
Let $(U_i)_i$ be independent random variables uniformly distributed in $]-\pi,\pi[$, and define $Z_d=\frac{1}{d}\sum\limits_{i=1}^{d}\cos(U_i)$. Then,
\[
\sum\limits_{x\in\mathbb{Z}^d}(G(0,x))^2=\mathbb{E}[(1-Z_d)^{-2}].
\]
We expand $(1-Z_d)^{-2}=\sum\limits_{n\geq 0}(n+1)Z_d^n$ into a power series of $Z_d$.
We first estimate the error term 
\[
R_{d,2m}\overset{\mathrm{def}}{=}\left|(1-Z_d)^{-2}-\sum\limits_{n=0}^{2m-1}(n+1)Z_d^n\right|.
\]
If $|Z_d|<1/2$,
\[
R_{d,2m}\leq Z_d^{2m}\cdot\sum\limits_{j=0}^{\infty}(2m+1+j)|Z_d|^{j}\leq 8mZ_d^{2m},
\]
if $|Z_d|\geq 1/2$,
\[
R_{d,2m}\leq (1-Z_d)^{-2}+\left|\sum\limits_{n=0}^{2m-1}(n+1)Z_d^n\right|
\leq (1 - Z_d)^{-2}1_{Z_d\geq 1/2} + 1 + m(2m-1).
\]
Note that 
\[
\mathbb E [Z_d^2] = \frac{1}{2d}\quad \mbox{and}\quad \mathbb E [Z_d^{2m}] \leq \frac{C(m)}{d^m}, \quad\mbox{for }m\geq 2,
\]
and by the exponential Markov inequality, $\mathbb P[|Z_d|\geq \frac 12] \leq 2\cdot e^{-d/8}$. 
Thus, for some $C'(m)<\infty$, 
\[
\mathbb E [R_{d,2m}] \leq \frac{C'(m)}{d^m} + \mathbb E[(1 - Z_d)^{-2}1_{Z_d\geq 1/2}].
\]
By H\"older's inequality and the exponential Markov inequality, 
\begin{equation}\label{eq:paocvihd3}
\mathbb{E}[(1-Z_d)^{-2}1_{Z_d\geq 1/2}]
 \leq (\mathbb{E}[(1-Z_d)^{-7/3}])^{6/7}\cdot e^{-d/56}.
\end{equation}
We will show that there exists a universal constant $C<\infty$ such that for $d\geq 5$, 
\begin{equation}\label{eq:paocvihd4}
\mathbb{E}[(1-Z_d)^{-7/3}]<C.
\end{equation}
Once \eqref{eq:paocvihd4} is proved, by using the bound on $\mathbb E[R_{d,6}]$, we get
\[
\sum\limits_{x\in\mathbb{Z}^d}(G(0,x))^2=
\sum\limits_{n=0}^5(n+1)\mathbb E[Z_d^n] + O(d^{-3}),
\]
and the first part of \eqref{eq:Gexpansion} follows by direct calculation of moments of $Z_d$ up to order $O(d^{-3})$. 

It remains to prove \eqref{eq:paocvihd4}. 
By convexity of the function $h\mapsto (1-h)^{-7/3}$ and the definition of $\hat D(k)$,
\begin{equation}\label{eq:Zdrecursion}
\mathbb{E}[(1-Z_d)^{-7/3}]\leq\frac 5d\mathbb E[(1 - Z_5)^{-7/3}] + \frac{d-5}{d}\mathbb E[(1-Z_{d-5})^{-7/3}]. 
\end{equation}
Thus, the uniform bound in \eqref{eq:paocvihd4} follows by induction from \eqref{eq:Zdrecursion} as soon as we show that 
for any $d\geq 5$, $\mathbb{E}[(1-Z_d)^{-7/3}]<\infty$. 
This follows from the definition of $\hat D(k)$, the fact that $\min\limits_{x\in]-\pi,\pi[}\frac{1-\cos(x)}{x^2}>0$, 
and the finiteness of the integral $\frac{1}{(2\pi)^d}\int\limits_{]-\pi,\pi[^d}\frac{1}{||k||_2^{14/3}}\mathrm{d}^dk$ 
for any $d\geq 5$. 

The proof of the first expansion in \eqref{eq:Gexpansion} is complete. 
Since $\frac{1}{1-\hat{D}(k)}\in L^1$ by \eqref{eq:paocvihd4}, 
the expansion for $G(0,0)$ in \eqref{eq:Gexpansion} can be done similarly by the inverse Fourier transform. 
We omit the details, and complete the proof. 
\end{proof}

\section{Refined lower bound for \texorpdfstring{$d=3,4$}{d=3,4}}\label{sec: dimension three or four}
\subsection{\texorpdfstring{$d=3$}{d=3}: proof of Theorem~\ref{thm: d-2 is not the right exponent for d=3}}
In this section we prove that $n^{2-d}$ is not the correct order of decay of the one arm probability in dimension $d=3$ 
by providing a lower bound on the one arm probability of order $n^{2-d+\varepsilon}$. 
The key ingredient for the proof is the following lemma which gives a lower bound of the expected capacity of the open cluster at $0$ 
formed by the loops $(\mathcal{L}_{\alpha})^{B(0,k)}$ contained inside the box $B(0,k)$.
\begin{lem}\label{lem:lower bound box expected capacity}
For $d=3$ and $\alpha>0$, denote by $\mathcal{C}_{\alpha}^k(0)$ the open cluster at $0$ formed by the loops $(\mathcal{L}_{\alpha})^{B(0,k)}$ 
contained inside the box $B(0,k)$. Then, there exist positive constants $\epsilon(\alpha)$ and $c(\alpha)$ such that 
\[
\mathbb{E}[\Capa(\mathcal{C}_{\alpha}^k(0))]\geq c(\alpha)\cdot k^{\epsilon(\alpha)}.
\]
\end{lem}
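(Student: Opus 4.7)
The plan is a multiscale renormalization for $f(k):=\mathbb E[\Capa(\mathcal C_\alpha^k(0))]$. I aim to establish a recursion of the form $f(Rk)\geq (1+\delta(\alpha))\,f(k)$, valid as long as $f(k)$ stays below a fixed fraction of $k$, for a large dimensional constant $R=R(d)$ and some $\delta(\alpha)=c(d)\alpha>0$. Combined with the trivial base $f(1)\geq\Capa(\{0\})>0$ and the monotonicity $\mathcal C_\alpha^k(0)\subseteq\mathcal C_\alpha^{k'}(0)$ for $k\leq k'$, iteration yields $f(k)\geq c(\alpha)\,k^{\epsilon(\alpha)}$ with $\epsilon(\alpha)=\log_R(1+\delta(\alpha))>0$; once $f$ saturates at scale $k$, monotonicity gives $f(k)\gtrsim k\geq k^{\epsilon(\alpha)}$ directly.

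The recursion step exploits the Poisson independence between the loops contained in $B(0,k)$ (which build $\tilde C:=\mathcal C_\alpha^k(0)$) and the loops contained in $B(0,Rk)$ but not in $B(0,k)$. Conditionally on $\tilde C$, introduce the event
\[
E=\bigl\{\exists\ell\in\mathcal L_\alpha:\ell\subset B(0,Rk),\ \ell\not\subset B(0,k),\ \ell\cap\tilde C\neq\phi,\ \Capa(\ell)\geq c_1k\bigr\}.
\]
On $E$ the range of the witness loop lies in $\mathcal C_\alpha^{Rk}(0)$, so $\Capa(\mathcal C_\alpha^{Rk}(0))\geq c_1k$; combined with the trivial $\Capa(\mathcal C_\alpha^{Rk}(0))\geq\Capa(\tilde C)$, this yields
\[
\mathbb E\bigl[\Capa(\mathcal C_\alpha^{Rk}(0))\bigm|\tilde C\bigr]\geq\Capa(\tilde C)+\bigl(c_1k-\Capa(\tilde C)\bigr)_+\cdot\mathbb P[E\mid\tilde C].
\]
For the lower bound on $\mathbb P[E\mid\tilde C]$ I combine the two parts of Lemma~\ref{lem:one loop connection on Zd} with the $d=3$ input $F(3,n)=n$: part~(b) with $K=\tilde C\subset B(0,k)$ and $M=2k$ gives $\mu$-measure at least $c\,\Capa(\tilde C)/k$ of loops that hit $\tilde C$, reach $\partial B(0,2k)$ (hence are not contained in $B(0,k)$), and have capacity at least $c'F(3,2k)=c''k$; part~(a) with $M=Rk$ bounds by $C\,\Capa(\tilde C)/(Rk)$ the $\mu$-measure of loops hitting $\tilde C$ and leaving $B(0,Rk)$. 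Choosing $R=R(d)$ large, the subtraction leaves a $\mu$-measure at least $c_2\,\Capa(\tilde C)/k$ of loops realizing $E$, so $\mathbb P[E\mid\tilde C]\geq 1-\exp(-c_3\alpha\,\Capa(\tilde C)/k)$. Linearization on the event $\{\Capa(\tilde C)\leq c_1k/2\}$ produces a multiplicative gain of $1+c_4\alpha$ in the conditional expectation, hence in $f(Rk)$; on the complementary event $\Capa(\tilde C)$ is already of order $k$, which beats any polynomial of exponent $\epsilon(\alpha)<1$. Truncating $f$ at the saturation scale unifies both regimes and closes the recursion.

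The main obstacle will be the quantitative balancing of the upper and lower bounds from Lemma~\ref{lem:one loop connection on Zd} to extract a $\mu$-measure of order $\Capa(\tilde C)/k$ for loops satisfying all four conditions simultaneously (inside $B(0,Rk)$, outside $B(0,k)$, hitting $\tilde C$, carrying capacity of order $k$). The identity $F(3,n)=n$ from Lemma~\ref{l:lower bound SRW capacity in box} is the crucial dimensional input: it is precisely in $d=3$ that a big loop automatically carries macroscopic capacity comparable to its diameter, which is exactly what lets $\Capa(\mathcal C_\alpha^{Rk}(0))$ grow by a factor strictly greater than $1$ in expectation compared to $\Capa(\mathcal C_\alpha^k(0))$.
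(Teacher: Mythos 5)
Your proposal is correct and follows essentially the same route as the paper: both extract, from the difference of parts (b) and (a) of Lemma~\ref{lem:one loop connection on Zd} with $F(3,n)=n$, a $\mu$-mass of order $\Capa(\tilde C)/k$ of loops of capacity of order $k$ attached to the current cluster inside a larger box, and then iterate over geometric scales using the Poisson independence of loops in nested regions together with the monotonicity of capacity. The only variation is in how the per-scale gain is quantified: the paper integrates the tail of $\Capa(\mathcal C_\alpha(K,m,M))$ over all intermediate radii to obtain a doubling $\mathbb E[\Capa]\geq 2\Capa(K)$ at the cost of an $\alpha$-dependent scale ratio $\lambda_2(\alpha)$, whereas you linearize the Poisson probability at a single target scale to get a factor $1+c(d)\alpha$ with a dimension-only ratio $R$; both yield a valid $\epsilon(\alpha)>0$, and your truncation of the functional at the saturation level $c_1k/2$ correctly closes the recursion in the regime where $\Capa(\tilde C)$ is already of order $k$.
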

Before proving the lemma, we show how to deduce Theorem~\ref{thm: d-2 is not the right exponent for d=3} from it. 
\begin{proof}[Proof of Theorem~\ref{thm: d-2 is not the right exponent for d=3}]
Let $d =3$ and $\alpha>0$. Take $\mathcal C_\alpha^{\lfloor n/2\rfloor }(0)$ as in the statement of Lemma~\ref{lem:lower bound box expected capacity}.
We always have
\[
\mathbb{P}[0\overset{\mathcal{L}_{\alpha}}{\longleftrightarrow}\partial B(0,n)]\geq
\mathbb{P}[\exists \ell\in\mathcal{L}_{\alpha}~:~\mathcal{C}_{\alpha}^{\lfloor n/2\rfloor}(0)\stackrel{\ell}\longleftrightarrow \partial B(0,n)].
\]
Note that $\mathcal{C}_{\alpha}^{\lfloor n/2\rfloor }(0)$ depends only on the loops $(\mathcal{L}_{\alpha})^{B(0,\lfloor n/2\rfloor)}$ inside the box $B(0,\lfloor n/2\rfloor)$. 
Thus, it is independent from the loops $(\mathcal{L}_{\alpha})_{(B(0,\lfloor n/2\rfloor))^c}$ intersecting $(B(0,\lfloor n/2\rfloor))^c$. 
Then, by Lemma \ref{lem:one loop connection on Zd}, there exists $c>0$ such that
\begin{multline}\label{eq:dintrefd1}
\mathbb{P}[\exists \ell\in\mathcal{L}_{\alpha}~:~\mathcal{C}_{\alpha}^{\lfloor n/2\rfloor }(0)\stackrel{\ell}\longleftrightarrow \partial B(0,n)]
\geq \mathbb{E}\left[1-\exp\{-\alpha\mu(\ell~:~\mathcal{C}_{\alpha}^{\lfloor n/2\rfloor }(0)\stackrel{\ell}\longleftrightarrow\partial B(0,n))\}\right]\\
\geq \mathbb{E}\left[1-\exp\{-\alpha c n^{-1}\Capa(\mathcal{C}_{\alpha}^{\lfloor n/2\rfloor }(0))\}\right].
 \end{multline}
 Since by Lemma~\ref{lem: capacity}, $n^{-1}\Capa\left(\mathcal{C}_{\alpha}^{\lfloor n/2\rfloor }(0)\right)\leq n^{-1}\Capa(B(0,\lfloor n/2\rfloor))$ is uniformly bounded, 
there exists a constant $c = c(\alpha)$ such that
 \begin{equation*}
 \eqref{eq:dintrefd1}\geq c\cdot n^{-1}\cdot\mathbb{E}[\Capa(\mathcal{C}_{\alpha}^{\lfloor n/2\rfloor }(0))].
 \end{equation*}
The proof is complete by Lemma \ref{lem:lower bound box expected capacity}.
\end{proof}

\medskip

We complete this section with the proof of the remaining lemma.
\begin{proof}[Proof of Lemma~\ref{lem:lower bound box expected capacity}]
Let $d = 3$ and $\alpha>0$.  
\begin{itemize}
\item[I)] By Lemma~\ref{lem:one loop connection on Zd}, there exist constants $\lambda_1>1$ and $c>0$ such that for $n\geq 1$, $m\geq 2n$, $M\geq\lambda_1m$, and $K\subset B(0,n)$, 
\begin{multline}\label{eq:llbbec1}
\mu(\ell~:~K\stackrel{\ell}\longleftrightarrow\partial B(0,m),~\ell\subset B(0,M),~\Capa(\ell)>cm)\\
\geq \mu(\ell~:~K\stackrel{\ell}\longleftrightarrow\partial B(0,m),~\Capa(\ell)>cm) - 
\mu(\ell~:~K\stackrel{\ell}\longleftrightarrow\partial B(0,M))\\
\geq c\cdot \Capa(K)\cdot m^{-1}.
 \end{multline}
 \item[II)] Denote by $\mathcal{C}_{\alpha}(K,m,M)$ the set of vertices visited by the loops from $\mathcal L_\alpha$ which are contained in $B(0,M)$ 
and intersect both $K$ and $\partial B(0,m)$.

We claim that there exists $\lambda_2 = \lambda_2(\alpha)>1$ such that 
for $n\geq 1$, $m\geq 2n$, $M>\lambda_2m$, and $K\subset B(0,n)$,
\begin{equation}\label{eq:llbbec2}
\mathbb{E}[\Capa(\mathcal{C}_{\alpha}(K,m,M))]\geq 2\cdot \Capa(K).
\end{equation}
Indeed, for the constant $c$ as in I), 
\begin{multline}\label{eq:llbbec3}
\mathbb{E}[\Capa(\mathcal{C}_{\alpha}(K,m,M)]=c\cdot \int\limits_{0}^{\infty}\mathrm{d}p\cdot \mathbb{P}[\Capa(\mathcal{C}_{\alpha}(K,m,M))>cp]\\
\geq c\cdot\int\limits_{m}^{\lfloor M/\lambda_1\rfloor}\mathrm{d}p\cdot\mathbb{P}
\left[\exists \ell\in\mathcal L_\alpha~:~K\stackrel{\ell}\longleftrightarrow \partial B(0,\lceil p\rceil),~\ell\subset B(0,M),~\Capa(\ell)> cp\right]\\
\stackrel{\eqref{eq:llbbec1}}\geq c\cdot\int\limits_{m}^{\lfloor M/\lambda_1\rfloor}\mathrm{d}p\cdot\left(
1 - \exp\left\{-\alpha\cdot c\cdot \Capa(K)\cdot \frac{1}{\lceil p\rceil}\right\}
\right)
 \end{multline}
Since $\Capa(K)\cdot\frac{1}{\lceil p\rceil}\leq \Capa(B(0,\lceil p\rceil))\cdot \frac{1}{\lceil p\rceil}$ is bounded, see Lemma~\ref{lem: capacity}, 
there exists $c' = c'(\alpha)>0$ such that 
\begin{equation}\label{eq:llbbec4}
\eqref{eq:llbbec3}\geq 
c\cdot c'\cdot \Capa(K)\cdot \int\limits_{m}^{M/\lambda_1}\frac{\mathrm{d}p}{p} = 
c\cdot c'\cdot \Capa(K)\cdot\log\left(\frac{M}{\lambda_1m}\right).
\end{equation}
By choosing $\lambda_2 = \lambda_2(\alpha)>1$ such that $c\cdot c'\cdot \log\left(\frac{\lambda_2}{\lambda_1}\right)> 2$, we get \eqref{eq:llbbec2}. 
 \item[III)] We can now complete the proof of Lemma~\ref{lem:lower bound box expected capacity} by iterating \eqref{eq:llbbec2}.
Take the deterministic sequence 
\[
M_0 = 1, \quad \text{and}\quad M_{i+1}=\lceil 2\lambda_2 M_{i}\rceil\quad \text{for $i\geq 0$},
\]
and the sequence of random subsets of $\mathbb Z^d$
\[
C_0=\{0\}, \quad\text{and}\quad C_{i+1}=\mathcal{C}_{\alpha}(C_{i},2M_{i},M_{i+1})\quad\text{for $i\geq 0$}.
\]
Note that for all $i$, $C_i\subset B(0,M_i)$, and the sets of loops forming $C_{i}$'s are disjoint for different $i$'s. Thus, by \eqref{eq:llbbec2}, for $i\geq 1$, 
\begin{equation}\label{eq:llbbec5}
\mathbb{E}[\Capa(C_i)|C_{i-1}]\geq 2\cdot \Capa(C_{i-1}),
\end{equation}
and by iteration of \eqref{eq:llbbec5}, 
\begin{equation}\label{eq:llbbec6}
\mathbb E[\Capa(C_i)] \geq 2^i\cdot \Capa(\{0\}). 
\end{equation}
For $k\geq 1$, let $T_k = \max\{i\geq 0~:~M_i\leq k\}$. Note that $C_{T_k}\subset \mathcal C_\alpha^k(0)$ and $T_k\geq \frac{\log k}{\log\lceil 2\lambda_2\rceil} -1$. 
By the monotonicity of capacity and \eqref{eq:llbbec6}, there exist $c = c(\alpha)$ and $\epsilon = \epsilon(\alpha)$ such that 
\[
\mathbb{E}[\Capa(\mathcal{C}_{\alpha}^k(0))]\geq \mathbb E[\Capa(C_{T_k})]\geq 2^{T_k}\cdot \Capa(\{0\})\geq c\cdot n^{\epsilon}.\qedhere
\]
\end{itemize}
\end{proof}

\subsection{\texorpdfstring{$d=4$}{d=4}: proof of Theorem~\ref{thm: refined lower bound for d=4}}
In this section we prove a lower bound on the one arm probability in dimension $d=4$, which is of order $n^{2-d}(\log n)^{\epsilon}$. 
This is better than the bound by the probability of single big loop obtained in Theorem~\ref{thm:at most polynomial decay of the connectivity}.

Since the proof of this fact is very similar to the proof of Theorem~\ref{thm: d-2 is not the right exponent for d=3}, 
we will only discuss necessary modifications. 
The role of Lemma~\ref{lem:lower bound box expected capacity} used in the proof of Theorem~\ref{thm: d-2 is not the right exponent for d=3}, 
is now played by the following lemma. 
\begin{lem}\label{lem:big box expected capacity d=4}
For $d=4$ and $\alpha>0$, let $\mathcal{C}_{\alpha}^k(0)$ be the open cluster at $0$ formed by the loops $(\mathcal{L}_{\alpha})^{B(0,k)}$ 
contained inside the box $B(0,k)$. Then, there exist positive constants $\epsilon = \epsilon(\alpha)$ and $c = c(\alpha)$ such that 
\[
\mathbb{E}[\Capa(\mathcal{C}_{\alpha}^k(0))]\geq c\cdot (\log n)^{\epsilon}.
\]
\end{lem}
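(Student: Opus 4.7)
The plan is to mimic the proof of Lemma~\ref{lem:lower bound box expected capacity} for $d=3$, with the capacity scale $F(3,p)=p$ replaced by $F(4,p)=p^{2}/\log p$. The key difference is that this replacement turns the single-scale capacity gain from linear in $\log(M/m)$ into doubly logarithmic in $M/m$, so the iteration must proceed on a super-exponentially sparser sequence of scales, and the final bound is polynomial only in $\log k$ rather than in $k$.

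First I would establish the analogue of step~I: for $K\subset B(0,n)$, $m\geq 2n$, and $M\geq\lambda_{1}m$ with $\lambda_{1}=\lambda_{1}(d)$ large enough,
\[
\mu\left(\ell~:~K\stackrel{\ell}{\longleftrightarrow}\partial B(0,m),~\ell\subset B(0,M),~\Capa(\ell)>c F(4,m)\right)\geq c\cdot \Capa(K)\cdot m^{-2}.
\]
This follows directly from Lemma~\ref{lem:one loop connection on Zd}: part~b) provides the main term of order $\Capa(K) m^{-2}$ at scale $m$, while part~a) bounds the mass of loops that escape $B(0,M)$ by $C\Capa(K)M^{-2}$, which is absorbed once $\lambda_{1}$ is large.

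The main step is the analogue of step~II: there exists $\delta=\delta(\alpha)>1$ such that for $K\subset B(0,n)$, $m\geq 2n$, and $M\geq\lambda_{1}m^{\delta}$,
\[
\mathbb{E}[\Capa(\mathcal{C}_{\alpha}(K,m,M))]\geq 2\Capa(K).
\]
Writing $\mathbb{E}[\Capa(\cdot)]=\int_{0}^{\infty}\mathbb{P}[\Capa(\cdot)>t]\,dt$ and changing variables $t=cp^{2}/\log p$, so that $dt$ is of order $p/\log p\,dp$, I would lower bound the integrand using the existence of a single Poisson loop from step~I:
\[
\mathbb{P}[\Capa(\mathcal{C}_{\alpha}(K,m,M))>cp^{2}/\log p]\geq 1-\exp(-\alpha c\Capa(K)/p^{2})\geq c'\alpha \Capa(K)/p^{2},
\]
the linearisation being valid because $\Capa(K)/p^{2}\leq \Capa(B(0,n))/m^{2}$ is uniformly bounded by Lemma~\ref{lem: capacity}. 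The resulting integral reduces to
\[
\int_{m}^{M/\lambda_{1}}\frac{dp}{p\log p}=\log\log(M/\lambda_{1})-\log\log m,
\]
and choosing $\delta$ large enough forces the total gain to exceed $2\Capa(K)$.

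The final step mirrors step~III of the $d=3$ proof. With $r_{0}=2$, $C_{0}=\{0\}$, and recursively $r_{i+1}=\lceil\lambda_{1}(2r_{i})^{\delta}\rceil$, $C_{i+1}=\mathcal{C}_{\alpha}(C_{i},2r_{i},r_{i+1})$, the iteration uses disjoint collections of loops at each level, so conditional doubling yields $\mathbb{E}[\Capa(C_{N})]\geq 2^{N}\Capa(\{0\})$. Since $\log r_{i+1}\approx \delta \log r_{i}$, one has $\log r_{N}\sim \delta^{N}$, so inside $B(0,k)$ we can afford $N\sim \log\log k/\log\delta$ iterations, which gives $\mathbb{E}[\Capa(\mathcal{C}_{\alpha}^{k}(0))]\geq 2^{N}\geq c(\alpha)(\log k)^{\log 2/\log\delta}$, proving the lemma with $\epsilon=\log 2/\log\delta$. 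The principal difficulty relative to $d=3$ is precisely this doubly-logarithmic accumulation of capacity per scale; it constrains the iteration to advance on a super-geometric sequence of radii and is the structural reason why the improvement over $n^{2-d}$ is only by a power of $\log n$.
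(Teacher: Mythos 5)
Your proposal is correct and follows essentially the same route as the paper's proof: the same three steps as in the $d=3$ case, with the capacity scale $m^2/\log m$ from Lemma~\ref{l:lower bound SRW capacity in box} yielding a per-annulus gain of order $\Capa(K)\log(\log M/\log m)$, hence the super-geometric scale sequence $M_{i+1}\approx M_i^{\lambda_2}$ and the final $(\log k)^{\epsilon}$ bound via $T_k\sim\log\log k$ doublings.
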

The proof of Lemma~\ref{lem:big box expected capacity d=4} consists of the same $3$ steps as the proof of Lemma~\ref{lem:lower bound box expected capacity}:
\begin{itemize}
\item[I)]
By Lemma~\ref{lem:one loop connection on Zd}, there exist constants $\lambda_1>1$ and $c>0$ such that for $n\geq 1$, $m\geq 2n$, $M\geq\lambda_1m$, and $K\subset B(0,n)$, 
\begin{equation}\label{eq:capd4}
\mu\left(\ell~:~K\stackrel{\ell}\longleftrightarrow\partial B(0,m),~\ell\subset B(0,M),~\Capa(\ell)>c\cdot \frac{m^2}{\log m}\right)
\geq c\cdot \Capa(K)\cdot m^{-2}.
\end{equation}
\item[II)]
Denote by $\mathcal{C}_{\alpha}(K,m,M)$ the set of vertices visited by the loops from $\mathcal L_\alpha$ which are contained in $B(0,M)$ 
and intersect both $K$ and $\partial B(0,m)$. Then using \eqref{eq:capd4}, one can show similarly to \eqref{eq:llbbec3} and \eqref{eq:llbbec4} that 
there exists $c = c(\alpha)$ such that 
\[
\mathbb{E}[\Capa(\mathcal{C}_{\alpha}(K,m,M)] \geq c\cdot \Capa(K)\cdot \log\left(\frac{\log M}{\log m}\right).
\]
In particular, there exists $\lambda_2 = \lambda_2(\alpha)>1$ such that for all $M\geq m^{\lambda_2}$, 
\[
\mathbb{E}[\Capa(\mathcal{C}_{\alpha}(K,m,M)]\geq 2\cdot \Capa(K),
\]
which is an analogue of \eqref{eq:llbbec2}.
\item[III)]
Take the deterministic sequence 
\[
M_0 = 1, \quad \text{and}\quad M_{i+1}=\lceil (2M_{i})^{\lambda_2}\rceil\quad \text{for $i\geq 0$},
\]
and the sequence of random subsets of $\mathbb Z^d$
\[
C_0=\{0\}, \quad\text{and}\quad C_{i+1}=\mathcal{C}_{\alpha}(C_{i},2M_{i},M_{i+1})\quad\text{for $i\geq 0$}.
\]
For $k\geq 1$, let $T_k = \max\{i\geq 0~:~M_i\leq k\}$. Then $T_k$ is of order $\log\log k$, and 
$\mathbb{E}[\Capa(\mathcal{C}_{\alpha}^k(0))]\geq \mathbb E[\Capa(C_{T_k})]\geq c\cdot(\log k)^{\epsilon}$, 
for some $c = c(\alpha)$ and $\epsilon = \epsilon(\alpha)$. \qed
\end{itemize}

\section{Loop percolation on general graphs}\label{sec: general graphs}

In this section we discuss several properties of the loop percolation on a general graph:
\begin{itemize}\itemsep0pt
\item 
triviality of the tail sigma-algebra, see Proposition~\ref{prop:zero-one law for tail event},
\item
$\kappa_c(\alpha)\leq 0$ for any recurrent graph, see Proposition~\ref{prop: existence of percolation for recurrent graph and zero kappa},
\item
continuity of $\kappa_c(\alpha)$, see Proposition~\ref{Continuity of kappa_c(alpha)}.
\end{itemize}

We begin with a $0-1$ law for tail events. 
\begin{prop}\label{prop:zero-one law for tail event}
For any $A\in\bigcap\limits_{K\text{ finite}}\mathcal{F}_{K^c}$, $\mathbb P[A]\in \{0,1\}$.
\end{prop}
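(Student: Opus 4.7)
The plan is to run the classical Kolmogorov zero-one argument: I will show that any tail event $A \in \bigcap_{K \text{ finite}} \mathcal{F}_{K^c}$ is independent of itself, and therefore satisfies $\mathbb{P}[A] = \mathbb{P}[A]^2$.

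The first step uses only the Poisson structure. For each finite $K \subset V$, the loop space partitions into two disjoint measurable pieces: the loops contained in $K$ and the loops that meet $K^c$. Because $\mathcal{L}_{\alpha,\kappa}$ is a Poisson point measure, its restrictions to disjoint measurable sets are independent. Hence $\mathcal{F}^{K}$ and $\mathcal{F}_{K^c}$ are independent, and in particular every tail event $A$ is independent of $\mathcal{F}^K$ for every finite $K$. This is essentially the same disjoint-support argument used in the proof of Proposition~\ref{prop: translation invariance and ergodicity} and Proposition~\ref{prop: uniqueness of infinite cluster}.

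The second step, which is the one that needs actual verification, is the identification
$\sigma\bigl(\bigcup_{n} \mathcal{F}^{K_n}\bigr) = \mathcal{F}$ for any exhausting sequence of finite sets $K_1 \subset K_2 \subset \cdots$ with $\bigcup_n K_n = V$. The key point is that a discrete loop has finite length by definition, so $\{\ell \subset K_n\}$ increases to the whole loop space. Given any measurable set $B$ of loops, the evaluation $m \mapsto m(B)$ is the monotone increasing limit of $m \mapsto m(B \cap \{\ell \subset K_n\})$, and each of these is $\mathcal{F}^{K_n}$-measurable. Since $\mathcal{F}$ is generated by such evaluations, this gives the claim.

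Combining the two ingredients through Dynkin's $\pi$-$\lambda$ theorem (applied to the algebra $\bigcup_n \mathcal{F}^{K_n}$), the event $A$ is independent of $\mathcal{F}$, hence of itself, which forces $\mathbb{P}[A] \in \{0,1\}$. The main (and indeed only) obstacle is the measurability claim $\sigma\bigl(\bigcup_n \mathcal{F}^{K_n}\bigr) = \mathcal{F}$; everything else follows immediately from independence of Poisson point measures on disjoint supports and a routine monotone class extension.
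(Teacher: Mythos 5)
Your proof is correct, and it follows the same overall strategy as the paper: Kolmogorov's zero--one argument built on the independence, for the Poisson ensemble, of the loops contained in a finite set $K$ and the loops meeting $K^c$. The difference is only in how the argument is closed. The paper takes an exhausting sequence $(K_n)_n$ and invokes martingale (L\'evy) convergence, $\mathbb{E}[1_A\mid\mathcal{F}^{K_n}]\to 1_A$, then computes $\mathbb{E}\bigl[1_A\,\mathbb{E}[1_A\mid\mathcal{F}^{K_n}]\bigr]=\mathbb{P}[A]^2$ by independence and passes to the limit to obtain $\mathbb{P}[A]=\mathbb{P}[A]^2$. You instead prove directly that $\bigcup_n\mathcal{F}^{K_n}$ generates $\mathcal{F}$ --- using that discrete loops have finite length, so that $m(B)=\lim_n m^{K_n}(B)$ for every measurable set $B$ of loops --- and upgrade the independence of $A$ from each $\mathcal{F}^{K_n}$ to independence from all of $\mathcal{F}$ via Dynkin's $\pi$--$\lambda$ theorem. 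The two routes are equivalent in substance: the generation claim you verify explicitly is precisely what the paper's martingale step needs implicitly, since $\mathbb{E}[1_A\mid\mathcal{F}^{K_n}]$ converges to $1_A$ (rather than to a proper conditional expectation) only because $A$ is measurable with respect to $\sigma\bigl(\bigcup_n\mathcal{F}^{K_n}\bigr)$. Your version is marginally more elementary and self-contained, at the cost of a slightly longer write-up; the paper's is shorter but leans on the martingale convergence theorem.
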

\begin{proof}
Let $(K_n)_n$ be an increasing sequence of finite subsets of vertices in $V$ such that $\bigcup\limits_{n} K_n = V$.
As $1_A\in L^p(\mathbb{P})$,
$$\mathbb{P}[1_A|\mathcal{F}^{K_n}]\underset{L^p(\mathbb{P})}{\overset{a.s.}{\rightarrow}}1_A.$$
Therefore,
$$\lim\limits_{n\rightarrow\infty}\mathbb{E}[1_A\mathbb{E}[1_A|\mathcal{F}^{K_n}]]=\mathbb{P}[A].$$
On the other hand, by independence between $\mathcal{F}^{K_n}$ and $\bigcap\limits_{K\text{ finite}}\mathcal{F}_{K^c}$,
$$\mathbb{E}[1_A\mathbb{E}[1_A|\mathcal{F}^{K_n}]]=\mathbb{P}[A]\mathbb{P}[\mathbb{P}[A|\mathcal{F}^{K_n}]]=(\mathbb{P}[A])^2.$$
The $0$-$1$ law holds since $\mathbb{P}[A]=(\mathbb{P}[A])^2$.
\end{proof}

\begin{cor}\label{cor: equivalence between theta and p general graph}
For any $x\in V$,
$\mathbb{P}[\#\mathcal C_{\alpha,\kappa}(x) = \infty]>0$ iff $\mathbb P[\cup_{z\in V}\{\#\mathcal C_{\alpha,\kappa}(z) = \infty\}]=1$.
\end{cor}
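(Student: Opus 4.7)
The plan is to reduce the corollary to the tail $0$-$1$ law of Proposition~\ref{prop:zero-one law for tail event}. Set $A_x = \{\#\mathcal{C}_{\alpha,\kappa}(x) = \infty\}$ and
\[
A = \bigcup_{z \in V} A_z,
\]
the event that some infinite open cluster exists. The first step is to show $A \in \bigcap_{K \text{ finite}} \mathcal{F}_{K^c}$, so that Proposition~\ref{prop:zero-one law for tail event} forces $\mathbb{P}[A] \in \{0,1\}$; the implication $\mathbb{P}[A_x] > 0 \Rightarrow \mathbb{P}[A] = 1$ is then immediate from $A_x \subseteq A$.

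For the tail-measurability, I would fix a finite $K \subseteq V$ and decompose any loop configuration as $\omega = \omega_{K^c} \sqcup \omega^K$, where $\omega_{K^c}$ collects the loops that meet $K^c$ and $\omega^K$ the loops contained in $K$. The claim is that each $\omega$-cluster is either (i) an $\omega_{K^c}$-cluster disjoint from $K$, or (ii) a union of at most $|K|$ $\omega_{K^c}$-clusters, each incident to $K$, fused through shared vertices of $K$ by loops of $\omega^K$. For (i), if an $\omega_{K^c}$-cluster $C \subseteq K^c$ were enlarged in $\omega$ by some further loop $\ell$, then $\ell$ would meet $K^c$ and hence already belong to $\omega_{K^c}$ and to $C$, a contradiction. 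For (ii), the loops of $\omega^K$ visit only vertices of $K$, and the $|K|$ vertices of $K$ can belong to at most $|K|$ distinct $\omega_{K^c}$-clusters. In either case an $\omega$-cluster is infinite if and only if some constituent $\omega_{K^c}$-cluster is infinite, and the latter depends only on $\omega_{K^c}$; hence $A \in \mathcal{F}_{K^c}$.

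For the reverse implication, suppose $\mathbb{P}[A] = 1$. Countable additivity yields some $z \in V$ with $\mathbb{P}[A_z] > 0$, which we may assume to be distinct from $x$ (otherwise there is nothing to prove). Assuming $G$ is connected (otherwise one restricts to the component of $x$), take a loop $\ell_0$ visiting both $x$ and $z$, for instance one tracing a fixed path from $x$ to $z$ and then its reverse. All transitions along $\ell_0$ are positive, so $\mu_\kappa(\ell_0) > 0$ and hence $\mathbb{P}[\ell_0 \in \mathcal{L}_{\alpha,\kappa}] > 0$. Let $N$ denote the Poisson count of $\ell_0$; by the Poisson structure $N$ is independent of $\mathcal{L}_{\alpha,\kappa} \setminus \{\text{copies of }\ell_0\}$, and $A_z$ is non-decreasing in the loop configuration. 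The decomposition
\[
\mathbb{P}[A_z] = \mathbb{P}[A_z \mid N = 0]\mathbb{P}[N = 0] + \mathbb{P}[A_z \mid N \geq 1]\mathbb{P}[N \geq 1],
\]
together with $\mathbb{P}[A_z \mid N \geq 1] \geq \mathbb{P}[A_z \mid N = 0]$, yields $\mathbb{P}[A_z \mid N \geq 1] \geq \mathbb{P}[A_z] > 0$. On $\{N \geq 1\}$ the loop $\ell_0$ puts $x$ and $z$ into the same cluster, so $A_z = A_x$ there, and therefore $\mathbb{P}[A_x] \geq \mathbb{P}[A_z \cap \{N \geq 1\}] > 0$.

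The main obstacle is the tail-measurability step, since removing the loops contained in $K$ can genuinely fracture clusters. The decomposition above saves the day because $\omega_{K^c}$-clusters avoiding the finite set $K$ are already maximal, so only finitely many $\omega_{K^c}$-clusters are ever fused by $\omega^K$-loops, and a finite union of finite sets is finite. The remaining Poisson bookkeeping in the third step is routine.
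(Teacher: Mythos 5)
Your proof is correct, and its skeleton coincides with the paper's: one direction is the tail $0$--$1$ law of Proposition~\ref{prop:zero-one law for tail event}, the other is a transfer of positivity of $\mathbb P[A_z]$ from one vertex to another. You differ in two places. First, you actually verify that $A=\bigcup_{z}A_z$ lies in $\bigcap_{K\text{ finite}}\mathcal F_{K^c}$, via the observation that at most $\#K$ of the $\omega_{K^c}$-clusters can be fused by loops contained in $K$, so an infinite cluster exists for $\omega$ iff one exists for $\omega_{K^c}$; the paper applies the $0$--$1$ law without spelling this out, so this is a genuine (and correct) addition. Second, for the transfer of positivity the paper invokes the FKG inequality for the product measure $(1_{\{\ell\in\mathcal L_{\alpha,\kappa}\}})_{\ell}$ applied to the increasing events $\{\#\mathcal C_{\alpha,\kappa}(z)=\infty\}$ and $\{x\overset{\mathcal L_{\alpha,\kappa}}{\longleftrightarrow}z\}$, whereas you insert a single deterministic back-and-forth loop $\ell_0$ through $x$ and $z$ and use only the independence of its Poisson multiplicity $N$ from the remaining loops together with monotonicity of $A_z$; this is more elementary (no correlation inequality) and exploits the Poisson structure directly, and your inequality $\mathbb P[A_z\mid N\geq 1]\geq\mathbb P[A_z]$ is justified correctly. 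One small caveat: both arguments need $G$ connected (the paper says ``irreducible''), and your parenthetical about restricting to the component of $x$ would not rescue the statement as written for a disconnected graph; but connectedness is the standing assumption, so this is harmless.
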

\begin{proof}
By the definition of Poisson loop soup, $(1_{\{\ell\in\mathcal{L}_{\alpha,\kappa}\}})_{\ell\in\{\text{loops}\}}$ 
is a sequence of $0,1$ valued independent variables. 
By the FKG inequality for product measures, for loop percolation associated with irreducible random walk, 
if $\mathbb{P}[\#\mathcal C_{\alpha,\kappa}(x)= \infty]>0$ for some $x\in V$, then 
such probability is positive for any vertex in $V$. 
Since $V$ is countable, if for a given $x\in V$, $\mathbb P[\#\mathcal C_{\alpha,\kappa}(x) = \infty] = 0$, then 
$\mathbb P[\cup_{z\in V}\{\#\mathcal C_{\alpha,\kappa}(z) = \infty\}] = 0$. 
The inverse statement follows from Proposition~\ref{prop:zero-one law for tail event}. 
\end{proof}

\medskip

The next statement shows that $\kappa_c(\alpha) \leq 0$ for any recurrent graph. 
\begin{prop}\label{prop: existence of percolation for recurrent graph and zero kappa}
For a connected recurrent graph $G=(V,E)$, $\alpha>0$, $\kappa = 0$, and $x\in V$, 
$\mathbb P$ almost surely, the set $\{\ell\in\mathcal{L}_{\alpha,0}~:~x\in \ell\}$ of loops passing through $x$ covers $V$.
\end{prop}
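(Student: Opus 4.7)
The plan is to show that $\mu(\ell~:~x,y\in\ell)=\infty$ for every $y\in V\setminus\{x\}$; since $\mathcal{L}_{\alpha,0}$ is Poisson of intensity $\alpha\mu$ and $V$ is countable, this would immediately yield the proposition via
\[
\mathbb{P}[\exists\,\ell\in\mathcal{L}_{\alpha,0}~:~x,y\in\ell]
=1-\exp\bigl(-\alpha\,\mu(\ell~:~x,y\in\ell)\bigr)=1
\]
together with a union bound over $y$. Thus everything reduces to a deterministic estimate on the intensity measure.

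To obtain the divergence, I would exhaust $V$ by an increasing sequence $(F_n)_{n\geq 1}$ of finite subsets with $\{x,y\}\subset F_1$. The restriction of $\mu$ to loops contained in $F_n$ coincides with the loop measure associated to the random walk killed upon exiting $F_n$, because by \eqref{def:dotmu} the based-loop weights depend only on transitions within $F_n$. This killed walk is transient on the finite state space $F_n$, so Lemma~\ref{lem:non-trivial loop visiting F} applies to its Green function $G^{F_n}$, and combined with inclusion-exclusion on $\{x,y\}$ yields
\[
\mu(\ell~:~x,y\in\ell,~\ell\subset F_n)
=-\log\left(1-\frac{G^{F_n}(x,y)\,G^{F_n}(y,x)}{G^{F_n}(x,x)\,G^{F_n}(y,y)}\right).
\]
Monotone convergence gives $\mu(\ell~:~x,y\in\ell,~\ell\subset F_n)\uparrow\mu(\ell~:~x,y\in\ell)$, so it suffices to show that the right-hand side diverges.

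The final input is to rewrite the ratio using the strong Markov property at the first hitting time of $y$ on the killed walk: $G^{F_n}(x,y)/G^{F_n}(y,y)=\mathbb{P}^x[\tau(y)<\tau(V\setminus F_n)]$, and symmetrically for the reversed ratio, so
\[
\frac{G^{F_n}(x,y)\,G^{F_n}(y,x)}{G^{F_n}(x,x)\,G^{F_n}(y,y)}
=\mathbb{P}^x[\tau(y)<\tau(V\setminus F_n)]\cdot\mathbb{P}^y[\tau(x)<\tau(V\setminus F_n)].
\]
Connectedness and recurrence of $G$ give $\mathbb{P}^x[\tau(y)<\infty]=\mathbb{P}^y[\tau(x)<\infty]=1$, and monotone convergence as $F_n\uparrow V$ forces both probabilities to tend to $1$; the argument of the logarithm tends to $0$ and the measure blows up, as desired.

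The only genuinely delicate point is the identification of the restriction of $\mu$ to loops contained in $F_n$ with the loop measure of the walk killed on exiting $F_n$, so that Lemma~\ref{lem:non-trivial loop visiting F} can be invoked; this is a direct unwrapping of \eqref{def:dotmu}, since the killing does not modify the weights of based loops supported inside $F_n$. Everything else is routine: inclusion-exclusion, the first-passage decomposition of $G^{F_n}(x,y)$, and the defining property of recurrence.
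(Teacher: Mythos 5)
Your proposal is correct, but it takes a genuinely different route from the paper's. The paper computes the two-point loop measure directly from the based-loop representation \eqref{eq:muS1S2}: cutting a loop through $x$ and $y$ into alternating passages gives $\mu(\ell~:~x\stackrel{\ell}\longleftrightarrow y)=\sum_{k\geq 1}\frac{1}{k}\,\mathbb{P}^x[\tau_{2k}<\infty]$, and recurrence makes every alternation probability equal to $1$, so the harmonic series diverges. You instead use finite-volume approximation: identify the restriction of $\mu$ to loops inside $F_n$ with the loop measure of the walk killed on exiting $F_n$, apply the determinant formula plus inclusion-exclusion to get $-\log\bigl(1-\mathbb{P}^x[\tau(y)<\tau(V\setminus F_n)]\,\mathbb{P}^y[\tau(x)<\tau(V\setminus F_n)]\bigr)$, and let recurrence push both hitting probabilities to $1$ as $F_n\uparrow V$. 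Both proofs reduce the proposition to $\mu(\ell~:~x,y\in\ell)=\infty$ and then finish with the Poisson structure and countability of $V$, exactly as you indicate. Two points would make your version airtight. First, Lemma~\ref{lem:non-trivial loop visiting F} is stated for SRW on a transient graph, while you apply it to a killed (sub-Markovian) chain; the identity you actually need, $\mu(\ell\cap F\neq\phi,~\ell\subset F_n)=\log\det\bigl((I-Q|_{F_n\times F_n})^{-1}|_{F\times F}\bigr)$, is precisely the finite-volume step inside the paper's proof of that lemma (via Jacobi's identity), valid because $Q|_{F_n\times F_n}$ has spectral radius strictly less than $1$ whenever $V\setminus F_n\neq\phi$ is reachable; citing that step rather than the lemma as stated closes the formal gap. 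Second, your exhaustion tacitly assumes $V$ is infinite: under the paper's definition a finite connected graph is also recurrent, and there $F_n=V$ eventually, where $G^{F_n}$ is not defined; this degenerate case is trivial to handle separately, and the paper's excursion computation covers it uniformly. What your route buys is an explicit quantitative finite-volume formula for the two-point loop measure; what the paper's buys is a one-line computation needing neither the finite-volume identification nor any restriction to infinite graphs.
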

\begin{proof}
Let $X_n$ be a SRW on $G$. 
For any different $x,y\in V$, let $\tau_0 = 0$, $\tau_{2k+1} = \inf \{n>\tau_{2k}~:~X_n = x\}$, and $\tau_{2k+2} = \inf\{n>\tau_{2k+1}~:~ X_n = y\}$ for $k\geq 0$. 
Since $G$ is recurrent, $\tau_i$ are all finite almost surely for any initial position of the random walk. 
By \eqref{def:dotmu} and \eqref{eq:muS1S2}, 
\[
\mu(\ell~:~x\stackrel{\ell}\longleftrightarrow y) = \sum_{k\geq 1}\frac 1k\cdot \mathbb P^x[\tau_{2k}<\infty] = \sum_{k\geq 1}\frac 1k = \infty.
\]
Therefore, 
\[
\mathbb P[\exists \ell\in \mathcal L_{\alpha,0}~:~x\stackrel{\ell}\longleftrightarrow y] = 
1 - \exp\left\{-\alpha\mu(\ell~:~x\stackrel{\ell}\longleftrightarrow y)\right\} = 1. 
\]
Since $G$ is connected and $V$ is countable, it follows that $\mathbb P$ almost surely $G$ is covered by the loops from $\mathcal L_{\alpha,0}$ which visit $x$. 
\end{proof}

\medskip

Next, we study the continuity of the critical curve $(\alpha,\kappa)$ of the loop percolation model. 
By \cite[Proposition~4.3]{LemaireLeJan}, both $\thr(\kappa)$ and $\kappa_c(\alpha)$ are non-decreasing. 
It follows from Theorem~\ref{thm:phase transition} that $\kappa\mapsto \thr(\kappa)$ is discontinuous at $\kappa = 0$ for loop percolation on $\mathbb Z^d$, $d\geq 3$. 
We do not know if it is continuous for $\kappa>0$. 
Our following result is about the continuity of $\alpha\mapsto\kappa_c(\alpha)$.
\begin{prop}\label{Continuity of kappa_c(alpha)} 
For $\alpha>0$, $\kappa_c(\alpha)$ is a non-decreasing continuous function of $\alpha$.
\end{prop}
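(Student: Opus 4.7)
The plan is to reduce continuity of $\kappa_c(\alpha)$ at an arbitrary $\alpha_0>0$ to a single Poisson-thinning coupling between loop ensembles at different parameter pairs. Monotonicity of $\kappa_c$ in $\alpha$ is already in the cited work of Le Jan and Lemaire, so only the two one-sided limits $\kappa_c(\alpha_0\pm)$ need to be matched with $\kappa_c(\alpha_0)$, and I would handle both by the same device.

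The key step is a pointwise comparison of intensity densities. For $-1<\kappa_2<\kappa_1$ and any non-trivial loop $\ell$ of length $n\ge 2$,
\[
\frac{\alpha_1\mu_{\kappa_1}(\ell)}{\alpha_2\mu_{\kappa_2}(\ell)}
=\frac{\alpha_1}{\alpha_2}\left(\frac{1+\kappa_2}{1+\kappa_1}\right)^{n}
\le\frac{\alpha_1}{\alpha_2}\left(\frac{1+\kappa_2}{1+\kappa_1}\right)^{2},
\]
since $(1+\kappa_2)/(1+\kappa_1)<1$ makes the ratio decreasing in $n$. Hence as soon as $\alpha_1\le\alpha_2\bigl((1+\kappa_1)/(1+\kappa_2)\bigr)^{2}$, one has $\alpha_1\mu_{\kappa_1}\le\alpha_2\mu_{\kappa_2}$ as measures on the loop space, and the standard Poisson-thinning construction provides a coupling with $\mathcal L_{\alpha_1,\kappa_1}\subseteq\mathcal L_{\alpha_2,\kappa_2}$, which yields $\theta(\alpha_1,\kappa_1)\le\theta(\alpha_2,\kappa_2)$. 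The crucial feature of this inequality is that it accommodates $\alpha_1>\alpha_2$ whenever $\kappa_1$ is sufficiently larger than $\kappa_2$, so it does strictly more than the obvious monotonicity in each variable separately.

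For right-continuity at $\alpha_0$, I would fix $\kappa>\kappa_c(\alpha_0)$, pick $\kappa'\in(\kappa_c(\alpha_0),\kappa)$ so that $\theta(\alpha_0,\kappa')=0$, and apply the coupling with $(\alpha_1,\kappa_1)=(\alpha,\kappa)$ and $(\alpha_2,\kappa_2)=(\alpha_0,\kappa')$ for $\alpha$ in the non-empty interval $\bigl(\alpha_0,\alpha_0((1+\kappa)/(1+\kappa'))^{2}\bigr]$. This forces $\theta(\alpha,\kappa)=0$ for some $\alpha>\alpha_0$, hence $\kappa_c(\alpha)\le\kappa$, and the monotonicity of $\kappa_c$ then gives $\kappa_c(\alpha_0+)\le\kappa$; letting $\kappa\downarrow\kappa_c(\alpha_0)$ finishes. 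Left-continuity is symmetric: fix $\kappa<\kappa_c(\alpha_0)$, interpolate with $\kappa'\in(\kappa,\kappa_c(\alpha_0))$ so that $\theta(\alpha_0,\kappa')>0$, and apply the coupling with the roles reversed, $(\alpha_1,\kappa_1)=(\alpha_0,\kappa')$ and $(\alpha_2,\kappa_2)=(\alpha,\kappa)$, for $\alpha$ in the non-empty interval $\bigl[\alpha_0((1+\kappa)/(1+\kappa'))^{2},\alpha_0\bigr)$; this yields $\theta(\alpha,\kappa)\ge\theta(\alpha_0,\kappa')>0$, so $\kappa_c(\alpha)\ge\kappa$ and hence $\kappa_c(\alpha_0-)\ge\kappa_c(\alpha_0)$.

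I do not foresee any real technical obstacle. The only subtlety worth emphasizing is that the uniform bound on the Radon-Nikodym derivative rests on $n\ge 2$, i.e., on the exclusion of length-one loops from the definition of $\mu_\kappa$; without this exponent floor the supremum over $n$ would blow up, and a strict change in $\kappa$ could not be absorbed by any finite change in $\alpha$, so the coupling would fail. With this observation in hand the argument is entirely elementary and does not invoke any of the finer percolation estimates from the preceding sections.
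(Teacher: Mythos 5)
Your argument is correct and is essentially the paper's own: the pointwise comparison $\alpha_1\mu_{\kappa_1}\le\alpha_2\mu_{\kappa_2}$ under $\alpha_1\le\alpha_2\bigl((1+\kappa_1)/(1+\kappa_2)\bigr)^2$, resting on the length floor $n\ge 2$ for non-trivial loops, is exactly Lemma~\ref{lem:compare p(alpha,kappa)}. The only cosmetic difference is that the paper converts the coupling into an explicit local Lipschitz-type bound $0\le\kappa_c(\alpha_1)-\kappa_c(\alpha_0)\le\frac{1+\kappa_c(\alpha_1)}{\sqrt{\alpha_1}}(\sqrt{\alpha_1}-\sqrt{\alpha_0})$, whereas you deduce the two one-sided limits by a soft $\varepsilon$-argument; both are routine once the coupling is in place.
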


For its proof, we need the following lemma.
\begin{lem}\label{lem:compare p(alpha,kappa)}
For $\kappa_1\geq\kappa_0$ and $\alpha_1\leq \alpha_0\left(\frac{1+\kappa_1}{1+\kappa_0}\right)^2$, 
the point process of loops $\mathcal L_{\alpha_1,\kappa_1}$ is stochastically dominated by $\mathcal L_{\alpha_0,\kappa_0}$, 
i.e., there exists a coupling $(\mathcal L_0,\mathcal L_1)$ of $\mathcal L_{\alpha_0,\kappa_0}$ and $\mathcal L_{\alpha_1,\kappa_1}$ 
on some probability space $(\widehat \Omega, \widehat {\mathcal A}, \widehat {\mathbb P})$ 
such that $\widehat {\mathbb P}[\mathcal L_1\subseteq \mathcal L_0 ] = 1$. 
\end{lem}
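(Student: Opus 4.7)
The plan is to build the coupling via independent thinning of the Poisson point process $\mathcal L_{\alpha_0,\kappa_0}$. The first step is a loop-by-loop comparison of the intensity measures. By \eqref{def:dotmu}, the weight $\dot\mu_\kappa(\dot\ell)$ is a product of a factor $(1+\kappa)^{-n}$ and $Q$-entries that do not depend on $\kappa$, hence for every based loop $\dot\ell$ of length $n$,
\[
\dot\mu_{\kappa_1}(\dot\ell)=\left(\frac{1+\kappa_0}{1+\kappa_1}\right)^{n}\dot\mu_{\kappa_0}(\dot\ell).
\]
Taking the push-forward from $V^n$ to the space of discrete loops, this identity passes to
\[
\mu_{\kappa_1}(\{\ell\})=\left(\frac{1+\kappa_0}{1+\kappa_1}\right)^{n}\mu_{\kappa_0}(\{\ell\}),
\]
so $\alpha_1\mu_{\kappa_1}$ is absolutely continuous with respect to $\alpha_0\mu_{\kappa_0}$ with density
\[
p(\ell)\overset{\text{def}}{=}\frac{\alpha_1}{\alpha_0}\left(\frac{1+\kappa_0}{1+\kappa_1}\right)^{n}.
\]

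The second step is the key inequality $p(\ell)\le 1$. Using $\kappa_1\geq\kappa_0$, the hypothesis $\alpha_1\leq\alpha_0\bigl(\tfrac{1+\kappa_1}{1+\kappa_0}\bigr)^{2}$, and the fact that every non-trivial discrete loop has length $n\geq 2$, one obtains
\[
p(\ell)\;\leq\;\left(\frac{1+\kappa_0}{1+\kappa_1}\right)^{n-2}\;\leq\;1.
\]
The crucial feature is the match between the exponent $2$ in the hypothesis and the minimal length of a non-trivial loop: if the exponent were smaller, the ratio would exceed $1$ for large $n$ and the argument would fail. This is, in my view, the only real content of the lemma.

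Given $p(\ell)\in[0,1]$, the coupling is routine. On a countable graph the loop space is countable, and for any finite $F\subset V$ the measure $\mu_{\kappa_0}$ restricted to loops visiting $F$ of length at most $k$ is finite (since $\dot\mu_{\kappa_0}$ is summable over finitely many based loops), so $\alpha_0\mu_{\kappa_0}$ is $\sigma$-finite. Realize $\mathcal L_0\sim\mathcal L_{\alpha_0,\kappa_0}$ on an enlarged probability space and, independently for each $\ell\in\mathcal L_0$, retain $\ell$ with probability $p(\ell)$; call $\mathcal L_1$ the set of retained loops. By construction $\mathcal L_1\subseteq\mathcal L_0$ almost surely, and by the standard Poisson thinning theorem $\mathcal L_1$ is a Poisson point process of intensity $p(\ell)\cdot\alpha_0\mu_{\kappa_0}(d\ell)=\alpha_1\mu_{\kappa_1}(d\ell)$, i.e.\ $\mathcal L_1$ has the law of $\mathcal L_{\alpha_1,\kappa_1}$. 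This provides the required coupling $(\mathcal L_0,\mathcal L_1)$, and no further obstacle is present beyond the elementary ratio computation above.
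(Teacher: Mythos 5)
Your proof is correct and follows essentially the same route as the paper: both reduce the lemma to the pointwise inequality $\alpha_1\dot\mu_{\kappa_1}(\dot\ell)\leq\alpha_0\dot\mu_{\kappa_0}(\dot\ell)$, which hinges on $n\geq 2$ together with $\kappa_1\geq\kappa_0$ and the hypothesis on $\alpha_1/\alpha_0$. The only difference is that you make the resulting coupling explicit via independent thinning, whereas the paper simply invokes the domination of intensity measures; this is a harmless elaboration of a standard step.
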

\begin{proof}
By definition, $\alpha_0\mu_{\kappa_0}$ is the intensity measure of the Poisson loop ensemble $\mathcal{L}_{\alpha_0,\kappa_0}$, 
and $\alpha_1\mu_{\kappa_1}$ is the intensity measure of $\mathcal{L}_{\alpha_1,\kappa_1}$. Under the assumption of this lemma, for a fixed based loop $(x_1,\ldots,x_k)$,
\begin{align}\label{eq:lcp1}
\alpha_0\dot\mu_{\kappa_0}((x_1,\ldots,x_k))=&\alpha_0\frac{1}{(1+\kappa_0)^k}\frac{1}{k}Q^{x_1}_{x_2}\cdots Q^{x_{k-1}}_{x_k}Q^{x_k}_{x_1}\notag\\
=&\alpha_0\left(\frac{1+\kappa_1}{1+\kappa_0}\right)^k\frac{1}{k}\frac{1}{(1+\kappa_1)^k}Q^{x_1}_{x_2}\cdots Q^{x_{k-1}}_{x_k}Q^{x_k}_{x_1}\notag\\
=&\alpha_0\left(\frac{1+\kappa_1}{1+\kappa_0}\right)^k\dot\mu_{\kappa_1}((x_1,\ldots,x_k))\notag\\
\geq &\alpha_0\left(\frac{1+\kappa_1}{1+\kappa_0}\right)^2\dot\mu_{\kappa_1}((x_1,\ldots,x_k))\notag\\
\geq &\alpha_1\dot\mu_{\kappa_1}((x_1,\ldots, x_k)).
\end{align}
For two measures $\nu_1$ and $\nu_2$, we write $\nu_1\leq\nu_2$ iff $\nu_1(A)\leq \nu_2(A)$ for any measurable $A$. 
Then, \eqref{eq:lcp1} implies that $\alpha_0\mu_{\kappa_0}\geq \alpha_1\mu_{\kappa_1}$, 
and the existence of the claimed coupling of $\mathcal L_{\alpha_0,\kappa_0}$ and $\mathcal L_{\alpha_1,\kappa_1}$ follows. 
\end{proof}

\begin{proof}[Proof of Proposition \ref{Continuity of kappa_c(alpha)}]
Fix $0<\alpha_0<\alpha_1$. 
By Lemma~\ref{lem:compare p(alpha,kappa)}, for any $\kappa_1<\kappa_c(\alpha_1)$, $\kappa_0 = \sqrt{\frac{\alpha_0}{\alpha_1}}(1+\kappa_1) - 1\leq \kappa_c(\alpha_0)$. 
Thus, 
\[
0\leq \kappa_c(\alpha_1) - \kappa_c(\alpha_0)\leq \frac{1 + \kappa_c(\alpha_1)}{\sqrt{\alpha_1}}\cdot \left(\sqrt{\alpha_1} - \sqrt{\alpha_0}\right)
\leq \frac{1 + \kappa_c(\alpha_0)}{2\alpha_0}\cdot \left(\alpha_1 - \alpha_0\right) .\qedhere
\]
\end{proof}

\section{Open problems}
\begin{itemize}\itemsep0pt
\item For $d=3$, does the limit $\lim\limits_{n\rightarrow\infty}\frac{1}{\log n}\log\mathbb{P}[0\overset{\mathcal{L}_{\alpha}}{\longleftrightarrow}\partial B(0,n)]$ exist? 
Note that if the limit exists, it must depend on $\alpha$ by Theorems \ref{thm: polynomial decay} and \ref{thm: d-2 is not the right exponent for d=3}. 
Assume that the limit exists and equals $-c(\alpha)$. Does the limit $\lim\limits_{n\rightarrow\infty}n^{c(\alpha)}\mathbb{P}[0\overset{\mathcal{L}_{\alpha}}{\longleftrightarrow}\partial B(0,n)]$ exist?
What is the value of $c(\alpha)$?
Conditionally on $0\overset{\mathcal{L}_{\alpha}}{\longleftrightarrow}\partial B(0,n)$, is the typical loop distance between $0$ and $\partial B(0,n)$ of order $\log n$? 

\item For $d=4$, does $\mathbb{P}[0\overset{\mathcal{L}_{\alpha}}{\longleftrightarrow}\partial B(0,n)]$ equal $n^{-2}$ up to some multiplicative factor of power of $\log n$? 
If so, does the power of the logarithm depend on $\alpha$? 
Conditionally on $0\overset{\mathcal{L}_{\alpha}}{\longleftrightarrow}\partial B(0,n)$, is the typical loop distance between $0$ and $\partial B(0,n)$ of order $\log\log n$? 

\item For $d\geq 5$, do the limits $\lim\limits_{n\rightarrow\infty}n^{d-2}\mathbb{P}[0\overset{\mathcal{L}_{\alpha}}{\longleftrightarrow}\partial B(0,n)]$, 
$\lim\limits_{||x||\rightarrow\infty}||x||^{2(d-2)}\mathbb{P}[0\overset{\mathcal{L}_{\alpha}}{\longleftrightarrow}x]$, 
$\lim\limits_{n\rightarrow\infty}n^{\frac d2 - 1}\mathbb{P}[\#\mathcal C_{\alpha}(0) > n]$ exist?
\item For $d\geq 3$, is $\thran = \thr$?
\item For $d=5$, is $\thrcz=\thr$? By Proposition~\ref{prop: alpha * is smaller than alpha **} this would imply also that $\thran = \thr$. 
\end{itemize}

\paragraph{Acknowledgements.} 
We thank Yves Le Jan, Sophie Lemaire and Titus Lupu for useful suggestions and stimulating discussions.

\bibliographystyle{amsalpha}

\begin{thebibliography}{{Le }12}

\bibitem[BGT89]{BinghamMR1015093}
N.~H.~Bingham, C.~M.~Goldie, and J.~L.~Teugels, \emph{Regular variation},
  Cambridge University Press, Cambridge, 1989. \MR{1015093 (90i:26003)}

\bibitem[Cam13]{Camia}
F.~Camia, \emph{Off-criticality and the massive brownian loop soup},
Preprint, \url{http://arxiv.org/abs/1309.6068}, 2013.

\bibitem[FKZ13]{StanMR3097424}
S.~Foss, D.~Korshunov, and S.~Zachary, \emph{An introduction to
  heavy-tailed and subexponential distributions}, 2nd ed., Springer, New York, 2013.
  \MR{3097424}

\bibitem[GKN92]{GandolfiMR1169017}
A.~Gandolfi, M.~S.~Keane, and C.~M.~ Newman, \emph{Uniqueness of the infinite
  component in a random graph with applications to percolation and spin
  glasses}, Probab. Theory Related Fields \textbf{92} (1992), no.~4, 511--527.
  \MR{1169017 (93f:60149)}

\bibitem[Gri90]{GriffinMR1060695}
P.~ Griffin, \emph{Accelerating beyond the third dimension: returning to the
  origin in simple random walk}, Math. Sci. \textbf{15} (1990), no.~1, 24--35.
  \MR{1060695 (91g:60083)}

\bibitem[Gri99]{GrimmettMR1707339}
G.~Grimmett, \emph{Percolation}, 2nd ed., Springer-Verlag, Berlin, 1999. 
\MR{1707339 (2001a:60114)}

\bibitem[HK01]{HamanaKestenMR1841327}
Y.~Hamana and H.~Kesten, \emph{A large-deviation result for the range of
  random walk and for the {W}iener sausage}, Probab. Theory Related Fields
  \textbf{120} (2001), no.~2, 183--208. \MR{1841327 (2002e:60161)}

\bibitem[Law82]{LawlerMR679202}
G.~F.~Lawler, \emph{The probability of intersection of independent random
  walks in four dimensions}, Comm. Math. Phys. \textbf{86} (1982), no.~4,
  539--554. \MR{679202 (84m:60084)}

\bibitem[Law13]{LawlerMR2985195}
\bysame, \emph{Intersections of random walks}, Birkh{\"a}user/Springer, New York, 2013, Reprint of the 1996 edition.
  \MR{2985195}

\bibitem[LL10]{LawlerMR2677157}
G.~F.~Lawler and V.~Limic, \emph{Random walk: a modern introduction},
Cambridge University Press, Cambridge, 2010. \MR{2677157 (2012a:60132)}

\bibitem[LT07]{LawlerMR2255196}
G.~F.~Lawler and J.~A.~{Trujillo Ferreras}, \emph{Random walk loop
  soup}, Trans. Amer. Math. Soc. \textbf{359} (2007), no.~2, 767--787
  (electronic). \MR{2255196 (2008k:60084)}

\bibitem[LW04]{LawlerWernerMR2045953}
G.~F.~Lawler and W.~Werner, \emph{The {B}rownian loop soup}, Probab.
  Theory Related Fields \textbf{128} (2004), no.~4, 565--588. \MR{2045953
  (2005f:60176)}

\bibitem[LeJ10]{LeJanMR2675000}
Y.~Le Jan, \emph{Markov loops and renormalization}, Ann. Probab.
  \textbf{38} (2010), no.~3, 1280--1319. \MR{2675000 (2011e:60179)}

\bibitem[LeJ11]{loop}
\bysame, \emph{Markov paths, loops and fields}, Lecture Notes in Mathematics,
  vol. 2026, Springer, Heidelberg, 2011, Lectures from the 38th Probability
  Summer School held in Saint-Flour, 2008.

\bibitem[LeJ12]{LejanMR2971372}
\bysame, \emph{Amas de lacets markoviens}, C. R. Math. Acad. Sci. Paris
  \textbf{350} (2012), no.~13-14, 643--646. \MR{2971372}

\bibitem[LL12]{LemaireLeJan}
Y.~Le Jan and S.~Lemaire, \emph{Markovian loop clusters on graphs},
  Preprint, \url{http://arxiv.org/abs/1211.0300},  
to appear in the Illinois Journal of Mathematics.

\bibitem[L13]{Lemaire}
S.~Lemaire, \emph{Markovian loop clusters on the complete graph and coagulation equations}, 
Preprint, \url{http://arxiv.org/abs/1308.4100}, 2013.

\bibitem[Lup13]{Lupuonedim}
T.~Lupu, \emph{Poisson ensembles of loops of one-dimensional diffusions},
  Preprint, \url{http://arxiv.org/abs/1302.3773}, 2013.

\bibitem[Lup14]{Lupu}
\bysame, \emph{From loop clusters of parameter 1/2 to the gaussian free field},
  Preprint, \url{http://arxiv.org/abs/1402.0298}, 2014.

\bibitem[Nag81]{NagaevMR616627}
S.~V. Nagaev, \emph{On the asymptotic behavior of probabilities of one-sided
  large deviations}, Teor. Ver. i Primenen. \textbf{26} (1981), no.~2,
  369--372. \MR{616627 (82h:60052)}

\bibitem[PR13]{PopovRath}
S.~Popov and B.~R{\'a}th, \emph{On decoupling inequalities and
  percolation of excursion sets of the gaussian free field}, 
Preprint, \url{http://arxiv.org/abs/1307.2862}, 2013.

\bibitem[PT12]{PopovTeixeira}
S.~Popov and A.~Teixeira, \emph{Soft local times and decoupling of
  random interlacements}, Preprint,
  \url{http://arxiv.org/abs/1212.1605}, to appear in the Journal of
  the European Mathematical Society.

\bibitem[RS11]{ArtemMR2819660}
B.~R{\'a}th and A.~Sapozhnikov, \emph{On the transience of
  random interlacements}, Electron. Commun. Probab. \textbf{16} (2011),
  379--391. \MR{2819660 (2012f:60342)}

\bibitem[RS13]{RodriguezSznitmanMR3053773}
P.-F.~Rodriguez and A.-S.~Sznitman, \emph{Phase transition
  and level-set percolation for the {G}aussian free field}, Comm. Math. Phys.
  \textbf{320} (2013), no.~2, 571--601. \MR{3053773}

\bibitem[Sla06]{SladeMR2239599}
G.~Slade, \emph{The lace expansion and its applications}, Lecture Notes in
  Mathematics, vol. 1879, Springer-Verlag, Berlin, 2006, Lectures from the 34th
  Summer School on Probability Theory held in Saint-Flour. \MR{2239599 (2007m:60301)}

\bibitem[Spi76]{SpitzerMR0388547}
F.~Spitzer, \emph{Principles of random walk}, second ed., Springer-Verlag,
  New York, 1976, Graduate Texts in Mathematics, Vol. 34. \MR{0388547 (52
  \#9383)}

\bibitem[SW12]{SheffieldWernerMR2979861}
S.~Sheffield and W.~Werner, \emph{Conformal loop ensembles: the
  {M}arkovian characterization and the loop-soup construction}, Ann. of Math.
  (2) \textbf{176} (2012), no.~3, 1827--1917. \MR{2979861}

\bibitem[Sym69]{Symanzik}
K.~Symanzik, \emph{Euclidean quantum field theory}, Scuola internazionale di
  Fisica "Enrico Fermi" \textbf{XLV} (1969), 152--223.

\bibitem[Szn10]{SznitmanMR2680403}
A.-S.~Sznitman, \emph{Vacant set of random interlacements and percolation},
  Ann. of Math. (2) \textbf{171} (2010), no.~3, 2039--2087. \MR{2680403
  (2011g:60185)}

\bibitem[Szn12a]{SznitmanMR2891880}
\bysame, \emph{Decoupling inequalities and interlacement percolation on
  {$G\times\mathbb Z$}}, Invent. Math. \textbf{187} (2012), no.~3, 645--706.
  \MR{2891880}

\bibitem[Szn12b]{SznitmanMR2932978}
\bysame, \emph{Topics in occupation times and {G}aussian free fields}, Z\"urich
  Lectures in Advanced Mathematics, European Mathematical Society, 2012. \MR{2932978}

\end{thebibliography}
\providecommand{\bysame}{\leavevmode\hbox to3em{\hrulefill}\thinspace}
\providecommand{\MR}{\relax\ifhmode\unskip\space\fi MR }
\providecommand{\MRhref}[2]{%
  \href{http://www.ams.org/mathscinet-getitem?mr=#1}{#2}
}
\providecommand{\href}[2]{#2}

\end{document}